\numberwithin{equation}{section}
\newcommand{\aosversion}[2]{\iftoggle{vaos}{#1}{#2}}
\newcommand{\btrue}{\mathrm{True}}
\newcommand{\bfalse}{\mathrm{False}}
\newcommand{\xyy}[1]{{\color{myblue} \colorbox{myblue!30}{Xuyang} #1}}
\begin{document}

\title{Fundamental Computational Limits in Pursuing Invariant Causal Prediction and Invariance-Guided Regularization}

\author{Authors}

\author{Yihong Gu$^1$,
 ~~~Cong Fang$^2$,
 ~~~Yang Xu$^2$, 
 ~~~Zijian Guo$^3$, 
 ~~~Jianqing Fan$^1$ \footnote{Supported by NSF Grants DMS-2210833 and DMS-2412029.}\\
{$^1$Princeton University, $^2$Peking University, and $^3$Rutgers University}
}

\date{}

\maketitle

\begin{abstract}

Pursuing invariant prediction from heterogeneous environments opens the door to learning causality in a purely data-driven way and has several applications in causal discovery and robust transfer learning. However, existing methods such as ICP \citep{peters2016causal} and EILLS \citep{fan2023environment} that can attain sample-efficient estimation are based on exponential time algorithms. In this paper, we show that such a problem is intrinsically hard in computation: the decision problem, testing whether a non-trivial prediction-invariant solution exists across two environments, is NP-hard even for the linear causal relationship. In the world where P$\neq$NP, our results imply that the estimation error rate can be arbitrarily slow using any computationally efficient algorithm. This suggests that pursuing causality is fundamentally harder than detecting associations when no prior assumption is pre-offered. 

Given there is almost no hope of computational improvement under the worst case, this paper proposes a method capable of attaining both computationally and statistically efficient estimation under additional conditions. Furthermore, our estimator is a distributionally robust estimator with an ellipse-shaped uncertain set where more uncertainty is placed on spurious directions than invariant directions, resulting in a smooth interpolation between the most predictive solution and the causal solution by varying the invariance hyper-parameter. Non-asymptotic results and empirical applications support the claim. 
\end{abstract}
\noindent \textbf{Keywords:} Causality, Distributional Robustness, Invariant Prediction, Maximin Effects, NP-hardness, Parsimonious Reduction.


\section{Introduction}

How do humans deduce the cause of a target variable from a set of candidate variables when only passive observations are available? A natural high-level principle is to identify the variables that produce consistent predictions at different times, locations, experimental conditions, or more generally, across various environments. This heuristic is implemented in statistical learning by seeking invariant predictions from diverse environments \citep{peters2016causal, heinze2018invariant, fan2023environment, gu2024causality}. This approach goes beyond just learning associations in the recognition hierarchy \citep{bareinboim2022pearl} and enables the discovery of certain data-driven causal relationships without prior causal assumptions. However, existing methods that realize general invariance learning rely on explicit or implicit exhaustive searches, which are computationally inefficient. This raises the question of whether learning invariant predictions is fundamentally hard. This paper contributes to understanding the fundamental limits and introducing a novel relaxed estimator for invariance learning. Theoretically, we prove this problem is intrinsically hard using a reduction argument \citep{karp20reducibility} with novel constructions. Our theoretical message further implies that learning data-driven causality is fundamentally harder than detecting associations. On the methodological side, we propose a relaxation in two aspects: our approach balances computational efficiency and statistical accuracy on one hand while optimizing trade-offs between prediction power and robustness on the other. 

\subsection{Pursuit of Linear Invariant Predictions}
\label{sec:lip}

Suppose we are interested in pursuing the linear invariant relationship between the response variable $Y\in \mathbb{R}$ and explanatory covariate $X\in \mathbb{R}^d$ using data from multiple sources/environments. Let $\mathcal{E}$ be the set of environments. For each environment $e\in \mathcal{E}$, we observe $n$ data $\{(X_i^{(e)}, Y_i^{(e)})\}_{i=1}^n$ that are i.i.d. drawn from some distribution $(X^{(e)}, Y^{(e)}) \sim \mu^{(e)}$ satisfying
\begin{align}
\label{model:lip}
    Y^{(e)} = (\beta^\star_{S^\star})^\top X^{(e)}_{S^\star} + \varepsilon^{(e)} \qquad \text{with} \qquad \mathbb{E}[X^{(e)}_{S^\star} \varepsilon^{(e)}] \equiv 0
\end{align} where $\beta^\star$ is the true parameter that is \emph{invariant} across different environment and $S^\star=\supp(\beta^\star)$ denotes the support of $\beta^\star$, while the distribution of $\mu^{(e)}$ may vary across environments. Here we assume different environments have the same sample size $n$ for presentation simplicity. The goal is to recover $S^\star$ and $\beta^\star$ using the observed data $\mathcal{D}^{\mathcal{E}} = \{(X_i^{(e)}, Y_i^{(e)})\}_{i\in [n], e\in \mathcal{E}}$.  

Methods inferring the invariant set $S^\star$ from \eqref{model:lip} can be applied to causal discovery under the structural causal model (SCM) \citep{glymour2016causal} framework. This is because when observing environments where \emph{interventions} are applied within the covariates $X$, $S^\star = \{j: X_j \text{ is \emph{direct cause} of } Y\}$ satisfies \eqref{model:lip} and is unique in some sense when the intervention is non-degenerate and enough \citep{peters2016causal, gu2024causality}; see the discussion in \cref{sec:related}. Though initially motivated by causal discovery under the SCM framework that may be sensitive to model misspecification, pursuing invariant predictions from heterogeneous environments itself is a much more generic principle in statistical learning, or a type of inductive bias in causality \citep{buhlmann2020invariance, gu2024causality}, that can also facilitate, for example, robust transfer learning \citep{rojas2018invariant}, prediction fairness among sub-populations \citep{hebert2018multicalibration}, and out-of-distribution generalization \citep{arjovsky2019invariant}. 

Unlike the standard linear regression under which each variable $X_j$ is either \emph{truly important} ($j\in S^\star$) or \emph{exogenously spurious} \citep{fan2016guarding} ($j\notin S^\star$ but $\mathbb{E}[X_j \varepsilon] = \mathbb{E}[X_j (Y - X^\top \beta^\star)] = 0$), the set of candidate variables in \eqref{model:lip} can be decomposed into three groups:
\begin{align}
\label{eq:set-g}
    \{1,\ldots, d\} = S^\star \bigcup \overbrace{\underbrace{\left\{ j\notin S^\star: \mathrm{Cov}^{\mathcal{E}}(\varepsilon, X_j) \neq 0\right\}}_{\text{Endogenously Spurious Variables} ~G:=} \bigcup \underbrace{\left\{ j\notin S^\star: \mathrm{Cov}^{\mathcal{E}}(\varepsilon, X_j) = 0\right\}}_{\text{Exogeneously Spurious Variables}}}^{\text{Spurious Variables}~ (S^\star)^c},
\end{align} where $\mathrm{Cov}^{\mathcal{E}}(\varepsilon, X_j):=\frac{1}{|\mathcal{E}|}\sum_{e\in \mathcal{E}} \mathbb{E}[\varepsilon^{(e)} X_j^{(e)}]$ is the pooled covariance between the noise and the covariate $X_j$ across different environments. The major difference compared with standard linear regression and the main difficulty behind such an estimation problem is the presence of \emph{endogenously spurious variables} $G$ \citep{fan2014endogeneity}. 
The exogenously spurious variable is one that lacks predictive power for the noise $\varepsilon=Y-X_{S^\star}^\top \beta_{S^\star}^\star$ in population and only increases the estimation error by $n^{-1/2}$ if it is falsely included.  It usually does not cause the bias of estimation but inflates slightly the variance.  In contrast, endogenously spurious variables contribute to predicting the noise; thus, the false inclusion of any such variable results in inconsistent estimation due to the biases they create.
An illustrative example is to classify whether the object in an image is a cow $(Y=0)$ or camel $(Y=1)$ using three extracted features $X_1$=body shape, $X_2$=background color, and $X_3$= temperature or time that the photo is taken. Here $S^\star=\{1\}$ is the invariant and causal feature, while $G=\{2\}$ helps predict the noise $\varepsilon$, since cows (resp. camels) usually appear on green grass (resp. yellow sand) in the data collected. $X_3$ is exogeneously spurious:  including it does not increase estimation bias but slight variance.   From a statistical viewpoint, the core difficulty is to distinguish whether a variable is truly important, or endogenously spurious among those statistically significant variables that contribute to predicting $Y$. This is where multi-environment comes into play. There is a considerable literature on estimating the parameter $\beta^\star$ in \eqref{model:lip} \citep{peters2016causal, rothenhausler2019causal, rothenhausler2021anchor, pfister2019invariant, arjovsky2019invariant, yin2021optimization}. 

\cite{fan2023environment} first realized sample-efficient estimation for the general model \eqref{model:lip} and offered a comprehensive non-asymptotic analysis in terms of both $n$ and $\mathcal{E}$, this idea is further extended to the fully non-parametric setting in \cite{gu2024causality}. Specifically, it shows that given data from finitely many environments $|\mathcal{E}|<\infty$, one can identify $S^\star$ with $n=\infty$ under the minimal identification condition:
\begin{align}
\label{ident:lip}
    \forall S\subseteq [d] ~\text{with}~ S\cap G \neq \emptyset ~~~ \Longrightarrow ~~~ &\exists e, e'\in \mathcal{E}, \beta^{(e,S)} \neq \beta^{(e',S)}
\end{align} where $\beta^{(e,S)}:=\argmin_{\supp(\beta) \subseteq S} \mathbb{E}_{(X, Y)\sim \mu^{(e)}}[|Y-\beta^\top X|^2]$. This requires that $S^\star$ is the maximum set that preserves the invariance structure in that incorporating any endogenously spurious variables in $G$ will result in shifts in predictions across $\mathcal{E}$. Turning to the empirical counterpart, the optimal rate for linear regression can be attained therein using their proposed \emph{environment invariant linear least squares} (EILLS) estimator. This implies that as long as $\beta^\star$ can be identified under finitely many environments, unveiling the data-driven causality parameter $\beta^\star$ in \eqref{model:lip} is as statistically efficient as estimating the association counterpart in standard linear regression.

Promising through the above progress, the invariance pursuit procedure has two drawbacks. The first is about the computational burden. The estimation error is only guaranteed for the global minimizer of the objective function in \cite{fan2023environment} and \cite{gu2024causality}. An exponential-in-$d$ algorithm is adopted to find the global minimizer of the objective function that \cite{fan2023environment} proposes. Though the Gumbel trick introduced by \cite{gu2024causality} allows variants of gradient descent algorithm to perform well in practice, the nonconvexity nature is still kept and there are no theoretical guarantees on the optimization. 

The second is that the invariant model is typically conservative in its predictive performance for a new environment. Though it finds the ``maximum'' invariant set, the invariant prediction model will eliminate the endogenously spurious variables that result in heterogeneous predictions in $\mathcal{E}$. This may result in conservativeness in prediction with the help of the endogenous variables, which is the best for the adversarial environment but is not so for the prediction environment of interest. In the aforementioned cow-camel classification task, suppose $r_1=95\%$ cows (resp. camels) appear on grass (resp. sand) in the first environment $e=1$ and the spurious ratio is $r_2=70\%$ in environment $e=2$. In this case, an invariant prediction model drops the background color $X_2$ due to its variability across environments. In general, a prediction model without $X_2$ is intuitively the best when $r= 0\%$, yet potentially reduces predictive power compared to the ones including $X_2$ when evaluated in an environment with $r> 50\%$.

The above discussion gives rise naturally to the following two questions, which will be addressed in this paper. 

\begin{quote}
    \it 
    $\mathsf{Q1}$. Can statistically efficient estimation of $\beta^\star$ in \eqref{model:lip} be attained by computationally efficient algorithms in general? If not, can it be attainable under some additional conditions? 
    
    $\mathsf{Q2}$. Can we have benefits by designing methods that smoothly ``interpolate'' the estimators for the invariant causal model $\beta^\star$ and the most predictive solution $\bar{\beta} := \argmin_{\beta} \sum_{e\in \mathcal{E}} \mathbb{E}_{(X,Y)\sim\mu^{(e)}} \\ \mathbb{E}[|Y-\beta^\top X|^2]$?
\end{quote}

\subsection{Computational Barrier}

The main theoretical message this paper delivers is: {\it the problem of finding invariant solutions is intrinsically hard}. In the following, we introduce a decision problem whose fundamental computation complexity is equivalent to causal invariance learning. Denote the boolean operators $\mathrm{AND}$, $\mathrm{OR}$ and $\mathrm{NOT}$ by $\land$, $\lor$ and $\neg$, respectively. We glance at the two questions below. 

\begin{quote}
     A. \it {What does the formula below evaluate? (a) $\btrue$ (b) $\bfalse$}
    \begin{align}
    \label{eq:sat1}
        \big((\btrue \land \btrue) \lor \bfalse\big) \land (\btrue \lor \neg \btrue) \land (\neg \bfalse \lor \neg(\btrue \land \btrue))
    \end{align}
\end{quote}

\begin{quote}
     B. \it {Can we choose $v_1, v_2, v_3, v_4$ in $\{\btrue, \bfalse\}$ to make the result of the formula as $\btrue$? (a) Yes (b) No}
    \begin{align}
    \label{eq:sat2}
    \big((v_1 \land v_4) \lor v_2\big) \land (v_3 \lor \neg v_4) \land (\neg v_2 \lor \neg (v_1 \land v_3))
    \end{align}
\end{quote}
The latter question is an instance of the circuit satisfiability ({\sc CircuitSAT}) problem \citep{karp20reducibility}. The answers to both questions are (a), and \eqref{eq:sat1} offers the unique valid solution to \eqref{eq:sat2} as $(v_1,v_2,v_3,v_4)=(\btrue, \bfalse, \btrue, \btrue)$.

From an intuitive perspective, we argue that the relationship between ``finding the best linear predictor'' and ``finding any non-trivial invariant (causal) prediction'' shares some similarities with the relationship between the two questions posed above. While both scenarios involve the same setting, that is ``boolean formula'' for the second pair and ``linear model'' for the first pair, and may potentially yield the same solution, their computation complexities and hierarchy in recognition tasks differ significantly. The former ones only involve simple arithmetic calculations, are straightforward in thought, and can be solved quickly. In contrast, the latter ones will suffer from inevitable brute force attempts, require complicated reasoning, and necessitate a potentially larger time budget. The latter tasks involve reasoning using the information extracted from the corresponding former perception tasks. 

Formally, consider the testing problem {\sc ExistsLIS-2} using population-level quantities. 
\begin{problem}[{\sc ExistsLIS-2}]
\label{prob:existlis2}
Consider the case of $|\mathcal{E}|=2$. Given the positive definite covariance matrices $\Sigma^{(1)}, \Sigma^{(2)} \in \mathbb{R}^{d\times d}$ with $\Sigma^{(e)} = \mathbb{E}[X^{(e)} (X^{(e)})^\top]$ and the covariance vectors $u^{(1)}, u^{(2)} \in \mathbb{R}^d$ with $u^{(e)} = \mathbb{E}[X^{(e)} Y^{(e)}]$, it asks whether it is possible to find a non-empty prediction-invariant set $S\subseteq [d]$ such that $\beta^{(1,S)} = \beta^{(2,S)} \neq 0$. Here $\beta^{(e,S)}$ is defined in \eqref{ident:lip} and can be arithmetically calculated as $\beta^{(e,S)} = [(\Sigma_S^{(e)})^{-1} u_S^{(e)}, 0_{S^c}]$ provided $\Sigma^{(e)}$ is positive definite thus invertible.
\end{problem}
\cref{prob:existlis2} simplifies the original linear invariance pursuit problem, i.e., estimating $\beta^\star$ or $S^\star$ in \eqref{model:lip}, in several aspects: we consider only two heterogeneous environments to identify $\beta^\star$ when $G\neq \emptyset$, and it only checks the existence of solution. 

As the answer to $\mathsf{Q1}$ in \cref{sec:lip}, this paper shows that the aforementioned simplified {\sc ExistsLIS-2} problem is NP-hard, which is essentially the same as the problem {\sc CircuitSat} with an instance example \eqref{eq:sat2}. Furthermore, the NP-hardness is not because of the existence of exponentially many possible invariant solutions, it remains when $\beta^\star$ is identifiable by \eqref{ident:lip}. Many problems are classified as NP-hard, other examples include {\sc 3Sat}, {\sc MaxClique}, {\sc Partition} \citep{erickson2023algorithms}. The Cook--Levin theorem \citep{karp20reducibility} states that if there exists a polynomial time algorithm to solve any NP-hard problem, then P$=$NP, meaning all the N(ondeterministic-)P(olynomial-time) problems, which is verifiable in polynomial time, are P(olynoimal-time) problems that are solvable in polynomial time. It is suspected, but is still a conjecture \citep{bovet1994introduction, fortnow2021fifty}, that P$\neq$NP. This implies it is unlikely that there exists any polynomial-time algorithms for NP-hard problems. This paper proves the NP-hardness of {\sc ExistsLIS-2} problem and an easier problem with constraint \eqref{ident:lip} by constructing a parsimonious polynomial-time reduction from the {\sc 3Sat} problem, a simplification of {\sc CircuitSat}, to our {\sc ExistsLIS-2} problem. See the formal definition of NP-hardness and reduction in \cref{sec:computation-limits}.

In many statistical problems, though attaining correct variable selection suffers from computational barriers, 
it is possible to construct a computationally efficient and accurate estimator of the continuous parameters of interest. For example, as a convex relaxation of $L_0$ regularized least squares, $L_1$ regularized least squares can obtain $n^{-1/2}$ \citep{bickel2009simultaneous} prediction error rate in general and match the same optimal $n^{-1}$ rate under the additional yet mild restricted eigenvalue (RE) condition \citep{candes2007dantzig}\footnote{The RE condition can be relaxed by the restricted strong convexity condition. In this case, if the covariate is zero-mean Gaussian \citep{raskutti2010restricted} or sub-Gaussian \citep{rudelson2013reconstruction}, optimal estimation error can be obtained by $L_1$ regularization when $|\supp(\beta^\star)| \log p = o(n)$ provided the curvature is bounded from below, i.e., $\lambda_{\min}(\mathbb{E}[XX^\top]) \gtrsim 1$. }. On the other hand, compared with $L_0$ \citep{zhang2012general} penalty, $L_1$ penalty requires a much more restrictive, usually impossible \citep{fan2001variable, zou2006adaptive}, condition to attain variable selection consistency \citep{zhao2006model, meinshausen2006high}. It is natural to ask if obtaining a reasonable prediction error using a computationally efficient algorithm is possible in finding invariant predictions. Our result also says ``No'' if P$\neq$NP.

In summary, this paper proves that consistent variable selection and reasonable prediction error in finding invariant predictions are NP-hard. In the world of P$\neq$NP, this establishes a dilemma between computational and statistical tractability for the invariance pursuit problem, and such an impossibility result has implications for several fields and questions.  

\begin{itemize}[topsep=0pt,itemsep=0pt]
    \item[(a)] It has long been hypothesized that there may exist some intrinsic computation barrier in finding invariant solutions given that the problem has a combinatorial formulation and all the existing provable sample-efficient methods use exhaustive search explicitly or implicitly. 
    It is still open whether finding an invariant solution is fundamentally hard or can be solved by a (still not discovered) computationally efficient algorithm. We offer a definite pessimistic answer to this. 
    \item[(b)] Our established dilemma above shows that pursuing invariance is fundamentally harder than pursuing sparsity. The latter can guarantee a decent prediction error using computationally efficient algorithms under a mild assumption that does not hurt the generality of the problem, and the corresponding estimation error will decrease when we keep increasing $n$. However, these no longer apply to the former. Thus, the relaxation tricks used in the sparsity pursuit like $L_1$ regularization may not be a good fit, and potentially new relaxation techniques should be introduced to pursue invariance. 
\end{itemize} 

\subsection{Our Proposed Method}

This paper proposes a simple method that answers question $\mathsf{Q2}$ with ``Yes'' by achieving a better balance between prediction power and invariance, while partially circumventing the computational barriers as the second part of $\mathsf{Q1}$. Given data from environments $\mathcal{E}$, the population-level estimator with $n=\infty$ is the minimizer of the following objective function
\begin{align*}
    \beta^{k,\gamma} = \argmin_{\beta\in \mathbb{R}^d} \frac{1}{|\mathcal{E}|} \sum_{e\in \mathcal{E}} \mathbb{E}[|Y^{(e)} - \beta^\top X^{(e)}|^2] + \gamma \sum_{j=1}^d w_k^{\mathcal{E}}(j) \cdot |\beta_j|.
\end{align*} It regularizes the pooled least squares using pre-calculated weighted $L_1$ penalty, where the adaptive, data-driven weight $w_k^\mathcal{E}(j)$ on $|\beta_j|$ is the upper bound of the prediction variations across environments $\mathcal{E}$ when incorporating variable $x_j$; see the details in \cref{sec:method}. Here $\gamma$ is the hyper-parameter that trades off predictive power and robustness against spurious signals, and $k$ is the hyper-parameter that controls the computation budget through $w_k^{\mathcal{E}}(j)$. The key features of our proposed estimator are as follows.
\begin{itemize}[itemsep=0pt, topsep=0pt]
\item[(a)] For the computation concern, our proposed estimator provably attains the causal identification, i.e., $\beta^{k,\gamma} = \beta^\star$ for large enough $\gamma$, by paying affordable computation cost (small $k$) under some unknown low-dimensional structure among the variables. On the other hand, by increasing the computation budget $k$ to $p$, our proposal achieves the causal identification under the same assumptions as those in EILLS \citep{fan2023environment}.

\item[(b)] The estimator reaches the goal in $\mathsf{Q2}$ by tuning $\gamma$. When causal identification is attained in (a), it leads to a continuous solution path interpolating the pooled least squares solution with $\gamma=0$ and the causal solution $\beta^\star$ with large enough $\gamma$. For any fixed $\gamma$, it has a certain distributional robustness interpretation in that $\beta^{k,\gamma}$ can be represented as the maximin effects \citep{meinshausen2015maximin,guo2024statistical} over some uncertainty set. 
\end{itemize}

\subsection{Related Works and Our Contribution}
\label{sec:related}

\cite{peters2016causal} first considers \eqref{model:lip} with more distributional constraints for causal discovery. To be specific, they consider doing causal discovery that infers the direct cause of the target response $Y$, using data under different environments \citep{didelez2012direct, meinshausen2016methods}, where in each environment, some unknown interventions are applied to the variables other than $Y$. Under the modularity assumption \citep{scholkopf2012causal}, which is also referred to as autonomy \citep{haavelmo1944probability, aldrich1989autonomy} or stability \citep{dawid2010identifying}, in the SCM framework that the intervention on $X_j$ will only change the conditional distribution of $X_j$ given all its direct causes, the conditional distribution of $Y$ given all its direct causes will remain the same across these different environments. This leads to the following distributional invariance structure if a linear model with exogenous noise is further assumed: $Y^{(e)} = (\beta^\star_{S^\star})^\top X^{(e)}_{S^\star} + \varepsilon$ with $\varepsilon \sim F_\varepsilon \indep X_{S^\star}^{(e)}$ and $\mathbb{E}[\varepsilon] = 0$, where $S^\star$ is the direct cause of the target response $Y$. A hypothesis-test based method is proposed in \cite{peters2016causal} to guarantee $\mathbb{P}(\hat{S} \subseteq S^\star) \ge 1-\alpha$. However, the set $\hat{S}^\infty$ it selects when $n=\infty$  will stand in between $\emptyset$ and $S^\star$, i.e., $\emptyset \subseteq \hat{S}^\infty \subseteq S^\star$, and easily be collapsed to $\emptyset$ in most of the cases when the interventions are not enough. The idea of penalizing least squares using exact invariance regularizer \citep{fan2023environment, gu2024causality} will select variables $\hat{S}^\infty$ satisfying $S^\star \subseteq \hat{S}^\infty \subseteq \bar{S}$ as $n=\infty$ where $\bar{S}$ is the Markov blanket of $Y$, but it will eliminate any of $Y$'s child if it is intervened in a non-degenerate manner. Though causal the solution is, it may lack some predictive power under the circumstances discussed before $\mathsf{Q2}$. The estimator proposed in this paper leverages the invariance principle as an inductive bias for ``soft'' regularization instead of that for ``hard'' structural equation estimation and can alleviate the lack of predictive power in this aspect. 

There are also attempts to attain both computationally and statistically efficient estimation under \eqref{model:lip}. For example, \cite{rothenhausler2019causal, rothenhausler2021anchor} consider the case where the mechanism among all covariate and response variables $(X,Y)$ remain unchanged and linear, while the heterogeneity across environments comes from additive interventions on $X$. Estimators similar to instrumental variable (IV) regression in causal identification are proposed. This idea is further extended \citep{kania2022causal, shen2023causality}, but can not go beyond circumventing the computation barrier by assumptions similar to IV regression. This is conceptually the same as least squares that follow the prior untestable assumptions to pinpoint the unique solution and may suffer from model misspecification. \cite{li2024fairm} studies a similar model with one additional constraint -- the covariance between $X_{S^\star}$ remains the same. A seemingly computation-efficient variable selection method is proposed. However, the additional constraint seems to be superfluous in that it cannot change the NP-hardness of the problem; see \aosversion{Appendix B.1}{\cref{sec:diss1}}. Therefore, there is still a gap in attaining sample-efficient estimation by computation-efficient algorithms under mild assumptions that will not ruin the prior-knowledge blind nature of invariance pursuit. This paper makes progress in this direction. 

There is also a considerable literature on robustifying prediction using the idea of distributionally robust optimization, which finds a predictor that minimizes the worst-case risk on a set of distributions referred to as the uncertain set. The uncertain set is typically a (isotropic) sphere in postulated metric centered on the training distribution. Examples of pre-determined metrics include KL divergence \citep{bagnell2005robust}, $f$-divergence \citep{duchi2021learning} and Wasserstein distance \citep{mohajerin2018data, blanchet2019data}. Such a postulated metric is uninformative which leads to a relatively conservative solution. Our estimator is a distributionally robust estimator with an ellipsoid-shaped uncertainty set. It assigns minimal uncertainty to invariant (causal) directions while allocating greater uncertainty to spurious directions, which balances the robustness and power in a better way. 

The NP-hardness and the conjecture P$\neq$NP are used to derive computation barriers in many statistical problems, mainly about detecting sparse low-dimensional structures in high-dimensional data. For the sparse linear model, \cite{huo2007stepwise} shows finding the global minima of $L_0$ penalized least squares is NP-hard, \cite{chen2014complexity} shows the NP-hardness holds for any $L_q$ loss and $L_p$ penalty with $q\ge 1$ and $p\in [0,1)$, and \cite{chen2017strong} extends it to general convex loss and concave penalty. However, these are computation barriers tailored to specific algorithms, not the fundamental limits of the problem itself. \cite{zhang2014lower} shows when P$\neq$NP, in the absence of the restricted eigenvalue condition, any polynomial-time algorithm can not attain estimation error faster than $n^{-1/2}$, which is attained by $L_1$ regularization but is sub-optimal compared with optimal $n^{-1}$ error. There is also a considerable literature on deriving statistical sub-optimality of computationally efficient algorithms using the reduction from the planted clique problem \citep{brennan2019optimal}, such as sparse principle component \citep{berthet2013complexity, berthet2013optimal, wang2016statistical}, sparse submatrix recovery \citep{ma2015computational}. However, a reasonable error is still attainable using computationally efficient alternatives. As discussed above, this is not the case for pursuing invariance as shown by this paper. 

\medskip
\noindent \textbf{Our Contributions.} The main contributions are as follows:

\begin{itemize}[itemsep=0pt, topsep=0pt]
    \item We establish the fundamental computational limits of finding prediction-invariant solutions in linear models, which is the first in the literature. Our proof is based on constructing a novel parsimonious reduction from the {\sc 3Sat} problem to the {\sc ExistLIS-2} problem. 
    \item A simple estimator is proposed to relax the computational budget and exact invariance pursuit using two hyper-parameters. It allows for provably computational and statistical efficiency estimation of the exact invariant (causal) parameters with mild additional assumptions and also offers flexibility in trade-offing efficiency and invariance (robustness). 
\end{itemize}

\medskip
\noindent \textbf{Organization.} This paper is organized as follows. In \cref{sec:computation-limits}, we introduce the concept of NP-hardness and present our main computation barrier result accompanied by the proofs. In \cref{sec:method}, we propose our method that relaxes the computation budget and conservativeness, illustrate its distributional robustness interpretation, and present the corresponding non-asymptotic result. The proofs for the results in \cref{sec:method} are deferred to the supplement material. \cref{sec:exp} collects the real-world application. 

\medskip
\noindent \textbf{Notations.} We will use the following notations. Let $X\in \mathbb{R}^d, Y\in \mathbb{R}$ be random variables and $x, y$ be their instances, respectively. We let $[m]=\{1,\ldots, m\}$. For a vector $z = (z_1,\ldots, z_m)^\top\in \mathbb{R}^m$, we let $\|z\|_q = (\sum_{j=1}^m |z_j|^q)^{1/q}$ with $q\in [1,\infty)$ be its $\ell_q$ norm, and let $\|z\|_\infty = \max_{j\in [m]} |z_j|$. For given index set $S=\{j_1,\ldots, j_{|S|}\}\subseteq [m]$ with $j_1<\cdots<j_{|S|}$, we denote $[z]_S=(z_{j_1},\ldots, z_{j_{|S|}})^\top \in \mathbb{R}^{|S|}$ and abbreviate it as $z_S$ if there is no ambiguity. We use $A\in \mathbb{R}^{n\times m}$ to denote a $n$ by $m$ matrix, use $A_{S,T}=\{a_{i,j}\}_{i\in S, j\in T}$ to denote a sub-matrix and abbreviate it as $A_{S}$ if $S=T$ and $n=m$. For a $d$-dimensional vector $z$ and $d\times d$ positive semi-definite matrix $A$, we let $\|z\|_A = \sqrt{z^\top A z}$, and let $\lambda_{\min}(A)$ (resp. $\lambda_{\max}(A)$) be the minimum (resp. maximum) eigenvalue of $A$.

We collect data from multiple environments $\mathcal{E}$. For each environment $e\in \mathcal{E}$, we observe $n$ data $\{(X^{(e)}_i, Y_i^{(e)})\}_{i=1}^n$ which are drawn i.i.d. from $\mu^{(e)}$. We denote $\mathbb{E}[f(X^{(e)}, Y^{(e)})]=\int f(x, y)\mu^{(e)}(dx,dy)$ and $\hat{\mathbb{E}}[f(X^{(e)}, Y^{(e)})] = \frac{1}{n} \sum_{i=1}^n f(X_i^{(e)}, Y_i^{(e)})$, and define
\begin{align}
\label{eq:covariance}
    \Sigma^{(e)} = \mathbb{E}[X^{(e)} (X^{(e)})^\top], ~u^{(e)} = \mathbb{E}[X^{(e)} Y^{(e)}], ~\Sigma = \frac{1}{|\mathcal{E}|} \sum_{e\in \mathcal{E}} \Sigma^{(e)}, ~u=\frac{1}{|\mathcal{E}|} \sum_{e\in \mathcal{E}} u^{(e)}.
\end{align}
We assume there is no collinearity, i.e., $\Sigma^{(e)} \succ 0$ such that we can define the population-level best linear predictor constrained on any set $S$ in each environment $e$, $\beta^{(e,S)}:=\argmin_{\supp(\beta)\subseteq S} \mathbb{E}[|Y^{(e)} - \beta^\top X^{(e)}|^2]$, and all the environment, $\beta^{(S)}:=\argmin_{\supp(\beta)\subseteq S} \allowbreak\sum_{e\in \mathcal{E}}  \mathbb{E}[|Y^{(e)} - \beta^\top X^{(e)}|^2]$. Let the pooled least squares loss over all the environments be
\begin{align}
\label{eq:pooled-least-squares}
\mathsf{R}^{\mathcal{E}}(\beta)=\frac{1}{2|\mathcal{E}|} \sum_{e\in \mathcal{E}} \mathbb{E}[|Y^{(e)} - \beta^\top X^{(e)}|^2].
\end{align}


\section{The Fundamental Limit of Computation}
\label{sec:computation-limits}

\subsection{Preliminary: NP-hardness}

We first introduce the idea of decision problem, NP-hardness, and reduction argument. 

\begin{definition}[Decision Problem]
A decision problem $P$ is a problem whose output is 1/0, meaning Yes/No. Let $x$ be an instance of the problem, we use $|x|$ to denote the size of its input and use $\mathcal{X}_P$ to denote the set of all the problem instances. We use $\mathcal{S}_x$ to denote the set of solutions for the problem instance $x$. We use the notation $x\in \mathcal{X}_{P,1}$ if the answer to the instance $x$ is 1(Yes). Clearly, we have $x\in \mathcal{X}_{P,1} \Longleftrightarrow |\mathcal{S}_x| \ge 1$.
\end{definition}

The particular decision problem that we consider is the {\sc 3Sat} problem below.

\begin{problem}[{\sc 3Sat}]
Given a conjunctive normal form (CNF) $\bigwedge_{i=1}^k (l_{i,1} \lor l_{i, 2} \lor l_{i, 3})$ of $k$ clauses, where the literal $l_{i,u}$ is either $v_\ell$ or $\neg v_\ell$ for some boolean variable $v_\ell \in \{\mathrm{True}, \mathrm{False}\}$ with $\ell\in [n]$, it asks if there exists an assignment of the variables such that the entire formula evaluates to $\mathrm{True}$. The size of a problem instance is $k$. $\mathcal{S}_x$ is the set of assignments of $(v_\ell)_{\ell=1}^n$ to let the formula be $\mathrm{True}$.
\label{prob: 3sat}
\end{problem}

We now present an instance of the {\sc 3Sat} problem.

\begin{example}[An Instance of {\sc 3Sat} Problem]
Consider an instance $x$ with $k=9$ clauses, the input is an CNF $f=(v_1\lor v_2 \lor v_3) \land (v_1\lor v_2 \lor \neg v_3) \land (v_1\lor \neg v_2 \lor v_3) \land (v_1\lor \neg v_2 \lor \neg v_3) \land (\neg v_1\lor \neg v_2 \lor v_3) \land (\neg v_1\lor v_2 \lor \neg v_3) \land (\neg v_1\lor \neg v_2 \lor \neg v_3) \land (\neg v_4 \lor v_4 \lor v_2) \land (\neg v_1 \lor v_2 \lor v_4)$ in $n=4$ variables. It is easy to see that $\mathcal{S}_x = \{(\mathrm{True}, \mathrm{False}, \mathrm{False}, \mathrm{True})\}$ and hence the answer to above {\sc 3Sat} instance is 1(Yes).
\end{example}

We also consider a potentially easier variant of {\sc 3Sat} to be used in the section. The problem is potentially easier than {\sc 3Sat} because it pursues the same target under additional non-trivial restrictions. 

\begin{problem}[{\sc 3Sat-Unique}]
The {\sc 3Sat-Unique} problem is the same as {\sc 3Sat} under the promise that the solution is unique if exists, i.e., $\mathcal{X}_{\text{\sc 3Sat-Unique}}=\{x\in \mathcal{X}_{\text{\sc 3Sat}}, |\mathcal{S}_x| \le 1\}$.
\end{problem}

We then introduce the idea of reduction and NP-hardness.

\begin{definition}[Reduction]
\label{def:reduction}
We say $T: \mathcal{X}_P \to \mathcal{X}_Q$ is a deterministic polynomial-time reduction from problem $P$ to problem $Q$ if there exists some polynomial $p$ such that for all $x\in \mathcal{X}_P$, (1) $T(x)$ can be calculated on a deterministic Turing machine with time complexity $p(|x|)$; and (2) $T(x) \in \mathcal{X}_{Q,1}$ if and only if $x \in \mathcal{X}_{P, 1}$.

We say $T: \mathcal{X}_P \to \mathcal{X}_Q$ is a randomized polynomial-time reduction \citep{valiant1985np} from problem $P$ to problem $Q$ if there exists some polynomial $p$ such that (1) $T(x)$ can be calculated on a randomized (coin-flipping) Turning machine with computational complexity $p(|x|)$ for any $x\in \mathcal{X}_P$; (2) For all $x\in \mathcal{X}_P\setminus \mathcal{X}_{P,1}$, $T(x) \notin \mathcal{X}_{Q,1}$; (3) For all $x\in \mathcal{X}_{P,1}$, $\mathbb{P}[T(x) \in \mathcal{X}_{Q,1}] \ge 1/p(|x|)$.
\end{definition}

\begin{definition}[NP-hardness]
We say a problem $P$ is NP-hard under deterministic (resp. randomized) polynomial-time reduction if there exists deterministic (resp. randomized) polynomial-time reduction from the circuit satisfiability problem \citep{karp20reducibility} to problem $P$.
\end{definition}

The NP-hardness of a problem is widely used to measure the existence of the underlying computational barrier for the problem; examples in statistics include sparse PCA under particular regime \citep{berthet2013complexity, berthet2013optimal, wang2016statistical}, sparse regression \citep{zhang2014lower} without restricted eigenvalue condition. The underlying reason why an NP-hard problem $P$ is ``hard'' can be illustrated via the Cook--Levin theorem \citep{karp20reducibility}: the existence of any polynomial-time algorithm for the NP-hard problem under deterministic polynomial-time reduction will assert P$=$NP, which implies any NP problem, defined as the problem whose validness of solution can be verified within polynomial-time, can be solved within polynomial-time. The NP-hardness under randomized polynomial-time reduction can be understood similarly: the existence of any polynomial-time algorithm for such a problem implies any NP problem can be solved within polynomial-time with high probability, that is, for any NP decision problem $P$, we can design a polynomial-time randomized algorithm $\tilde{A}$ such that
\begin{align*}
\forall x\in \mathcal{X}_{P} \setminus \mathcal{X}_{P,1},~ \tilde{A}(x) = 0 \qquad \text{and} \qquad \forall x\in \mathcal{X}_{P, 1},~ \mathbb{P}[\tilde{A}(x)=1] \ge 1-0.01|x|^{-100}.
\end{align*}
If the conjecture ``P$\neq$NP'' holds, then the NP-hardness of a problem naturally implies ``there is no polynomial-time algorithm for the problem''. We introduce the NP-hardness under randomized polynomial-time reduction to characterize the computation barrier of the linear invariance pursuit under identification condition \eqref{ident:lip}. We have the following result for the above two problems.
\begin{lemma}
\label{prop:3sat}
The problem {\sc 3Sat} is NP-hard under deterministic polynomial-time reduction. The problem {\sc 3Sat-Unique} is NP-hard under randomized polynomial-time reduction.
\end{lemma}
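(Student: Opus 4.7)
\textbf{Part 1: NP-hardness of \textsc{3Sat}.} The plan is to give a deterministic polynomial-time reduction from \textsc{CircuitSat} to \textsc{3Sat} via the Tseitin transformation. Given a boolean circuit $C$ with $m$ gates of fan-in at most $2$, I introduce a fresh auxiliary variable $y_i$ for each gate output and encode each gate's truth table by a constant number of clauses of length at most $3$: for example $y = u \land v$ becomes $(\neg y \lor u) \land (\neg y \lor v) \land (y \lor \neg u \lor \neg v)$, while $y = \neg u$ becomes $(y \lor u) \land (\neg y \lor \neg u)$, with any shorter clauses padded by literal repetition. Finally I append the unit clause asserting that the output wire evaluates to $\btrue$. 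The total blow-up is $O(m)$, so the reduction runs in polynomial time, and a routine structural induction on the circuit shows $C$ is satisfiable if and only if the resulting 3-CNF is.

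\textbf{Part 2: NP-hardness of \textsc{3Sat-Unique} under randomized reduction.} The plan is a Valiant--Vazirani style isolation applied on top of Part 1. Given a \textsc{3Sat} instance $\phi$ on $n$ variables $v_1, \ldots, v_n$, I draw $k$ uniformly from $\{0, 1, \ldots, n\}$, sample $k$ independent random subsets $S_1, \ldots, S_k \subseteq [n]$ and random bits $b_1, \ldots, b_k \in \{0, 1\}$, and form the augmented system $\phi' = \phi \land \bigwedge_{i=1}^k \bigl(\bigoplus_{j \in S_i} v_j = b_i\bigr)$. When $\mathcal{S}_\phi = \emptyset$ we have $\mathcal{S}_{\phi'} = \emptyset$ deterministically; when $\mathcal{S}_\phi \neq \emptyset$, the Valiant--Vazirani isolation lemma gives $|\mathcal{S}_{\phi'}| = 1$ with probability at least $1/(8(n+1))$. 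To put $\phi'$ back into 3-CNF I expand each XOR constraint of length $\ell$ using a left-associative chain of $\ell - 2$ fresh auxiliary variables $z_{i,1}, \ldots, z_{i,\ell-2}$ with the standard 3-clause encodings of two-variable XOR equalities. Because each auxiliary is a deterministic function of the original variables, this expansion preserves the cardinality of the solution set, and therefore uniqueness carries over to the 3-CNF form.

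\textbf{Main obstacles.} Part 1 is purely bookkeeping once the Tseitin encoding is fixed. The main technical burden is the isolation bound in Part 2. The key lemma states that for any nonempty $A \subseteq \mathbb{F}_2^n$ with $2^{t-1} \le |A| \le 2^t$, a uniformly random affine map $h: \mathbb{F}_2^n \to \mathbb{F}_2^{t+1}$ has exactly one preimage of the zero vector inside $A$ with probability at least $1/8$. I would prove this by Bonferroni: pairwise uniformity of $h$ yields $\sum_{a \in A} \Pr[h(a) = 0] = |A| / 2^{t+1}$ and $\sum_{a \neq a' \in A} \Pr[h(a) = h(a') = 0] = |A|(|A|-1)/4^{t+1}$, and subtracting gives a constant lower bound on the probability that exactly one point of $A$ hits zero. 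Averaging over the uniform draw of $k$ yields success probability $\Omega(1/n)$, matching condition (3) of \cref{def:reduction}. A secondary subtlety is avoiding inflation of the solution count when replacing XORs by 3-clauses; using the chained auxiliaries (each uniquely determined by the originals) rather than a generic Tseitin expansion is the cleanest way to guarantee that the transformation is solution-preserving rather than merely satisfiability-preserving.
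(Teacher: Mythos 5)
Your proposal matches the paper's proof in substance. For Part 2 the paper also follows the Valiant--Vazirani isolation argument (it cites Theorem~1.1 of \cite{valiant1985np} and proves an analogue of their Lemma~2.1), and its Lemma~C.8 does exactly what you flag as the ``secondary subtlety'': it expands each XOR constraint via a left-associative chain of fresh auxiliaries so that the map on solution sets is a bijection rather than merely satisfiability-preserving, which is what lets the uniqueness promise carry through to the 3-CNF form. Your second-moment / Bonferroni derivation of the $1/8$ isolation bound is the standard one and is sound. For Part 1 the paper simply cites \cite{karp20reducibility} without exhibiting a reduction, whereas you spell out the Tseitin encoding; that is a correct and standard instantiation of what the citation is implicitly invoking, so it is a fuller rather than a different argument.
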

\begin{proof}[Proof of \cref{prop:3sat}] The NP-hardness of {\sc 3Sat} follows from \cite{karp20reducibility}, the proof for the NP-hardness of {\sc 3Sat-Unique} can be found in \aosversion{Appendix A.4}{\cref{sec:proof-3sat}}.
\end{proof}

\subsection{The Hardness of Population-level Linear Invariance Pursuit}

When $|\mathcal{E}|=2$, we will show that finding a non-trivial invariant solution using population covariance matrices has a computation barrier similar to the {\sc 3Sat} problem. Moreover, even when $\beta^\star$ and $S^\star$ are identifiable, the computation limit remains in a similar manner to the {\sc 3Sat-Unique} problem. This claim can be rigorously delivered in the following \cref{thm:np-hard-p1}.  Without loss of generality, we assume that $X^{(e)}$ and $Y^{(e)}$ are all zero-mean random variables in each environment. 

\begin{problem}[Existence of Linear Prediction-Invariant Set]
\label{prob1}
    Let $d \in \mathbb{N}^+$ be the dimension of the explanatory covariate, and $E$ be the number of environments. Let $\Sigma^{(1)}, \ldots, \Sigma^{(E)} \in \mathbb{R}^{d\times d}$ be positive definite matrices representing the covariance matrices of $X^{(e)}$, i.e., $\Sigma^{(e)} = \mathbb{E}[X^{(e)} (X^{(e)})^\top]$, and $u^{(1)}, \ldots, u^{(E)}$ be $d$-dimensional vectors representing the covariance between $X^{(e)}$ and $Y^{(e)}$, i.e., $u^{(e)} = \mathbb{E}[X^{(e)}Y^{(e)}]$. In this case, the population-level least squares solutions can be written as $\beta^{(e,S)} = [\beta^{(e,S)}_S, 0_{S^c}]$ with $\beta^{(e,S)}_S=(\Sigma_S^{(e)})^{-1} u_S^{(e)}$ and $\beta^{(S)} = [\beta^{(S)}_S, 0_{S^c}]$ with $\beta^{(S)}_S=(\sum_{e\in \mathcal{E}} \Sigma_S^{(e)})^{-1} (\sum_{e\in \mathcal{E}} u_S^{(e)})$. \\
    We define the problem {\sc ExistLIS} as follows:
    
    \noindent \textbf{$[\mathsf{Input}]$} $\Sigma^{(1)}, \ldots, \Sigma^{(E)}$ and $u^{(1)}, \ldots, u^{(E)}$ satisfying the above constraints.\\
    \noindent \textbf{$[\mathsf{Output}]$} Returns 1(Yes) if there exists $S\subseteq[d]$ such that $\beta^{(e,S)} \equiv \beta^{(S)} \neq 0$; otherwise 0(No).
\end{problem}

We simplify the original problem, that is, unveiling $S^\star$ in \eqref{model:lip}, when $n=\infty$ from two aspects in \cref{prob1}. Firstly, we only use the first-order linear information rather than the full distribution information such that the input of the problem is of $O(d^2)$ when $|\mathcal{E}|=O(1)$. The space of {\sc ExistLIS} can be seen as a ``linear projection'' of the space of the problems that recovering $S^\star$ in \eqref{model:lip} provided $\Sigma^{(e)} \succ 0$. Secondly, we state it as a decision problem rather than a solution-solving problem: it suffices to answer whether a non-trivial invariant set exists instead of pursuing one. For simplicity in this section, we use the terminology ``invariant set'' instead of ``linear prediction-invariant set''. We define the concept of the maximum invariant set to present the same problem under the identification condition \eqref{ident:lip}.

\begin{definition}[Invariant Set and Maximum Invariant Set]
\label{def:mis}
    Under the setting of \cref{prob1}, we say a set $\bar{S}$ is a \underline{invariant set} if $\beta^{(e,\bar{S})} \equiv \beta^{(\bar{S})}$. We say a set $\bar{S}$ is a \underline{maximum invariant set} if it is an invariant set and satisfies
    \begin{align}
    \label{eq:max-invariant-set} 
        \forall S\subseteq[d], ~~ \mathrm{either}~ \left(\beta^{(S\cup \bar{S})} = \beta^{(\bar{S})}\right) ~\mathrm{or}~
        \left(\sup_{e,e'\in [E]} \|\beta^{(e,S)} - \beta^{(e',S)}\|_2 > 0\right) .
    \end{align}
\end{definition}


\begin{problem}[Existence of Linear Invariant Set under Identification]
\label{prob2}
    Problem {\sc ExistLIS-Ident} is defined as the same problem as {\sc ExistLIS} with the additional constraint that there exists a maximum invariant set $S^\dagger$.
\end{problem}

Note that $S^\dagger$ can be an empty set, under which the corresponding problem instance does not have non-trivial invariant solutions. Observe that the boolean formula $(a \lor b)$ is equivalent to the statement (if $\neg a$ then $b$). As required by \eqref{eq:max-invariant-set}, an invariant set $\bar{S}$ is a maximum invariant set if incorporating any variable that enhances the prediction performance will lead to shifts in best linear predictions. Therefore, the existence of the maximum invariant set defined in \cref{def:mis} is just a restatement of the identification condition \eqref{ident:lip}, that is, $\bar{S}$ is a maximum invariant set if and only if $S^\star =\bar{S}$ satisfies \eqref{model:lip} and \eqref{ident:lip} simultaneously. 

\cref{prob2} is an easier version of the problem of recovering $S^\star$ in \eqref{model:lip} with the identification constraint \eqref{ident:lip} in population $n=\infty$. The following example gives an instance of the problem {\sc ExistLIS-Ident}. This example also indicates that the maximum invariant set may not be unique, but all the maximum invariant sets yield the same prediction performance. 

\begin{example}[An Instance of {\sc ExistLIS-Ident} Problem] Consider an instance with $d=4$, $E=2$ and input
\begin{align*}
    (\Sigma^{(1)}, \Sigma^{(2)}) = \left(\begin{bmatrix}
        1 & 0 & \frac{2}{\sqrt{15}} & 0 \\
        0 & 1 & \frac{1}{\sqrt{15}} & 0 \\
        \frac{2}{\sqrt{15}} & \frac{1}{\sqrt{15}} & \frac{3}{5} & 0 \\
        0 & 0 & 0 & 1 \\
    \end{bmatrix}, \begin{bmatrix}
        1 & 0 & \frac{2}{\sqrt{5}} & 0 \\
        0 & 1 & \frac{1}{\sqrt{5}} & 0 \\
        \frac{2}{\sqrt{5}} & \frac{1}{\sqrt{5}} & \frac{7}{5} & 0 \\
        0 & 0 & 0 & 1 \\
    \end{bmatrix}\right),  (u^{(1)}, u^{(2)})= \left(\begin{bmatrix}
        2\\ 1\\ 2\sqrt{\frac{3}{5}} \\ 0
    \end{bmatrix}, \begin{bmatrix}
        2\\ 1\\ \frac{6}{\sqrt{5}}\\ 0
    \end{bmatrix}\right).
\end{align*} It can be seen as a ``linear projection'' of the following data-generating process with $e=\{1,2\}$ and independent standard normal random variables $\varepsilon_0,\ldots, \varepsilon_4$:
\begin{align*}
    X_1^{(e)} &\gets \varepsilon_1, ~~X_2^{(e)} \gets \varepsilon_2, ~~X_4^{(e)} \gets \varepsilon_4, \\
    Y^{(e)} &\gets 2\cdot X_1^{(e)} + X_2^{(2)} + \varepsilon_0, \\
    X_3^{(e)} &\gets \frac{(\sqrt{3})^{e-2} Y^{(e)} + \varepsilon_3}{\sqrt{5}}.
\end{align*}
It is easy to see that the sets $\emptyset, \{1\}, \{2\}, \{4\}, \{1,2\}, \{1,4\}, \{2,4\}, \{1,2\}, \{1,2,4\}$ are all invariant sets, while the sets $\{1,2\}$ and $\{1,2,4\}$ are maximum invariant sets. 
\end{example}

From the perspective of a computational problem, the existence of a maximum invariant set offers non-trivial constraints on the problem and one can construct a model where this condition fails to hold; see \cref{ex:fail} below. On the other hand, the non-existence of a maximum invariant set rarely happens under the causal discovery setting. To be specific, under the setting of the structural causal model with intervention on $X$, it is known from Theorem 3.1 in \cite{gu2024causality} that a maximum invariant set always exists if the intervention is non-degenerate, which occurs with probability $1$ under suitable measure on the intervention. 

\begin{example}[An Instance of {\sc ExistLIS} that is not {\sc ExistLIS-Ident}]
\label{ex:fail}
Consider the model of Example 4.1 in \cite{fan2023environment} with $s^{(1)}=1/2$ and $s^{(2)}=2$, that is, the SCMs in environment $e\in \{1,2\}$ are
\begin{align*}
    X^{(e)}_1 &\gets \sqrt{0.5} \varepsilon_1 \\
    Y^{(e)} &\gets X_1^{(e)} + \sqrt{0.5} \varepsilon_0 \\
    X_2^{(e)} &\gets 2^{2e-3} Y^{(e)} + \varepsilon_2 
\end{align*} with $\varepsilon_0,\ldots, \varepsilon_2$ are i.i.d. standard Gaussian random variables. It is easy to check that the sets $\emptyset, \{1\}, \{2\}$ are all invariant sets but none of them satisfies the second constraint \eqref{eq:max-invariant-set}, and the set $\{1,2\}$ is not an invariant set. So there does not exist a maximum invariant set. 
\end{example}

Given $\mathcal{X}_{\text{{\sc ExistLIS-Ident}}} \subsetneq \mathcal{X}_{\text{{\sc ExistLIS}}}$, {\sc ExistLIS} may be potentially harder than {\sc ExistLIS-Ident}. We will establish NP-hardness to both {\sc ExistLIS} and {\sc ExistLIS-Ident} to rule out the possibility that the computational hardness is because of nonidentifiability, or in other words, computational difficulty can be resolved when $S^\star$ is identifiable in \eqref{model:lip} by \eqref{ident:lip}.

\begin{theorem}
\label{thm:np-hard-p1}
When $E=2$, the problem {\sc ExistsLIS} is NP-hard under deterministic polynomial-time reduction; the problem {\sc ExistsLIS-Ident} is NP-hard under randomized polynomial-time reduction.
\end{theorem}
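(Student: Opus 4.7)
The plan is to build a deterministic polynomial-time reduction from {\sc 3Sat} to {\sc ExistsLIS} (for the first claim), then argue that essentially the same construction is parsimonious, so composing with the randomized reduction in \cref{prop:3sat} gives a randomized reduction from {\sc 3Sat-Unique} to {\sc ExistsLIS-Ident}. Throughout, I use the equivalent formulation of invariance: $\beta^{(1,S)}=\beta^{(2,S)}$ iff the two systems $\Sigma^{(e)}_{S,S}\beta_S=u^{(e)}_S$ have a common solution, equivalently iff there exists $\beta_S$ with $\Sigma^{(1)}_{S,S}\beta_S=u^{(1)}_S$ and $(\Sigma^{(1)}_{S,S}-\Sigma^{(2)}_{S,S})\beta_S = u^{(1)}_S-u^{(2)}_S$. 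This recasts invariance as a pair of linear constraints that can be tuned by choosing off-diagonal entries of $\Sigma^{(e)}$ and the entries of $u^{(e)}$.

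For the reduction, given a {\sc 3Sat} instance with Boolean variables $v_1,\ldots,v_n$ and clauses $C_1,\ldots,C_k$, I would construct an instance in dimension $d=O(n+k)$ built from two kinds of blocks. For each variable $v_\ell$ I introduce a pair of \emph{literal coordinates} $(x_\ell^T, x_\ell^F)$ whose base covariance is a scaled identity in both environments; the cross-environment difference $\Sigma^{(1)}-\Sigma^{(2)}$ couples $x_\ell^T$ and $x_\ell^F$ so that any set $S$ containing both literals of the same variable makes the two normal equations inconsistent, while a set containing neither makes both literals contribute $0$ to $\beta^{(S)}$, so without loss of generality a non-trivial invariant set picks exactly one literal per variable — this is the decoding map to a Boolean assignment. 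For each clause $C_i=\ell_{i,1}\lor \ell_{i,2}\lor \ell_{i,3}$ I introduce a \emph{clause anchor} coordinate $x_{C_i}$ together with carefully calibrated off-diagonal entries in $\Sigma^{(1)}-\Sigma^{(2)}$ linking $x_{C_i}$ to each of the three literal coordinates matching $\ell_{i,u}$, and a discrepancy $u^{(1)}_{C_i}-u^{(2)}_{C_i}$ that can be absorbed by the invariance constraint only when $S$ contains $x_{C_i}$ together with at least one of the three matching literals. Forcing every anchor into $S$ (e.g. by engineering $u^{(e)}_{C_i}$ so that omitting $x_{C_i}$ leaves a non-cancellable residual in any other set) gives the global AND across clauses, while the local ``releasable only when a literal matches'' mechanism is the OR within a clause.

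The forward direction — a satisfying assignment yields a non-trivial invariant $S$ — is a direct verification: take $S=\{x_\ell^{T}:v_\ell=\text{True}\}\cup\{x_\ell^{F}:v_\ell=\text{False}\}\cup\{x_{C_1},\ldots,x_{C_k}\}$ and exhibit the common solution $\beta_S$ explicitly. The reverse direction is the delicate step, and it is where I expect the main obstacle to sit: one has to rule out ``accidental'' invariant sets that do not come from a clean True/False selection and yet satisfy the two normal equations. The plan is to choose diagonal magnitudes of $\Sigma^{(e)}$ large enough, relative to the off-diagonal perturbations used to encode variables and clauses, that (i) positive definiteness of $\Sigma^{(e)}$ holds, and (ii) a genericity/rank argument shows the joint linear system admits a common solution only at the designed ``logical'' configurations. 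Concretely, I would use a block-diagonal backbone with $O(\varepsilon)$-size perturbations in the variable/clause couplings and pick $\varepsilon$ small enough that invariance on any non-canonical $S$ would require a nontrivial algebraic identity among a transcendental choice of perturbation parameters, which can be excluded by taking these parameters algebraically independent (rational with fresh primes).

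Finally, for {\sc ExistsLIS-Ident}, I would verify that the construction above is parsimonious: the decoding ``$S\mapsto$ induced assignment'' is a bijection between non-trivial invariant sets and satisfying assignments. Thus a {\sc 3Sat-Unique} instance with a unique satisfying assignment maps to an instance of {\sc ExistsLIS} with a unique non-trivial invariant set $S^\dagger$, and by the same rank/genericity argument any superset $S^\dagger\cup\{j\}$ either breaks invariance or leaves $\beta^{(S^\dagger\cup\{j\})}=\beta^{(S^\dagger)}$ (because adding any unused literal/anchor either violates the paired-literal conflict or adds a coordinate orthogonal to the current solution), so $S^\dagger$ satisfies the maximality condition \eqref{eq:max-invariant-set} in \cref{def:mis}; when the 3SAT instance is unsatisfiable, $S^\dagger=\emptyset$ works trivially. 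Composing this deterministic reduction with the randomized reduction from {\sc CircuitSat} to {\sc 3Sat-Unique} provided by \cref{prop:3sat} then yields NP-hardness of {\sc ExistsLIS-Ident} under randomized polynomial-time reduction.
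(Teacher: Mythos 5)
Your high-level plan matches the paper's: construct a parsimonious deterministic reduction from {\sc 3Sat} to {\sc ExistLIS}, then invoke \cref{prop:3sat} and the parsimony to handle {\sc ExistLIS-Ident}. However, your clause gadget cannot work as described. If, as you set things up, the first environment pins down $\beta^{(1,S)}_j$ to a fixed value for every $j\in S$ (e.g., diagonal $\Sigma^{(1)}$ and $u^{(1)}=\mathbf 1$ giving $\beta_j=\mathbf 1\{j\in S\}$), then invariance is the linear system $\Sigma^{(2)}_{S}\beta^{(1,S)}_S=u^{(2)}_S$. The row corresponding to a forced-in anchor $x_{C_i}$ reads $\Sigma^{(2)}_{C_i,C_i}+\sum_{u}\Sigma^{(2)}_{C_i,\ell_{i,u}}\mathbf 1\{\ell_{i,u}\in S\}+\text{(other terms)}=u^{(2)}_{C_i}$, which is satisfied for exactly one value of $\sum_u\mathbf 1\{\ell_{i,u}\in S\}$ once the couplings are fixed, not for the whole range $\{1,2,3\}$. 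That is, a single linear-equality anchor expresses ``exactly $m$'' and not ``at least one,'' so your OR-within-a-clause mechanism is not realizable in this form; a satisfying assignment with two or three true literals in some clause would \emph{not} map to an invariant set, which breaks the forward direction as well as parsimony. The paper avoids this by encoding clause satisfaction combinatorially: it introduces $7$ ``action ID'' coordinates per clause (one for each nonzero truth pattern of the three literals), forces exactly one per clause to be selected, and uses a $\{0,1\}$-valued contradiction matrix $A$ to enforce cross-clause consistency. That recasts OR as a choice among $7$ linear-compatible options, which is precisely what linear-equality invariance can certify.

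The other soft spot is your genericity argument for ruling out ``accidental'' invariant sets. Genuine algebraic independence cannot be achieved with rationals (all rationals are trivially algebraic, and fresh primes do not help), while transcendental parameters cannot be written down for a Turing-machine input; so ``pick parameters generic enough'' has to be replaced by something explicit and verifiable in polynomial time. The paper's trick is to keep all entries integers except for a single strategically placed $\tfrac12$ in $\Sigma^{(2)}$ and $u^{(2)}$; for any candidate invariant set the relevant scalar equations then have an integer on one side and an integer-plus-$\tfrac12$ on the other unless the set has exactly the intended structure (contains $d$, has size $k+1$, and picks $A$-compatible action IDs). This makes the ``no accidental solutions'' step a concrete parity/integrality check rather than an appeal to transcendence, and it is also what lets the reduction remain polynomial in bit size. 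If you want to salvage your literal-pair design, you would need both a multi-coordinate clause gadget (to replace OR by a disjunction of exactly-one conditions, e.g.\ as in the paper) and a discrete integrality device of this kind in place of the genericity claim.
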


\cref{thm:np-hard-p1} states that there exist certain fundamental computational limits under the problem of pursuing a linear invariant prediction: the difficulties are intrinsically inherited in the problem itself -- there does not exist a polynomial-time algorithm to test whether there exists a non-trivial invariant prediction in general if P$\neq$NP.

\begin{remark}[NP-hardness under More Restrictive Conditions]
It is worth noticing that the underlying computational barrier is attributed to the nature of the problem, i.e., pursuing invariance, instead of artificial and technical difficulties. Such a barrier will remain for other cousin models and models under more restrictive conditions. Examples include (1) finding a prediction with stronger invariance condition like distributional invariance in \cite{peters2016causal}; (2) problems with row-wise sparse covariance matrices where all the covariance matrices only have constant-level non-zero entries in each row; (3) problems with well-separated heterogeneity in that the variations in prediction are large for all the non-invariant solutions. See the rigorous statement and discussion in \aosversion{Appendix A}{\cref{appendix:comp}}.
\end{remark}

We will show a much easier problem with fixed $(\Sigma^{(1)}, u^{(1)})$ structure is NP-hard.

\subsection{Proof of \cref{thm:np-hard-p1}}

The following lemma claims that we can construct a \emph{parsimonious} polynomial-time reduction from the well-known problem {\sc 3Sat} to our problem {\sc ExistLIS} that preserves the number of solutions. Given an instance $x$ of the {\sc 3Sat} problem stated in Problem \ref{prob: 3sat}, we let $\mathcal{S}_x = \{v\in \{\mathrm{True}, \mathrm{False}\}^{n}: v \text{ let the formula to be } \mathrm{True}\}$ be its set of solutions. Given an instance $y$ of the {\sc ExistLIS} problem with $E=2$, we define its solution set $\mathcal{S}_y \subseteq [d]$ as the set of all the $S$ satisfying $\beta^{(1,S)} = \beta^{(2, S)} \neq 0$. We let $k$ be the number of clauses in the instance $x$ and $d$ be the number of covariance in the instance $y$ and omit the dependency on $x$ (resp. $y$) in $k$ (resp. $d$) for presentation simplicity. 
 For an integer $m$, We let $1_{m}$ be a $m$-dimensional vector with all entries being $1$, and let $I_m$ be a $m\times m$ identity matrix.  

Unlike the standard reduction argument whose goal is to find a polynomial time reduction $T: \mathcal{X}_{\text{\sc 3Sat}} \to \mathcal{X}_{\text{\sc ExistLIS}}$ such that $\indicator\{|\mathcal{S}_x|>0\} = \indicator\{|\mathcal{S}_{T(x)}|>0\}$, we will construct a parsimonious reduction satisfying $|\mathcal{S}_x|=|\mathcal{S}_{T(x)}|$. This finer construction transfers the promise of the unique solution in {\sc 3Sat-Unique} to the promise of the identification in {\sc ExistLIS-Ident}. 

\begin{lemma}
\label{lemma:reduction}
We can construct a parsimonious polynomial-time reduction from {\sc 3Sat} to {\sc ExistLIS}: for each instance $x$ of problem {\sc 3Sat} with input size $k$, we can transform it to $y=T(x)$ of problem {\sc ExistLIS} within polynomial-time with $d=7k+1$ such that $|\mathcal{S}_y|= |\mathcal{S}_x|$. 
\end{lemma}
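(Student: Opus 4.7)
The plan is a block construction in which each of the $k$ clauses of the 3SAT instance $x$ contributes $7$ coordinates and one additional anchor coordinate is appended, giving dimension $d = 7k+1$. For the $i$th clause $(l_{i,1}\lor l_{i,2}\lor l_{i,3})$, the $7$ coordinates are indexed by the $7$ of the $2^{3}=8$ truth assignments of the three underlying Boolean variables that make the clause evaluate to $\mathrm{True}$; the anchor plays the role of the response signal. The reduction $T$ is designed so that invariant sets $S$ with $\beta^{(1,S)}=\beta^{(2,S)}\neq 0$ correspond exactly to choices of one ``satisfying-assignment coordinate'' per clause such that the choices are globally consistent on the truth values of every Boolean variable they share. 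A satisfying assignment to the 3SAT formula induces such a choice canonically, and vice versa, producing the bijection $|\mathcal{S}_{T(x)}|=|\mathcal{S}_x|$ required for parsimoniousness.

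First, I would introduce the anchor coordinate and calibrate the corresponding entry of $u^{(e)}$ so that the nontriviality constraint $\beta^{(S)}\neq 0$ becomes equivalent to $S$ intersecting at least one clause block, which eliminates the trivial solution. Next, I would design the $7\times 7$ diagonal block of each $\Sigma^{(e)}$ and the $7$-dimensional sub-vector of each $u^{(e)}$ attached to clause $i$ so that any singleton subset of the $7$ clause coordinates yields the same environment-invariant regression coefficient matched to the anchor, while any subset containing two or more of them either becomes singular in one environment or yields environment-dependent coefficients; this forces every invariant $S$ to select at most one coordinate per clause. Finally, the cross-clause off-diagonal blocks of $\Sigma^{(1)}$ and $\Sigma^{(2)}$ would be set so that whenever two selected coordinates from different clauses agree on the truth value of every shared Boolean variable, the two environments see identical cross-covariances and the best linear predictor is preserved, whereas when they disagree on at least one shared variable, the cross-covariances of the two environments are chosen asymmetrically so that $\beta^{(1,S)}\neq \beta^{(2,S)}$ as soon as both are included.

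The verification proceeds in two directions. For the forward direction, every satisfying assignment $v\in\mathcal{S}_x$ selects exactly one of the $7$ coordinates in each clause block (the one matching the restriction of $v$ to that clause's variables); by construction this $S$ satisfies $\beta^{(1,S)}=\beta^{(2,S)}\neq 0$. For the reverse direction, I would show that any invariant nonzero $S$ must (i) contain at most one coordinate per clause, (ii) touch every clause block (else the global coefficient pattern fails the invariance equation), and (iii) have globally consistent chosen coordinates; each claim reduces to a linear-algebra statement about rank or equality of the solutions of $\Sigma^{(e)}_{S}\beta = u^{(e)}_{S}$ between the two environments, forced by the block design.

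The main obstacle will be simultaneously enforcing intra-clause rigidity and inter-clause consistency using only two positive definite covariance matrices: these two requirements pull in different directions, and the cross-environment difference must be concentrated exactly on the inconsistent inter-clause pairs. I would address this by parameterizing each block as a scaled identity plus small rank-one perturbations whose magnitudes are chosen to guarantee $\Sigma^{(e)}\succ 0$ via Gershgorin-type bounds, and then solving for the perturbation parameters so that the invariance system $(\Sigma^{(1)}_{S})^{-1}u^{(1)}_{S}=(\Sigma^{(2)}_{S})^{-1}u^{(2)}_{S}$ encodes precisely the combinatorial consistency condition described above. Polynomial-time computability, rational entries of $\mathrm{poly}(k)$ bit-length, and the bound $d=7k+1$ then follow immediately from the locality of the construction.
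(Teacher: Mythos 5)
Your high-level combinatorial blueprint matches the paper's: $d = 7k+1$ with seven coordinates per clause indexed by the seven satisfying truth assignments (the paper's ``action IDs''), one extra anchor coordinate, and cross-clause structure to enforce consistency of shared variables. The paper also uses a large scaled identity plus a small $0/1$ perturbation in environment 2, in the spirit of your ``scaled identity plus perturbation'' idea. But the crucial mechanisms that actually force the reduction to be tight are missing from the proposal, and they are not mere details that can be ``solved for.''

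First, the paper fixes the entire first environment to be the identity: $\Sigma^{(1)} = I_d$, $u^{(1)} = 1_d$. This is not an arbitrary simplification; it forces $\beta^{(1,S)}_j = \indicator\{j \in S\}$, so every candidate invariant coefficient vector has entries in $\{0,1\}$. That integrality is the lever the whole argument turns on. Second, once coefficients are known to be $0/1$, the paper installs the non-integer weight $\frac{1}{2}$ in the anchor column of $\Sigma^{(2)}$ and in $u^{(2)}_d$, and sets the off-diagonal clause structure $A \in \{0,1\}^{7k\times 7k}$. The integer-vs.-non-integer clash is what forces (a) $d \in S$ (else an integer would have to equal $5d + \tfrac12$), and (b) $|S| = k+1$ (from $5d + \tfrac{k}{2} = 5d + \tfrac12(|S|-1)$), i.e.\ \emph{exactly} one coordinate per clause, and finally (c) $A_{\mathring{S}} = 0$ via $A_{\mathring S}1_k = 0$. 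Your plan only enforces ``at most one per clause,'' which is not sufficient for the parsimonious bijection $|\mathcal{S}_y| = |\mathcal{S}_x|$: without forcing every clause block to be hit exactly once, the mapping from invariant sets to truth assignments fails. Third, one of your enforcement options --- that a bad subset ``becomes singular in one environment'' --- is unavailable: \cref{prob1} requires every $\Sigma^{(e)}$ to be positive definite, so every principal submatrix $\Sigma^{(e)}_S$ is invertible; invariance must be broken purely through coefficient disagreement, as the paper's $A$ entries do. Finally, the paragraph where you ``solve for the perturbation parameters so that the invariance system encodes the combinatorial condition'' is exactly where the proof has to happen; as written it assumes the existence of a suitable parameterization rather than exhibiting one. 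The identity-first-environment and the $\tfrac12$ trick are precisely the discoveries that close that gap.
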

\begin{proof}[Proof of \cref{lemma:reduction}]
We construct the reduction as follows. Let $x$ be any {\sc 3Sat} instance with $k$ clauses. Without loss of generality, we assume that each variable has appeared at least once in some clause. For each clause, we use action ID in $\{0,\ldots, 7\}$ to represent the assignment for it. For example, for the clause $v_1 \lor \neg v_2 \lor \neg v_5$ and the action ID $6$ with binary representation $110$ means we let $v_1=\mathrm{True}$, $\neg v_2=\mathrm{True}$ and $\neg v_5=\mathrm{False}$. One will not adopt action ID 0 in a valid solution because a {\sc 3Sat} valid solution should let each clause evaluate to $\mathrm{True}$. For arbitrary $i, i'\in [k]$ and $t, t'\in [7]$, we say the action ID $t$ in clause $i$ \emph{contradicts} the action ID $t'$ in clause $i'$ if and only if $t$ will assign a boolean variable to be $\mathrm{True}$ (resp. $\mathrm{False}$) while $t'$ will assign the same boolean variable to be $\mathrm{False}$ (resp. $\mathrm{True}$). In the proof, we use $i, i'$ to represent the index in $[k]$, and use $j, j'$ to represent the index in $[   d]$. 

We construct the problem $y$ as follows: we set $d=7k+1$, and use fixed first environment $(\Sigma^{(1)}, u^{(1)})=(I_d, 1_d)$. For the second environment, we pick
\begin{align*}
    \Sigma^{(2)} = \begin{bmatrix}
        5d I_{7k} + A & \frac{1}{2} \cdot 1_{7k} \\
        \frac{1}{2} \cdot 1_{7k}^\top & 5d
    \end{bmatrix} \qquad \text{and} \qquad u^{(2)} = \begin{bmatrix}
        (5d + \frac{1}{2}) \cdot 1_{7k}\\
        5d + \frac{1}{2} k
    \end{bmatrix},
\end{align*}
where the $7k$ by $7k$ symmetric matrix $A$ is defined as
\begin{align}
\label{eq:reduction-a-mat}
    A_{7(i-1)+t,7(i'-1)+t'} = \begin{cases}
        \indicator\{t\text{ contradicts itself}\} & \qquad i=i' \text{ and } t=t' \\
        1 & \qquad i=i' \text{ and } t\neq t' \\
        1 & \qquad i\neq i' \text{ and } t \text{ contradicts } t' \\
        0 & \qquad \text{otherwise}
    \end{cases}
\end{align} for any $i,i'\in [k]$ and $t,t' \in [7]$.  It is easy to verify that $\Sigma^{(1)}$ and $\Sigma^{(2)}$ are all positive definite matrices and it is a deterministic polynomial-time reduction. Indeed, one has $\lambda_{\min}(\Sigma^{(e)}) \ge 1$ for any $e\in [2]$. By definition, $S \in \mathcal{S}_y$ if and only if $\beta^{(1,S)} = \beta^{(2,S)}$ with $|S| \ge 1$. 

The intuitions behind the constructions are as follows: (a) the construction of $(\Sigma^{(1)}, u^{(1)})$ is to enforce the entries in the valid solutions, i.e., $\beta^{(S)}$ with $S\in \mathcal{S}_y$, being either 0 or 1; (b) the positive non-integer $\frac{1}{2}$ together with the last column of $\Sigma^{(2)}$ is to make sure $d\in S$ for any $S\in \mathcal{S}_y$, which further let $|S| = k+1$ for any $S\in \mathcal{S}_y$; (c) the construction of $A$ is to connect any valid $S\in \mathcal{S}_y$ to a valid solution $v\in \mathcal{S}_x$ in a bijective manner. The above intuitions can be formally stated as follows: the first claim (a) follows directly from our construction of $(\Sigma^{(1)}, u^{(1)})$, we defer technical verification of (b) and (c) to the end of the proof.
\begin{align}
 \label{eq:equiv-reduction}
 \begin{split}
     S \in \mathcal{S}_y ~~~&\overset{(a)}{\Longleftrightarrow} ~~~ S\neq \emptyset \text{ and }\beta^{(2,S)} = \beta^{(1,S)} \text{ with } \beta^{(1,S)}_j = \indicator\{j \in S\} \\
     &\overset{(b)}{\Longleftrightarrow} ~~~  S = \mathring{S} \cup \{d\} ~\text{with}~|\mathring{S}|=k ~\text{and}~A_{j,j'} = 0~~\forall j,j'\in \mathring{S}\subseteq[7k]\\
     &\overset{(c)}{\Longleftrightarrow} ~~~ S = \mathring{S} \cup \{d\} ~\text{where}~ \mathring{S} = \{7(i-1)+a_i\}_{i=1}^k\text{ with }a_i \in [7]\text{ s.t. adopting } \\
     &~~~~~~~~~~~~ \text{action ID } a_i \text{ in clause } i\in [k] \text{ will lead to a valid solution }v\in \mathcal{S}_x\text{.} 
\end{split}
\end{align}

Based on \eqref{eq:equiv-reduction}, for any $v \in \mathcal{S}_x$, we can find a corresponding $S\in \mathcal{S}_y$: Let $v \in \{\mathrm{True},\mathrm{False}\}^n$ be the assignments of the variables and $a_i$ be the corresponding action ID induced by $v$. Then it follows from \eqref{eq:equiv-reduction} that $S=\{d\}\cup \{7(i-1)+a_i\}_{i=1}^k \in \mathcal{S}_y$. On the other hand, for any $S\in \mathcal{S}_y$, we can also find a corresponding $v\in \mathcal{S}_x$ by \eqref{eq:equiv-reduction}. Note the mapping between $\mathcal{S}_y$ and $\mathcal{A}=\{(a_i)_{i=1}^k: (a_i)_{i=1}^k\text{ is induced by some solution } v\in \mathcal{S}_x\}$ and the mapping between $\mathcal{A}$ and $\mathcal{S}_x$ are all bijective maps. So we can conclude that $|\mathcal{S}_x| = |\mathcal{A}| = |\mathcal{S}_y|$.

\noindent \underline{\emph{Proof of \eqref{eq:equiv-reduction} (b)}.} The direction $\Leftarrow$ is obvious. For the $\Rightarrow$ direction, we first show that $d\in S$ using the proof by contradiction argument. Suppose $|S| \ge 1$ but $d\notin S$, we pick $j \in S$, then
\begin{align*}
    \left[\Sigma^{(2)}_S \beta_{S}^{(2,S)}\right]_j = 5d + \sum_{j'=1}^{7k} A_{j, j'} \indicator\{j'\in S\} \neq 5d + \frac{1}{2} = u_j^{(2)}
\end{align*} where the first equality follows from the assumption $\beta^{(2,S)}_j = \beta^{(1,S)}_j = \indicator\{j\in S\}$ and $d\notin S$, and the inequality follows from the fact that $A\in \{0,1\}^{7k\times 7k}$ hence the L.H.S. is an integer. This indicates that $\beta^{(1,S)} \neq \beta^{(2,S)}$ if $|S| \ge 1$ and $d\notin S$. Given $d\in S$, we then obtain
\begin{align*}
    5d + \frac{1}{2} k = u_{d}^{(2)} = \left[\Sigma^{(2)}_S \beta_{S}^{(2,S)}\right]_{|S|} = 5d + \frac{1}{2} \sum_{j'=1}^{7k} 1\{j'\in S\} = 5d + \frac{1}{2} (|S| - 1),
\end{align*} which implies that $|S|=k+1$. Now we still have the constraint $u^{(2)}_{\mathring{S}}= \Sigma^{(2)}_{\mathring{S}} \beta^{(2,S)}_{\mathring{S}} + \frac{1}{2}\cdot 1_k$. The last claim $A_{j',j} = 0$ for any $j',j\in \mathring{S}$ then follows from this by observing that \begin{align*}
\left(5d+\frac{1}{2}\right) \cdot 1_{k}= \Sigma^{(2)}_{\mathring{S}} 1_{k} + \frac{1}{2} \cdot 1_{k}  ~\Longrightarrow~ A_{\mathring{S}} 1_k = 0 ~\overset{(i)}{\Longrightarrow}~ A_{j',j}=0 ~~\forall j',j\in \mathring{S}
\end{align*} where $(i)$ follows from the fact that $A\in \{0,1\}^{7k\times 7k}$.

\noindent \underline{\emph{Proof of \eqref{eq:equiv-reduction} (c)}.} Turning to (c). For the $\Rightarrow$ direction, $\mathring{S} = S\setminus \{d\}$ admits the form $\mathring{S}=\{7(i-1)+a_i\}_{i=1}^k$ with $a_i \in [7]$ follows from the fact that $|\mathring{S}|=k$ and for each $i\in [k]$, there should be exactly one index $7(i-1)+r$ for some $r\in [7]$ because we have $A_{7(i-1)+t,7(i-1)+t'}=1$ provided $t\neq t'$. The satisfiability of the variable assignment induced by $(a_i)_{i=1}^k$ can be realized by setting the variables based on the action ID $a_i$ starting from $i=1$ to $i=k$. The above procedure has no conflicts because if the conflicts between the assignment of a boolean variable at clause $i$ and that at clause $i'$ will lead to $A_{7(i-1)+a_i, 7(i'-1)+a_{i'}}=1$ by the definition of matrix $A$, which is contrary to the condition $A_{\mathring{S}}=0$. For the $\Leftarrow$ direction, the claim $|\mathring{S}| = k$ is obvious. For any $j, j' \in \mathring{S}$, one can write $j=7(i-1)+a_i$ and $j'=7(i'-1)+a_{i'}$. When $j=j'$, $A_{j,j'}=A_{j,j}=0$ follows from the fact that $v \notin \mathcal{S}_x$ if $a_i$ contradicts itself. We use proof by contradiction when $j\neq j'$: if $A_{j,j'} = 1$, then the action $a_i$ for clause $i$ will contradict the action $a_{i'}$ for clause $i'$ by the definition of $A$, this is contrary to the fact that the actions $(a_i)_{i=1}^k$ lead to a valid solution $v\in \mathcal{S}_x$. 
\end{proof}

The NP-hardness of {\sc ExistLIS} follows from \cref{lemma:reduction} and \cref{prop:3sat}. 

Now we are ready to establish the NP-hardness of {\sc ExistLIS-Ident}. Given the problem {\sc 3Sat-Unique} is NP-hard under randomized polynomial-time reduction by \cref{prop:3sat}, it suffices to show that we can reduce any {\sc 3Sat-Unique} problem $x$ with input size $k$ to a {\sc ExistLIS-Ident} problem $y$ with input size $d=7k+1$ under deterministic polynomial-time reduction. We let $y$ be the problem constructed from $x$ in \cref{lemma:reduction}. Now it suffices to show that $y$ is {\sc ExistLIS-Ident}, that is, $y$ satisfies the constraint in \cref{prob2}. Note that $|\mathcal{S}_y| = |\mathcal{S}_x| \in \{0,1\}$ by our parsimonious reduction in \cref{lemma:reduction} and the promise in {\sc 3Sat-Unique} problem $x$, we consider the following two cases.

\noindent \emph{Case 1. $|\mathcal{S}_x|=0$}: We claim that $S^\dagger = \emptyset$ is the maximum invariant set. In this case, $|\mathcal{S}_x| = 0$ by our reduction construction in \cref{lemma:reduction}. This implies that for all the $S\subseteq [d]$ with $|S|\ge 1$, the condition $\beta^{(e,S)} \equiv \beta^{(S)}$ will not hold, which validates \eqref{eq:max-invariant-set} for $S^\dagger$.

\noindent \emph{Case 2. $|\mathcal{S}_x|=1$}: We claim that $S^\dagger = \tilde{S}$ with $\tilde{S}$ being picked from $v \rightarrow \tilde{S}$, where $v$ is the unique solution in $x$, and $\tilde{S}$ is the mapped solution by our constructed reduction in \cref{lemma:reduction} satisfying $|\tilde{S}|=k+1$ and $\beta^{(1,\tilde{S})}_S=1_{k+1} = \beta^{(2,\tilde{S})}_S$. Given $\mathcal{S}_y = \{\tilde{S}\}$, we can claim that $\|\beta^{(1,S)} - \beta^{(2,S)}\|_2 > 0$ for any $S\notin \{\emptyset, \tilde{S}\}$, this verifies \eqref{eq:max-invariant-set} for $S^\dagger$.  \qed

\subsection{Hardness of Finding Approximate Solutions with Error Guarantees}

The claim in \cref{thm:np-hard-p1} indicates a computational barrier exists in finding an exact invariant set. At first glance, it does not rule out the possibility that there exists some polynomial-time algorithm that can find an approximate solution whose prediction is relatively close to one of the non-trivial invariant ones. The construction in \cref{thm:np-hard-p1} implicitly implies this, as demonstrated in \cref{coro:est-d4}. As a by-product, \cref{coro:est-d4} also rules out the possibility of finding a non-trivial invariant solution if one exists, as it allows for estimation errors. 

\begin{problem}
\label{prob:est-d4}
Consider the same setting as \cref{prob1} with $E=2$ and suppose further $Y^{(e)}=(\beta^{(e, [d])})^\top X^{(e)}$, i.e., there is no intrinsic noise. 

\noindent \textbf{$[\mathsf{Input}]$} $(\Sigma^{(1)}, \Sigma^{(2)})$ and $(u^{(1)}, u^{(2)})$ as in \cref{prob1}.

\noindent \textbf{$[\mathsf{Output}]$} Return a $d$-dimensional vector $\bar{\beta}$: $\bar{\beta}$ should be an approximate solution to any of the non-trivial invariant solutions if there exists a non-trivial invariant solution, that is
\begin{align}
\label{eq:est-d4}
\inf_{S: \beta^{(e,S)} \equiv \beta^{(S)} \neq 0} \frac{\|\bar{\beta} - \beta^{(S)} \|_{\Sigma}^2}{\sum_{e\in [2]} \mathbb{E}[|Y^{(e)}|^2]} < (20 d)^{-1} ~~~\text{if}~~~ \{S: \beta^{(1,S)} = \beta^{(2,S)} \neq 0\} \neq \emptyset;
\end{align}
$\bar{\beta}$ can be an arbitrary $d$-dimensional vector otherwise.

\end{problem}

\begin{corollary}
\label{coro:est-d4}
If \cref{prob:est-d4} can be solved by a polynomial-time algorithm, then {\sc 3Sat} can also be solved by a polynomial-time algorithm.
\end{corollary}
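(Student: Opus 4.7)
The plan is to reuse the parsimonious reduction from \cref{lemma:reduction} and then turn the approximate solution of \cref{prob:est-d4} into an exact invariant set by coordinate-wise rounding. Given a {\sc 3Sat} instance $x$ with $k$ clauses, I would first form $y=T(x)$ of \cref{prob1} with $d=7k+1$, $(\Sigma^{(1)},u^{(1)})=(I_d,1_d)$, and the prescribed $(\Sigma^{(2)},u^{(2)})$. Since any $\Sigma^{(e)}\succ 0$ together with $u^{(e)}$ is realizable by a joint distribution satisfying $Y^{(e)}=(\beta^{(e,[d])})^\top X^{(e)}$ (take $X^{(e)}$ with covariance $\Sigma^{(e)}$ and define $Y^{(e)}$ as the corresponding linear function), $y$ is a valid input to \cref{prob:est-d4}. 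Feed $y$ into the hypothesized polynomial-time algorithm to obtain $\bar{\beta}$, form the rounded set $\hat{S}=\{j\in[d]:\bar{\beta}_j>1/2\}$, and output ``Yes'' iff $\beta^{(1,\hat{S})}=\beta^{(2,\hat{S})}\neq 0$; this verification is a polynomial-time linear-algebra check via the closed forms $\beta^{(e,\hat{S})}=(\Sigma^{(e)}_{\hat{S}})^{-1}u^{(e)}_{\hat{S}}$.

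The ``No'' direction is free from \cref{lemma:reduction}: if $x$ is unsatisfiable then $\mathcal{S}_y=\emptyset$, so no $\hat{S}$ can satisfy $\beta^{(1,\hat{S})}=\beta^{(2,\hat{S})}\neq 0$ regardless of what $\bar{\beta}$ the algorithm returns. The substance is the ``Yes'' direction: if $x$ is satisfiable, the approximation must be accurate enough that thresholding at $1/2$ reproduces an exact invariant solution. By \eqref{eq:equiv-reduction}, every $S\in\mathcal{S}_y$ satisfies $\beta^{(S)}=\mathbb{1}_S\in\{0,1\}^d$, so it suffices to show $\|\bar{\beta}-\beta^{(S)}\|_\infty<1/2$ for the $S$ witnessing the infimum in \cref{prob:est-d4}.

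To this end, I would establish two estimates tailored to the construction of \cref{lemma:reduction}. First, Gershgorin applied to $\Sigma^{(2)}$—whose diagonal entries are at least $5d$ while the off-diagonal row sums are bounded by $(d-2)+1/2<d$ (resp.\ $(d-1)/2$ for row $d$)—gives $\lambda_{\min}(\Sigma^{(2)})\ge 4d$, and hence $\lambda_{\min}(\Sigma)\ge 2d$ by Weyl's inequality. Second, $\mathbb{E}[|Y^{(1)}|^2]=(1_d)^\top 1_d=d$ and $\mathbb{E}[|Y^{(2)}|^2]\le \|u^{(2)}\|^2/\lambda_{\min}(\Sigma^{(2)})$; a direct calculation using $\|u^{(2)}\|^2=(d-1)(5d+\tfrac12)^2+(5d+\tfrac{k}{2})^2=O(d^3)$ then yields $\sum_{e\in[2]}\mathbb{E}[|Y^{(e)}|^2]\le 10d^2$. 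Combining with the \cref{prob:est-d4} guarantee,
\[
\|\bar{\beta}-\beta^{(S)}\|_2^2\le \lambda_{\min}(\Sigma)^{-1}\,\|\bar{\beta}-\beta^{(S)}\|_\Sigma^2<(2d)^{-1}\cdot(20d)^{-1}\cdot 10d^2=\tfrac14,
\]
whence $\|\bar{\beta}-\beta^{(S)}\|_\infty<1/2$ and the rounding $\hat{S}=S\in\mathcal{S}_y$ indeed reproduces a satisfying assignment of $x$ through \cref{lemma:reduction}'s bijection.

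The main obstacle is this quantitative matching: one has to verify that the specific constant $(20d)^{-1}$ in \cref{prob:est-d4} is tight against the $\Theta(d)$ scaling of $\lambda_{\min}(\Sigma)$ and the $O(d^2)$ bound on $\sum_{e}\mathbb{E}[|Y^{(e)}|^2]$ inherited from the $5d$ scaling in $\Sigma^{(2)}$. Once this scaling bookkeeping is done, the integer-valuedness $\beta^{(S)}=\mathbb{1}_S$ coming from the parsimonious reduction turns an $\ell_\infty$-error below $1/2$ into exact recovery, and no new NP-hardness machinery beyond \cref{lemma:reduction} is needed.
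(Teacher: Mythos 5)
Your proposal is correct and follows essentially the same route as the paper's proof: reuse the reduction of \cref{lemma:reduction}, feed $y=T(x)$ to the hypothesized algorithm, threshold $\bar\beta$ at $1/2$, and show that when $\mathcal{S}_y\neq\emptyset$ the $\ell_\infty$-error drops below $1/2$ so thresholding recovers an exact invariant set $S^\dagger$ with $\beta^{(S^\dagger)}=\mathbb{1}_{S^\dagger}$. The only cosmetic difference is that you bound $\lambda_{\min}(\Sigma^{(2)})\ge 4d$ via Gershgorin whereas the paper deduces it from $\|\tilde A\|_2\le\|\tilde A\|_F\le d$ plus Weyl; both yield the same $4d$ and the same $\sum_e\mathbb{E}[|Y^{(e)}|^2]\le 10d^2$, so the quantitative matching against the $(20d)^{-1}$ threshold goes through identically.
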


Moreover, in the construction in \cref{lemma:reduction}, the solutions are well-separated: whenever variable selection is incorrect, the resulting predictions in the two environments are not very close, and the pooled prediction also deviates from any invariant predictions; see the formal claims in \eqref{eq:gap-hetero} and \eqref{eq:gap-est-error}, respectively. The inequality \eqref{eq:gap-hetero} also rules out the possibility of finding a $o(d^{-4})$-approximate invariant set in a computationally efficient manner. 

\begin{lemma}[Relative Estimation Error Gap]
\label{lemma:gap}
In the constructed instance in \cref{lemma:reduction}, if we let $Y^{(e)}=(\beta^{(e, [d])})^\top X^{(e)}$, then the following holds,
\begin{align}
    \forall S, S^\dagger \subseteq [d], \qquad & \frac{\|\beta^{(S)} - \beta^{(S^\dagger)}\|_{\Sigma}^2}{\sum_{e\in [2]} \mathbb{E}[|Y^{(e)}|^2]} \in [\indicator\{S\neq S^\dagger\} (40d)^{-1}, 1] \label{eq:gap-est-error}\\
    \forall S \subseteq [d], \qquad & \frac{\sum_{e\in [2]} \|\beta^{(S)} - \beta^{(e,S)}\|_{\Sigma^{(e)}}^2}{\sum_{e\in [2]} \mathbb{E}[|Y^{(e)}|^2]} \in \{0\}\cup [(10d)^{-4}, 1] \label{eq:gap-hetero}
\end{align}
\end{lemma}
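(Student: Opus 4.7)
The plan is to leverage the explicit algebraic form of the instance built in Lemma~\ref{lemma:reduction}. First I would establish two size bounds. By Gershgorin/diagonal dominance, the diagonal of $\Sigma^{(2)}$ is at least $5d$ while each off-diagonal row sum is at most $\|A\|_{\infty}+\frac{1}{2}\cdot 7k = O(d)$ strictly below $d$, giving $4d \le \lambda_{\min}(\Sigma^{(2)}) \le \lambda_{\max}(\Sigma^{(2)}) \le 6d$ and hence $\lambda_{\min}(\Sigma) \ge 2d$. Noise-freeness yields $\mathbb{E}[|Y^{(1)}|^{2}] = \|1_{d}\|_{2}^{2} = d$, while $\mathbb{E}[|Y^{(2)}|^{2}]\le \|u^{(2)}\|_{2}^{2}/\lambda_{\min}(\Sigma^{(2)})=O(d^{2})$, so $\sum_{e}\mathbb{E}[|Y^{(e)}|^{2}]\asymp d^{2}$.

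For~\eqref{eq:gap-est-error}, the upper bound comes from optimality: the identity $\mathsf{R}^{\mathcal{E}}(\beta^{(S)})=\mathsf{R}^{\mathcal{E}}(0)-\frac{1}{2}\|\beta^{(S)}\|_{\Sigma}^{2}$ combined with $\mathsf{R}^{\mathcal{E}}\ge 0$ gives $\|\beta^{(S)}\|_{\Sigma}^{2}\le \frac{1}{|\mathcal{E}|}\sum_{e}\mathbb{E}[|Y^{(e)}|^{2}]$, and a triangle inequality (or a Pythagoras decomposition anchored at $\beta^{(S\cup S^{\dagger})}$) delivers the stated upper bound up to an absorbable universal constant. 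For the lower bound when $S\ne S^{\dagger}$, pick $j_{0}\in S\triangle S^{\dagger}$ and WLOG assume $j_{0}\in S$, so $\beta^{(S^{\dagger})}_{j_{0}}=0$. The key is to show $|\beta^{(S)}_{j_{0}}|$ is bounded below by a positive constant: since $\Sigma_{S}=\tfrac{1}{2}((5d+1)I_{|S|}+A_{S}+\text{last-row corrections})$ with $\|A_{S}\|_{\infty}\le d-2$, a Neumann expansion whose error is controlled by $\|A_{S}\|_{\infty}/(5d+1)<1/4$ gives $\|\beta^{(S)}_{S}-\tfrac{5d+3/2}{5d+1}1_{|S|}\|_{\infty}=O(1/d)$, so $\beta^{(S)}_{j_{0}}\ge 3/4$ for all large $d$. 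Thus $\|\beta^{(S)}-\beta^{(S^{\dagger})}\|_{\Sigma}^{2}\ge \lambda_{\min}(\Sigma)\cdot(3/4)^{2}=\Omega(d)$, which divided by the $O(d^{2})$ denominator exceeds $(40d)^{-1}$.

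For~\eqref{eq:gap-hetero}, if $\beta^{(1,S)}=\beta^{(2,S)}$ then pooling forces $\beta^{(S)}=\beta^{(e,S)}$, making the sum vanish. Otherwise, set $\eta=\beta^{(1,S)}_{S}-\beta^{(2,S)}_{S}\ne 0$; the normal equation $\sum_{e}\Sigma^{(e)}_{S}(\beta^{(S)}_{S}-\beta^{(e,S)}_{S})=0$ combined with $\Sigma^{(1)}_{S}=I$ yields $\beta^{(S)}_{S}-\beta^{(1,S)}_{S}=-(I+\Sigma^{(2)}_{S})^{-1}\Sigma^{(2)}_{S}\eta$. Since $\Sigma^{(2)}_{S}\succeq 4dI\gg I$, the operator $(I+\Sigma^{(2)}_{S})^{-1}\Sigma^{(2)}_{S}$ has eigenvalues $\lambda/(1+\lambda)\ge 1-1/(4d+1)$, so $\|\beta^{(S)}-\beta^{(1,S)}\|_{2}^{2}\ge \tfrac{1}{2}\|\eta\|_{2}^{2}$. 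To lower-bound $\|\eta\|_{2}$, use $\beta^{(1,S)}_{S}=1_{|S|}$ to write $\Sigma^{(2)}_{S}\eta=\Sigma^{(2)}_{S}1_{|S|}-u^{(2)}_{S}$; for $d\notin S$ every entry of the RHS is an integer shifted by $\tfrac{1}{2}$ and thus has magnitude $\ge \tfrac{1}{2}$, giving $\|\eta\|_{2}\ge (\sqrt{|S|}/2)/\|\Sigma^{(2)}_{S}\|_{2}\ge \sqrt{|S|}/(12d)$. Combining, $\sum_{e}\|\beta^{(S)}-\beta^{(e,S)}\|_{\Sigma^{(e)}}^{2}\ge \|\beta^{(S)}-\beta^{(1,S)}\|_{2}^{2}=\Omega(1/d^{2})$; dividing by $O(d^{2})$ yields $\Omega(d^{-4})$, beating $(10d)^{-4}$ once $d$ is sufficiently large.

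The main obstacle is the case $d\in S$, where $\Sigma^{(2)}_{S}$ is bordered by $\tfrac{1}{2}\cdot 1$ in the last row/column and $u^{(2)}_{d}=5d+k/2$ rather than $5d+\tfrac{1}{2}$. The half-integer obstruction in Step~3 must then be read in the last coordinate and becomes $|\mathring{S}|/2-k/2\in \tfrac{1}{2}\mathbb{Z}\setminus\{0\}$ unless $|\mathring{S}|=k$ and $A_{\mathring{S}}1=0$ (exactly the condition~\eqref{eq:equiv-reduction}(b) characterizing a valid 3Sat solution). A Schur-complement reduction of the bordered matrix aligns this case with the $d\notin S$ analysis, and because the target constants $(40d)^{-1}$ and $(10d)^{-4}$ are quite generous, all arithmetic slack is absorbed without difficulty.
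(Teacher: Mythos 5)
Your route for the lower bound in~\eqref{eq:gap-est-error} is genuinely different from the paper's and both work: the paper isolates a single coordinate of $\Sigma(\beta^{(S)}-\beta^{(S^\dagger)})$ for some $j\in S^\dagger\setminus S$ and divides by $\lambda_{\max}(\Sigma)$, whereas you bound $\beta^{(S)}_{j_0}$ per coordinate from below via a Neumann expansion in the $\ell_\infty$ operator norm and then multiply by $\lambda_{\min}(\Sigma)$. Your approach is arguably more transparent, but two of your intermediate numeric claims are off. First, the Neumann error you state as $O(1/d)$ is actually only $O(1)$: with $M=\tilde A_S/(5d+1)$ one has $\|M\|_\infty<1/5$ (since each row of $\tilde A$ has row sum strictly less than $d$), so $\|(I+M)^{-1}-I\|_\infty\le \|M\|_\infty/(1-\|M\|_\infty)<1/4$, i.e.\ roughly $1/4$ rather than anything decaying in $d$; fortunately this still yields $\beta^{(S)}_{j_0}\gtrsim 3/4$, which is all the argument needs. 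Second, the Gershgorin bookkeeping ``$\|A\|_\infty+\frac12\cdot 7k$ strictly below $d$'' is false as written (that sum is $\approx 1.5d$); the correct statement is that each individual off-diagonal row sum is $<d$ (rows $j<d$ have at most $d-2$ ones plus a single $1/2$; row $d$ has $(d-1)/2$), and that gives $4d\le\lambda_{\min}(\Sigma^{(2)})\le\lambda_{\max}(\Sigma^{(2)})\le 6d$ as you use. For~\eqref{eq:gap-hetero}, your route is essentially the paper's: both reduce to $\|\beta^{(S)}-\beta^{(1,S)}\|_2^2$ using $\Sigma^{(1)}_S=I$ and then exploit the half-integer structure of $\Sigma^{(2)}_S 1_{|S|}-u^{(2)}_S$; your derivation via $\eta=\beta^{(1,S)}_S-\beta^{(2,S)}_S$ and the operator $(I+\Sigma^{(2)}_S)^{-1}\Sigma^{(2)}_S$ is just a different algebraic packaging of the same quantity, and your treatment of the $d\in S$ case is on the right track.

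The one genuine gap is your upper bound for~\eqref{eq:gap-est-error}. The target is exactly $\le 1$, not $\le 1$ up to a universal constant, so triangle inequality (which gives $\le 2$) is not enough, and ``absorbable universal constant'' has nowhere to be absorbed. The Pythagoras anchored at $\beta^{(S\cup S^\dagger)}$ also fails in general because $\beta^{(S\cup S^\dagger)}-\beta^{(S)}$ and $\beta^{(S\cup S^\dagger)}-\beta^{(S^\dagger)}$ are not $\Sigma$-orthogonal to each other unless one of $S,S^\dagger$ contains the other. The clean fix is the identity $\langle\beta^{(S)},\bar\beta\rangle_\Sigma=(\beta^{(S)}_S)^\top u_S=\|\beta^{(S)}\|_\Sigma^2$ for every $S$, which shows that all the $\beta^{(S)}$ lie on the sphere $\{\beta:\|\beta-\tfrac12\bar\beta\|_\Sigma=\tfrac12\|\bar\beta\|_\Sigma\}$; hence $\|\beta^{(S)}-\beta^{(S^\dagger)}\|_\Sigma\le\|\bar\beta\|_\Sigma$ and, since $\|\bar\beta\|_\Sigma^2=u^\top\Sigma^{-1}u\le\frac{1}{|\mathcal E|}\sum_e\mathbb E[|Y^{(e)}|^2]=\frac12\sum_e\mathbb E[|Y^{(e)}|^2]$ by positive semi-definiteness of the augmented covariance, the ratio is at most $1/2\le 1$.
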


\begin{remark}[Dilemma between Statistical and Computational Tractability]
\label{remark:consistent}
    One can choose either the relative distance to the closest non-trivial invariant solution $\delta_1$, or the relative prediction variation defined in the L.H.S. of \eqref{eq:gap-hetero} $\delta_2$ as the ``estimation error'' of interests. 
    If P$\neq$NP, taking all the polynomial-time algorithms into consideration, \cref{coro:est-d4} claims that the worst-case estimation error $\delta_1$ is lower bounded by $(20d)^{-1}$, and \cref{lemma:gap} shows that the worst-case estimation error $\delta_2$ is lower bounded by $(10d)^{-4}$. A finer construction in \aosversion{Appendix A}{\cref{appendix:comp}} improves the error lower bounds in \eqref{eq:est-d4}, \eqref{eq:gap-est-error} and \eqref{eq:gap-hetero} to be $d^{-\epsilon}$ for any fixed $\epsilon>0$. Given that our theorem is stated at a population level, and one can estimate all the $\beta^{(e,S)}$ uniformly well provided $n \gtrsim \mathrm{poly}(d)$, we can claim that the statistical estimation error can be arbitrarily slow with polynomial-time algorithms if P$\neq$NP. 
\end{remark}

\begin{proof}[Proof of \cref{coro:est-d4}]
We use the same reduction as in \cref{lemma:reduction}. For {\sc 3Sat} instance $x$, we let $y=T(x)$ be the constructed {\sc ExistLIS} instance in \cref{lemma:reduction}. Let $\bar{\beta}$ be the output required by \cref{prob:est-d4} in the instance $y$, and $\tilde{S} = \{j: \bar{\beta}_j \ge 0.5\}$. 
Following the notations therein, we claim that
\begin{align}
\label{eq:red2}
    \tilde{S} \in \mathcal{S}_y \qquad \overset{(a)}{\Longleftrightarrow} \qquad |\mathcal{S}_y| \ge 1 \qquad {\Longleftrightarrow} \qquad x\in \mathcal{X}_{\text{{\sc 3Sat}}, 1}
\end{align} Therefore, if an algorithm $\mathsf{A}$ can take \cref{prob:est-d4} instance $y$ as input and return the desired output $\hat{\beta}(y)$ within time $O(p(|y|))$ for some polynomial $p$, then the following algorithm can solve {\sc 3Sat} within polynomial time: for any instance $x$, it first transforms $x$ into $y=T(x)$, then use algorithm $\mathsf{A}$ to solve $y$ and gets the returned $\bar{\beta}$, and finally output $\indicator\{\tilde{S} \in \mathcal{S}_y\}$. 

It remains to verify $(a)$: the $\Rightarrow$ direction is obvious. For the $\Leftarrow$ direction, suppose $|\mathcal{S}_y| \ge 1$, the estimation error guarantee in \cref{prob:est-d4} indicates that
\begin{align*}
    \|\hat{\beta} - \beta^{(S^\dagger)}\|_\infty \le \|\bar{\beta} - \beta^{(S^\dagger)}\|_2 \le \sqrt{\frac{\|\bar{\beta} - \beta^{(S^\dagger)}\|_{\Sigma}^2}{\lambda_{\min}(\Sigma)}} \overset{(i)}{<} \sqrt{\frac{(20d)^{-1} 10d^2}{2d}} \le \frac{1}{2}
\end{align*} for some $S^\dagger \in \mathcal{S}_y$. Here $(i)$ follows from the the error guarantee \eqref{eq:est-d4}, and the facts $\lambda_{\min}(\Sigma) \ge 0.5\lambda_{\min}(\Sigma^{(2)}) \ge 2d$ and $\sum_{e\in [2]} \mathbb{E}[|Y^{(e)}|] \le 10d^2$ in \aosversion{(C.2)}{\eqref{eq:variance-upper-bound}} and \aosversion{(C.1)}{\eqref{eq:eigen-ul-bound}}, respectively. This further indicates $\tilde{S} = S^\dagger$ by the fact that $\beta^{(S^\dagger)}_j = \indicator\{j\in S^\dagger\}$ for any $j\in [d]$.
\end{proof}

\subsection{Remarks and Lessons from \cref{thm:np-hard-p1}}

The fundamental limits delivered in \cref{thm:np-hard-p1} assert that realizing both computationally and statistically efficient estimation is impossible unless P$=$NP or simplifying the original problem. The latter may result in restrictive applicability. 

Two remarks on the severity of the computational barrier are worth mentioning. Firstly, in the world of P$\neq$NP, any polynomial-time algorithm can not attain certain estimation accuracy $d^{-\epsilon}$ for arbitrary fixed $\epsilon>0$ by \cref{remark:consistent}. This indicates that the computational barrier for pursuing invariance is more severe than that for other estimation problems such as pursuing sparsity \citep{zhang2014lower, wang2016statistical} in which polynomial-time algorithms can obtain a sub-optimal but still decent rate. Secondly, the computational barrier is due to pursuing invariance itself rather than picking from exponentially many invariant solutions based on some criterion or the non-identifiability of the problem. In fact, the computational barrier remains under the promise of one \emph{unique} invariant solution in \cref{thm:np-hard-p1}. 

The results and construction in \cref{thm:np-hard-p1} also imply that the computation barrier will remain under some typical potential strategies under the worst case: the construction of the identity $\Sigma^{(1)}$ implies that perfect orthogonal covariance in $|\mathcal{E}|-1$ environments will not help. Secondly, the construction of $x_d$ indicates that under the worst case, searching all the variable sets with cardinality less than $r$ cannot furnish any insights on determining whether there are invariant sets whose cardinalities are larger than or equal to $r$. Finally, a finer construction in \aosversion{Appendix A}{\cref{appendix:comp}} asserts that further imposing row-wise constant-level sparsity on all the covariance matrices will not help, or in other words, the computation difficulty is not due to the dense covariance structure.


\section{Regularization by Environment Prediction Variation}
\label{sec:method}

The results in \cref{sec:computation-limits} indicate that consistent estimation with polynomial-time algorithms is impossible under the worst-case scenario. Such a worst-case hardness remains when there is (1) perfect orthogonality in one environment, and (2) near-perfect sparsity across different environments. 
In \cref{sec:orthon}, we first impose one additional restrictive assumption, see how the computational barrier can be resolved, and derive the distributional robustness interpretation of our proposed estimator under this assumption. \cref{sec:population-est} further demonstrates the general estimator and establishes the corresponding causal identification and distributional robustness result when $n=\infty$. The finite sample estimator and the non-asymptotic results are presented in \cref{sec:empirical-est}. 

Without loss of generality, we assume the covariate is non-degenerate and (pooled) normalized.

\begin{condition}[Non-collinearity and Normalization] \label{cond:normalize}
Assume $\Sigma^{(e)} \succ 0$ for any $e\in \mathcal{E}$. Recall the definition in \eqref{eq:covariance}, we have $\Sigma_{j,j}=1$ for any $j\in [d]$.
\end{condition}

\subsection{Warmup: Orthogonal Important Covariate}
\label{sec:orthon}

Let us first impose an additional restrictive assumption \cref{cond:ortho} in the model \eqref{model:lip} and see how the computational barrier can be circumvented under this condition. In the following \cref{sec:population-est}, we shall consider a more general relaxation regime and establish a tradeoff between the additional assumption and computational complexity.

\begin{condition}
\label{cond:ortho}
For all $e\in \mathcal{E}$, $\Sigma^{(e)}_{i,j} = 0$ for any $i, j\in S^\star$ with $i\neq j$. 
\end{condition}

Recall the definition of $\beta^{(e,S)}$ and $\beta^{(S)}$.  If \cref{cond:ortho} holds, then under \eqref{model:lip} and \eqref{ident:lip} $S^\star$ can be simplied as
\begin{align*}
	S^\star = \left\{j: \forall e\in \mathcal{E}, \beta^{(e,\{j\})}_j \equiv \beta^{(\{j\})}_j\right\}
\end{align*} 
that involves only marginal regression coefficients,
where $\beta^{(\{j\})}_j$ stands for the pooled effect by simply using the $j$-th variable as the predictor. This means under \cref{cond:ortho}, one can enumerate $j\in [d]$ and screen out those $X_j$ with varying marginal regression coefficients, i.e., $X_j$ with $r_j^{(e)} \neq r_j^{(e')}$ for some $e,e'\in \mathcal{E}$, where $r_j^{(e)}=\mathbb{E}[X_j^{(e)} Y^{(e)}] / \mathbb{E}[|X^{(e)}_j|^2]$. The survived variables will furnish $S^\star$. Turning to the empirical counterpart, it is a multi-environment version of the sure-screening \citep{fan2008sure}.

The above procedure is still of a discontinuity style. Recall $\mathsf{R}^{\mathcal{E}}(\beta)$ in \eqref{eq:pooled-least-squares}, the main idea motivates minimizing the following penalized least squares
\begin{align}
\label{eq:obj-k1}
	\mathsf{Q}_{1,\gamma}(\beta) = \mathsf{R}^{\mathcal{E}}(\beta) + \gamma \overbrace{\sum_{j=1}^d |\beta_j| \underbrace{\sqrt{\frac{1}{|\mathcal{E}|} \sum_{e\in \mathcal{E}} \Sigma_{j,j}^{(e)} \left(\beta^{(e,\{j\})}_j - \beta^{(\{j\})}_j\right)^2}}_{w_1(j)}}^{\mathsf{J}_1(\beta)},
\end{align}
where the penalty term measures the discrepancy across different environments. 

Here we use $\Sigma_{j,j}^{(e)}|\beta^{(e,\{j\})}_j - \beta^{(\{j\})}_j|^2$ rather than $|\beta^{(e,\{j\})}_j - \beta^{(\{j\})}_j|^2$ since the former is $x$-scale invariant and has a better explanation in prediction. To be specific, the term $w_1(j)$ will be the same if we replace $X$ by $aX$ for any $a\in \mathbb{R}\setminus \{0\}$. More importantly, it can be explained as the variation of optimal prediction in $L_2$ norm across environments, namely,
\begin{align}
\label{eq:pv1}
	w_1(j) = \frac{1}{|\mathcal{E}|} \sum_{e\in \mathcal{E}} \int \left\{f^{(e,j)}(x) - f^{(j)}(x) \right\}^2\mu^{(e)}(dx)
\end{align} where $f^{(e,j)}(x) = \beta^{(e,\{j\})}_j x_j$ is the best linear prediction on $X_j$ in environment $e$ and $f^{(j)}(x)=\beta^{(\{j\})}_j x_j$ is the best linear prediction on $X_j$ across all environments. 

The proposed optimization program can be understood in two aspects. On the one hand, it maintains the capability to solve the invariant pursuit problem, that is, recover $\beta^\star$ from \eqref{model:lip}, when $\gamma$ is large enough. To see this, when $\gamma \asymp 1$, the introduced penalty $\gamma \mathsf{J}_1(\beta)$ will place a constant penalty on the spurious variables, i.e., $j\in G$, and will not penalize any variables in $S^\star$. Therefore, one can expect that $\beta^\star$ will be the unique minimizer of $\mathsf{Q}_{1,\gamma}(\beta)$ as $\gamma$ is large enough so that the penalty term is larger than the prediction error of using $\beta^\star$. On the other hand, it maximizes relaxed worst-case explained variance over small perturbations around the pooled least squares, defined as $\bar{\beta} := \Sigma^{-1} u$, when $\gamma$ is small. Recall the definition of pooled quantity $(\Sigma, u)$ in \eqref{eq:covariance}, the two-fold characterization of the population-level minimizer of \eqref{eq:obj-k1} can be formally delivered as follows. 

\begin{proposition}\label{prop:minimax-k1}
    Let $\mathcal{P}_\gamma(\Sigma, u) = \big\{(X,Y)\sim \mu: \mathbb{E}[XX^\top] = \Sigma, \left|\mathbb{E}[X Y] - u\right| \le \gamma\cdot \allowbreak (w_1(1),\ldots, w_1(d))^\top \big\}$ be the uncertainty set of distributions. Under \cref{cond:normalize}, $\mathsf{Q}_{1,\gamma} (\beta)$ has an unique minimizer $\beta^{\gamma}$ satisfying
    \begin{align}
    \begin{split}
	\label{eq:minimax-k1}
    		\beta^\gamma &= \argmin_{\beta} \max_{\mu \in \mathcal{P}_\gamma(\Sigma, u)} \left\{\mathbb{E}_{(X,Y)\sim \mu}\left[|Y - \beta^\top X|^2 - |Y|^2\right] \right\}.
    \end{split}
    \end{align} Moreover, under \eqref{model:lip} with $S^\star$ further satisfying \eqref{ident:lip}, if \cref{cond:ortho} holds, then $\beta^\gamma = \beta^\star$ when $\gamma \ge \gamma^\star:= \max_{j\in G} |\frac{1}{|\mathcal{E}|} \sum_{e\in \mathcal{E}} \mathbb{E}[X_j^{(e)} \varepsilon^{(e)}]| / w_1(j)$, where $w_1(j)$ is defined in \eqref{eq:obj-k1}. 
\end{proposition}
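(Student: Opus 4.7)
The claim has three logically independent pieces, and my plan is to dispatch them in the order they are stated: (i) uniqueness of $\beta^{\gamma}$; (ii) the minimax representation; and (iii) causal identification under Condition~\ref{cond:ortho} for $\gamma \geq \gamma^{\star}$. For (i), write $\mathsf{R}^{\mathcal{E}}(\beta)=\tfrac12 \beta^{\top}\Sigma\beta - u^{\top}\beta + c$ with $c$ depending only on the $Y^{(e)}$. Under Condition~\ref{cond:normalize} the matrix $\Sigma=|\mathcal{E}|^{-1}\sum_{e}\Sigma^{(e)}$ is positive definite, hence $\mathsf{R}^{\mathcal{E}}$ is strictly convex. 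Adding the convex penalty $\gamma\mathsf{J}_{1}(\beta)=\gamma\sum_{j}w_{1}(j)|\beta_{j}|$ preserves strict convexity, so the minimizer $\beta^{\gamma}$ is unique.

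\textbf{Step 2: Minimax identity via inner maximization.} For any $\mu\in\mathcal{P}_{\gamma}(\Sigma,u)$, since $\mathbb{E}_{\mu}[XX^{\top}]=\Sigma$ is fixed, the inner objective depends on $\mu$ only through $v_{\mu}:=\mathbb{E}_{\mu}[XY]$:
\[
\mathbb{E}_{\mu}\!\left[|Y-\beta^{\top}X|^{2}-|Y|^{2}\right]=\beta^{\top}\Sigma\beta-2\beta^{\top}v_{\mu}.
\]
Because the constraint $|v_{\mu,j}-u_{j}|\leq \gamma w_{1}(j)$ is separable in $j$, the maximum is attained at $v_{\mu,j}^{\star}=u_{j}-\gamma w_{1}(j)\operatorname{sign}(\beta_{j})$ (with the convention that $\operatorname{sign}(0)$ can be any value in $[-1,1]$), giving supremal value
\[
\beta^{\top}\Sigma\beta-2\beta^{\top}u+2\gamma\sum_{j=1}^{d}w_{1}(j)|\beta_{j}|=2\bigl(\mathsf{Q}_{1,\gamma}(\beta)-c\bigr).
\]
Thus minimizing the supremum over $\mathcal{P}_{\gamma}(\Sigma,u)$ is equivalent (up to the additive constant $c$) to minimizing $\mathsf{Q}_{1,\gamma}$. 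A minor technicality is to certify that the supremum is actually attained inside $\mathcal{P}_{\gamma}$; this is done by exhibiting a joint Gaussian $(X,Y)$ with covariance $\Sigma$ and cross-covariance $v_{\mu}^{\star}$, which is admissible provided the joint covariance is PSD, a condition made easy by enlarging the ambient scalar variance of $Y$.

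\textbf{Step 3: Causal identification via KKT.} By strict convexity it suffices to verify the first-order optimality condition $\Sigma\beta^{\star}-u+\gamma\, g=0$ for some $g$ with $g_{j}\in w_{1}(j)\,\partial|\beta^{\star}_{j}|$. The gradient residual is $[\Sigma\beta^{\star}-u]_{j}=-\mathrm{Cov}^{\mathcal{E}}(\varepsilon,X_{j})$, which vanishes for $j\in S^{\star}$ and for exogenously spurious $j$, and is nonzero exactly on $G$. The crucial ingredient supplied by Condition~\ref{cond:ortho} is that for every $j\in S^{\star}$ and every $e$, orthogonality yields $\mathbb{E}[X_{j}^{(e)}Y^{(e)}]=\Sigma_{j,j}^{(e)}\beta^{\star}_{j}$, hence $\beta^{(e,\{j\})}_{j}\equiv \beta^{\star}_{j}$ and $w_{1}(j)=0$. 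So on $S^{\star}$ the KKT equation reads $0+\gamma\cdot 0=0$ and is automatic. For exogenously spurious $j$ both terms vanish and any subgradient works. For $j\in G$, applying the identification condition \eqref{ident:lip} to the singleton $S=\{j\}$ produces distinct $\beta^{(e,\{j\})}_{j}$ across environments, whence $w_{1}(j)>0$; so the subdifferential inclusion $|[\Sigma\beta^{\star}-u]_{j}|\leq \gamma w_{1}(j)$ is equivalent to $\gamma\geq |\mathrm{Cov}^{\mathcal{E}}(\varepsilon,X_{j})|/w_{1}(j)$. Taking the maximum over $G$ gives the threshold $\gamma^{\star}$ stated in the proposition.

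\textbf{Anticipated main obstacle.} The minimax step is a clean separable calculation; the real substance is Step~3, and specifically the identity $w_{1}(j)=0$ for $j\in S^{\star}$. This is precisely where Condition~\ref{cond:ortho} is used decisively: without orthogonality among the important covariates, a singleton marginal regression on $X_{j}$ may still vary across environments because of confounding via other elements of $S^{\star}$, so the weighted $L_{1}$ penalty would shrink $\beta^{\star}$ itself and invariance-consistent selection would fail. Verifying that the orthogonality hypothesis is \emph{exactly} what makes $w_{1}(j)=0$ on $S^{\star}$, and that identification still guarantees $w_{1}(j)>0$ on $G$, is the delicate matching that carries the whole proof.
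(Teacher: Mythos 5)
Your proposal is correct and follows essentially the same route the paper takes, differing only in packaging. The paper proves the minimax identity once for general $k$ (Theorem \ref{thm:minimax-k}) via the same separable box-supremum calculation you perform, proves uniqueness and curvature in a small auxiliary lemma, and establishes identification for general $k$ (Theorem \ref{thm:causal-ident}) by bounding $\mathsf{Q}_{k,\gamma}(\beta)-\mathsf{Q}_{k,\gamma}(\beta^\star)\ge\sum_{j\in G}|\beta_j|\{\gamma w_k(j)\}-\beta_j\,\mathrm{Cov}^{\mathcal{E}}(\varepsilon,X_j)\ge 0$ for $\gamma\ge\gamma_k^\star$, which is the integrated form of your subdifferential inclusion; the Proposition~\ref{prop:minimax-k1} proof then simply verifies Condition~\ref{cond:restricted-invariance} at $k=1$ under orthogonality exactly as you do — $\mathbb{E}[X_j^{(e)}Y^{(e)}]=\Sigma^{(e)}_{jj}\beta^\star_j$ forces $\beta^{(e,\{j\})}_j=\beta^\star_j$, hence $w_1(j)=0$ on $S^\star$. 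Your direct KKT verification at $\beta^\star$ is an equivalent, self-contained way to say the same thing, and your observation that identification \eqref{ident:lip} applied to singletons $\{j\}$ with $j\in G$ is what guarantees $w_1(j)>0$ is correct and is implicit in the paper's Theorem~\ref{thm:causal-ident}. One genuine value-add in your write-up: you explicitly flag that the box supremum over $\tilde u$ must be certified achievable inside $\mathcal{P}_\gamma$, and you supply a valid Gaussian construction (PSD joint covariance by enlarging $\mathrm{Var}(Y)$); the paper passes over this silently in the proof of Theorem~\ref{thm:minimax-k}.
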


\cref{prop:minimax-k1} offers interpretations of the population-level minimizer $\beta^\gamma$ of $\mathsf{Q}_{1,\gamma}(\beta)$ for varying $\gamma$ from two perspectives. On the one hand, $\beta^\gamma$ can be interpreted as the distributionally robust prediction model over the uncertainty set $\mathcal{P}_\gamma(\Sigma, u)$: it minimizes the worst-case negative explained variance, or it is the maximin effects \citep{meinshausen2015maximin,guo2024statistical} over the uncertainty set $\mathcal{P}_\gamma(\Sigma, u)$. The uncertainty class contains all joint distributions of $(X,y)$, where the covariates $X$ have the second-order moment matrix as $\Sigma$ and the covariance between $X$ and $Y$ is perturbed around $u$. Similar to Theorem 1 in \cite{meinshausen2015maximin} and Proposition 1 in \cite{guo2024statistical}, $\beta^\gamma$ has the following geometric explanation, that
\begin{align}
\label{eq:geo}
    \beta^\gamma = \argmin_{\beta \in \Theta_\gamma} \beta^\top \Sigma \beta \qquad \text{with} \qquad \Theta_\gamma = \{\beta: |\Sigma \beta - u| \le \gamma \cdot (w_1(1),\ldots, w_1(d))^\top\}.
\end{align} This basically says that $\beta^\gamma$ is the projection of the null $\beta=0$ on the convex closed set $\Theta_\gamma$ with respect to the norm $\|\cdot\| = \|\Sigma^{1/2} \cdot \|_2$; see the proof in \aosversion{Appendix D.2}{\cref{proof:geo}}. The distributional robustness \eqref{eq:minimax-k1} and geometric interpretation \eqref{eq:geo} are independent of the invariance structure \eqref{model:lip} and further structural assumption \cref{cond:ortho}. Instead, they are attributed to the choice of $L_1$ regularization with inhomogeneous weights $(w_1(1),\ldots, w_1(d))$. This is a realization of the heuristic idea of adopting an anisotropic uncertainty ellipsoid based on the observed environments. Specifically, more uncertainty is placed on the variables predicting differently in the observed environments than those with invariant predictions. 

On the other hand, consider the case where the data generating process satisfies the invariance structure \eqref{model:lip}, the sufficient heterogeneity \eqref{ident:lip}, together with an additional structure assumption \cref{cond:ortho}. Now the above distributionally robust procedure will place zero uncertainty on the invariant, causal variables, and will place linear-in-$\gamma$ uncertainty on the spurious variables. The minimizer $\beta^\gamma$ will coincide with the true, causal parameter $\beta^\star$ when $\gamma$ is large enough. 

Let us illustrate the above ideas using the toy example below. 

\begin{figure}
\begin{center}
\begin{tabular}{ccc}
    \subfigure[]{\begin{tikzpicture}[state/.style={circle, draw, minimum size=0.8cm, scale=0.8}]
        \def\yos{3.5}

        \draw[black, rounded corners] (-1.5, -1.8) rectangle (1.5, 1.5);
	\node[state] at (0, 1) (x1) {\small $X_1$};
	\node[state] at (0, 0) (y) {\small $Y$};
	\node[state] at (-1, -1) (x2) {\small $X_2$};
	\node[state] at (1, -1) (x3) {\small $X_3$};	
	\draw[->] (x1) -- node[right] {\scriptsize $1$} (y);
	\draw[->] (y) -- node[left] {\scriptsize $2/3$} (x2);
	\draw[->] (y) -- node[right] {\scriptsize $2/3$} (x3);
	\node at (0, -1.5) {\small e=1};
	
	\draw[black, rounded corners, fill=mylightblue!10] (-1.5, -1.8-\yos) rectangle (1.5, 1.5-\yos);
	\node[state] at (0, 1-\yos) (2x1) {\small $X_1$};
	\node[state] at (0, -\yos) (2y) {\small $Y$};
	\node[state] at (-1, -1-\yos) (2x2) {\small $X_2$};
	\node[state] at (1, -1-\yos) (2x3) {\small $X_3$};	
	\draw[->] (2x1) -- node[right] {\scriptsize $1$} (2y);
	\draw[->] (2y) -- node[left] {\myred{\scriptsize $0.5$}} (2x2);
	\draw[->] (2y) -- node[right] {\myred{\scriptsize $0.25$}} (2x3);
	\node at (0, -1.5-\yos) {\small e=2};
\end{tikzpicture}} & 
    \subfigure[]{\begin{tikzpicture}[x={(-0.77*0.6cm, -0.77*0.6cm)}, y={(2*0.6cm, 0cm)}, z={(0cm, 2*0.6cm)}]
	\def\gmma{0.4}
	\def\gmmb{2}
	\def\gmmc{3.6}
	\coordinate (beta) at (0.4280906440550959, 0.7498317150979645, 0.29347093477849623);
	\coordinate (betaa) at (0.5269925335391648, 0.7444419688895091, 0.08454497708581467);
	\coordinate (betab) at (0.7789430129056212, 0.3789517130027666, 3.585425932494615e-08);
	\coordinate (betastar) at (1, 0, 0);
	\coordinate (betat) at (0.5682481274386867, 0.7401557487840454, -2.0980577927396735e-08);
	\coordinate (d1) at (-0.2472514911051864, 0.01346193576399807, 0.5223244143715526);
	\coordinate (d2) at (-0.002692387080361211, 0.5990576227909074, -0.7565626719629004);
	\draw[gray, dashed] (beta) -- (0, 0.7498317150979645, 0.29347093477849623);
	\draw[gray, dashed] (0, 0.7498317150979645, 0.29347093477849623) -- (0, 0, 0.29347093477849623);
	\draw[gray, dashed] (0, 0.7498317150979645, 0.29347093477849623) -- (0, 0.7498317150979645, 0);
	\draw[gray, dashed] (betaa) -- (0, 0.7444419688895091, 0.08454497708581467);
	\draw[gray, dashed] (0, 0.7444419688895091, 0.08454497708581467) -- (0, 0, 0.08454497708581467);
	\draw[gray, dashed] (0, 0.7444419688895091, 0.08454497708581467) -- (0, 0.7444419688895091, 0);
	\draw[gray, dashed] (betab) -- (0.7789430129056212, 0, 0);
	\draw[gray, dashed] (betab) -- (0, 0.3789517130027666, 0);

	\coordinate (p3x1) at ($(beta) - \gmmc*(d1)$);
	\coordinate (p3x2) at ($(beta) - \gmmc*(d2)$);
	\coordinate (p3x3) at ($(beta) + \gmmc*(d1)$);
	\coordinate (p3x4) at ($(beta) + \gmmc*(d2)$);
	\coordinate (p2x1) at ($(beta) - \gmmb*(d1)$);
	\coordinate (p2x2) at ($(beta) - \gmmb*(d2)$);
	\coordinate (p2x3) at ($(beta) + \gmmb*(d1)$);
	\coordinate (p2x4) at ($(beta) + \gmmb*(d2)$);
	\coordinate (p1x1) at ($(beta) - \gmma*(d1)$);
	\coordinate (p1x2) at ($(beta) - \gmma*(d2)$);
	\coordinate (p1x3) at ($(beta) + \gmma*(d1)$);
	\coordinate (p1x4) at ($(beta) + \gmma*(d2)$);
	\draw[colorgradcurve={0.4pt}{myred}{myblue}, thick] (beta) -- (betat) -- (betastar);
	\path[draw=myred!0!myblue, fill=myred!0!myblue, thick, opacity = 0.2] (p3x1) -- (p3x2) -- (p3x3) -- (p3x4) -- (p3x1);

	\path[draw=myred!88!myblue, fill=myred!88!myblue, thick, opacity = 0.2] (p1x1) -- (p1x2) -- (p1x3) -- (p1x4) -- (p1x1);
	\path[draw=myred!43!myblue, fill=myred!43!myblue, thick, opacity = 0.2] (p2x1) -- (p2x2) -- (p2x3) -- (p2x4) -- (p2x1);
	\draw[draw=myred!88!myblue, fill=myred!88!myblue] (betaa) circle (1pt) node[below] {\textcolor{myred!88!myblue}{\footnotesize $\beta^{0.4}$}};
	\draw[draw=myred!43!myblue, fill=myred!43!myblue] (betab) circle (1pt) node[below] {\textcolor{myred!43!myblue}{\footnotesize $\beta^{2}$}};

	
	\draw[draw=myred, fill=myred] (beta) circle (1pt) node[right] {\myred{{\footnotesize$\beta^{0}=$~}$\bar{\beta}$}};
	\draw[draw=myblue, fill=myblue] (betastar) circle (1pt) node[left] {\myblue{$\beta^\star${\footnotesize$=\beta^{3.6}$~}}};

	\draw (p3x2) node {\footnotesize \myblue{$\Theta_{3.6}$}};
	\draw (p2x2) node {\footnotesize \textcolor{myred!42!myblue}{$\Theta_2$}};
	\draw (p1x2) node {\footnotesize \textcolor{myred!71!myblue}{$\Theta_{0.4}$}};

	\draw[-latex] (0,0,0) -- (2,0,0) node[pos = 1.2] {$\beta_1$};
	\draw[-latex] (0,-0.3,0) -- (0,2,0) node[pos = 1.1] {$\beta_2$};
	\draw[-latex] (0,0,-0.3) -- (0,0,2) node[pos = 1.1] {$\beta_3$};
\end{tikzpicture}} & 
    \subfigure[]{\begin{tikzpicture}[x={(0.6cm, 0cm)}, y={(0cm, 1.8cm)}]
	\def\yscale{1}
	\def\xscale{1}
	\draw[->] (0, 0) -- (0, 1.3);
	\draw[->] (0, 0) -- (6, 0);
	
	\draw node at (6.2, 0) {\footnotesize $\gamma$};
	\draw node at (0, 1.45) {\footnotesize $\beta_j^\gamma$};

	\draw[draw=mygreen, line width=1pt] (0, 0.4280906440550959) -- (0.57, 0.5682481274386867) -- (3.51, 1) -- (6, 1);
	\node at (-0.3,0.4280906440550959) {\footnotesize \textcolor{mygreen}{$\beta_1$}};
	\draw[draw=myyellow, dashed, line width=1pt] (0, 0.7624616067073176) -- (0.57, 0.7401557487840454) -- (3.51, 0) -- (6, 0);
	\node at (-.3,\yscale*0.76246161) {\footnotesize \textcolor{myyellow}{$\beta_2$}};
	\draw[draw=myyellow, dotted, line width=1pt] (0, 0.29347093477849623) -- (0.57, 0) -- (6, 0);
	\node at (-.3,\yscale*0.29347093477849623) {\footnotesize \textcolor{myyellow}{$\beta_3$}};

        \draw[draw=myred!88!myblue, dashed] (0.4, 0) node[below] {\scriptsize \textcolor{myred!88!myblue}{$0.4$}} -- (0.4, 1.2);
        \draw[draw=myred!43!myblue, dashed] (2, 0) node[below] {\scriptsize \textcolor{myred!43!myblue}{$2$}} -- (2, 1.2);
        \draw[draw=myblue, dashed] (3.6, 0) node[below] {\scriptsize \textcolor{myblue}{$3.6$}} -- (3.6, 1.2);

	\def\yoffset{2}
	\draw[->] (0, 0-\yoffset) -- (0, 1.3-\yoffset);
	\draw[->] (0, -\yoffset) -- (6, -\yoffset);
	
	\draw node at (6.2, 0-\yoffset) {\footnotesize $\sqrt{\gamma}$};
	\draw node at (0, 1.45-\yoffset) {\footnotesize $\beta_{\mathtt{FAIR},j}^\gamma$};

	\draw[draw=mygreen, line width=1pt] (0, 0.4280906440550959-\yoffset) -- (1.82, 0.4280906440550959-\yoffset);
	\draw[draw=mygreen, line width=1pt] (1.82, 0.4280906440550959-\yoffset) circle (1pt);
	\draw[draw=mygreen, line width=1pt] (1.83, 0-\yoffset) circle (1pt);
	\draw[draw=mygreen, line width=1pt] (1.83, 0-\yoffset) -- (4.87, 0-\yoffset);
	\draw[draw=mygreen, line width=1pt] (4.87, 0-\yoffset) circle (1pt);
	\draw[draw=mygreen, line width=1pt] (4.88, 1-\yoffset) circle (1pt);
	\draw[draw=mygreen, line width=1pt] (4.88, 1-\yoffset) -- (6, 1-\yoffset);
	\node at (-0.3,0.4280906440550959-\yoffset) {\footnotesize \textcolor{mygreen}{$\beta_1$}};

	\draw[draw=myyellow, dashed, line width=1pt] (0, 0.7498317150979645-\yoffset) -- (1.82, 0.7498317150979645-\yoffset);
	\draw[draw=myyellow, line width=1pt] (1.82, 0.7498317150979645-\yoffset) circle (1pt);
	\draw[draw=myyellow, line width=1pt] (1.83, 0.95403938-\yoffset) circle (1pt);
	\draw[draw=myyellow, dashed, line width=1pt] (1.83, 0.95403938-\yoffset) -- (4.87, 0.95403938-\yoffset);
	\draw[draw=myyellow, line width=1pt] (4.87, 0.95403938-\yoffset) circle (1pt);
	\draw[draw=myyellow, line width=1pt] (4.88, 0-\yoffset) circle (1pt);
	\draw[draw=myyellow, dashed, line width=1pt] (4.88, 0-\yoffset) -- (6, 0-\yoffset);
	\node at (-0.3,0.76246161-\yoffset) {\footnotesize \textcolor{myyellow}{$\beta_2$}};

	\draw[draw=myyellow, dotted, line width=1pt] (0, 0.29347093477849623-\yoffset) -- (1.82, 0.29347093477849623-\yoffset);
	\draw[draw=myyellow, line width=1pt] (1.82, 0.29347093477849623-\yoffset) circle (1pt);
	\draw[draw=myyellow, line width=1pt] (1.83, 0.37339422-\yoffset) circle (1pt);
	\draw[draw=myyellow, dotted, line width=1pt] (1.83, 0.37339422-\yoffset) -- (4.87, 0.37339422-\yoffset);
	\draw[draw=myyellow, line width=1pt] (4.87, 0.37339422-\yoffset) circle (1pt);
	\draw[draw=myyellow, line width=1pt] (4.88, 0-\yoffset) circle (1pt);
	\draw[draw=myyellow, dotted, line width=1pt] (4.88, 0-\yoffset) -- (6, 0-\yoffset);
	\node at (-0.3,\yscale*0.29347093477849623-\yoffset) {\footnotesize \textcolor{myyellow}{$\beta_3$}};

\end{tikzpicture}} 
\end{tabular}
\end{center}
\caption{(a) A structural causal model illustration of the multi-environment model in \cref{ex2}: the arrow from node $u$ to node $v$ with number $s$ means there is a linear causal effect $s$ of $u$ on $v$. (b) visualize the uncertainty set $\Theta_\gamma$ in three checkpoints of $\gamma\in \{0.4, 2, 3.6\}$ and regularization path of the proposed estimator \eqref{eq:minimax-k1} in the three-dimensional parameter space $\beta \in \mathbb{R}^3$. For each $\gamma$, the uncertainty set $\Theta_\gamma$ is a two-dimensional plane filled by colors changing from \myred{red} to \myblue{blue} as $\gamma$ increases. The upper panel of (c) depicts how the population level solution $\beta^\gamma \in \mathbb{R}^3$ changes according to $\gamma$ in each coordinate $j\in [3]$: the causal variable is represented by \mygreen{green} solid line, and the two spurious (reverse causal) variable are represented by \myyellow{yellow} dashed ($\beta_2$) and dotted ($\beta_3$) lines, respectively. The lower panel of (c) plots the counterpart for the FAIR-Linear estimator in \cite{gu2024causality}. }
\label{fig:reg}
\end{figure}
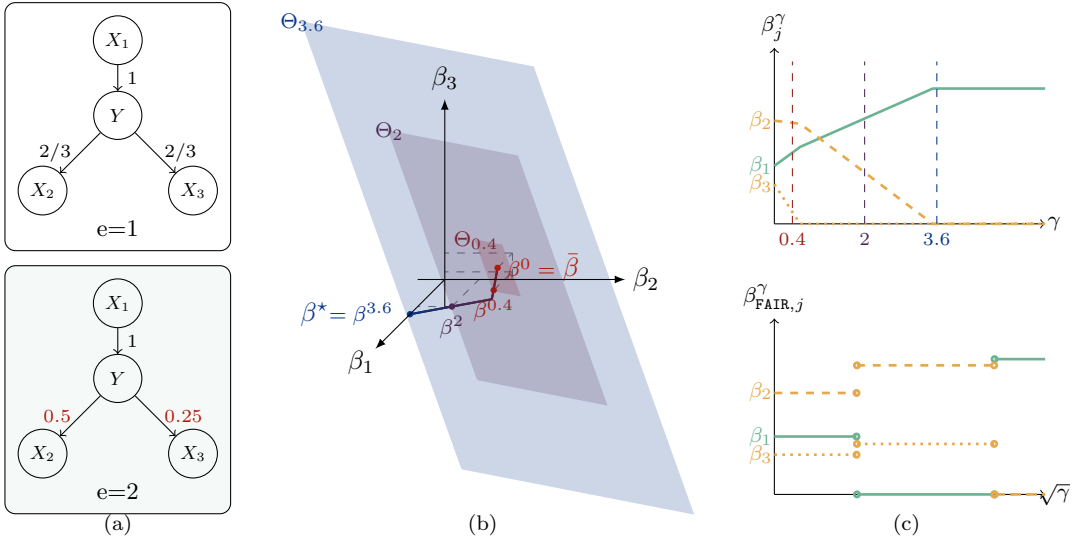

\begin{example} \label{ex2}
Consider the following data-generating process with $d=3$, $\mathcal{E}=\{1,2\}$ and independent standard normal random variables $\varepsilon_0,\ldots, \varepsilon_3$, the cause-effect relationship and the intervention effects are illustrated in \cref{fig:reg} (a). The constant factors before $\varepsilon_j$ with $j\ge 2$ are added to ensure $X_j$ has a unit variance. 
\begin{align*}
\begin{split}
    X_1^{(1)} &\gets \varepsilon_1, \\
    Y^{(1)} &\gets X_1^{(1)} + \varepsilon_0, \\
    X_2^{(1)} &\gets (2/3) \cdot Y^{(1)} + (1/3) \cdot\varepsilon_2, \\
    X_3^{(1)} &\gets (2/3) \cdot Y^{(1)} + (1/3) \cdot\varepsilon_3; \\
\end{split} \qquad \text{and} 
\begin{split}
    X_1^{(2)} &\gets \varepsilon_1, \\
    Y^{(2)} &\gets X_1^{(2)} + \varepsilon_0, \\
    X_2^{(2)} &\gets 0.5 \cdot Y^{(2)} + (\sqrt{2})^{-1} \cdot \varepsilon_2, \\
    X_3^{(2)} &\gets 0.25 \cdot Y^{(2)} + \sqrt{7/8} \cdot \varepsilon_3. \\
\end{split}
\end{align*}
\end{example}

In \cref{ex2}, $X_1$ is the invariant (causal) variable, while $X_2$ and $X_3$ are all endogenous spurious (reverse causal) variables as shown in \cref{fig:reg} (a). They have identical spurious predictive powers in environment $e=1$, and variable $X_3$ is confronted with stronger perturbations than $X_2$ in environment $e=2$. The invariance structure is well identified with $S^\star=\{1\}$ satisfying \eqref{model:lip} and \eqref{ident:lip} simultaneously. The prediction variation in \eqref{eq:pv1} are $(w_1(1), w_1(2), w_1(3)) = (0, 1/6, 1/4)$. 

\cref{fig:reg} (b) visualize the maximin effect \eqref{eq:minimax-k1} over the uncertainty set shaped by the prediction variation. For given fixed $\gamma$, the uncertainty set in $\mathbb{E}[XY]$ in \eqref{eq:minimax-k1} does not place uncertainty on the causal variable $X_1$, while it places a relatively small uncertainty $\gamma/6$ on the variables $X_2$ which suffers from less perturbation, and a relatively large uncertainty $\gamma/4$ on the variable $X_3$ that predicts more differently in observed environments $\mathcal{E}$. This two-dimensional uncertainty plane in covariance space further yields the two-dimensional uncertainty plane centered on the pooled least squares $\bar{\beta}$ in the solution space after the affine transformation $x\to \Sigma^{-1} x$ as shown in \cref{fig:reg} (b). The uncertainty sets $\Theta_\gamma$ all lie in the same hyper-plane and their diameter scales linearly with $\gamma$. The corresponding population-level minimizer $\beta^\gamma$ is the projection of the null $\beta=0$ on $\Theta_\gamma$. This leads to a solution path that connects the most predictive solution $\bar{\beta}$ and the causal solution $\beta^\star$ continuously. When $\gamma$ is smaller than the critical threshold, such a prediction $\beta^\gamma$ still leverages part of the spurious variables for prediction and will have better prediction over $\beta^\star$ and $\bar{\beta}$ when it is deployed in an environment where the reverse causal effects are still positive but slightly shrinkage, for example, $X_3 \gets Y/3\sqrt{2}+ \sqrt{8/9} \varepsilon_3$. Such a solution $\beta^\gamma$ stands in between $\beta^\star$ and $\bar{\beta}$: it is more robust than $\bar{\beta}$ and less conservative than $\beta^\star$. As a comparison, the FAIR-Linear \citep{gu2024causality} estimator that solves the hard-constrained structural estimation problem is less flexible in this regard, as shown in the lower panel of \cref{fig:reg} (c), it adopts certain hard threshold and choose either to include or eliminate the spurious variables. 

\subsection{Interpolating between the Orthogonal and General Cases}
\label{sec:population-est}

The population-level minimizer of \eqref{eq:obj-k1} can solve the linear invariance pursuit in \eqref{model:lip} efficiently within time complexity $O((|\mathcal{E}|+n)d + T_{\mathrm{Lasso}}(|\mathcal{E}|\cdot n, d))$, where $T_{\mathrm{Lasso}}(N, d)$ is the complexity of running a $d$-variate $N$-sample Lasso. However, the estimation can only be guaranteed when \cref{cond:ortho} holds, and it may fail when \cref{cond:ortho} does not hold. Here, we introduce a more general relaxation balancing estimation error and time complexity. 

Instead of calculating the prediction variation of the marginal linear predictor for each variable $X_j$, we consider calculating the prediction variation of the predictors using variable size less or equal to $k$. For the population-level counterpart, it minimizes the following objective
\begin{align}
\label{eq:obj-k}
	\mathsf{Q}_{k,\gamma}(\beta) = \mathsf{R}^{\mathcal{E}}(\beta) + \gamma \overbrace{\sum_{j=1}^d |\beta_j| \cdot \underbrace{\sqrt{\min_{S: j\in S, |S| \le k}  \frac{1}{|\mathcal{E}|} \sum_{e\in \mathcal{E}} \|\beta^{(e,S)}_S - \beta^{(S)}_S\|_{\Sigma^{(e)}_S}^2	}}_{w_k(j)}}^{\mathsf{J}_k(\beta)}
\end{align} with some computational budget hyper-parameter $k \in \mathbb{N}$.

As $k$ grows or equivalently as more computational budget is paid, the space of instances that can be solved enlarges and will finally coincide with that of EILLS or FAIR when $k\ge |S^\star|$. On the other hand, if the computational budget we can pay is relatively limited, one can still probably solve some problem instances with low-dimensional structures as elaborated in the following \cref{thm:causal-ident}.

\begin{condition}[Restricted Invariance]\label{cond:restricted-invariance}    For any $j\in S^\star$, there exists some $S \subseteq [d]$ with $|S|\le k$ and $j\in S$ such that $\beta^{(e,S)} \equiv \beta^{(S)}$ for any $e\in \mathcal{E}$.  
\end{condition}

Note that when \cref{cond:restricted-invariance} holds, for all $j \in S^\star$, the weight $w_k(j)$ in the penalty term is equal to $0$.  On the other hand, for a large enough $\gamma$, all endogenous variables will be excluded due to a positive $w_{k}(j)$. Hence, the object \eqref{eq:obj-k} will screen out all endogenously spurious variables and meanwhile minimize the prediction errors using the remaining variables.  
\cref{cond:restricted-invariance} naturally holds when $k \ge |S^\star|$. When $k < |S^\star|$, \cref{cond:restricted-invariance} requires a stronger identification condition than the invariance assumption \eqref{model:lip} such that all the invariant variables $X_j$ with $j\in S^\star$ can be identified using a smaller set $S_j$ with $|S_j| \le k < |S^\star|$. This is a generic condition and can hold under different circumstances. For example, there are some shared group-orthogonal structures in the set $S^\star$ such as $\Sigma_{S^*}^{(e)}$ admits a block diagonal structure with the maximum block size $\leq k$, which includes the diagonal case in \cref{cond:ortho} as a specific instance, or the insufficiency of interventions on the ancestors of $S^\star$, for example, all the ancestors of $Y$ are free of intervention.   \aosversion{Proposition B.2}{\cref{prop:ri}} in the appendix further offers conditions under which \cref{cond:restricted-invariance} holds. The following two theorems generalize \cref{prop:minimax-k1} for growing $k$. 

\begin{theorem}\label{thm:minimax-k}
        Let $\mathcal{P}_{\gamma,k}(\Sigma, u) = \big\{(X,Y)\sim \mu: \mathbb{E}[XX^\top] = \Sigma, \left|\mathbb{E}[X_jY] - u_j\right| \le \gamma \cdot w_k(j) ~\forall j\in [d]\big\}$ be the uncertain set of distributions. 
        Under \cref{cond:normalize}, $\mathsf{Q}_{k,\gamma} (\beta)$ has a unique minimizer $\beta^{k,\gamma}$ satisfying
	\begin{align}\label{eq:minimax-k}
            \beta^{k,\gamma} &= \argmin_{\beta} \max_{\mu \in \mathcal{P}_{\gamma,k}(\Sigma, u)} \left\{\mathbb{E}_{(X,Y)\sim \mu}[|Y - \beta^\top X|^2 - |Y|^2] \right\}
	\end{align} 
\end{theorem}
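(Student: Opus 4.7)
The plan is to compute the inner maximum in closed form for each fixed $\beta$ and then recognize the result as a constant shift of $2\mathsf{Q}_{k,\gamma}(\beta)$. For any $\mu\in \mathcal{P}_{\gamma,k}(\Sigma,u)$, the second-moment constraint $\mathbb{E}[XX^\top]=\Sigma$ pins down the quadratic term, giving
\begin{align*}
\mathbb{E}_{(X,Y)\sim\mu}\!\left[|Y-\beta^\top X|^2 - |Y|^2\right] = \beta^\top \Sigma \beta - 2 \beta^\top \mathbb{E}_\mu[XY].
\end{align*}
Thus the inner maximization reduces to a linear program in the vector $v:=\mathbb{E}_\mu[XY]\in \mathbb{R}^d$, whose only remaining constraint is the box $|v_j-u_j|\le \gamma w_k(j)$ for each $j\in [d]$.

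This box program decouples across coordinates, and since $\max_{|v_j-u_j|\le \gamma w_k(j)} (-2\beta_j v_j) = -2\beta_j u_j + 2\gamma w_k(j)|\beta_j|$, the inner maximum is
\begin{align*}
\beta^\top \Sigma \beta - 2\beta^\top u + 2\gamma \sum_{j=1}^d w_k(j) |\beta_j|.
\end{align*}
On the other hand, expanding $\mathsf{R}^{\mathcal{E}}(\beta)$ in \eqref{eq:pooled-least-squares} yields $2\mathsf{R}^{\mathcal{E}}(\beta) = \beta^\top \Sigma \beta - 2\beta^\top u + \frac{1}{|\mathcal{E}|}\sum_{e\in\mathcal{E}}\mathbb{E}[|Y^{(e)}|^2]$, so
\begin{align*}
2\mathsf{Q}_{k,\gamma}(\beta) = \Big(\beta^\top\Sigma\beta - 2\beta^\top u + 2\gamma\sum_{j=1}^d w_k(j)|\beta_j|\Big) + \frac{1}{|\mathcal{E}|}\sum_{e\in\mathcal{E}}\mathbb{E}[|Y^{(e)}|^2].
\end{align*}
The two objectives therefore differ by an additive constant independent of $\beta$, so they share the same argmin, establishing the identity \eqref{eq:minimax-k}. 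The only small technical point is to check that the supremum is attained inside $\mathcal{P}_{\gamma,k}(\Sigma,u)$; this is immediate since for any target $(\Sigma, v)$ with $\Sigma\succ 0$ one may realize it by, e.g., jointly Gaussian $(X,Y)$, and the box extremizer $v^\star$ of the coordinate-wise LP lies in the feasible range by construction.

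Uniqueness follows from strict convexity of $\mathsf{Q}_{k,\gamma}$: the weights $w_k(j)$ are deterministic constants given the data, so $\mathsf{Q}_{k,\gamma}$ is the sum of a strictly convex quadratic $\mathsf{R}^{\mathcal{E}}(\beta)$ (whose Hessian is $\Sigma\succ 0$ under \cref{cond:normalize}) and a convex $\ell_1$ penalty, hence has a unique minimizer $\beta^{k,\gamma}$. The main (and only) conceptual step is recognizing that the adversary's uncertainty on $\mathbb{E}[XY]$, shaped by the anisotropic radii $\gamma w_k(j)$, is dual to the weighted $\ell_1$ penalty in $\mathsf{Q}_{k,\gamma}$; no minimax swap or Sion-type argument is required because the inner max is solved pointwise in $\beta$.
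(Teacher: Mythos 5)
Your proof is correct and follows essentially the same route as the paper's: compute the inner supremum pointwise in $\beta$ by recognizing that the constraint $\mathbb{E}_\mu[XX^\top]=\Sigma$ freezes the quadratic term and the remaining maximization over $\mathbb{E}_\mu[XY]$ decouples into a coordinate-wise box LP, yielding the weighted $\ell_1$ term; then match this to $\mathsf{Q}_{k,\gamma}(\beta)$ up to an additive constant (and a factor of 2, which the paper actually suppresses) and invoke strict convexity of $\mathsf{R}^{\mathcal{E}}$ plus convexity of the penalty for uniqueness (the paper delegates this to \cref{prop:sc}). Your added remarks about attainability of the box extremizer via a Gaussian realization and the absence of any need for a minimax swap are minor clarifications the paper leaves implicit, but they don't change the argument.
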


\begin{theorem}
\label{thm:causal-ident}
Under the setting of \cref{thm:minimax-k}, assume the invariance structure \eqref{model:lip} holds with $S^\star$ satisfying \eqref{ident:lip}. Suppose further that \cref{cond:restricted-invariance} holds, then $\beta^{k,\gamma} = \beta^\star$ when $\gamma \ge \gamma^\star_k$ with $\gamma_k^\star:= \max_{j\in G} |\frac{1}{|\mathcal{E}|} \sum_{e\in \mathcal{E}} \mathbb{E}[X_j^{(e)} \varepsilon^{(e)}] | / w_k(j)$.
\end{theorem}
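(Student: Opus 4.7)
The plan is to exploit the fact that $\mathsf{Q}_{k,\gamma}$ is strictly convex --- $\mathsf{R}^{\mathcal{E}}$ is a quadratic with Hessian $\Sigma = |\mathcal{E}|^{-1}\sum_{e} \Sigma^{(e)} \succ 0$ under \cref{cond:normalize}, and the weighted $\ell_1$ penalty is convex --- so proving $\beta^{k,\gamma} = \beta^\star$ reduces to verifying that $\beta^\star$ satisfies the first-order (subdifferential) optimality condition. Since $\nabla \mathsf{R}^{\mathcal{E}}(\beta) = \Sigma\beta - u$, the KKT condition at $\beta = \beta^\star$ reads: for every $j \in [d]$ there exists $s_j \in \partial|\beta^\star_j|$ with $(\Sigma\beta^\star - u)_j + \gamma w_k(j) s_j = 0$. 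Defining the residual $r_j := u_j - (\Sigma\beta^\star)_j$ and using the structural equation \eqref{model:lip} yields $r_j = |\mathcal{E}|^{-1}\sum_{e\in \mathcal{E}} \mathbb{E}[X_j^{(e)} \varepsilon^{(e)}]$, so the entire check amounts to bounding this pooled covariance coordinate-by-coordinate against $\gamma w_k(j)$.

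Next, I would partition $[d] = S^\star \cup G \cup H$ with $H = \{j\notin S^\star : \mathrm{Cov}^{\mathcal{E}}(\varepsilon, X_j) = 0\}$, the set of exogenously spurious indices, and handle the three regimes separately. For $j \in S^\star$, \cref{cond:restricted-invariance} supplies a witnessing set $S\ni j$ of size $\le k$ on which $\beta^{(e,S)} \equiv \beta^{(S)}$, so the infimum inside $w_k(j)$ is attained at zero, forcing $w_k(j)=0$; on the other hand, the exogeneity condition in \eqref{model:lip} gives $r_j=0$, so the KKT equation holds trivially with any $s_j \in \{-1,1\}$. For $j \in H$ one has $\beta^\star_j = 0$ and $r_j = 0$ by definition, so the choice $s_j = 0 \in [-1,1]$ works regardless of whether $w_k(j)$ vanishes. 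For $j \in G$, where $\beta^\star_j = 0$, one picks $s_j := r_j / (\gamma w_k(j))$, and the required bound $|s_j| \le 1$ is exactly the hypothesis $\gamma \ge \gamma^\star_k$ via the definition of $\gamma^\star_k$.

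The one delicate point --- and the main obstacle worth isolating --- is to verify $w_k(j) > 0$ for every $j \in G$, without which the quotient defining $s_j$ is ill-posed and $\gamma^\star_k$ itself is infinite. I would derive this from the identification condition \eqref{ident:lip}: any $S\ni j$ with $|S|\le k$ and $j\in G$ automatically satisfies $S\cap G\neq \emptyset$, so \eqref{ident:lip} furnishes $e,e'\in\mathcal{E}$ with $\beta^{(e,S)}\neq \beta^{(e',S)}$; since $\Sigma^{(e)}_S\succ 0$ and the pooled predictor $\beta^{(S)}$ cannot coincide with every environment-wise $\beta^{(e,S)}$ when these disagree, the quantity $|\mathcal{E}|^{-1}\sum_e\|\beta^{(e,S)}_S - \beta^{(S)}_S\|_{\Sigma^{(e)}_S}^2$ is strictly positive on the witnessing $S$, hence $w_k(j)>0$. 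Assembling the three cases shows $\beta^\star$ is a KKT point of the convex program $\mathsf{Q}_{k,\gamma}$; strict convexity then upgrades this to uniqueness, giving $\beta^{k,\gamma} = \beta^\star$.
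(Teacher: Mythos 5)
Your proof is correct, and it is a slightly different route to the same destination as the paper. The paper shows $\beta^\star$ is the global minimizer directly: it expands $\mathsf{Q}_{k,\gamma}(\beta) - \mathsf{Q}_{k,\gamma}(\beta^\star)$ as a quadratic term plus a cross term $-(\beta-\beta^\star)^\top\Sigma(\bar{\beta}-\beta^\star)$ plus the penalty, drops the nonnegative quadratic, observes $\Sigma(\bar{\beta}-\beta^\star)_j = \frac{1}{|\mathcal{E}|}\sum_e\mathbb{E}[X_j^{(e)}\varepsilon^{(e)}]$ is supported on $G$ while $w_k(j)=0$ on $S^\star$, and then bounds $\sum_{j\in G}(\gamma w_k(j)|\beta_j| - \beta_j r_j)\ge 0$ term-by-term when $\gamma\ge\gamma_k^\star$; uniqueness then follows from \cref{prop:sc} (which is proved via a strong-convexity inequality around the minimizer). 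You instead verify the first-order subdifferential stationarity condition $0\in\nabla\mathsf{R}^{\mathcal{E}}(\beta^\star)+\gamma\,\partial\mathsf{J}_k(\beta^\star)$ coordinate-by-coordinate and invoke strict convexity for uniqueness. For a convex objective these are, of course, two faces of the same fact, and the pivotal ingredients are identical in both: (i) $w_k(j)=0$ for $j\in S^\star$ via \cref{cond:restricted-invariance}, (ii) $r_j=0$ for $j\in S^\star\cup H$ by exogeneity, and (iii) $w_k(j)>0$ for $j\in G$ via \eqref{ident:lip}. One thing your write-up does that the paper does not: you spell out the argument for point (iii), whereas the paper merely asserts that \eqref{ident:lip} implies $w_k(j)\neq 0$ for $j\in G$; your observation that any $S$ with $j\in S\subseteq[d]$, $j\in G$, $|S|\le k$ automatically has $S\cap G\neq\emptyset$, hence $\{\beta^{(e,S)}\}_{e\in\mathcal{E}}$ are not all equal, hence $\frac{1}{|\mathcal{E}|}\sum_e\|\beta^{(e,S)}_S-\beta^{(S)}_S\|^2_{\Sigma^{(e)}_S}>0$ for every admissible $S$ (and the min over finitely many $S$ is therefore positive) fills that in cleanly. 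Your three-way partition of $[d]$ also makes the role of each hypothesis more transparent than the paper's compressed inequality chain. One small point of sloppiness: for $j\in S^\star$ you write that the KKT equation holds ``with any $s_j\in\{-1,1\}$,'' but since $\beta^\star_j\neq 0$ the subdifferential is the singleton $\{\mathrm{sign}(\beta^\star_j)\}$; the equation $0=0$ is of course satisfied by that singleton, so this is harmless, but the phrasing suggests a freedom that is not actually there.
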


\begin{remark}
One can show that $\gamma^\star_k$ is uniformly upper bounded by
    \begin{align*}
    \gamma^\star_k \le \left\{\min_{e\in \mathcal{E}}\lambda_{\min}(\Sigma^{(e)}) \right\} \cdot \sqrt{\gamma^*}
    \end{align*} where $\gamma^*$ is the critical threshold, or the signal-to-noise ratio in heterogeneity in \cite{fan2023environment}.  It was defined on a square scale, so a square root is taken here; see the formal definition of $\gamma^*$ in \aosversion{(D.2)}{\eqref{eq:gamma-star}} in the appendix.  This indicates that one does not need to adopt a potentially larger hyper-parameter to achieve causal identification compared with EILLS in \cite{fan2023environment}, recalling the scaling \cref{cond:normalize}. 
\end{remark}
Similar to \cref{prop:minimax-k1}, the first distributional robustness interpretation \eqref{eq:minimax-k} in \cref{thm:minimax-k} is due to adopting inhomogeneous $L_1$ penalization on the variables based on a finer prediction variation $(w_k(1), \cdots, w_k(d))$ observed in the environments $\mathcal{E}$ than the marginal counterpart $(w_1(1), \cdots, w_1(d))$. The second theorem \cref{thm:causal-ident} states that when additional structural assumption \eqref{cond:restricted-invariance} holds, the causal parameter $\beta^\star$ under \eqref{model:lip} with \eqref{ident:lip} can be identified by our estimator when $\gamma$ is large enough.

\subsection{Empirical-level Estimator and Non-asymptotic Analysis}
\label{sec:empirical-est}

Turning to the empirical counterpart, for given $k$ and $\gamma$, we consider minimizing the following empirical-level penalized least squares
\begin{align}
\label{eq:obj-empirical-k}
\begin{split}
    \hat{\beta}^{k, \gamma} &= \argmin_{\beta} \overbrace{\frac{1}{2n|\mathcal{E}|} \sum_{e\in \mathcal{E}, i\in [n]} \left(Y_i^{(e)} - \beta^\top X_i^{(e)} \right)^2 + \gamma \cdot \sum_{j=1}^d |\beta_j| \sqrt{\hat{w}_k(j)}}^{\hat{\mathsf{Q}}_{k,\gamma}(\beta)}, \\
    & ~~~~~~~~~~~~~~~~~~~~~~~~ \text{with} ~~~ \hat{w}_k(j) = \inf_{S \subseteq [d], |S|\le k, j\in S} \frac{1}{|\mathcal{E}|} \sum_{e\in \mathcal{E}} \left\|\hat{\beta}_S^{(e,S)} - \hat{\beta}^{(S)}_S\right\|_{\hat{\Sigma}^{(e)}_{S}}^2.
\end{split}
\end{align}
The weighted $L_1$-penalty aims at attenuating the endogenously spurious variables.   This will be applied to the low-dimensional regime $d=o(n)$.
Under the high-dimensional regime $d \gtrsim n$, we further add another $L_1$ penalization with hyper-parameter $\lambda$, which aims at reducing exogenously spurious variables:
\begin{align}
\label{eq:l1-estimator}
 \hat{\beta}^{k, \gamma, \lambda} = \argmin_{\beta\in \mathbb{R}^d}  \hat{\mathsf{Q}}_{k,\gamma}(\beta) + \lambda \|\beta\|_1.
\end{align}

For the theoretical analysis, we impose some standard assumptions used in linear regression. 
\begin{condition}[Regularity]
\label{cond:regularity}
The following conditions hold:
\begin{itemize}
\item[(a)] \underline{(Data Generating Process)} We collect data from $|\mathcal{E}| \in \mathbb{N}^+$ environments. For each environment $e\in \mathcal{E}$, we observe $(X_1^{(e)}, Y_1^{(e)}), \ldots, (X_n^{(e)}, Y_n^{(e)}) \overset{i.i.d.}{\sim} \mu^{(e)}$. The data from different environments are also independent.
\item[(b)] \underline{(Non-collinearity and Normalization)} Assume $\Sigma^{(e)} \succ 0$ for any $e\in \mathcal{E}$. Recall the definition in \eqref{eq:covariance}, we have $\Sigma_{j,j}=1$ for any $j\in [d]$.
\item[(c)] \underline{(Sub-Gaussian Covariate and Noise)} There exists some constants $\sigma_x \in [1,\infty)$ and $\sigma_y \in \mathbb{R}^+$ such that
\begin{align*}
	\forall e\in \mathcal{E} ~~~~~~~~~~ &\mathbb{E} \left[\exp\left\{v^\top (\Sigma^{(e)}_S)^{-1/2}X^{(e)}_S\right\}\right] \le \exp\left(\frac{\sigma_x^2}{2} \cdot \|v\|_2^2\right) ~~ \forall S\subseteq [d], v \in \mathbb{R}^{|S|},\\
                                            & \mathbb{E} \left[\exp\left\{\lambda Y^{(e)}\right\}\right] \le \exp\left(\frac{\lambda^2 \sigma_y^2}{2}\right) ~~ \forall \lambda 
                                            \in \mathbb{R}.
\end{align*}
\item[(d)] \underline{(Relative Bounded Covariance)} There exists a constant $b \in [1,\infty)$ such that
\begin{align*}
\forall e\in \mathcal{E} \text{ and } S\subseteq [p] \qquad \lambda_{\max}(\Sigma_S^{-1/2} \Sigma_S^{(e)} \Sigma_S^{-1/2}) \le b.
\end{align*}
\end{itemize}
To simplify the presentation, let $c_1$ be such that $c_1 \ge \max\{b, \sigma_x, \sigma_y\}$ and $|\mathcal{E}| \le n^{c_1}$. 
\end{condition}

These assumptions are standard in the analysis of linear regression. It is easy to see the sub-Gaussian covariate conditions hold with $\sigma_x=1$ when $X^{(e)}\sim \mathcal{N}(0,\Sigma^{(e)})$.  The sub-Gaussian condition can be relaxed by the finite fourth-moment conditions with robust inputs; see \cite{fan2020robust}. Our error bound is independent of $\sup_{e\in \mathcal{E}} \lambda_{\max}(\Sigma^{(e)})$ given fixed $b$. The maximum eigenvalue $\lambda_{\max}(\Sigma^{(e)})$ may grow with $d$ in the presence of highly correlated covariates such as factor models \citep{fan2022latent, fan2024factor}. It is also easy to see that $b\le |\mathcal{E}|$ by observing that
\begin{align}
\label{eq:b<E}
    \lambda_{\max}(\Sigma_S^{-1/2} \Sigma_S^{(e)} \Sigma_S^{-1/2}) \le \left\{\lambda_{\min}\left((\Sigma^{(e)}_S)^{-1/2} \Sigma_S (\Sigma_S^{(e)})^{-1/2} \right)\right\}^{-1} \le |\mathcal{E}|.
\end{align} 

The following theorem establishes the $L_2$ error bound with respect to $\beta^{k,\gamma}$ identified in \cref{thm:minimax-k} in the low-dimensional regime.

\begin{theorem}
\label{thm:main-lowdim}
Assume \cref{cond:regularity} holds. There exists a constant $\tilde{C}=O(\mathrm{poly}(c_1))$ such that if $n\ge \tilde{C} \max\{d, k \log d, t\}$ and $t\ge \log n$, then with probability at least $1-e^{-t}$,
\begin{align*}
    \|\hat{\beta}^{k, \gamma} - \beta^{k, \gamma}\|_2 \le \tilde{C} \cdot \sqrt{\frac{d}{n}} \cdot \left\{\frac{\gamma}{\kappa} \sqrt{t + \log(n) + k\log d} + \sqrt{\frac{1+t/d}{\kappa\cdot|\mathcal{E}|}}\right\},
\end{align*}
where $\kappa=\min_{e\in \mathcal{E}} \lambda_{\min}(\Sigma^{(e)})$.
\end{theorem}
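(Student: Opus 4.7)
\medskip

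\noindent\textbf{Proof Proposal.} The plan is to treat this as a perturbed convex optimization problem, where the perturbations are of two qualitatively different types: (i) the ``gradient-side'' perturbation coming from replacing $(\Sigma,u)$ by $(\hat{\Sigma},\hat{u})$ in the quadratic part, and (ii) the ``penalty-side'' perturbation coming from replacing the weights $\{w_k(j)\}$ by the data-driven weights $\{\hat{w}_k(j)\}$. First, I would set $\Delta = \hat{\beta}^{k,\gamma}-\beta^{k,\gamma}$ and write the basic inequality obtained from optimality of $\hat{\beta}^{k,\gamma}$: by strong convexity of the empirical quadratic part with curvature $\hat{\Sigma}$, together with the subgradient optimality condition $\Sigma\beta^{k,\gamma}-u=-\xi$ for some $\xi$ with $|\xi_j|\le\gamma\sqrt{w_k(j)}$, one obtains
\begin{align*}
\tfrac{1}{2}\Delta^\top\hat{\Sigma}\Delta \;\le\; -\bigl[(\hat{\Sigma}-\Sigma)\beta^{k,\gamma}-(\hat{u}-u)\bigr]^\top\Delta \;+\; \gamma\|\Delta\|_1\cdot\max_{j\in[d]}\bigl|\sqrt{\hat{w}_k(j)}-\sqrt{w_k(j)}\bigr|,
\end{align*}
using $|\sqrt{\hat{g}(\beta_1)}-\sqrt{\hat{g}(\beta_2)}|$ type bookkeeping on the penalty difference $\hat{g}(\beta^{k,\gamma})-\hat{g}(\hat{\beta}^{k,\gamma})-[g(\beta^{k,\gamma})-g(\hat{\beta}^{k,\gamma})]$.

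Next, I would handle the ``easy'' pieces. A standard sub-Gaussian matrix concentration under \cref{cond:regularity}(c)(d), combined with the assumption $n\gtrsim d+t$, yields $\hat{\Sigma}\succeq(\kappa/2)I$ on an event of probability at least $1-e^{-t}$, giving restricted strong convexity $\Delta^\top\hat{\Sigma}\Delta\gtrsim\kappa\|\Delta\|_2^2$. For the gradient perturbation, since $\hat{u}$ and $\hat{\Sigma}$ aggregate $n|\mathcal{E}|$ independent sub-Gaussian observations, Hanson--Wright and a Bernstein-type bound give
\begin{align*}
\bigl\|(\hat{\Sigma}-\Sigma)\beta^{k,\gamma}-(\hat{u}-u)\bigr\|_2 \;\lesssim\; \sqrt{\tfrac{d+t}{n|\mathcal{E}|}} \;\asymp\; \sqrt{\tfrac{d}{n}}\cdot\sqrt{\tfrac{1+t/d}{|\mathcal{E}|}},
\end{align*}
which (after controlling $\|\beta^{k,\gamma}\|_2=O(1)$ via the population KKT condition $\Sigma\beta^{k,\gamma}=u-\xi$ and the normalization $\Sigma_{jj}=1$) reproduces the second term in the stated bound through the $\sqrt{d/n}\sqrt{(1+t/d)/(\kappa|\mathcal{E}|)}$ contribution after dividing by $\kappa\|\Delta\|_2$.

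The main obstacle, and the step I would spend most effort on, is the weight perturbation $\max_{j}|\hat{w}_k(j)-w_k(j)|$, because each weight is itself an infimum of an empirical quadratic form over all subsets $S\ni j$ of size at most $k$. I would use the elementary inequality $|\sqrt{a}-\sqrt{b}|\le\sqrt{|a-b|}$ to reduce matters to controlling $\max_j|\hat{w}_k(j)-w_k(j)|$, and then bound this by the uniform deviation of $\|\hat{\beta}_S^{(e,S)}-\hat{\beta}_S^{(S)}\|_{\hat{\Sigma}_S^{(e)}}^2$ from its population counterpart, uniformly over all $S\subseteq[d]$ with $|S|\le k$ and over $e\in\mathcal{E}$. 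For a fixed $S$, the standard OLS perturbation argument gives $\|\hat{\beta}_S^{(e,S)}-\beta_S^{(e,S)}\|_2^2\lesssim(|S|+t)/n$ and similarly for $\hat{\beta}_S^{(S)}$; combined with sub-Gaussian concentration of $\hat{\Sigma}_S^{(e)}$ on the $|S|$-dimensional subspace, this yields per-$S$ concentration of the squared weighted norm with rate $(|S|+t)/n$. A union bound over the $\binom{d}{k}\le d^k$ relevant subsets, the $|\mathcal{E}|\le n^{c_1}$ environments, and the $d$ choices of $j$ inflates the high-probability deviation to $\max_j|\hat{w}_k(j)-w_k(j)|\lesssim(k\log d+t+\log n)/n$, hence $\max_j|\sqrt{\hat{w}_k(j)}-\sqrt{w_k(j)}|\lesssim\sqrt{(k\log d+t+\log n)/n}$. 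Plugging this and $\|\Delta\|_1\le\sqrt{d}\|\Delta\|_2$ into the basic inequality and dividing by $\kappa\|\Delta\|_2$ produces the first term $\tfrac{\gamma}{\kappa}\sqrt{d(t+\log n+k\log d)/n}$ in the stated bound; collecting both contributions completes the proof. The assumption $n\gtrsim\tilde{C}\max\{d,k\log d,t\}$ is exactly what is needed to simultaneously justify the restricted eigenvalue step and to keep the uniform OLS perturbations on each $S$ with $|S|\le k$ small enough for the subset concentration to be valid.
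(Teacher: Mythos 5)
Your overall decomposition — a basic-inequality argument that separates a strong-convexity term, a gradient-side perturbation from replacing $(\Sigma,u)$ by $(\hat{\Sigma},\hat{u})$, and a penalty-side perturbation from replacing $\{w_k(j)\}$ by $\{\hat{w}_k(j)\}$ — is essentially the same skeleton the paper uses (its $\mathsf{T}_1+\mathsf{T}_2+\gamma\mathsf{T}_3$ split plus the population strong-convexity Proposition), and the treatment of the strong convexity and the gradient perturbation is correct in spirit. However, there is a genuine gap in the penalty-perturbation step, and it is the crux of the whole theorem.

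You reduce the penalty perturbation to $\max_j|\hat{w}_k(j)-w_k(j)|$, claim a per-$S$ concentration rate $|\hat{v}(S)-v(S)|\lesssim(|S|+t)/n$ (where $v(S)$ is the squared weighted norm and $\hat{v}(S)$ its empirical counterpart), take a union bound to get $\max_j|\hat{w}_k(j)-w_k(j)|\lesssim(k\log d+t+\log n)/n$, and finally apply the elementary inequality $|\sqrt{a}-\sqrt{b}|\le\sqrt{|a-b|}$. The first step fails. Write $\hat v(S)$ as a perturbation of $v(S)=\frac{1}{|\mathcal{E}|}\sum_e\|\beta_S^{(e,S)}-\beta_S^{(S)}\|_{\Sigma_S^{(e)}}^2$: perturbing both the vector and the weighting matrix produces a \emph{cross term} of order $\sqrt{v(S)}\cdot\sqrt{(|S|+t)/n}$, so the honest per-$S$ bound is
\begin{align*}
|\hat v(S)-v(S)|\;\lesssim\;\sqrt{v(S)}\cdot\sqrt{\tfrac{|S|+t}{n}}+\tfrac{|S|+t}{n},
\end{align*}
not $(|S|+t)/n$. (Your OLS bound $\|\hat\beta_S^{(e,S)}-\beta_S^{(e,S)}\|_2^2\lesssim(|S|+t)/n$ is correct for the coefficient vectors, but the difference of the \emph{squared norms} is dominated by the cross term whenever $v(S)$ is bounded away from $0$.) Feeding the honest bound into $|\sqrt{a}-\sqrt{b}|\le\sqrt{|a-b|}$ only yields $|\sqrt{\hat v(S)}-\sqrt{v(S)}|\lesssim (v(S)\cdot(|S|+t)/n)^{1/4}$, i.e.\ an $n^{-1/4}$ rate, which is too slow for the theorem's $\sqrt{d/n}$ conclusion. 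The paper closes exactly this gap with two ingredients you would need to replace your elementary inequality by: (i) an \emph{instance-dependent} concentration bound (Proposition E.1) of the form $|\hat v(S)-v(S)|\lesssim\sqrt{v(S)}\,\delta+\delta^2$ with $\delta\asymp\sqrt{\rho(k,t)}$, and (ii) the non-trivial observation (Lemma E.2) that this specific shape of bound — not a generic uniform bound — implies $|\sqrt{\hat v(S)}-\sqrt{v(S)}|\lesssim\delta$ rather than $\sqrt{\delta}$. Without that structure-aware argument (or, alternatively, a direct triangle-inequality-for-norms argument on the square-rooted quantities), the penalty-side term cannot be controlled at the required rate.

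Two secondary points worth noting. First, your basic inequality works with empirical strong convexity $\hat{\Sigma}\succeq(\kappa/2)I$ and Cauchy--Schwarz in the Euclidean metric, which delivers $\kappa^{-1}$ in front of the gradient-perturbation term; the stated theorem has $\kappa^{-1/2}$ in the second term. The paper achieves the sharper $\kappa$-dependence by keeping the population strong convexity $\frac12\|\Sigma^{1/2}\Delta\|_2^2$ and performing Cauchy--Schwarz in the $\Sigma^{-1/2}$-weighted metric inside $\mathsf{T}_2$, then absorbing a fraction of $\|\Sigma^{1/2}\Delta\|_2^2$ via $xy\le\epsilon x^2+y^2/\epsilon$; your route gives a valid but slightly weaker bound. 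Second, your union bound over ``$d$ choices of $j$'' is redundant — the infimum defining $\hat{w}_k(j)$ ranges over subsets $S\ni j$ with $|S|\le k$, which are already covered by the union over subsets $S$ with $|S|\le k$ and environments $e$; this only affects constants but is worth tightening.
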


As shown in \cref{thm:minimax-k} and \cref{thm:causal-ident}, the invariance hyper-parameter $\gamma$ interpolates the most predictive solution, the pooled least squares $\bar{\beta} = \Sigma^{-1} u$, with $\gamma=0$ and the most robust solution, the invariant (causal) solution $\beta^\star$, with large enough $\gamma\ge \gamma_k^\star$ in a smooth manner when the additional condition \cref{cond:restricted-invariance} holds. Under the regime of $\kappa \asymp \gamma_k^\star \asymp 1$, our proposed empirical estimator converges to the target $\bar{\beta}$ at the rate of $\{d/(n \cdot |\mathcal{E}|)\}^{1/2}$ on one hand $\gamma = 0$. On the other hand, combining it with \cref{thm:causal-ident}, we also have the convergence rate to the causal parameter $\beta^\star$, that is,  
\begin{align*}
    \mathbb{P}\left[\|\hat{\beta}^{k,\gamma} - \beta^\star\|_2 \le C \sqrt{\{\log(n)+k\log(d)\}\frac{d}{n}}\right] \ge 1-n^{-10}
\end{align*} with the proper choice of $\gamma \asymp \gamma_k^\star \asymp 1$. When $0<\gamma <  \gamma_k^\star$, the estimator $\hat{\beta}^{k,\gamma}$ serves as an invariance information guided distributionally robust estimator, whose variance of the empirical estimator lies in between the two. 

Turning to the high-dimensional regime, we have the following result. The main message is that the proposed estimator in \eqref{eq:l1-estimator} can handle the high-dimensional covariates in a similar spirit to Lasso \citep{tibshirani1997lasso, bickel2009simultaneous} for the sparse linear model with the help of another $L_1$ penalty. 

\begin{theorem}
\label{thm:high-dim}
Assume \cref{cond:regularity} holds. Denote $S^{k,\gamma} = \supp(\beta^{k,\gamma})$. There exists a constant $\tilde{C} = O(\mathrm{poly}(c_1))$ such that if $n\ge \tilde{C}(k+\kappa^{-1}|S^{k,\gamma}|)\log d$, then with probability at least $1-(nd)^{-10}$, 
\begin{align*}  
    \|\hat{\beta}^{k, \gamma, \lambda} - \beta^{k, \gamma}\|_2 \le \frac{12 \sqrt{|S^{k,\gamma}|}}{\kappa} \lambda ~~~ \text{if} ~~~ \lambda \ge \tilde{C}\left(\gamma\cdot\sqrt{\frac{k\log d+\log n}{n}} + \sqrt{\frac{\log d+\log n}{n\cdot|\mathcal{E}|}}\right).
\end{align*}
\end{theorem}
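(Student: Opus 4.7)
The plan is to cast \eqref{eq:l1-estimator} as a weighted Lasso with data-dependent weights $\tilde\lambda_j := \lambda + \gamma\sqrt{\hat{w}_k(j)}$ and to run the standard basic-inequality / restricted-eigenvalue analysis on it, the key new technical step being a uniform-in-$j$ control of the random weights. Write $\Delta := \hat\beta^{k,\gamma,\lambda} - \beta^{k,\gamma}$, $S := S^{k,\gamma}$, and let $\hat\Sigma,\hat u$ be the pooled empirical Gram and cross-moment. Optimality of $\hat\beta^{k,\gamma,\lambda}$ and the quadratic nature of the pooled risk deliver the basic inequality
\begin{align*}
\tfrac{1}{2}\Delta^\top\hat\Sigma\Delta \;\le\; -\langle \hat\Sigma\beta^{k,\gamma}-\hat u,\,\Delta\rangle + \sum_{j=1}^{d}\tilde\lambda_j\bigl(|\beta^{k,\gamma}_j|-|\hat\beta^{k,\gamma,\lambda}_j|\bigr),
\end{align*}
and by \cref{thm:minimax-k} the population minimizer satisfies a KKT identity $\Sigma\beta^{k,\gamma}-u=-\gamma\zeta$ with subgradient $\zeta_j = w_k(j)\,\mathrm{sign}(\beta^{k,\gamma}_j)$ on $S$ and $|\zeta_j|\le w_k(j)$ off $S$.

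\textbf{Reduction to a cone inequality.} Substituting the KKT identity into the basic inequality, splitting the indices via $S$ versus $S^c$, and using convexity of $|\cdot|$ (so that $|\hat\beta_j|\ge|\beta_j|+\mathrm{sign}(\beta_j)\Delta_j$ on $S$ and $|\hat\beta_j|=|\Delta_j|$ on $S^c$), routine algebra reduces the display to
\begin{align*}
\tfrac{1}{2}\Delta^\top\hat\Sigma\Delta + \sum_{j\notin S}(\lambda - r_j)|\Delta_j| \;\le\; \sum_{j\in S}(\lambda + r_j)|\Delta_j|,
\end{align*}
with a noise term $r_j := \bigl|[(\hat\Sigma-\Sigma)\beta^{k,\gamma}-(\hat u-u)]_j\bigr| + \gamma\bigl|\sqrt{\hat w_k(j)}-w_k(j)\bigr|$. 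Whenever $\lambda\ge 2\max_j r_j$ this simultaneously yields the cone constraint $\|\Delta_{S^c}\|_1\le 3\|\Delta_S\|_1$ and the estimate $\tfrac12\Delta^\top\hat\Sigma\Delta\le \tfrac32\lambda\sqrt{|S|}\,\|\Delta\|_2$.

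\textbf{Controlling $r_j$.} The score piece is a sub-Gaussian linear statistic; \cref{cond:regularity}(c)--(d), a Bernstein-type bound, and a union bound over $j\in[d]$ give $\max_j\bigl|[(\hat\Sigma-\Sigma)\beta^{k,\gamma}-(\hat u-u)]_j\bigr|\lesssim\sqrt{(\log d+\log n)/(n|\mathcal E|)}$ with probability at least $1-(nd)^{-11}$. The weight piece is the delicate part: each $\hat w_k(j)$ is an infimum over $|S|\le k$ with $j\in S$ of pooled OLS-fit discrepancies, and uniform sub-Gaussian control of $\hat\beta^{(e,S)}_S$ and $\hat\beta^{(S)}_S$ must be carried out simultaneously for every such $S$ and every $e\in\mathcal E$. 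Standard OLS concentration on each fixed $S$, combined with a union bound over the at most $\binom{d}{k}\le d^{k}$ choices of $S$, gives $\|\hat\beta^{(e,S)}_S-\beta^{(e,S)}_S\|_{\Sigma^{(e)}_S}^2\lesssim (k\log d+\log n)/n$ uniformly, provided $n\gtrsim\kappa^{-1}k\log d$. The Lipschitz inequality $|\sqrt{a}-\sqrt{b}|\le\sqrt{|a-b|}$ then transfers the squared-norm deviation to the root, yielding $\max_j\bigl|\sqrt{\hat w_k(j)}-w_k(j)\bigr|\lesssim\sqrt{(k\log d+\log n)/n}$. The choice of $\lambda$ in the theorem accordingly ensures $\lambda\ge 2\max_j r_j$ on this good event.

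\textbf{Restricted eigenvalue and conclusion.} The last ingredient is a restricted eigenvalue bound $\Delta^\top\hat\Sigma\Delta\ge(\kappa/2)\|\Delta\|_2^2$ on the cone $\|\Delta_{S^c}\|_1\le 3\|\Delta_S\|_1$, obtained by transferring $\lambda_{\min}(\Sigma)\ge\kappa$ to $\hat\Sigma$ through a sub-Gaussian RE lemma of Rudelson--Zhou / Raskutti--Wainwright--Yu type, which requires $n\gtrsim\kappa^{-1}|S|\log d$. Combining with the cone inequality gives $\|\Delta\|_2\le 6\sqrt{|S|}\,\lambda/\kappa$; the constant $12$ in the statement merely absorbs the slack in the RE constant. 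I expect the main obstacle to be the uniform-in-$S$ concentration of $\sqrt{\hat w_k(j)}$: the infimum structure of $\hat w_k(j)$ forces simultaneous control of $O(|\mathcal E|d^{k})$ OLS fits and is precisely what produces the $k\log d$ factor in both the sample-size requirement and the prescribed lower bound on $\lambda$, while the square-root Lipschitz trick is essential to avoid squaring that factor in the final rate.
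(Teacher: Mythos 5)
Your proposal follows the paper's argument in all but one step: the basic inequality from the optimality of $\hat\beta^{k,\gamma,\lambda}$, the population KKT identity $\Sigma\beta^{k,\gamma}-u=-\gamma\,(w_k\odot\xi)$, reduction to the cone $\|\Delta_{S^c}\|_1\le 3\|\Delta_S\|_1$ once $\lambda$ dominates the two noise pieces, and restricted strong convexity to close the loop — this is exactly the paper's $\mathsf{T}_1/\mathsf{T}_2$ decomposition followed by Lemma~\ref{lemma:rsc}. The constants, the cone parameter~$3$, and the final bound $12\sqrt{|S^{k,\gamma}|}\lambda/\kappa$ all line up.

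The one place you differ is the route to $\max_j|\sqrt{\hat w_k(j)}-w_k(j)|\lesssim\sqrt{(k\log d+\log n)/n}$, and there the tool you cite would not work as stated. You invoke the scalar inequality $|\sqrt{a}-\sqrt{b}|\le\sqrt{|a-b|}$ with $a=\hat v(S)$, $b=v(S)$ (writing $\hat w_k(j)=\inf_S\hat v(S)$), but $|\hat v(S)-v(S)|$ is not of order $\delta^2$: it has a cross term of order $\sqrt{v(S)}\,\delta$ coming from the linearization of the quadratic form, so $\sqrt{|\hat v(S)-v(S)|}$ is only $O((v(S))^{1/4}\sqrt\delta)$, which is far larger than $\delta$ whenever $v(S)\asymp 1$. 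The paper sidesteps this by proving the \emph{instance-dependent} bound $|\hat v(S)-v(S)|\lesssim\sqrt{v(S)}\,\delta+\delta^2$ (\cref{prop:instance-dependent-v2}, via the variational representation in \cref{lemma:variational-representation-v2}) and then applying the tailored \cref{lemma:sqrt}, which is designed precisely for this error profile and does deliver $|\sqrt{\hat v(S)}-\sqrt{v(S)}|\lesssim\delta$. Your OLS-concentration idea is actually salvageable — and arguably cleaner than the paper's route — but the correct tool is the triangle inequality at the level of the pooled norm, $\bigl|\sqrt{\hat v(S)}-\sqrt{v(S)}\bigr|\le\bigl(\tfrac{1}{|\mathcal E|}\sum_e\|(\hat\Sigma^{(e)}_S)^{1/2}(\hat\beta^{(e,S)}_S-\hat\beta^{(S)}_S)-(\Sigma^{(e)}_S)^{1/2}(\beta^{(e,S)}_S-\beta^{(S)}_S)\|_2^2\bigr)^{1/2}$, after which the uniform OLS and Gram-matrix deviations of order $\delta$ finish the job; it is emphatically not the $\sqrt{|a-b|}$ Lipschitz bound.

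Everything else — the sample-size requirements $n\gtrsim(k+\kappa^{-1}|S^{k,\gamma}|)\log d$, the choice of $\lambda$ of the stated form, the high-probability statement, and the $\sqrt{|S^{k,\gamma}|}$ dependence — is consistent with the paper. The proposal is correct in spirit and in conclusion; the only genuine issue is the misattribution of the square-root transfer step, which as written would lose a square root in the rate.
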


\section{Real Data Applications}
\label{sec:exp} 
In this section, we compare our method \emph{invariance-guided regularization (IGR)} with other estimators in two real data applications: daily stock log-return prediction and earth climate system prediction. Our proposed method attains more robust predictions compared with predecessors. We summarize the framework in \cref{alg:igr-algo}. In the two applications, we simply adopt $k=2$. The performance of varying $k\in \{1,2,3\}$ is similar; see \aosversion{Appendix F.4}{\cref{sec:different-k}}.

\begin{algorithm}
    \caption{Linear Regression with Invariance-Guided Regularization (IGR)}
    \begin{algorithmic}[1]
    \State \textbf{Input:} Training environments $\{\mathcal{D}^{(e)}\}_{e\in \mathcal{E}}$ with $\mathcal{D}^{(e)} = \{(X_i^{(e)}, Y_i^{(e)})\}_{i=1}^n$; validation environment $\mathcal{D}^{(\mathrm{valid})}$.
    \State \textbf{Input:} computational budget $k$.
    \State \textbf{Input:} candidate sets of hyper-parameters $\Gamma$ and $\Lambda$.
    \State For each pair of hyper-parameters $(\gamma,\lambda) \in \Gamma \times \Lambda$, calculate $\hat{\beta}^{k,\gamma,\lambda}$ using \eqref{eq:l1-estimator} on training environments.
    \State Choose hyper-parameters as
    \begin{align}
    \label{eq:hyper-parameter}
        \hat{\gamma}, \hat{\lambda} \in \argmin_{\gamma \in \Gamma, \lambda \in \Lambda} \frac{1}{|\mathcal{D}^{(\mathrm{valid})}|} \sum_{(X_i,Y_i)\in \mathcal{D}^{(\mathrm{valid})}} (X_i^\top \hat{\beta}^{k, \gamma, \lambda} - Y_i)^2
    \end{align}
    \State \textbf{Output: } $\beta^{k, \hat{\gamma}, \hat{\lambda}}$.
    \end{algorithmic}
    \label{alg:igr-algo}
    \end{algorithm}

\subsection{Stock Log-return Prediction}\label{sec:stock}

We follow \citet{varambally2023discovering} and use the daily log-returns of 100 stocks from S\&P 100, defined as the differences in the logarithms of the closing prices of successive days. We denote the daily log-returns of these stocks as $\{Z_{t,j}\}_{t\in[T],j\in[100]}$ where $T$ is the length of the sequence.
In this study, we focus on predicting the stocks in the Real Estate sector: American Tower (Symbol: \texttt{AMT}) and Simon Property Group (Symbol: \texttt{SPG}). For the task of predicting the outcome variables $\{\texttt{AMT}, \texttt{SPG}\}$ with index $j_0 \in [100]$, the target response variable is $Y_t=Z_{t, j_0}$, and the covariate is $X_t = \{Z_{t-1, j} : j \in [100]\} \cup \{Z_{t, j} : j \neq j_0\}$, the same as \citep{varambally2023discovering}.

We use data from 800 consecutive days starting in August 2018 and partition this time series into seven segments: days 1–100 ($\mathcal{D}_1$) and 101–200 ($\mathcal{D}_2$) serve as the two training environments, days 201–400 ($\mathcal{D}_3$) as the validation environment, and days 401–500, 501–600, 601–700, and 701–800 as the four test environments denoted as $\{\mathcal{D}_{3+i}\}_{i=1}^4$. This partitioning is motivated by the results of \citet{varambally2023discovering}, which indicate that the market behavior between the two training time spans differs significantly. We set both $X$ and $Y$ to be zero-mean in each environment to remove the effect of the trend. 

We fix the computational budget $k = 2$ and compare our method with Causal Dantzig \citep{rothenhausler2019causal}, Anchor Regression \citep{rothenhausler2021anchor} and DRIG \citep{shen2023causality} with the aid of $L_1$ penalty (if applicable), along with PCMCI$^+$ \citep{runge2020discovering} with the aid of $L_2$ penalty. The hyper-parameters for all models are determined via the validation set $\mathcal{D}^{(\mathrm{valid})}=\mathcal{D}_3$ using the criterion similar to \eqref{eq:hyper-parameter}. Here the hyper-parameters in the two prediction tasks are determined independently. We finally evaluate each method using the worst-case out-of-sample $R^2$ across the four test environments defined as 
\begin{align}
\label{eq:worst-exp}
    \min_{e\in\{4, 5, 6, 7\}} R^2_{\mathtt{oos}, e} ~~\text{with}~~ R_{\mathtt{oos},e}^2=1-\frac{\sum_{(X, Y)\in \mathcal{D}_e}(Y-\hat{Y}(X))^2}{\sum_{(X, Y)\in \mathcal{D}_e}Y^2}
\end{align} where $\hat{Y}(X)$ is the model's prediction. Here we use the $R^2$ rather than the mean squared error in \eqref{eq:hyper-parameter} to present the result to illustrate the challenge of this task, given most of the previous methods have negative out-of-sample $R^2$, indicating that their fitted models are even worse than simply using the null prediction model. 

This process is repeated $100$ times. For each trial, we use a random sample of $90$ data in each training environment $\mathcal{D}_1, \mathcal{D}_2$ to fit the model. The average $\pm$ standard deviation of the worst-case out-of-sample $R^2$ is reported in \cref{table:stock}. 

\begin{table}[htb!]
    \footnotesize
    \centering
    \begin{tabular}{crr}
      \hline
      \multicolumn{1}{c}{Data} &
      \multicolumn{1}{c}{\texttt{AMT}} & 
      \multicolumn{1}{c}{\texttt{SPG}}\\
      \hline
      IGR (Ours) & {$0.131\pm0.074$}& {$0.048\pm 0.039$}\\
      Causal Dantzig&$-0.150\pm0.296$&$-0.006\pm 0.072$\\
      Anchor/Lasso &$ -0.199\pm0.097$& $-0.018\pm 0.021$\\ 
      DRIG&$ -0.553\pm0.309$&$-0.201\pm 0.099$\\
      PCMCI$^+$&$ 0.051\pm0.075$&$ -0.057\pm0.041$\\
      \hline
    \end{tabular}
	\caption{The average $\pm$ standard deviation of the worst-case out-of-sample $R^2$ \eqref{eq:worst-exp} for predicting the stocks $\mathtt{AMT}$ and $\mathtt{SPG}$ using different estimators.}
    \label{table:stock}
\end{table}

We can see that our method outperforms competing methods in terms of robustness, as it provides more consistent estimations across different environments. In particular, our method achieves a positive worst-case out-of-sample $R^2$ when predicting \texttt{SPG}, while the other methods result in negative $R^2$ values. 
To qualitatively illustrate why most of the other competing methods yield negative $R^2$ values, we apply LASSO with an $L_1$ penalty parameter of $0.125$ on the training data in the $\texttt{AMT}$ task to select covariates. Using the selected covariates, we refit the target on the training environments $\mathcal{D}_1,\mathcal{D}_2$, as well as one of the test environments $\mathcal{D}_6$. As shown in \cref{fig:stable}, the resulting estimations differ drastically, highlighting strong heterogeneity across environments. This observation partially explains why other methods may produce negative $R^2$ values.

\begin{figure}[htb!]
    \centering
    \includegraphics[width=1.0\linewidth]{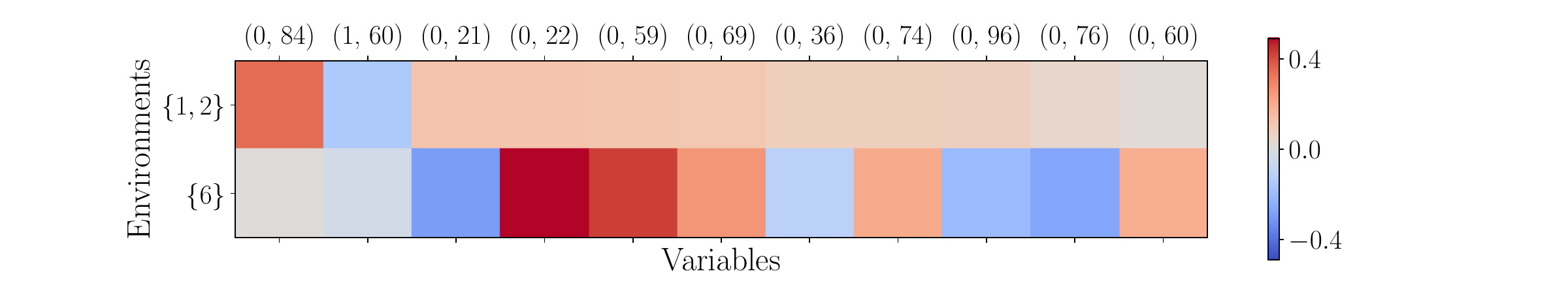}
    \caption{The estimated coefficients of the selected variables are shown for $\mathcal{D}_1 \cup \mathcal{D}_2$ and $\mathcal{D}_6$. Warm colors represent positive coefficients, while cool colors indicate negative coefficients. Variables are denoted as $(\tau, j)$, where $\tau \in \{0, 1\}$ represents the time lag, and $j$ indicates the stock index.}
    \label{fig:stable}
\end{figure}

\subsection{Climate Dynamic Prediction}\label{sec:climate}
We apply our method to the NCEP-NCAR Reanalysis Dataset~\citep{1996BAMS...77..437K} provided by NOAA PSL, Boulder, Colorado, USA. The dataset is widely used in atmospheric and climate research. It comprises 10512 global grid points with a resolution of 2.5 degrees in both latitude and longitude, spanning multiple vertical levels, and is available on a daily timescale. The dataset encompasses a range of meteorological properties, including air temperature (\texttt{air}),  clear sky upward solar flux (\texttt{csulf}), surface pressure (\texttt{pres}), sea level pressure (\texttt{slp}), and others.

We treat the aforementioned four properties $\{\texttt{air}, \texttt{csulf},\texttt{slp},\texttt{pres}\}$ as four independent tasks. For each property $a\in \{\texttt{air}, \texttt{csulf},\texttt{slp},\texttt{pres}\}$, we perform time-series prediction on $\{Z_{a,t,j}\}_{t\in [T], j\in [r]}$, where $Z_{a,t,j}$ represents the (pre-processed) measure of the property $a$ in geometric region $j$ at timestep $t$. We omit the dependency on $a$ and denote it as $\{Z_{ t,j}\}_{t\in [T], j\in [r]}$ when $a$ is clear from context. See how the data is pre-processed in \aosversion{Appendix F.1}{\cref{sec:pre-processing}}. 

In our experiment, we consider the datasets for the years 1950 ($\mathcal D_1$) and 2000 ($\mathcal D_2$) as the two training environments, the year 2010 as the validation environment ($\mathcal D_3$), and the year 2020 as the test environment ($\mathcal D_4$), all on a daily timescale. The target is to predict a set of variables $\mathcal{Y}=\{j_1,\ldots,j_w\}\subseteq [60]$, namely $Y_t=(Z_{t,j_1},\ldots,Z_{t,j_w})^\top$, using all the variables from the past seven days as covariates, namely,
\begin{align*}
    X_t:=\left(Z_{t-1,1},\ldots,Z_{t-1,60},Z_{t-2,1},\ldots,Z_{t-2,60},\ldots,Z_{t-7,1},\ldots,Z_{t-7,60}\right).
\end{align*} Here $\mathcal{Y}$ is the variables that can be predicted by $X_t$ significantly better than simply using the null prediction model; see the formal procedure on determining $\mathcal{Y}$ in \aosversion{Appendix F.2}{\cref{sec:construction-of-target}}.

For competing estimators, we consider PCMCI$^+$ \citep{runge2020discovering} and Granger causality \citep{granger1969investigating} with the aid of $L_2$ penalty, along with the following three causality-oriented linear models: Causal Dantzig \citep{rothenhausler2019causal}, Anchor Regression~\citep{rothenhausler2021anchor} and DRIG \citep{shen2023causality} with the aid of $L_1$ penalty (if applicable). We use mean squared error (MSE) as both the validation metric and test metric, which is defined as
\begin{align}\label{eq:MSE}
    \mathrm{MSE}_{e}=\sum_{(X_t,Y_t)\in\mathcal{D}_e}\|Y_t-\hat{Y}(X_t)\|_2^2 \qquad e\in [4]
\end{align}
where $\hat{Y}(X_t)$ is the model's prediction. The hyper-parameters for each model are tuned using the validation environment $\mathcal{D}_3$ as described in \cref{alg:igr-algo}. 

\begin{table}[htb!]
    \centering
    \footnotesize
    \begin{tabular}{cccccc}
    \hline
        Data & \texttt{air} &\texttt{csulf} & \texttt{pres} & \texttt{slp}\\
        \hline
        IGR(Ours)&$ 3.7838 \pm 0.3281$&$2.0523\pm 0.0883$ &$1.6077\pm 0.1122$&$3.0466\pm0.1955$\\
        Causal Dantzig&  $4.3742\pm0.2099$&$2.6197\pm0.0455$  &$ 2.0429\pm0.1502$&$3.5819\pm0.3161$\\
        LASSO&  $3.9171\pm 0.3194$ &$2.1327\pm 0.0567$& $1.6726\pm 0.0897$&$3.1261\pm 0.1887$\\
        Anchor &$3.9007\pm0.2394$ & $2.1142\pm 0.0615$& $1.6638\pm 0.0981$&$3.1235\pm 0.1622$\\
        DRIG & $ 3.9579\pm 0.2594$&$ 2.1844\pm 0.1233$& $1.7235\pm0.1176$&$3.2890\pm 0.1618$\\
        Granger&$ 4.3174\pm 0.3842$&$2.3182\pm 0.0736$&$1.8470\pm 0.1222$ &$3.5308\pm 0.1484$\\
        PCMCI$^+$ & $4.3533\pm 0.3062$&$2.4024\pm 0.0422$& $1.9499\pm 0.1213$&$3.6627\pm 0.2711$\\
        \hline
    \end{tabular}
    \caption{The average $\pm$ standard deviation of the mean squared error~\eqref{eq:MSE} of the four tasks air temperature (\texttt{air}),  clear sky upward solar flux (\texttt{csulf}), surface pressure (\texttt{pres}) and sea level pressure (\texttt{slp}) using different estimators.}
    \label{table: climate}
\end{table}

This process is repeated $100$ times. For each trail, we use a random sample of $300$ data in each training environment $\mathcal{D}_1, \mathcal{D}_2$ to fit the model. The average $\pm$ standard deviation of the average mean squared error on the test environment $\mathcal{D}_4$ of each method for each task is reported in \cref{table: climate}. The quantitative results show that our method outperforms all competing methods across all tasks, indicating that IGR can provide more robust predictions. We also qualitatively visualize the causal relation detected by our method; see \aosversion{Appendix F.4}{\cref{appendix:climate-causal}}.

\appendix

\bibliographystyle{apalike2}
\bibliography{main.bbl}

\newpage
\section*{Supplemental Materials}

The supplemental materials are organized as follows:

\begin{itemize}[itemsep=0pt, leftmargin=20mm]
    \item[\cref{appendix:comp}] provides additional discussions about the computation barrier omitted in the main text.
    \item[\cref{sec:diss}] contains the discussions omitted in the main paper.
    \item[\cref{sec:proof:barrier}] contains the proofs for the computation barrier results.
    \item[\cref{sec:proof:population}] contains the proofs for the population-level results.
    \item[\cref{sec:proof:non-asymptotic}] contains the proofs for the finite sample results.
\end{itemize}

\section{More Discussions on Computational Barriers}
\label{appendix:comp}

In this section, we (1) answer the following question related to the NP-hardness of the problem {\sc ExistLIS}; and (2) prove \cref{prop:3sat} in the main text. 

\begin{itemize}
    \item[Q1] The setup of \eqref{model:lip} are searching for a set with weaker invariance conditions adopted in causal discovery literature. Can stronger invariance conditions like the full distributional invariance in \cite{peters2016causal} help?
    \item[Q2] The NP-hardness of 0/1 Knapsack Problem relies on the exponential total budget, and there is $\mathrm{poly}(\text{number of items}, \text{total budget})$ algorithm. Is the NP-hardness in {\sc ExistLIS} due to the existence of many varying solutions with heterogeneity signal $e^{-\Omega(d)}$ such that a computationally efficient algorithm is possible if all the non-invariant solutions have large heterogeneity signals?
    \item[Q3] The covariance matrices in the construction \cref{lemma:reduction} is dense. Is computationally efficient estimation attainable when the covariance matrices are all sparse? 
\end{itemize}

The brief answers to the above questions are all ``No'', and the rigorous statements can be found in the following subsections.

\subsection{Stronger Invariance Condition Cannot Help}
\label{sec:barrier-1}

We consider the following task which is a special case of pursuing the distributional invariance, under which the conditional independence test can be easily done by doing simple calculations on the full covariance matrix on $(X^\top, Y)^\top$. To be specific, $(X, Y)$ are multivariate normal distributed with positive definite full covariance matrix in each environment. The decision problem can be described as follows.

\begin{problem}
\label{prob:exist-dis}
Let $d \in \mathbb{N}^+$ be the dimension of the explanatory covariate, and $E$ be the number of environments. We assume that for each $e\in \{1,\ldots, E\}$, 
\begin{align*}
    \begin{bmatrix}
        X^{(e)} \\
        Y^{(e)}
    \end{bmatrix} \sim \mathcal{N}\left(0, \begin{bmatrix}
        \Sigma^{(e)} & u^{(e)} \\
        (u^{(e)})^\top & v^{(e)}
    \end{bmatrix}\right)
\end{align*} where $\Sigma^{(1)}, \ldots, \Sigma^{(E)} \in \mathbb{R}^{d\times d}$ are positive definite matrices, $u^{(1)}, \ldots, u^{(E)}$ are $d$-dimensional vectors, and $v^{(e)}$ is a scalar satisfying $v^{(e)} > (u^{(e)})^\top (\Sigma^{(e)})^{-1} u^{(e)}$. We say a set $S$ is a non-trivial distribution-invariant set if
\begin{align}
\label{eq:dis}
    \beta^{(e,S)} \equiv {\beta}^{(S)} \neq 0 \qquad \text{and} \qquad v^{(e)} - (\beta^{(e,S)}_S)^\top \Sigma_S^{(e)} \beta^{(e,S)}_S \equiv v_{\varepsilon}.
\end{align}
    We define the problem {\sc ExistDIS} as follows:
    
    \noindent \textbf{$[\mathsf{Input}]$} $\Sigma^{(1)}, \ldots, \Sigma^{(E)}$,  $u^{(1)}, \ldots, u^{(E)}$ and $v^{(1)}, \ldots, v^{(E)}$  satisfying the above constraints.\\
    \noindent \textbf{$[\mathsf{Output}]$} Returns 1 if there exists a non-trivial distribution-invariant set otherwise 0.

    \noindent We define the problem {\sc ExistDIS-Unique} as the same problem with the promise that the non-trivial distribution-invariant set is unique if exists.
\end{problem}

The following lemma shows that \eqref{eq:dis} is equivalent to the full distribution invariance condition (Assumption 1 in \cite{peters2016causal}) under the setting in \cref{prob:exist-dis}.

\begin{lemma}
\label{lemma:equiv}

    Under the setting of \cref{prob:exist-dis}, $S$ satisfies \eqref{eq:dis} if and only if 
    \begin{align}
    \label{eq:dis-invar}
        \exists \bar{\beta} \in \mathbb{R}^d, \beta_{S} \neq 0 ~~ \text{s.t.} ~~ 
        Y^{(e)} = X^{(e)}_S \bar{\beta}_S + \varepsilon^{(e)} ~~ \text{with} ~~ \varepsilon^{(e)} \sim F_\varepsilon \indep X_{S}^{(e)} ~~~~ \forall e\in \{1,\ldots, E\}        
    \end{align}
\end{lemma}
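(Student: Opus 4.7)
The plan is to exploit joint Gaussianity of $(X^{(e)}, Y^{(e)})$ in both directions; the only nontrivial tool is the classical fact that uncorrelated jointly Gaussian vectors are independent. Both implications then reduce to direct moment computations, and the condition on residual variance in \eqref{eq:dis} precisely matches the requirement that the noise distributions $F_\varepsilon$ coincide across environments in \eqref{eq:dis-invar}.

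For the forward direction, I would set $\bar{\beta} := \beta^{(S)}$ (so $\bar{\beta}_{S^c}=0$) and define the residual $\varepsilon^{(e)} := Y^{(e)} - X_S^{(e)\top}\bar{\beta}_S$. Since $(X^{(e)}, Y^{(e)})$ is jointly Gaussian, $\varepsilon^{(e)}$ is Gaussian. Using $\bar{\beta}_S = \beta^{(e,S)}_S = (\Sigma_S^{(e)})^{-1} u_S^{(e)}$ (the first identity in \eqref{eq:dis}), a one-line computation gives $\mathrm{Cov}(\varepsilon^{(e)}, X_S^{(e)}) = u_S^{(e)} - \Sigma_S^{(e)}\bar{\beta}_S = 0$, so $\varepsilon^{(e)} \indep X_S^{(e)}$ by joint Gaussianity. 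Its mean is $0$, and its variance equals $v^{(e)} - \bar{\beta}_S^\top \Sigma_S^{(e)} \bar{\beta}_S$, which by the second identity of \eqref{eq:dis} equals the common value $v_\varepsilon$ across all $e$. Hence $\varepsilon^{(e)} \sim F_\varepsilon := \mathcal{N}(0, v_\varepsilon)$ uniformly in $e$, and $\bar{\beta}_S = \beta^{(S)}_S \neq 0$ is inherited from \eqref{eq:dis}.

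For the reverse direction, assume $Y^{(e)} = X_S^{(e)\top} \bar{\beta}_S + \varepsilon^{(e)}$ with $\varepsilon^{(e)} \sim F_\varepsilon$ independent of $X_S^{(e)}$. Multiplying by $X_S^{(e)}$ and taking expectations gives $u_S^{(e)} = \Sigma_S^{(e)}\bar{\beta}_S$, so $\beta^{(e,S)}_S = (\Sigma_S^{(e)})^{-1}u_S^{(e)} = \bar{\beta}_S$ does not depend on $e$; padded with zeros this yields $\beta^{(e,S)} \equiv \beta^{(S)} = \bar{\beta}$, and $\bar{\beta}_S \neq 0$ is given. Taking variances and using independence, $v^{(e)} = \bar{\beta}_S^\top \Sigma_S^{(e)} \bar{\beta}_S + \mathrm{Var}(\varepsilon^{(e)})$, and since $\varepsilon^{(e)} \sim F_\varepsilon$ has a fixed variance $v_\varepsilon$, this rearranges to the second identity of \eqref{eq:dis}.

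There is no real obstacle; the only subtle step is the appeal to joint Gaussianity in the forward direction to upgrade zero covariance to full independence. This is exactly what the multivariate-normal setup of \cref{prob:exist-dis} buys us, and it is also precisely why the stronger distributional-invariance formulation reduces to the linear-moment formulation of \cref{prob1} in this setting, so that the NP-hardness arguments for {\sc ExistLIS}/{\sc ExistLIS-Ident} transfer to {\sc ExistDIS}/{\sc ExistDIS-Unique} via the same constructed instances (after augmenting with a suitable common $v^{(e)}$ ensuring $v_\varepsilon$ is matched).
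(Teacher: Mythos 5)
Your proof is correct and follows the same approach as the paper's: both rely on joint Gaussianity to upgrade zero covariance of the residual with $X_S^{(e)}$ to full independence, and both reduce the matching-noise-distribution condition to matching the residual variance $v^{(e)} - (\beta_S^{(e,S)})^\top \Sigma_S^{(e)} \beta_S^{(e,S)} = v_\varepsilon$. The paper presents this as a two-step chain of equivalences while you unroll both implications explicitly, but the substance is identical.
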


It is then easy to see that the problem {\sc ExistDIS-Unique} corresponds to the case where the non-trivial invariant set is unique if it exists and ICP \citep{peters2016causal} can uniquely identify $S^\star$. We have the following result. The proof idea is that we construct the problem such that the additional invariant noise-level constraint trivially holds for all the prediction-invariant solutions.

\begin{theorem}
\label{thm:np-hard-dis}
When $E=2$, the problem {\sc ExistsDIS} is NP-hard under deterministic polynomial-time reduction, the problem {\sc ExistsDIS-Unique} is NP-hard under randomized polynomial-time reduction.
\end{theorem}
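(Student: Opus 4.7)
The plan is to piggyback on the parsimonious reduction already built in \cref{lemma:reduction} and augment the instance with a carefully chosen response variance $v^{(e)}$ so that the additional constant-noise-variance requirement in \eqref{eq:dis} holds for free on every non-trivial prediction-invariant set, while being incompatible with every non-prediction-invariant set. The auxiliary fact \cref{lemma:equiv} is a standard Gaussian conditioning computation: under joint normality, $Y^{(e)} \mid X_S^{(e)}$ is Gaussian with mean $(\beta^{(e,S)}_S)^\top X_S^{(e)}$ and variance $v^{(e)} - (\beta^{(e,S)}_S)^\top \Sigma_S^{(e)} \beta^{(e,S)}_S$, so the existence of a cross-environment law $F_\varepsilon$ independent of $X_S^{(e)}$ with $\bar\beta_S \neq 0$ is equivalent to simultaneously matching the linear coefficients and the residual variance across $e$, which is exactly \eqref{eq:dis}.

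For the main reduction, I take the instance $(\Sigma^{(1)}, \Sigma^{(2)}, u^{(1)}, u^{(2)})$ produced in \cref{lemma:reduction} from a {\sc 3Sat} (resp.\ {\sc 3Sat-Unique}) instance $x$, and I exploit the rigid structure of its solution set: by \eqref{eq:equiv-reduction}, every $S\in\mathcal{S}_y$ satisfies $|S|=k+1$ and $\beta^{(1,S)}=\beta^{(2,S)}=\mathbf{1}_S$. A direct computation of $(\mathbf{1}_S)^\top \Sigma_S^{(e)} (\mathbf{1}_S)$ using the block form of $\Sigma^{(2)}$, the fact $d\in S$, and the constraint $A_{\mathring S}=0$ on $\mathring S=S\setminus\{d\}$ yields values that depend only on $k$ and $d$ and not on which particular $S\in \mathcal{S}_y$ we pick. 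Let $q_e := (\mathbf{1}_S)^\top \Sigma_S^{(e)} (\mathbf{1}_S)$ denote these two numbers; then setting
\begin{equation*}
v^{(1)} := M, \qquad v^{(2)} := M + (q_2 - q_1),
\end{equation*}
with $M$ a sufficiently large polynomial in $d$ (large enough to dominate $(u^{(e)})^\top (\Sigma^{(e)})^{-1} u^{(e)}$ in both environments, which is bounded above by $\|u^{(e)}\|_2^2/\lambda_{\min}(\Sigma^{(e)})=O(d^3)$), makes the full covariance of $(X^{(e)},Y^{(e)})$ positive definite and simultaneously forces $v^{(1)} - q_1 = v^{(2)} - q_2$. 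Hence every $S\in\mathcal{S}_y$ is a non-trivial distribution-invariant set, i.e.\ satisfies \eqref{eq:dis}.

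For the converse, any non-trivial distribution-invariant $S$ is in particular prediction-invariant with $\beta^{(S)}\neq 0$, so $S\in \mathcal{S}_y$. Together with the forward direction, the set of non-trivial distribution-invariant sets coincides with $\mathcal{S}_y$, so the map $T$ from \cref{lemma:reduction}, enriched by the choice of $(v^{(1)},v^{(2)})$ above, is a parsimonious deterministic polynomial-time reduction from {\sc 3Sat} to {\sc ExistDIS}. NP-hardness of {\sc ExistDIS} then follows from \cref{prop:3sat}. The same enriched instance also gives a reduction from {\sc 3Sat-Unique} to {\sc ExistDIS-Unique}: parsimoniousness guarantees $|\mathcal{S}'_y|=|\mathcal{S}_x|\in\{0,1\}$, so the promise transfers, and the randomized NP-hardness of {\sc ExistDIS-Unique} follows from the randomized NP-hardness of {\sc 3Sat-Unique}.

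The main obstacle is verifying that the matching value $q_2-q_1$ is genuinely independent of the choice of $S\in\mathcal{S}_y$ rather than depending on combinatorial details of the satisfying assignment; this is where the construction in \cref{lemma:reduction} is essential, because the solution sets are forced to have cardinality exactly $k+1$, to contain $d$, and to satisfy $A_{\mathring S}=0$, which together freeze the quadratic form $(\mathbf{1}_S)^\top \Sigma_S^{(e)} (\mathbf{1}_S)$ at a value depending only on $d$ and $k$. A secondary check is that the same $(v^{(1)},v^{(2)})$ do not accidentally create a distribution-invariant set outside $\mathcal{S}_y$; this is automatic since any such set would in particular be prediction-invariant and therefore already lie in $\mathcal{S}_y$.
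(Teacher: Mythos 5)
Your proposal is correct and follows essentially the same route as the paper: keep the $(\Sigma,u)$ construction from \cref{lemma:reduction}, observe that the rigidity in \eqref{eq:equiv-reduction} (forcing $|S|=k+1$, $d\in S$, and $A_{\mathring S}=0$) makes $(\mathbf{1}_S)^\top \Sigma_S^{(e)} \mathbf{1}_S$ the same for every $S\in\mathcal{S}_y$, and then choose $v^{(1)},v^{(2)}$ to match the residual variances exactly on these solutions while staying above the explained variance; the paper simply instantiates your abstract $M$ as $100d^5$ so that $v^{(1)}=100d^5+(k+1)$ and $v^{(2)}=100d^5+5d(k+1)+k$. Your observation that the converse inclusion is automatic (distribution-invariance implies prediction-invariance) and that parsimony transfers the uniqueness promise are both exactly what the paper uses.
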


\subsection{NP-hardness Remains when It is Well-Separated}
\label{sec:np-est-error}

For any fixed $\epsilon \in (0,1)$, consider the following restricted version of the problem.
\begin{problem}[Existence of Linear Invariant Set under $\epsilon$-Separation]
\label{prob-sep}
    For any fixed constant $\epsilon > 0$, Problem {\sc Exist-$\epsilon$-Sep-LIS} is defined as the same problem as {\sc ExistLIS} with the additional $\epsilon$-separation conditions as follows 
    
    \noindent (a) $1 \le \mathbb{E}[|Y^{(e)}|^2] \le 1000$ for any $e\in[E]$; 
    
    \noindent (b) $\frac{1}{|E|}\sum_{e=1}^E\|\beta^{(e,S)}_S - \beta^{(S)}_S\|_{\Sigma_S^{(e)}}^2 \in \{0\} \cup [ d^{-\epsilon}/1280,\infty)$ for any $S\subseteq [d]$.

    \noindent (c) $\|\beta^{(S)}-\beta^{(S^\dagger)}\|_{\Sigma}^2\in [\indicator\{S\neq S^\dagger\} d^{-\epsilon}/1280,\infty)$ for any $S \subseteq [d]$ and any invariant set $S^\dagger$.
\end{problem}
\newcommand{\probsep}{{\sc Exist-$\epsilon$-Sep-LIS}}

Condition (a) promises $O(1)$ variance for the response, which is a typical regime considered by linear regression analysis. Condition (b) enforces the prediction variation should be $\Omega(d^{-\epsilon})$ if it is not an invariant set, and condition (c) assures that the non-invariant prediction should be $\Omega(d^{-\epsilon})$ away from the invariant prediction. The next theorem confirms that NP-hardness remains under this restrictive case. 

\begin{theorem}
\label{thm:gap2}
For any fixed $\epsilon>0$, the problem {\sc Exist-$\epsilon$-Sep-LIS} is NP-hard under deterministic polynomial-time reduction. 
\end{theorem}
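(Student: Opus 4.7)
My plan is to enhance the parsimonious reduction of \cref{lemma:reduction} by appending a block of strongly correlated ``padding'' coordinates, chosen so that the absolute separation gaps of the resulting instance exceed $D^{-\epsilon}/1280$ while $D$ grows polynomially with $k$. The key observation is that orthogonal (independent) padding fails for $\epsilon<1$: the budget $\mathbb{E}|Y|^2\le 1000$ caps the sum of per-coordinate signals, bounding $D$ by a constant depending only on $\epsilon$. In contrast, a padding block with a common off-diagonal $\rho\in(0,1)$ keeps $u^{(e),\top}_{\mathrm{pad}}\Sigma^{(e),-1}_{\mathrm{pad}}u^{(e)}_{\mathrm{pad}}$ at $O(1)$ regardless of block size, while every single padding coordinate $j$ still contributes $\|\beta^{(\{j\})}\|^2_\Sigma=1$ to the pooled norm used in condition (c).

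Given a {\sc 3Sat} instance $x$ with $k$ clauses, let $(\Sigma^{(e)}_\circ,u^{(e)}_\circ)$ denote the output of \cref{lemma:reduction} with $d_0=7k+1$. Rescale $u^{(e)}_\circ$ by a common factor so that $u^{(e),\top}_\circ (\Sigma^{(e)}_\circ)^{-1}u^{(e)}_\circ\le 500$; the relative gaps in \cref{lemma:gap} are preserved, so absolute gaps are at least $c_1/d_0$ for the $\beta$-distance and $c_2/d_0^4$ for the prediction variation. Set $\rho=\tfrac12$, $\delta=D^{-\epsilon/2}/\sqrt{1280}$, and $D=\lceil C_\epsilon d_0^{4/\epsilon}\rceil$ where $C_\epsilon$ is large enough that $c_2/d_0^4\ge D^{-\epsilon}/1280$. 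Define
\begin{equation*}
\Sigma^{(e)}_{\mathrm{new}}=\begin{bmatrix}\Sigma^{(e)}_\circ & 0\\ 0 & (1-\rho)I_{D-d_0}+\rho\mathbf{1}\mathbf{1}^\top\end{bmatrix},\quad u^{(e)}_{\mathrm{new},j}=\begin{cases}u^{(e)}_{\circ,j}, & j\le d_0,\\ 1+(-1)^e\delta, & j>d_0,\end{cases}
\end{equation*}
and pick $v^{(e)}\in[1,1000]$ strictly above the total signal; this is feasible because the padding block's contribution $(1+\delta)^2/\rho$ stays bounded by a constant ($\le 3$). The reduction runs in $\mathrm{poly}(k)$ time because $D$ is polynomial in $k$ for every fixed $\epsilon$.

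A direct Sherman--Morrison computation shows $\beta^{(e,S)}_j=(1+(-1)^e\delta)/(1+(m'-1)\rho)$ on every padding coordinate $j\in S$, where $m'=|S\cap\{d_0{+}1,\dots,D\}|$. Since this differs across environments whenever $m'\ge 1$, no set containing a padding coordinate is invariant, so the invariant sets of the new instance coincide with those of the original; by \cref{lemma:reduction} the new instance has a non-trivial invariant solution iff $x$ is satisfiable. Condition (a) follows from the choice of $v^{(e)}$. For condition (b), a non-invariant $S$ either lies in $[d_0]$---in which case the variation equals the original variation and is at least $c_2/d_0^4\ge D^{-\epsilon}/1280$---or contains a padding coordinate, in which case the padding contribution to the variation equals $\delta^2\cdot m'/(1+(m'-1)\rho)\ge\delta^2=D^{-\epsilon}/1280$. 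For condition (c), an invariant $S^\dagger$ is contained in $[d_0]$; if $S\cap[d_0]\neq S^\dagger$ then \cref{lemma:gap} gives $\|\beta^{(S)}-\beta^{(S^\dagger)}\|^2_\Sigma\ge c_1/d_0\ge D^{-\epsilon}/1280$, while if $S\cap[d_0]=S^\dagger$ but $S$ has padding then the padding contribution $m'/(1+(m'-1)\rho)$ is at least $1$.

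The main obstacle is the tension between enlarging $D$ enough to turn $d_0^{-4}$ into $D^{-\epsilon}$ and respecting the response-variance budget imposed by condition (a); the resolution is the strongly correlated padding block, whose top eigenvector (the all-ones direction) absorbs all the padding signal into a single $O(1)$ quantity while each individual padding coordinate continues to add an $\Omega(1)$ contribution to $\|\beta^{(\{j\})}\|^2_\Sigma$ and a $\delta^2$ contribution to the per-set variation. Once this ingredient is in place, the verification of (a), (b), (c) is routine bookkeeping built on a direct application of \cref{lemma:gap}.
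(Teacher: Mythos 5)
Your proposal uses the same key idea as the paper's proof: append to the Lemma~\ref{lemma:reduction} instance a block-diagonal, strongly correlated ``padding'' block whose all-ones direction keeps the total padding signal $u_{\mathrm{pad}}^\top\Sigma_{\mathrm{pad}}^{-1}u_{\mathrm{pad}}$ at $O(1)$, while each padding coordinate still produces an $\Omega(1)$ per-coordinate contribution, and with mismatched $u^{(e)}$ on the padding so that no padding coordinate can ever join an invariant set. The paper uses precisely this construction (its $H_\ell=I_\ell+\mathbf{1}\mathbf{1}^\top$ is your $(1-\rho)I+\rho\mathbf{1}\mathbf{1}^\top$ with $\rho=1/2$ scaled by $2$, and its choice $u^{(1)}_{\mathrm{pad}}=k^{-1}\mathbf{1}$, $u^{(2)}_{\mathrm{pad}}=-3k^{-1}\mathbf{1}$ is the large-mismatch analogue of your $1\pm\delta$).

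One small inaccuracy worth flagging. You claim that after rescaling $u^{(e)}_\circ$ by a scalar so that $u^{(e),\top}_\circ(\Sigma^{(e)}_\circ)^{-1}u^{(e)}_\circ\le 500$, ``absolute gaps are at least $c_1/d_0$ for the $\beta$-distance and $c_2/d_0^4$ for the prediction variation.'' Lemma~\ref{lemma:gap} gives \emph{relative} bounds $(40d_0)^{-1}$ and $(10d_0)^{-4}$; to convert them to absolute bounds you must multiply by $\sum_{e}\mathbb{E}[|Y^{(e)}_\circ|^2]$, and after rescaling by $s^2\approx 500/(10 d_0^2)$ this denominator can be as small as $s^2\cdot d_0=\Theta(1/d_0)$ (since $\mathbb{E}[|Y^{(1)}_\circ|^2]=d_0$ originally). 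So the absolute gaps are $\Theta(d_0^{-2})$ and $\Theta(d_0^{-5})$, not $d_0^{-1}$ and $d_0^{-4}$. This is harmless --- set $D=\lceil C_\epsilon d_0^{5/\epsilon}\rceil$ instead --- but it is exactly the accounting the paper avoids by instead rescaling $\Sigma^{(e)}_\circ$ to have diagonal $32k$ (so that $\sum_e\mathbb{E}[|Y^{(e)}_\circ|^2]$ stays $\Theta(1)$), which is why the paper can afford the smaller exponent $d=\lceil k^{3/\epsilon}\rceil$ and re-verify the gap lemma with that cleaner normalization.

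The rest of your computation checks out: the Sherman--Morrison step giving $\beta^{(e,S)}_j=(1+(-1)^e\delta)/(1+(m'-1)\rho)$, the padding variation $\delta^2 m'/(1+(m'-1)\rho)\ge\delta^2$, the padding contribution $m'/(1+(m'-1)\rho)\ge 1$ to $\|\beta^{(S)}-\beta^{(S^\dagger)}\|_\Sigma^2$, and the boundedness of $u_{\mathrm{pad}}^\top\Sigma_{\mathrm{pad}}^{-1}u_{\mathrm{pad}}$ are all correct, and the block-diagonal structure guarantees the decomposition into core and padding contributions with no cross terms. With the corrected polynomial degree, the proof is sound.
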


The above construction also naturally implies that the computation barrier remains if our target is to find a solution close to some invariant solution within $O(d^{-\epsilon})$ error. 

\begin{problem}
\label{prob:est-d-eps}
Consider the problem~\probsep{} with $E=2$ and suppose further $Y^{(e)}=(\beta^{(e, [d])})^\top X^{(e)}$, i.e., there is no intrinsic noise. The input is the same, and it is required to output $\hat{\beta} \in \mathbb{R}^d$ such that $\inf_{S: \beta^{(e,S)} \equiv \beta^{(S)} \neq 0} \|\hat{\beta} - \beta^{(S)} \|_{\Sigma}^2 \le d^{-\epsilon}/4$ if $\{S: \beta^{(e,S)} \equiv \beta^{(S)} \neq 0\} \neq \emptyset$, its output can be an arbitrary $d$-dimensional vector otherwise. 
\end{problem}
\begin{corollary}
\label{coro:est-d-eps}
If \cref{prob:est-d-eps} can be solved by a worst-case polynomial-time algorithm, then {\sc 3Sat} can also be solved by a worst-case polynomial-time algorithm.
\end{corollary}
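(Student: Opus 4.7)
\emph{Proof proposal.} The plan is to follow the template of \cref{coro:est-d4} while invoking the finer, $\epsilon$-separated construction behind \cref{thm:gap2} instead of the baseline reduction of \cref{lemma:reduction}. Given a {\sc 3Sat} instance $x$, I would let $y = T(x)$ be the \probsep{} instance produced by that deterministic polynomial-time reduction. Next, I would feed $y$ to the hypothesized polynomial-time solver for \cref{prob:est-d-eps}, obtain $\hat{\beta} \in \mathbb{R}^d$, extract a candidate set $\tilde{S}$ by coordinate-wise thresholding of $\hat{\beta}$ at half the known value of the invariant entries, and declare $x$ satisfiable iff $\tilde{S}$ certifiably induces a non-trivial invariant set. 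The certification step is cheap: given the covariance inputs, compute $\beta^{(e,\tilde{S})} = (\Sigma^{(e)}_{\tilde{S}})^{-1} u^{(e)}_{\tilde{S}}$ in polynomial time and check $\beta^{(1,\tilde{S})} = \beta^{(2,\tilde{S})} \neq 0$.

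Correctness would split by satisfiability. If $x$ is unsatisfiable, then by the reduction no non-trivial invariant set exists in $y$, so the certification fails for every candidate $\tilde{S}$ and the algorithm correctly outputs NO. If $x$ is satisfiable, the error guarantee of \cref{prob:est-d-eps} yields some invariant $S^\dagger$ with $\|\hat{\beta} - \beta^{(S^\dagger)}\|_\Sigma^2 \le d^{-\epsilon}/4$. I would then convert this to an $\ell_\infty$ bound using $\|\hat{\beta} - \beta^{(S^\dagger)}\|_\infty \le \|\hat{\beta} - \beta^{(S^\dagger)}\|_\Sigma / \sqrt{\lambda_{\min}(\Sigma)}$, so that with an appropriate lower bound $\lambda_{\min}(\Sigma) \gtrsim d^{-\epsilon}$ baked into the construction, the coordinate-wise error becomes strictly less than half the separation of invariant-solution entries, and thresholding recovers $\tilde{S} = S^\dagger$ exactly.

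The main obstacle is bookkeeping rather than conceptual: I would need to verify that the construction proving \cref{thm:gap2} retains the two structural features of \cref{lemma:reduction} that make this thresholding argument work, namely (i) every non-trivial invariant solution $\beta^{(S^\dagger)}$ takes values in a known discrete set (e.g., $\{0, c\}$ with an explicit lower bound on $c$), and (ii) $\lambda_{\min}(\Sigma)$ remains polynomially large enough for the $\Sigma$-to-$\ell_\infty$ conversion to leave sufficient slack. Note that the $\Sigma$-norm gap of $d^{-\epsilon}/1280$ in condition (c) of \probsep{} cannot be used directly because it is strictly smaller than the allowed error $d^{-\epsilon}/4$; the argument must instead leverage the $\ell_\infty$ discreteness of invariant solutions combined with the eigenvalue bound. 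Once these two ingredients are verified from the explicit construction in the appendix, the remainder of the proof copies \cref{coro:est-d4} verbatim, and contraposing yields the claimed conditional hardness.
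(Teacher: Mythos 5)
Your proposal matches the paper's proof: use the reduction $T_\epsilon$ from \cref{thm:gap2}, threshold $\hat{\beta}$ at $k^{-1}/2$, and check whether the resulting set is a nontrivial invariant set. The two ingredients you flag as needing verification do hold in the construction — invariant solutions have nonzero entries exactly equal to $k^{-1}$ and $\lambda_{\min}(\Sigma)\ge 1$ — though note your tentative target $\lambda_{\min}(\Sigma)\gtrsim d^{-\epsilon}$ would on its own be too weak given that the entries are of size $k^{-1}=d^{-\epsilon/3}$, not constant, so the stronger eigenvalue bound actually present is what makes the $\ell_\infty$ conversion close.
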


The key idea is to divide the set $[d]$ into two blocks: a block with size $d^{\epsilon/3}$, whose construction is similar to \cref{lemma:reduction}, and a remaining auxiliary block, where there is no invariant solution in this block and the predictive variance is carefully controlled. It is interesting to see if a similar result holds for $\epsilon=0$. We leave it for future studies.

\subsection{NP-Hardness Remains for Row-wise $O(1)$-Sparse Covariance}

The following theorem shows the problem {\sc ExistLIS} is NP-hard even when each row or column of matrix $\Sigma^{(e)},e\in\mathcal{E}$ has only $O(1)$ non-zero elements.

\begin{theorem}
\label{thm:sparse-construction}
Consider the problem {\sc ExistLIS} with the additional constraint that for any $e\in[E]$, each row of matrix $\Sigma^{(e)}$ has no more than $C$ non-zero elements for some universal constant $C>0$. The above problem is NP-hard under deterministic polynomial-time reduction when $E=2$. 
\end{theorem}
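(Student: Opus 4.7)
The plan is to refine the parsimonious reduction of \cref{lemma:reduction} so that the second environment's covariance matrix $\Sigma^{(2)}$ is row-wise $O(1)$-sparse, while preserving the bijection $|\mathcal{S}_y|=|\mathcal{S}_x|$. The first environment's matrix $\Sigma^{(1)}=I_d$ is already $1$-sparse and needs no modification. Starting from a bounded-occurrence variant of {\sc 3Sat}---namely the version where each Boolean variable appears in at most a fixed constant $C_0$ clauses, which is classically NP-hard under deterministic polynomial-time reduction by Tovey-type arguments---the rest of the construction can be taken essentially from \cref{lemma:reduction}, once the dense aggregator coordinate is replaced by a sparse gadget.

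First I would check that, under the bounded-occurrence hypothesis, the $7k\times 7k$ contradiction block $A$ defined in \eqref{eq:reduction-a-mat} already has $O(1)$ non-zero entries per row: within each clause every action ID interacts with only the other $6$ action IDs in the same clause, and each action ID contradicts at most $3(C_0-1)\cdot 7 = O(1)$ action IDs in other clauses, since those clauses must share a Boolean variable with the current clause. The only remaining source of super-constant density in \cref{lemma:reduction} is the aggregator row/column associated with the last coordinate $x_d$, whose entries $\Sigma^{(2)}_{d,j}=\tfrac12$ for all $j\in[7k]$ produce $\Theta(k)$ non-zero entries per row and column.

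To sparsify this aggregator, the plan is to replace $x_d$ by a balanced binary tree $\mathcal{T}$ of $\Theta(k)$ auxiliary variables, each of constant degree. The $7k$ leaves of $\mathcal{T}$ correspond to the $7k$ action-ID coordinates, and each internal node is linked in $\Sigma^{(2)}$ only to its two children and its parent, so every row has at most $3$ tree-edge entries plus $O(1)$ contradiction entries. With appropriately chosen edge weights and entries of $u^{(2)}$, the constraint $\Sigma^{(2)}_S \beta^{(2,S)}_S = u^{(2)}_S$ with $\beta^{(2,S)}_S=1_{|S|}$ propagates a partial-count equation from the leaves to the root: every internal node of $\mathcal{T}$ must be included in $S$, and at each internal node the number of selected children must match a prescribed integer, so the root-level equation is met precisely when exactly one action ID per clause is selected. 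Positive definiteness of $\Sigma^{(2)}$ is then enforced, as in \cref{lemma:reduction}, by inflating every diagonal entry by a sufficiently large multiple of $d$; this only changes the diagonal and does not affect row sparsity.

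The main obstacle is calibrating the tree gadget so that (i) the aggregation equations admit a completion only when exactly one leaf per clause is in $S$ and the unique extension to $\mathcal{T}$ is forced deterministically; (ii) the reduction remains parsimonious, i.e.\ $|\mathcal{S}_y|=|\mathcal{S}_x|$; and (iii) $\Sigma^{(2)}$ stays positive definite. The delicate point is (i)--(ii), since the tree has many internal-node slots whose inclusion must be rigidly tied to the leaf pattern; I would resolve this by setting the offsets in $u^{(2)}$ at each internal node so that, restricted to $\{0,1\}$-valued $\beta$, the local count equation has a unique integer solution given the children's indicators, mirroring the argument after \eqref{eq:equiv-reduction}. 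Once this calibration is carried out, the chain of equivalences $S\in\mathcal{S}_y \Leftrightarrow v\in \mathcal{S}_x$ goes through exactly as in \eqref{eq:equiv-reduction}, and NP-hardness of {\sc ExistLIS} under row-wise $O(1)$-sparsity of the covariance matrices follows from the NP-hardness of bounded-occurrence {\sc 3Sat}.
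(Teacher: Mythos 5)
Your first two ingredients are on the right track and match the paper's Step~1: reducing {\sc 3Sat} to a bounded-occurrence variant (the paper uses a per-clause variable-copy construction with XOR-style equality gadgets, giving at most $15$ occurrences per variable), and then observing that the contradiction block $A$ of \eqref{eq:reduction-a-mat} automatically becomes row-wise $O(1)$-sparse because one clause shares a variable with $O(1)$ others.

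The gap is in the aggregator gadget. In the construction underlying \eqref{eq:equiv-reduction}, the first environment $(\Sigma^{(1)},u^{(1)})=(I_d,1_d)$ forces $\beta^{(1,S)}_j=\indicator\{j\in S\}$, so any coordinate included in $S$ carries $\beta$-value exactly $1$. Your binary tree $\mathcal{T}$ requires every internal node to be in $S$, which means every internal node has $\beta$-value exactly $1$. Consequently, the row equation at a higher internal node $t$ whose two children are themselves internal nodes reads $\frac12(\beta_{c_1}+\beta_{c_2})+\cdots = \frac12\cdot 2+\cdots$: both summands are identically $1$ and the equation carries \emph{no} information about the leaves beneath. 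The ``number of selected children'' is locked to $2$ at every non-bottom internal node, so no partial count propagates from the leaves to the root and the root-level equation cannot detect whether exactly one action ID per clause was chosen. A binary-fan-in tree over $\{0,1\}$-valued indicators cannot accumulate counts unless the internal nodes carry multi-valued entries, and that conflicts with the forcing $\beta^{(2,S)}_j=\indicator\{j\in S\}$ inherited from $\Sigma^{(1)}=I$. (Encoding an OR/threshold at each node instead of a sum would fix the semantics but is not expressible as a single linear equality per row.)

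What the paper does instead is avoid hierarchical aggregation entirely. It introduces one auxiliary coordinate $7k'+i$ \emph{per clause} $i$, connected via the matrix $K$ directly to the $7$ action-ID coordinates of clause $i$ with weight $\frac12$; this is a fan-in-$7$ (still $O(1)$) constraint whose row equation, after taking residues modulo $\frac12$, \emph{locally} enforces ``exactly one of the $7$ action IDs.'' The $k'$ per-clause aggregators are then coupled by a path-graph block $\frac18 B$ (degree $\le 2$), and the same integer/half-integer residue argument shows that if any aggregator is in $S$ then all of its path-neighbors must be, propagating mandatory inclusion to every aggregator. So the ``propagation'' in the paper concerns only membership in $S$, never a count, and the count constraint is evaluated in one hop per clause. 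Your tree idea would work if you reserve an arity-$7$ node for each clause (which is already $O(1)$ per row) and then connect those $k'$ clause-nodes by any bounded-degree connected graph — but at that point you have essentially rederived the paper's $K$ and $B$ construction, with a tree instead of a path playing the role of $B$.
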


The proof idea is as follows. We first reduce the general {\sc 3Sat} problem $x$ with $k$ clauses to another {\sc 3Sat} problem $x'$ with $O(k^2)$ clauses. In $x'$, each variable at most appears on $15$ times. This will further lead to a row-wise sparse $A$ in \cref{lemma:reduction}. A finer construction will also adopted to distribute the constraints imposed by the last dense row of $\Sigma^{(2)}$ into $O(k^2)$ sparse rows. 

\subsection{Proof of \cref{prop:3sat}}
\label{sec:proof-3sat}

The proof is similar to Theorem 1.1 in \cite{valiant1985np}. We will use the following lemma akin to their Lemma 2.1. For any $u, v\in \{0,1\}^n$, we let $u\cdot v$ be the inner product over GF[2] of $u, v$.

\begin{lemma}
\label{lemma:uv=0}
    Given any {\sc 3Sat} formula $f$ with $n$ variables $v = (v_i)_{i=1}^n$ and $w \in \{0,1\}^n$, let $\mathcal{S}$ be the set of all the variables that make the formula evaluates to be true. One can construct a {\sc 3Sat} formula $f'$ in at most $2n$ variables $v'$ and $3(n-1)+k+1$ clauses such that there exists a bijective between its solution set $\mathcal{S}'$ and $\mathcal{S} \cap \{v: v\cdot w = 0\}$.
\end{lemma}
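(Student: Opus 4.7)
The strategy is to take $f' := f \wedge g$, where $g$ is a 3-CNF gadget that encodes the GF[2] linear constraint $v \cdot w = 0$. Let $I = \{i \in [n] : w_i = 1\}$ with $m = |I|$. If $m = 0$, the constraint $v \cdot w = 0$ holds vacuously, so take $f' = f$, add no auxiliary variables, and use the identity bijection between $\mathcal{S}$ and $\mathcal{S} \cap \{v : v \cdot w = 0\} = \mathcal{S}$; the bounds $2n$ and $3(n-1)+k+1$ are trivially met. For the non-trivial case $m \ge 1$, write $I = \{i_1 < i_2 < \cdots < i_m\}$ and introduce $m-1 \le n-1$ fresh auxiliary boolean variables $y_1, \ldots, y_{m-1}$; the total variable count is then at most $n + (n-1) \le 2n$.

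\textbf{Chain construction and bijection.} The auxiliaries are intended to represent the prefix parities $y_j = v_{i_1} \oplus \cdots \oplus v_{i_{j+1}}$, which I enforce through the telescoping GF[2] identities $y_1 = v_{i_1} \oplus v_{i_2}$, $y_j = y_{j-1} \oplus v_{i_{j+1}}$ for $j = 2, \ldots, m-1$, together with the final requirement $y_{m-1} = 0$. Each three-variable identity $a \oplus b \oplus c = 0$ is a standard Tseitin gate that can be encoded as a short 3-CNF sub-formula, and $y_{m-1} = 0$ is a single literal. Granting that this encoding uses $3(n-1) + 1$ added clauses in total, the bijection is then automatic: for any $v \in \mathcal{S}$ with $v \cdot w = 0$, the chain of XOR equations uniquely propagates to an assignment $(y_1, \ldots, y_{m-1})$, and $y_{m-1} = v_{i_1} \oplus \cdots \oplus v_{i_m} = v \cdot w = 0$, so this lifted assignment satisfies $g$ and thus $f'$; conversely every satisfying assignment of $f'$ satisfies $f$ and is forced by $g$ to have $v \cdot w = 0$, with the aux values uniquely determined by the chain. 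Hence the extension map $v \mapsto (v, y_1(v), \ldots, y_{m-1}(v))$ is a bijection onto $\mathcal{S}'$.

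\textbf{Main obstacle.} The routine Tseitin encoding of a single $a \oplus b \oplus c = 0$ constraint requires four 3-CNF clauses (one for each forbidden parity pattern), which would give $4(m-1)$ added clauses rather than the advertised $3(n-1)+1$. The real work of the lemma is therefore the careful bookkeeping of clauses, and the hard part will be arranging the gadget so the amortized cost is three clauses per auxiliary plus one. The plan is to exploit boundary compressions of the chain: (i) at the first step, relabel $y_1$ against one of the original variables so the first XOR equation reduces in clause count; (ii) at the last step, observe that substituting the unit $y_{m-1} = 0$ into the final XOR constraint $y_{m-2} \oplus v_{i_m} \oplus y_{m-1} = 0$ collapses a four-clause gadget to a much smaller one, absorbing the extra unit into the chain; (iii) distribute the saved clauses across the chain so each internal gate only contributes three clauses on average. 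Once the gadget is fixed the correctness and the bijection argument above are immediate, so this clause-accounting is the only genuinely delicate step.
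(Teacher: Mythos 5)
Your construction is substantively the same as the paper's: both reduce $v\cdot w = 0$ over $\mathrm{GF}[2]$ to a chain of three-variable XOR constraints plus one unit clause, encode each XOR gate via the standard Tseitin gadget, and note that the extension map $v \mapsto (v, y(v))$ (deterministically propagating the prefix parities) is a bijection between $\mathcal{S}\cap\{v : v\cdot w=0\}$ and $\mathcal{S}'$. That part of your argument is complete and matches what the paper does.

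You are also right to flag the clause count as the sore point, but the repairs you sketch in part (iii) cannot succeed, and it is worth seeing why. The constraint $x_1 \oplus x_2 \oplus x_3 = 0$ excludes the four odd-parity assignments of $(x_1,x_2,x_3)$, namely $(1,0,0)$, $(0,1,0)$, $(0,0,1)$, $(1,1,1)$. Any 3-literal clause over a \emph{strict subset} of $\{x_1,x_2,x_3\}$ excludes a pair of assignments that differ only in the omitted variable, and such a pair always has one even- and one odd-parity member, so it would wrongly kill a satisfying assignment. Hence every clause must mention all three variables with distinct underlying variables, and each such clause excludes exactly one assignment; you therefore need exactly four clauses per internal XOR gate, no amortization possible. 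Your boundary collapse at $y_{m-1}=0$ does save a constant (it reduces the last gate to a two-clause equivalence $y_{m-2}\leftrightarrow v_{i_m}$), but a one-time constant saving cannot bring $4(m-1)+1$ down to $3(m-1)+1$.

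The real resolution is that the numerical bound stated in the lemma does not match the construction actually used: the paper's own proof also uses the four-clause gadget, one per gate, $m-1$ gates, and one unit clause, giving $k+4(m-1)+1 \le k+4(n-1)+1$ added clauses, which exceeds $k+3(n-1)+1$ once $n\ge 2$. So neither your version nor the paper's achieves the advertised constant; the stated count is a minor slip, and it is immaterial because the only thing the downstream argument (Valiant--Vazirani style randomized reduction for {\sc 3Sat-Unique}) needs is that the blow-up is polynomial in the input size, which it is. My concrete recommendation: state the bound as $k + 4(n-1) + 1$ clauses and at most $2n$ variables, keep your bijection argument as is, and delete the amortization sketch, since no such scheme exists.
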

\begin{proof}[Proof of \cref{lemma:uv=0}]
Let $i_1, \ldots, i_m$ be the indices with $w_{i_j}=1$. The constraint $w \cdot v=0$ can be written as $v_{i_1} \otimes \cdots \otimes v_{i_m} = 0$, where $\otimes$ is the XOR operation,  which is equivalent to
\begin{align*}
v_{i_1} \otimes v_{i_2} = t_1, ~~v_{i_3} \otimes t_{1} = t_2, ~~~~\cdots ~~~~ v_{i_{m-1}} \otimes t_{m-2} = t_{m-1}, ~~ v_{i_m} \otimes t_{m-1} = t_m, ~~ t_m = 0
\end{align*}
with another $m \le n$ binaries variables $t_1, \cdots, t_{m-1}, t_m$. The constraint $x_1 \otimes x_2 = x_3$ is equivalent to the following 4-clause {\sc 3Sat} formula
\begin{align*}
    (\neg x_1 \lor \neg x_2 \lor \neg x_3) \land (x_1 \lor \neg x_2 \lor x_3) \land (\neg x_1 \lor x_2 \lor x_3) \land (x_1 \lor x_2 \lor \neg x_3)
\end{align*} The last constraint $t_m = 0$ can be written as the clause $(\neg t_m \lor \neg t_m \lor \neg t_m)$. 
\end{proof}

The rest of the proof is the same as that in Theorem 1.1: we can construct a randomized polynomial reduction from {\sc 3Sat} to {\sc 3Sat-Unique}. \qed

\section{Omitted Discussions}
\label{sec:diss}

\subsection{Discussion on \cite{li2024fairm}}
\label{sec:diss1}
\begin{problem}
\label{prob:cic}
Under the same setting as \cref{prob1} with $E=2$, it takes $(\Sigma^{(1)}, \Sigma^{(2)})$ and $(u^{(1)}, u^{(2)})$ as input and is required to determine whether there exists $S\subseteq [d]$ with $|S|\ge d/7$ such that $\Sigma_S^{(1)} = \Sigma_S^{(2)}$ and $u_S^{(1)} = u_S^{(2)}$. 
\end{problem}
Here we test the existence of any ``large'', namely $|S|\ge d/7$, covariance-invariant set rather than any covariance-invariant set, this is because $S$ is covariance-invariant in \cref{prob:cic} will imply $\{j\}$ is covariance-invariant for any $j\in S$. Testing the existence of a univariate invariant set is trivial and has $O(d \cdot E)$ algorithm. The proof is similar to \cref{thm:np-hard-p1} by letting $u^{(1)}=u^{(2)}$. 
\begin{theorem}
\label{thm:cic}
 \cref{prob:cic} is NP-hard.
\end{theorem}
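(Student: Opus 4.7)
The plan is to reuse the parsimonious combinatorial skeleton developed for \cref{lemma:reduction} but strip away the $\beta$-invariance machinery, replacing it with the simpler covariance equality condition. Given a {\sc 3Sat} instance $x$ with $k$ clauses, we set $d=7k+1$ and define the instance $y=T(x)$ of \cref{prob:cic} by
\begin{align*}
\Sigma^{(1)} = \alpha I_d, \qquad \Sigma^{(2)} = \alpha I_d + A', \qquad u^{(1)} = u^{(2)} = 0,
\end{align*}
where $\alpha = 10d$ and $A' \in \mathbb{R}^{d\times d}$ is the symmetric block extension of the matrix $A$ from \eqref{eq:reduction-a-mat}, namely $A'_{j,j'} = A_{j,j'}$ for $j,j'\in[7k]$ and $A'_{j,j'} = 0$ whenever $j=d$ or $j'=d$. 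Computing $T(x)$ is clearly polynomial-time; symmetry is inherited from the symmetry of ``contradicts'' in \eqref{eq:reduction-a-mat}; and since each entry of $A'$ lies in $\{0,1\}$ we have $\|A'\|_{\mathrm{op}} \le \|A'\|_\infty \le d$, so $\Sigma^{(2)} \succeq 9 d \, I_d \succ 0$, making $y$ a valid instance of \cref{prob:cic}. Because $u^{(1)} = u^{(2)}$, the constraint $u_S^{(1)} = u_S^{(2)}$ is automatic, and the problem reduces to asking whether there exists $S$ with $|S| \ge d/7$ and $A'_S = 0$.

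The core step is a pigeonhole argument that replaces the role played by the last row/column of $\Sigma^{(2)}$ in \cref{lemma:reduction}. Partition $[d]$ into the $k$ clause blocks $B_i = \{7(i-1)+1,\ldots, 7i\}$ for $i\in[k]$ and the singleton $B_{k+1} = \{d\}$. By construction of $A$, any two distinct indices lying in the same block $B_i$ with $i\le k$ satisfy $A_{j,j'}=1$, so $A'_S = 0$ forces $|S\cap B_i|\le 1$ for every $i$, giving $|S|\le k+1$. The cardinality constraint $|S|\ge d/7 = k+\tfrac{1}{7}$ forces $|S|\ge k+1$, hence $|S|=k+1$ with exactly one index per block; moreover $d\in S$, since otherwise $k+1 = |S \cap [7k]| \le k$. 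Writing $S\cap[7k] = \{7(i-1)+a_i:i\in[k]\}$ with $a_i\in[7]$, the remaining constraint $A_{7(i-1)+a_i,\,7(i'-1)+a_{i'}}=0$ for all $i,i'$ is, by \eqref{eq:reduction-a-mat}, precisely the condition that $a_i$ does not self-contradict and no action contradicts another across clauses. By the bijection in \eqref{eq:equiv-reduction}(c), this is equivalent to $(a_i)_{i=1}^k$ being induced by a valid satisfying assignment of $x$. The reverse direction is immediate: any $v\in\mathcal{S}_x$ yields, via the same bijection, a set $S$ of size $k+1\ge d/7$ with $A'_S = 0$ and hence $\Sigma_S^{(1)} = \Sigma_S^{(2)}$.

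Putting these pieces together shows $y\in\mathcal{X}_{\text{\cref{prob:cic}},1}$ if and only if $x\in\mathcal{X}_{\text{\sc 3Sat},1}$, and composing with the NP-hardness of {\sc 3Sat} from \cref{prop:3sat} yields the claim. The main obstacle is not especially deep here, since the heavy combinatorial construction of $A$ has already been carried out in \cref{lemma:reduction}; the only genuinely new ingredient is verifying that the cardinality threshold $d/7$ in \cref{prob:cic} matches the natural ``$k+1$-per-block'' cardinality enforced by $A_S=0$, and this is handled by the clean pigeonhole argument above, together with choosing $\alpha = 10d$ large enough to absorb $A'$ into a positive definite perturbation.
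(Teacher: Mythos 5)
Your proposal is correct and takes essentially the same route as the paper: build the contradiction matrix $A$ from the {\sc 3Sat} instance, set $\Sigma^{(1)}=\alpha I_d$, $\Sigma^{(2)}=\alpha I_d + A$ (suitably extended), make $u^{(1)}=u^{(2)}$, and use the cardinality threshold $|S|\ge d/7$ together with the pigeonhole within each 7-block of $A$ to force exactly one action per clause. The only difference is cosmetic: you retain the $d$-th coordinate from \cref{lemma:reduction} with a zeroed-out row and column in $A'$, whereas the paper simply drops it and takes $d=7k$ (with $u^{(e)}=5d\,1_d$ and $\Sigma^{(2)}=5dI_d+A$), which makes the block-counting arithmetic slightly cleaner since $d/7=k$ exactly.
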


\subsection{Discussion on \cref{cond:restricted-invariance}}

To show when \cref{cond:restricted-invariance} holds for small $k$ under the structural causal model framework, we first introduce the setting of SCM with intervention on $X$; see also \cite{gu2024causality} Section 3. We first introduce the definition of SCM and the setting considered.

\begin{definition}[Structural Causal Model] A structural causal model $M=(\mathcal{S}, \nu)$ on $p$ variables $Z_1,\ldots, Z_p$ can be described using $p$ assignment functions $\{f_1,\ldots, f_p\} = \mathcal{S}$: 
\begin{align*}
    Z_j \leftarrow f_j(Z_{\mathtt{pa}(j)}, U_j) \qquad j=1,\ldots, p,
\end{align*} 
where $\mathtt{pa}(j) \subseteq \{1,\ldots, p\}$ is the set of parents, or the direct causes, of the variable $Z_j$, and the joint distribution $\nu(du) = \prod_{j=1}^p \nu_j(du_j)$ over $p$ independent exogenous variables $(U_1,\ldots, U_p)$. For a given model $M$, there is an associated directed graph $\mathfrak{G}(M)=(V, E)$ that describes the causal relationships among variables, where $V=[p]$ is the set of nodes, $E$ is the edge set such that $(i,j)\in E$ if and only if $i\in \mathtt{pa}(j)$. $\mathfrak{G}(M)$ is acyclic if there is no sequence $(v_1,\ldots, v_k)$ with $k\ge 2$ such that $v_1=v_k$ and $(v_i, v_{i+1}) \in E$ for any $i\in [k-1]$.
\end{definition}

As in \cite{peters2016causal}, we consider the following data-generating process in $|\mathcal{E}|$ environments. For each $e\in \mathcal{E}$, the process governing $p=d+1$ random variables $Z^{(e)} = (Z_1^{(e)},\ldots, Z_{d+1}^{(e)})=(X_1^{(e)}, \ldots, X_d^{(e)},Y^{(e)})$ is derived from an SCM $M^{(e)}(\mathcal{S}^{(e)}, \nu)$. We let $e_0 \in \mathcal{E}$ be the observational environment for reference and the rest are interventional environments. We let $\mathfrak{G}$ be the directed graph representing the causal relationships in $e_0$, and simply let $\mathfrak{G}$ be shared across $\mathcal{E}$ without loss of generality. We assume $\mathfrak{G}$ is acyclic. In each environment $e\in \mathcal{E}$, the assignments are as follows:
\begin{align}
\label{eq:scm-model} 
\begin{split}
    X_j^{(e)} &\leftarrow f_j^{(e)}(Z_{\mathtt{pa}(j)}^{(e)}, U_j), \qquad \qquad  j=1,\ldots, d \\
    Y^{(e)} &\leftarrow f_{d+1}(X_{\mathtt{pa}(d+1)}^{(e)}, U_{d+1}).
\end{split}
\end{align}
Here the distribution of exogenous variables $(U_1,\ldots, U_{d+1})$, the cause-effect relationship $\{\mathtt{pa}(j)\}_{j=1}^{d+1}$ represented by $\mathfrak{G}$, and the structural assignment $f_{d+1}$ are \emph{invariant} across $e\in \mathcal{E}$, while the structural assignments for $X$ may vary among $e\in \mathcal{E}$. The heterogeneity, which is emphasized by superscript $(e)$ is due to the arbitrary interventions on the variables $X$. We use $Z_{\mathtt{pa}(j)}$ to emphasize that $Y$ can be the direct cause of some variables in the covariate vector. 

We denote $I\subseteq [d]$, defined as $I:=\{j: f^{(e)}_j \neq f^{(e_0)}_j ~~ \text{for some} ~ e\in \mathcal{E}\}$, be the set of variables intervened, 
We summarize the above data-generating process as a condition. 

\begin{condition}
\label{cond:scm-model}
Suppose $\{M^{(e)}\}_{e\in \mathcal{E}}$ are defined by \eqref{eq:scm-model}, $\mathfrak{G}$ is acyclic, and $f_{d+1}$ is a linear function.
\end{condition}

\begin{proposition}
\label{prop:ri}
    Under the model \eqref{model:lip} with regularity condition \cref{cond:regularity}, suppose one of the following conditions holds.

    \begin{itemize}[itemsep=0pt]
        \item[(a)] There exists a partition of $S^\star = \cup_{l=1}^L S^\star_l$ such that $\mathbb{E}[X_{S^\star_l}^{(e)} (X_{S^\star_r}^{(e)})^\top] = 0$ for any $l\neq r$ and $|S_l^\star|\le k$. 
        \item[(b)] Assume \cref{cond:scm-model} holds such that we can define the ancestor set recursively as $\mathtt{at}(j) = \mathtt{pa}(j) \cup \bigcup_{k\in \mathtt{pa}(j)} \mathtt{at}(k)$. We have $I\cap \mathtt{at}(d+1)=\emptyset$, $S^\star = \mathtt{pa}(d+1)$, and $k\ge 1$.
    \end{itemize}

    Then \cref{cond:restricted-invariance} holds.
\end{proposition}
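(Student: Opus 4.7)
The plan is to exhibit, for each $j \in S^\star$, an explicit set $S \ni j$ with $|S| \le k$ on which the constrained best linear predictor $\beta^{(e,S)}$ does not vary with $e$. Once this is established, the pooled counterpart $\beta^{(S)}$ automatically coincides with the common value because a convex combination of strongly convex quadratics sharing a common minimizer is minimized at that minimizer, so there is nothing further to check for the equality $\beta^{(e,S)} \equiv \beta^{(S)}$ in \cref{cond:restricted-invariance}.

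The common algebraic starting point uses \eqref{model:lip}: whenever $S \subseteq S^\star$, the orthogonality $\mathbb{E}[X^{(e)}_{S^\star} \varepsilon^{(e)}] = 0$ kills the noise contribution and leaves
\begin{align*}
\beta^{(e,S)}_S = (\Sigma^{(e)}_S)^{-1} \mathbb{E}[X^{(e)}_S Y^{(e)}] = (\Sigma^{(e)}_S)^{-1}\,\mathbb{E}[X^{(e)}_S (X^{(e)}_{S^\star})^\top]\,\beta^\star_{S^\star}.
\end{align*}
For case (a), I would let $S = S^\star_l$ be the unique block of the partition containing $j$, so that $j\in S$ and $|S|\le k$ by construction. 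The block-orthogonality hypothesis $\mathbb{E}[X^{(e)}_{S^\star_l}(X^{(e)}_{S^\star_r})^\top] = 0$ for $r \ne l$ collapses the cross-moment above to $\Sigma^{(e)}_S \beta^\star_S$, whence $\beta^{(e,S)}_S = \beta^\star_S$ independent of $e$.

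For case (b), I would take the singleton $S = \{j\}$, which is admissible because $k \ge 1$. Here the invariance is distributional rather than purely algebraic. Since $S^\star = \mathtt{pa}(d+1)$, every $j\in S^\star$ satisfies $\mathtt{at}(j)\subseteq \mathtt{at}(d+1)$, and by hypothesis no variable in $\mathtt{at}(d+1)$ is intervened; combined with the invariance of $f_{d+1}$ and the shared exogenous noises $(U_k)_{k\in[d+1]}$, a topological induction along the acyclic graph $\mathfrak{G}$ unfolds the structural assignments and shows that the joint law of $(X^{(e)}_{\mathtt{at}(d+1)}, Y^{(e)})$, and in particular that of $(X^{(e)}_j, Y^{(e)})$, is identical for all $e\in\mathcal{E}$. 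Consequently $\beta^{(e,\{j\})}_j$ does not depend on $e$.

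The only genuinely non-formulaic step is the distributional propagation in case (b), namely translating the graph-level hypothesis $I\cap \mathtt{at}(d+1)=\emptyset$ into equidistribution of the relevant sub-vector of $X$; everything else is bookkeeping. Note that neither part of the argument uses the identification condition \eqref{ident:lip} or any relation between $k$ and $|S^\star|$, and case (a) subsumes \cref{cond:ortho} (trivial blocks of size one with $k=1$) while case (b) allows even $k=1$ whenever the heterogeneity lives strictly off the ancestor set of $Y$.
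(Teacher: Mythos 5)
Your proposal is correct and follows essentially the same route as the paper's proof: in case (a) you pick $S = S^\star_l$ and use block-orthogonality together with $\mathbb{E}[X^{(e)}_{S^\star}\varepsilon^{(e)}]=0$ to collapse $\beta^{(e,S)}_S$ to $\beta^\star_S$, and in case (b) you pick the singleton $S=\{j\}$ and appeal to distributional invariance of $(X^{(e)}_j, Y^{(e)})$ across $e$. The one place you differ is in elaboration rather than substance: the paper's proof of (b) merely writes $\beta^{(e,\{j\})}_j = \mathbb{E}[X^{(e)}_j Y^{(e)}]/\mathbb{E}[|X_j^{(e)}|^2]$ and asserts equality across environments without justification, whereas you supply the missing topological-induction argument showing that $I\cap\mathtt{at}(d+1)=\emptyset$, the invariance of $f_{d+1}$, and the shared laws of $(U_m)_m$ together make the joint law of $(X^{(e)}_{\mathtt{at}(d+1)},Y^{(e)})$ invariant across $e$ (using that $\mathtt{pa}(m)\subseteq\mathtt{at}(m)\subseteq\mathtt{at}(d+1)$ for all $m\in\mathtt{at}(d+1)$, so the ancestor sub-SCM is self-contained). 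That filling-in is welcome, since the paper's version is a sketch. Your final remark that (a) subsumes \cref{cond:ortho} with singleton blocks and that (b) allows $k=1$ is also correct.
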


\begin{proof}[Proof of \cref{prop:ri}]
We first prove (a). To be specific, we show that
\begin{align*}
    \forall e\in \mathcal{E}, l\in [L], \qquad \beta^{(e,S_l^\star)}_{S_l^\star} = \beta^{\star}_{S_l^\star}.
\end{align*} It follows from \cref{cond:regularity} and the definition of least squares that 
\begin{align*}
    \beta^{(e,S_l^\star)}_{S_l^\star} &= \left[\Sigma_{S_l^\star}^{(e)}\right]^{-1} \mathbb{E}[X_{S_l^\star}^{(e)} Y^{(e)}] \\
    &= \left[\Sigma_{S_l^\star}^{(e)}\right]^{-1} \mathbb{E} \left[X_{S_l^\star}^{(e)} \left(\varepsilon^{(e)} + (X_{S_l^\star}^{(e)})^\top \beta^\star_{S_l^\star} + \sum_{r\neq l}  (X_{S_{r}^\star}^{(e)})^\top \beta^\star_{S_{r}^\star} \right)\right] \\
    &\overset{(i)}{=} \left[\Sigma_{S_l^\star}^{(e)}\right]^{-1} \left\{\mathbb{E} \left[X_{S_l^\star}^{(e)} \varepsilon^{(e)} \right] + \Sigma_{S_l^\star}^{(e)} \beta^\star_{S^\star_l} + \sum_{r \neq l}  \mathbb{E}[X_{S^\star_l}^{(e)} (X_{S^\star_r}^{(e)})^\top]\beta^\star_{S^\star_r} \right\} \\
    &= \beta^{\star}_{S_l^\star}
\end{align*} where (i) follows from the exogeneity of $X_{S^\star}$ in \eqref{model:lip} and (a).

Now we prove (b). Given the condition in (b), we have for any $j\in S^\star=\mathtt{pa}(d+1)$, and $e,e'\in \mathcal{E}$
\begin{align*}
    \beta_j^{(e)} = \frac{\mathbb{E}[X^{(e)}_j Y^{(e)}]}{\mathbb{E}[|X_j^{(e)}|^2]} = \frac{\mathbb{E}[X^{(e')}_j Y^{(e')}]}{\mathbb{E}[|X_j^{(e')}|^2]} = \beta_j^{(e')}.
\end{align*}

\end{proof}

\section{Proofs for Computation Fundamental Limits}
\label{sec:proof:barrier}

\subsection{Proof of \cref{lemma:gap}}
\label{sec:lemma:gap}

\begin{proof}[Proof of \eqref{eq:gap-est-error}]
We first establish the upper bound in \eqref{eq:gap-est-error}. It follows from the definition of $\beta^{(S)}$ that
\begin{align*}
    \|\beta^{(S)} - \beta^{(S^\dagger)}\|_{\Sigma}^2 &= u_S \Sigma_S^{-1} u_S + u_{S^\dagger} \Sigma_{S^\dagger}^{-1} u_{S^\dagger} - 2u_{S\cap S^\dagger} \Sigma_{S\cap S^\dagger}^{-1} u_{S\cap S^\dagger}\\
    &\overset{(a)}{\le} 2 \times \frac{1}{2} \sum_{e\in [2]} \mathbb{E}[|Y^{(e)}|^2]
    \le \sum_{e\in [2]} \mathbb{E}[|Y^{(e)}|^2].
\end{align*} here $(a)$ follows from the fact that the pooled full covariance matrix $\begin{bmatrix}
        \Sigma & u \\
        u^\top & \frac{1}{2} \sum_{e\in \mathcal{E}} \mathbb{E}[|Y^{(e)}|^2]
    \end{bmatrix}$ is positive semi-definite. 

Now we turn to the lower bound. We denote $\tilde{A} = \begin{bmatrix}
        A & \frac{1}{2} 1_k \\
        \frac{1}{2} 1_k^\top & 0
    \end{bmatrix}$. It is easy to see that $\|\tilde{A}\|_F \le d$, combining this with the fact that $\|\tilde{A}\|_2 \le \|\tilde{A}\|_F$, the maximum and minimum eigenvalue of $\Sigma^{(2)}$ can be controlled by 
\begin{align}
\label{eq:eigen-ul-bound}
4d \le 32k - \|\tilde{A}\|_2 \le \lambda_{\min}(\Sigma^{(2)}) \le \lambda_{\max}(\Sigma^{(2)}) \le 5d + \|\tilde{A}\|_2 \le 6d
\end{align}
When there is no intrinsic noise, the variance of $Y^{(e)}$ can be exactly calculated as
\begin{align*}
    \mathbb{E}[|Y^{(1)}|^2] = (u^{(1)})^\top (\Sigma^{(1)})^{-1} u^{(1)} = d
\end{align*} and upper bounded as 
\begin{align*}
    \mathbb{E}[|Y^{(2)}|^2] = (u^{(2)})^\top (\Sigma^{(2)})^{-1} u^{(2)} \le \left(\lambda_{\min}(\Sigma^{(2)}) \right)^{-1} \|u^{(2)}\|_2^2 \le \frac{1}{4d} \times d\times (6d)^2 \le 9d^2.
\end{align*} Therefore, we have
\begin{align}
\label{eq:variance-upper-bound}
    \sum_{e\in [2]} \mathbb{E}[|Y^{(e)}|^2] \le 10d^2. 
\end{align}

On the other hand, by \eqref{eq:eigen-ul-bound}, we obtain
\begin{align}
\label{eq:diff-lb}
\begin{split}
    \|\beta^{(S)} - \beta^{(S^\dagger)}\|_{\Sigma}^2 &= \left(\beta^{(S)} - \beta^{(S^\dagger)}\right)^\top \Sigma \Sigma^{-1} \Sigma \left(\beta^{(S)} - \beta^{(S^\dagger)}\right) \\
    &\ge \frac{1}{\lambda_{\max}(\Sigma)} \left\|\Sigma\left(\beta^{(S)} - \beta^{(S^\dagger)}\right)\right\|_2^2 \ge \frac{1}{7d} \left\|\Sigma\left(\beta^{(S)} - \beta^{(S^\dagger)}\right)\right\|_2^2.
\end{split}
\end{align}
We denote $S_1 = S\setminus S^\dagger$ and $S_2 = S^\dagger \setminus S$. We will establish the lower bound on $\|\Delta\|_2^2$ for $\Delta=\Sigma(\beta^{(S)} - \beta^{(S^\dagger)}) \in \mathbb{R}^d$ when $S\neq S^\dagger$. Given $S\neq S^\dagger$, one has either $S_1 \neq \emptyset$ or $S_2 \neq \emptyset$. Without loss of generality, we assume that $S_2 \neq \emptyset$. 

First, one has
\begin{align*}
    \|\beta^{(S)}_S\|_2 &= \left\| (\Sigma_S)^{-1} u_S \right\|_2
    = \left\|\left(\frac{5d+1}{2} I_{|S|} + \frac{1}{2} \tilde{A}_S \right)^{-1} u_S \right\|_2 \\
    &= \left\|\left(I_{|S|} + \frac{1}{5d+1} \tilde{A}_S\right)^{-1} \frac{2}{5d+1} u_S \right\|_2 \\
    &\overset{(a)}{\le} \left\|\left(I_{|S|} + \frac{1}{5d+1} \tilde{A}_S\right)^{-1} - I_{|S|} \right\| \frac{2}{5d+1} \|u_S\|_2 + \frac{2}{5d+1} \|u_S\|_2\\
    &\overset{(b)}{\le} \left(1 + 2\frac{d}{5d+1}\right) \frac{2}{5d+1} \frac{5d+1+0.5k}{2} \sqrt{d} \\
    &\le (1 + 2/5) \times (1 + 0.5/5) \sqrt{d} \le 1.5 \sqrt{d}.
\end{align*} Here $(a)$ follows from the triangle inequality, $(b)$ follows from the fact that $\|(I+M)^{-1} -I\|_2 \le 2\|M\|$ if $\|M\| \le 0.5$. Pick $j\in S_2$, it follows from the above upper bound, the fact $j\notin S$ and Cauchy Schwarz inequality that
\begin{align*}
    \Delta_j &= \Sigma_{j, S}^\top \beta_S^{(S)} - u_j \\
    &\le \|\tilde{A}_{j, S}\|_2 \|\beta_S^{(S)}\|_2 - \frac{1}{2}(5d + 0.5 + 1) \\
    &\le 1.5 d - 2.5 d - 0.75 \le -d - 0.75.
\end{align*} This further yields that $\|\Delta\|_2^2 \ge \|\Delta_j\|^2 \ge d^2$. 
Combining it with \eqref{eq:diff-lb} and \eqref{eq:variance-upper-bound} completes the proof of the lower bound.

\end{proof}

\begin{proof}[Proof of \eqref{eq:gap-hetero}]
For the upper bound, we have
\begin{align*}
    \sum_{e\in [2]} \|\beta^{(S)} - \beta^{(e,S)}\|_{\Sigma^{(e)}}^2 &= \min_{\supp(\beta)\subseteq S}  \sum_{e\in [2]} \|\beta^{(e,S)} - \beta\|_{\Sigma^{(e)}}^2 \\ &\le \sum_{e\in [2]} \|\beta^{(e,S)}\|_{\Sigma^{(e)}}^2 \overset{(a)}{=} \sum_{e\in [2]} u_S^{(e)} (\Sigma_S^{(e)})^{-1} u_S^{(e)} \\
    &\overset{(b)}{\le} \sum_{e\in [2]} \mathbb{E}[|Y^{(e)}|^2]. 
\end{align*} Here $(a)$ follows from the definition of $\beta^{(e,S)}$, $(b)$ follows from the fact that the following covariance matrix $\begin{bmatrix}
        \Sigma^{(e)} & u^{(e)} \\
        (u^{(e)})^\top & \mathbb{E}[|Y^{(e)}|^2]
    \end{bmatrix}$ is positive semi-definite. 

Turning to the lower bound, 
\begin{align*}
    \sum_{e\in [2]} \|\beta^{(S)} - \beta^{(e,S)}\|_{\Sigma^{(e)}}^2 &\ge \|\beta^{(S)} - \beta^{(1,S)}\|_2^2 \\
    &= \left\|\Sigma_S^{-1} (0.5 u_S^{(1)} + 0.5 u_S^{(2)}) - u_S^{(1)}\right\|_2^2 \\
    &\ge [\lambda_{\max}(\Sigma)]^{-2} \left\|0.5 u_S^{(1)} + 0.5 u_S^{(2)} - 0.5 u_S^{(1)} - 0.5 \Sigma_S^{(2)} u_S^{(1)} \right\|_2 \\
    &\ge [4\lambda_{\max}(\Sigma)]^{-2} \|2\Sigma_S^{(2)} u_S^{(1)} - u_S^{(2)} \|_2^2
\end{align*} Observe that all the entries in the vector $2\Sigma_S^{(2)} u_S^{(1)} - u_S^{(2)}$ are integer. Then unless $\Sigma_S^{(2)} u_S^{(1)} = u_S^{(2)}$, in other words, $S$ is a invariant set by \cref{def:mis}, we have $\|2\Sigma_S^{(2)} u_S^{(1)} - u_S^{(2)} \|_2^2 \ge 1$. Therefore, we have
\begin{align*}
\sum_{e\in [2]} \|\beta^{(S)} - \beta^{(e,S)}\|_{\Sigma^{(e)}}^2 &\ge \|\beta^{(S)} - \beta^{(1,S)}\|_2^2 \ge [4\lambda_{\max}(\Sigma)]^{-2} \ge 784d^{-2}
\end{align*} if $S$ is not a invariant set. Combining it with the upper bound \eqref{eq:variance-upper-bound} completes the proof. 

\end{proof}

\subsection{Proofs in \cref{sec:barrier-1}}

\begin{proof}[Proof of \cref{lemma:equiv}] Denote $\hat{\varepsilon}^{(e)} = Y^{(e)} - (\beta^{(e,S)})^\top X^{(e)}$, we have
\begin{align*}
    \text{Cond \eqref{eq:dis-invar}} \qquad &\Longleftrightarrow \qquad \beta^{(e,S)} \equiv \bar{\beta}_S \neq 0 \text{ and } F_\varepsilon \sim \hat{\varepsilon}^{(e)} \indep X_S^{(e)} \\
    & \overset{(a)}{\Longleftrightarrow} \qquad \beta^{(e,S)} \equiv \bar{\beta}_S \neq 0 \text{ and } \mathrm{var}(\hat{\varepsilon}^{(e)}) \equiv v_\varepsilon \qquad \overset{(b)}{\Longleftrightarrow} \qquad \text{Cond \eqref{eq:dis}}
\end{align*} where (a) follows from the fact that $(X, Y)$ are multivariate Gaussian under which independence is equivalent to uncorrelatedness and the fact that $\hat{\varepsilon}^{(e)}$ is also Gaussian, (b) follows from the fact that
\begin{align*}
    \mathrm{var}(\hat{\varepsilon}^{(e)}) &= \mathbb{E}[|Y^{(e)}|^2] - 2 (\beta^{(e,S)}_S)^\top \mathbb{E}[X_S^{(e)} Y^{(e)}] + (\beta^{(e,S)}_S)^\top \Sigma_{S}^{(e)} \beta^{(e,S)}_S \\
    &= v^{(e)} - (\beta^{(e,S)}_S)^\top \Sigma_{S}^{(e)} \beta^{(e,S)}_S.
\end{align*}
\end{proof}

\begin{proof}[Proof of \cref{thm:np-hard-dis}]
The proof is similar to that of \cref{thm:np-hard-p1}. For each instance $x$, we use the same reduction construction of $(\Sigma, u)$ in problem $y$ constructed in \cref{lemma:reduction} and let
\begin{align*}
    v^{(1)} = 100 d^{5} + k + 1 \qquad v^{(2)} = 100 d^5 + 5d(k+1) + k,
\end{align*} this furnishes a new problem $\tilde{y}$ of {\sc ExistDIS}. It is easy to see that
\begin{align*}
    \forall e\in [2], \qquad v^{(e)} - (u^{(e)}_S)^\top (\Sigma_S^{(e)})^{-1} u^{(e)}_S \ge 1.
\end{align*} Moreover, for any valid solution $S\in \mathcal{S}_y$, one has
\begin{align*}
    v^{(1)} - \beta^{(1,S)}_S \Sigma_S^{(1)} \beta^{(1,S)}_S = v^{(1)} - 1_{|S|}^\top 1_{|S|} = v^{(1)} - (k+1) = 100 d^5 
\end{align*}and
\begin{align*}
    v^{(2)} - \beta^{(2,S)}_S \Sigma_S^{(2)} \beta^{(2,S)}_S &= v^{(2)} - 1_{|S|}^\top \Sigma^{(2)}_S 1_{|S|} \\
    &\overset{(a)}{=} v^{(2)} - \left(5d + 2 (|S| - 1) \frac{1}{2} + 1_k^\top (5d I_d + A_{\mathring{S}}) 1_{k}\right) \\
    &\overset{(b)}{=} v^{(2)} - 5d (1 + k) - k = 100 d^5.
\end{align*} Here $(a)$ follows from the fact that $d\in S$ provided $S\in \mathcal{S}_y$, $(b)$ follows from the fact that $A_{i,j}=0$ for any $i,j\in \mathring{S}$ and $|S| = k+1$ provided $S\in \mathcal{S}_y$. This further yields that $\mathcal{S}_y\subseteq \mathcal{S}_{\tilde{y}}$. Combined with the fact that $\mathcal{S}_{\tilde{y}} \subseteq \mathcal{S}_{y}$, one further has $\mathcal{S}_{\tilde{y}}=\mathcal{S}_{{y}}$. The rest of the proof follows similarly. 
\end{proof}

\subsection{Proof of \cref{thm:gap2}}

We adopt a similar reduction idea as that in \cref{lemma:reduction}. Without loss of generality, we assume $k\ge 10^4$ and $\epsilon<0.5$. 

We first introduce one additional notation. For any integer $\ell>0$, we define the positive definite $\ell\times\ell$ matrix $H_\ell$ as follows:
\begin{align}
\label{eq:reduction-g-mat-2}
    (H_\ell)_{j,j'} = \begin{cases}
        2 & \qquad j=j' \\
        1 & \qquad \text{otherwise}
    \end{cases}
\end{align}
for any $j,j'\in[\ell]$. Namely, $H_\ell =I_\ell+1_\ell 1_\ell^\top$ for any $\ell\ge 1$. One can thereby obtain $H_\ell^{-1}=I_\ell-\tfrac{1}{\ell+1}1_\ell 1_\ell^\top$.

\noindent {\sc Step 1. Construct the Reduction. } For any {\sc 3Sat} instance $x$ with input size $k$, we construct an {\sc ExistLIS} instance $y$ with size $d = \lceil k^{3/(\epsilon)} \rceil$ as follows:
\begin{align*}
    \Sigma^{(1)} = \begin{bmatrix}
         32k \cdot I_{7k+1}&0 \\
        0  & H_{d-7k-1}
    \end{bmatrix} \qquad \text{and} \qquad u^{(1)} = (k^{-1})\begin{bmatrix}
         32k\cdot1_{7k+1}\\
        1_{d-7k-1}
    \end{bmatrix},
\end{align*}
and
\begin{align*}
    \Sigma^{(2)} = \begin{bmatrix}
         32k\cdot I_{7k} + A & \frac{1}{2} \cdot 1_{7k}&0 \\
        \frac{1}{2} \cdot 1_{7k}^\top &  32k&0\\
        0&0&H_{d-7k-1}
    \end{bmatrix} \qquad \text{and} \qquad u^{(2)} = (k^{-1})\begin{bmatrix}
        (32k + \frac{1}{2}) \cdot 1_{7k}\\
         32k + \frac{1}{2} k\\
        -3\cdot1_{d-7k-1}
    \end{bmatrix}.
\end{align*}
One can observe that both $\Sigma^{(1)}$ and $\Sigma^{(2)}$ are respectively composed by an upper-left $(7k+1)\times(7k+1)$ matrix and a lower-right $(d-7k-1)\times(d-7k-1)$ matrix $H_{d-7k-1}$.
Recall that in the Proof of \eqref{eq:gap-est-error} we introduce the notation of matrix $\tilde{A} = \begin{bmatrix}
        A & \frac{1}{2} 1_k \\
        \frac{1}{2} 1_k^\top & 0
    \end{bmatrix}$, and we have $\|\tilde{A}\|_2\le\|\tilde{A}\|_F\le 7k+1$. Then similar to \eqref{eq:eigen-ul-bound}, the maximum and minimum eigenvalue of $\Sigma^{(2)}_{[7k+1]}$ can be controlled by
    \begin{align}\label{eq:eigen-ul-bound-2}
        24k\le 32k - \|\tilde{A}\|_2\le \lambda_{\min}(\Sigma^{(2)}_{[7k+1]})\le \lambda_{\max}(\Sigma^{(2)}_{[7k+1]})\le 32k+\|\tilde{A}\|_2\le 40k.
    \end{align}
 Combining with the fact that  $H_{\ell}$ is positive definite for any $\ell\ge 1$, we can conclude that both $\Sigma^{(1)}$ and $\Sigma^{(2)}$ are positive definite, and the above reduction can be calculated within polynomial time.

Now it suffices to show that (1) The above construction is a parsimonious reduction; and (2) The instance $y$ lies in the problem \probsep{}. In order to complete the remaining proof, it is helpful to observe that there are three modifications in this construction compared to the construction in \cref{lemma:reduction}.

\begin{itemize}
\item[(a)] We introduce an auxiliary $(d-7k-1)$-dimension part $[d]\setminus [7k+1]$. We will show that this part is precluded by any invariant set.

\item[(b)] We change the diagonal coordinates in $\Sigma_{[7k+1]}^{(1)}$ from $1$ to $32k$, and those in $\Sigma_{[7k+1]}^{(2)}$ from $5d$ to $32k$ to make $\mathbb{E}[|Y^{(1)}|^2]\asymp \mathbb{E}[|Y^{(2)}|^2]
$. We also change the coordinates of $u_{[7k+1]}^{(1)}$ and $u_{[7k+1]}^{(2)}$ accordingly.

\item[(c)] We add a $k^{-1}$ multiplicative factor in $u^{(1)}$ and $u^{(2)}$ to let $\mathbb{E}[|Y^{(1)}|^2],\mathbb{E}[|Y^{(2)}|^2] \asymp 1$. This will also result in all the $\beta^{(e,S)}$ and $\beta^{(S)}$ being multiplied by the same $k^{-1}$ factor.
\end{itemize}

\noindent {\sc Step 2. Verification of Parsimonious Reduction.} 
We first claim that the auxiliary $(d-7k-1)$-dimension part $[d]\setminus [7k+1]$ is precluded by any invariant set, namely
\begin{align}
\label{eq:empty-larger-than-7k}
\forall  S^\dagger \in \mathcal{S}_y \qquad \Longrightarrow \qquad S^\dagger \cap \{7k+2,\ldots, d\} = \emptyset.
\end{align}

To this end, we adopt the proof-by-contradiction argument. To be specific, if $j\in S^\dagger$ for some $j\in \{7k+2, \ldots, d\}$ and $S^\dagger\in\mathcal{S}_y$, then the equations $\Sigma^{(1)}_{S^\dagger}\beta^{(S^\dagger)}_{S^\dagger}=u^{(1)}_{S^\dagger}$ and $\Sigma^{(2)}_{S^\dagger}\beta^{(S^\dagger)}_{S^\dagger}=u^{(2)}_{S^\dagger}$ yields 
\begin{align*}
    u^{(1)}_j=\left[\Sigma^{(1)}_{S^\dagger}\beta^{(S^\dagger)}\right]_j=\Sigma_{j,S^\dagger}^{(1)}\beta^{(S^\dagger)}
\quad\text{and}\quad
    u^{(2)}_j=\left[\Sigma^{(2)}_{S^\dagger}\beta^{(S^\dagger)}\right]_j=\Sigma_{j,S^\dagger}^{(2)}\beta^{(S^\dagger)}.
\end{align*}
However, in our construction $\Sigma_{j,S^\dagger}^{(1)}=\Sigma_{j,S^\dagger}^{(2)}$ while $u^{(1)}_{j}\neq u^{(2)}_{j}$. This leads to a contradiction. Therefore, an invariant set should not contain any element in $\{7k+2,\ldots,d\}$.

By \eqref{eq:empty-larger-than-7k}, we have the following statements similar to \eqref{eq:equiv-reduction} in the proof of \cref{lemma:reduction}.
\begin{align}
 \label{eq:equiv-reduction-eps-sep}
 \begin{split}
     S^\dagger \in \mathcal{S}_y ~~~&\overset{(a)}{\Longleftrightarrow} ~~~ S^\dagger\neq \emptyset \text{ and }\beta^{(2,S^\dagger)} = \beta^{(1,S^\dagger)} \text{ with } \beta^{(1,S^\dagger)}_j = (k^{-1})\indicator\{j \in S^\dagger\} \\
     &\overset{(b)}{\Longleftrightarrow} ~~~  S^\dagger = \mathring{S} \cup \{7k+1\} ~\text{with}~|\mathring{S}|=k ~\text{and}~A_{j,j'} = 0~~\forall j,j'\in \mathring{S}\subseteq[7k]\\
     &\overset{(c)}{\Longleftrightarrow} ~~~ \mathring{S} = \{7(i-1)+a_i\}_{i=1}^k\text{ with }a_i \in [7]\text{ s.t. adopt action ID } a_i \\
     &~~~~~~~~~~~~ \text{ in clause } i\in [k] \text{ will lead to a valid solution }v\in \mathcal{S}_x\text{.} 
\end{split}
\end{align}
We emphasis that the proof of \ref{eq:equiv-reduction-eps-sep}(a) and (c) are essentially identical to those of \eqref{eq:equiv-reduction}. {For completeness, we prove (b)}.

\noindent \underline{\sc Proof of \eqref{eq:equiv-reduction-eps-sep} (b).} The proof is almost identical to the proof of \eqref{eq:equiv-reduction}(b) since the major difference is the $k^{-1}$ multiplicative factor. The direction $\Leftarrow$ is obvious. For the $\Rightarrow$ direction, we first show that $7k+1\in S^\dagger$ using the proof by contradiction argument. Suppose $|S^\dagger| \ge 1$ but $7k+1\notin S^\dagger$, we pick $j \in S^\dagger$, then
\begin{align*}
    k\left[\Sigma^{(2)}_{S^\dagger} \beta_{S^\dagger}^{(2,S^\dagger)}\right]_j = 32k + \sum_{j'=1}^{7k} A_{j, j'} \indicator\{j'\in S^\dagger\} \neq 32k + \frac{1}{2} = k\cdot u_j^{(2)}
\end{align*} where the first equality follows from the assumption $\beta^{(2,S^\dagger)}_j = \beta^{(1,S^\dagger)}_j = k^{-1} \indicator\{j\in S^\dagger\}$ and $7k+1\notin S^\dagger$, and the inequality follows from the fact that $A\in \{0,1\}^{7k\times 7k}$ hence the L.H.S. is an integer. This indicates that $\beta^{(1,S^\dagger)} \neq \beta^{(2,S^\dagger)}$ if $|S^\dagger| \ge 1$ and $7k+1\notin S^\dagger$. Given $7k+1\in S^\dagger$, we then obtain
\begin{align*}
    32k + \frac{1}{2} k = k\cdot u_{7k+1}^{(2)} = k\left[\Sigma^{(2)}_{S^\dagger} \beta_{S^\dagger}^{(2,S^\dagger)}\right]_{|S^\dagger|} = 32k + \frac{1}{2} \sum_{j'=1}^{7k} 1\{j'\in S^\dagger\} = 32k + \frac{1}{2} (|S^\dagger| - 1),
\end{align*} which implies that $|S^\dagger|=k+1$. Now we still have the constraint $u^{(2)}_{\mathring{S}}= \Sigma^{(2)}_{\mathring{S}} \beta^{(2,S^\dagger)}_{\mathring{S}} + \frac{1}{2}\cdot 1_k$. The last claim $A_{j',j} = 0$ for any $j',j\in \mathring{S}$ then follows from this by observing that \begin{align*}
\left(32k+\frac{1}{2}\right) \cdot 1_{k}= \Sigma^{(2)}_{\mathring{S}} 1_{k} + \frac{1}{2} \cdot 1_{k}  ~\Longrightarrow~ A_{\mathring{S}} 1_k = 0 ~\overset{(i)}{\Longrightarrow}~ A_{j',j}=0 ~~\forall j',j\in \mathring{S}
\end{align*} where $(i)$ follows from the fact that $A\in \{0,1\}^{7k\times 7k}$.

Therefore, we can conclude that this mapping is a parsimonious polynomial-time reduction from {\sc 3Sat} to {\sc ExistLIS}. Given the conditions (1) -- (3) further holds as verified below, the instance is an \probsep{} instance. Hence the problem \probsep{} is NP-hard. \qed

\begin{lemma}
\label{lemma:vf111}
The above constructed instance is an \probsep{} instance.     
\end{lemma}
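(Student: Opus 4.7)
The plan is to verify the three $\epsilon$-separation conditions (a)--(c) for the constructed instance, exploiting two structural features: first, $\Sigma^{(1)}$ and $\Sigma^{(2)}$ are block-diagonal, with an upper-left ``main'' block of size $7k+1$ (essentially the construction of \cref{lemma:reduction} rescaled by $32k$ in $\Sigma^{(e)}$ and by $k^{-1}$ in $u^{(e)}$) and a lower-right ``auxiliary'' block $H_{d-7k-1}$ common to both environments; second, by \eqref{eq:empty-larger-than-7k} every invariant set is contained in $[7k+1]$. For any $S\subseteq[d]$, writing $S_1=S\cap[7k+1]$ and $S_2=S\setminus[7k+1]$, block-diagonality yields
\begin{align*}
    \sum_{e}\bigl\|\beta^{(e,S)}_S-\beta^{(S)}_S\bigr\|_{\Sigma^{(e)}_S}^2
    = \sum_{e}\bigl\|\beta^{(e,S_1)}_{S_1}-\beta^{(S_1)}_{S_1}\bigr\|_{\Sigma^{(e)}_{S_1}}^2
    + \sum_{e}\bigl\|\beta^{(e,S_2)}_{S_2}-\beta^{(S_2)}_{S_2}\bigr\|_{\Sigma^{(e)}_{S_2}}^2,
\end{align*}
and an analogous decomposition of $\|\beta^{(S)}-\beta^{(S^\dagger)}\|_\Sigma^2$, so each of (b) and (c) can be checked one block at a time.

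For (a), following \cref{prob:est-d4} I take $Y^{(e)}=(\beta^{(e,[d])})^\top X^{(e)}$, so $\mathbb{E}[|Y^{(e)}|^2]=(u^{(e)})^\top(\Sigma^{(e)})^{-1}u^{(e)}$ splits along the two blocks. Using the explicit formula $H_\ell^{-1}=I_\ell-\tfrac{1}{\ell+1}1_\ell 1_\ell^\top$, the auxiliary contribution is at most $9k^{-2}$ in either environment; the main-block contribution evaluates exactly to $32(7k+1)/k\in[224,225]$ for $e=1$ and is bounded in $[175,320]$ for $e=2$ by combining $\|u^{(2)}_{[7k+1]}\|_2^2\asymp k$ with \eqref{eq:eigen-ul-bound-2}. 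Both totals lie in $[1,1000]$ once $k\ge 10^4$.

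For (b), if $|S_2|\ge 1$ the auxiliary block alone already settles the condition: since $\Sigma^{(1)}_{S_2}=\Sigma^{(2)}_{S_2}=H_{|S_2|}$ but $u^{(1)}_{S_2}-u^{(2)}_{S_2}=4k^{-1}1_{|S_2|}$, a direct calculation gives $\tfrac{1}{2}\sum_{e}\|\beta^{(e,S_2)}_{S_2}-\beta^{(S_2)}_{S_2}\|_{\Sigma^{(e)}_{S_2}}^2=2k^{-2}|S_2|/(|S_2|+1)\ge k^{-2}$, which exceeds $d^{-\epsilon}/1280\le k^{-3}/1280$. If $S_2=\emptyset$ the problem reduces to the main block, where I would adapt the integer-residual argument in the proof of \eqref{eq:gap-hetero}: multiplying $\Sigma_{S_1}\bigl(\beta^{(S_1)}_{S_1}-\beta^{(1,S_1)}_{S_1}\bigr)$ through by $64k$ produces the vector $32k\cdot ku^{(2)}_{S_1}-\Sigma^{(2)}_{S_1}\cdot ku^{(1)}_{S_1}$ whose entries, by the construction of $\Sigma^{(2)}$ and the values of $ku^{(e)}$, are all integers (for rows $j\in[7k]$) or integer multiples of $16$ (for the row $7k+1$). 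This vector is nonzero precisely when $S_1$ fails the characterization \eqref{eq:equiv-reduction-eps-sep}(b) and therefore is not an invariant set, so its $\ell_2$-norm is at least $1$, and the eigenvalue bounds $\lambda_{\max}(\Sigma^{(e)}_{[7k+1]})\le 40k$ from \eqref{eq:eigen-ul-bound-2} together with $\lambda_{\min}(\Sigma^{(1)}_{S_1})=32k$ translate this into a lower bound of order $k^{-3}$, sufficient since $d^{-\epsilon}\le k^{-3}$.

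For (c) the same dichotomy applies: when $|S_2|\ge 1$ the auxiliary block contributes an automatic gap $\|\beta^{(S_2)}_{S_2}\|_{H_{|S_2|}}^2\ge k^{-2}/2$ since $\beta^{(S^\dagger)}$ vanishes on $[d]\setminus[7k+1]$; when $|S_2|=0$ and $S_1\neq S^\dagger$ I adapt the argument in the proof of \eqref{eq:gap-est-error} by computing $\Sigma_{[7k+1]}(\beta^{(S)}-\beta^{(S^\dagger)})$ coordinate by coordinate, invoking $\beta^{(S^\dagger)}_{S^\dagger}=k^{-1}1_{|S^\dagger|}$ with $|S^\dagger|=k+1$ and $7k+1\in S^\dagger$ from \eqref{eq:equiv-reduction-eps-sep}(b). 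The main obstacle, and where the argument is most delicate, is precisely the half-integer entry $\tfrac{1}{2}$ in the row and column indexed by $7k+1$, which means the residual need not be strictly integer-valued at that coordinate; to salvage the lower bound I would separately handle the subcases determined by whether $7k+1$ lies in $S\cap S^\dagger$, in $S\setminus S^\dagger$, or in $S^\dagger\setminus S$, using the explicit value $|S_1\cap[7k]|$ relative to $k$ to pin down the coefficient of $\tfrac{1}{2}$ and ensure some entry of the residual has magnitude at least $\tfrac14$, which again yields $\|\beta^{(S)}-\beta^{(S^\dagger)}\|_\Sigma^2\gtrsim k^{-3}\ge d^{-\epsilon}/1280$.
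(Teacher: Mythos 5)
Your outline matches the paper's proof closely for condition (a), condition (b), and the $S_2\neq\emptyset$ case of condition (c): the block-diagonal decomposition, the explicit formula $H_\ell^{-1}=I_\ell-\tfrac{1}{\ell+1}1_\ell1_\ell^\top$, the factor-$4$ mismatch between $u^{(1)}_{S_2}$ and $u^{(2)}_{S_2}$, and the eigenvalue bounds from \eqref{eq:eigen-ul-bound-2} are exactly the paper's ingredients, and your lower bounds (up to a constant factor—you get $2k^{-2}|S_2|/(|S_2|+1)$ where the correct value is $4k^{-2}|S_2|/(|S_2|+1)$) are sufficient for the thresholds anyway. In (b) Case $S_2=\emptyset$, your residual $\Sigma_{S_1}(\beta^{(S_1)}_{S_1}-\beta^{(1,S_1)}_{S_1})$ is, after the right scaling, a constant multiple of the paper's vector $(2\Sigma^{(2)}_{S_1})(k\beta^{(1,S_1)}_{S_1})-(2k)u^{(2)}_{S_1}$, so that step is genuinely the same integer-residual argument; note though that your stated scaling factor is off by a factor of $k$ (you write $32k\cdot ku^{(2)}_{S_1}-\Sigma^{(2)}_{S_1}\cdot ku^{(1)}_{S_1}$, but $64k$ rather than $64k^2$ is what makes $\Sigma_{S_1}(\beta^{(S_1)}-\beta^{(1,S_1)})$ integer-valued), and what matters downstream is that the scaling enters quadratically in the final lower bound, so this needs to be tracked carefully.

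The real divergence is in condition (c), Case $S_2=\emptyset$. You propose to keep running the integer-residual machine on $\Sigma_{[7k+1]}(\beta^{(S)}-\beta^{(S^\dagger)})$ and to salvage the half-integer contamination from the $\tfrac12$ entries in row/column $7k+1$ by a three-way subcase analysis on where $7k+1$ sits relative to $S$ and $S^\dagger$. The paper's actual argument is cleaner and avoids this entirely: it first proves the uniform bound $\|\beta^{(S)}\|_2\le 3k^{-1/2}$ for every $S\subseteq[7k+1]$ (via a Neumann-series estimate on $\Sigma_{S}^{-1}=(32k I+\tfrac12\tilde A_S)^{-1}$), then picks any coordinate $j$ in the symmetric difference of $S$ and $S^\dagger$—say $j\in S^\dagger\setminus S$—and uses the normal equation $[\Sigma\beta^{(S^\dagger)}]_j=u_j$ together with $u_j\approx 32$ and $|[\Sigma\beta^{(S)}]_j|\le\|\tilde A_{j,S}\|_2\|\beta^{(S)}\|_2\lesssim\sqrt{k}\cdot k^{-1/2}=O(1)$ to conclude $|\Delta_j|\ge 1$. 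No integrality is invoked at all; the dominance of the ``diagonal'' signal $u_j$ over the bounded cross-correlation does the work, which is exactly why the $\tfrac12$ entries cause no trouble. It also proves the stronger claim for \emph{arbitrary} pairs $S,S'\subseteq[7k+1]$, not just pairs with one coordinate equal to an invariant set, so nothing from \eqref{eq:equiv-reduction-eps-sep}(b) is needed at this stage. Your proposed subcase route is likely salvageable with effort, but you should be aware that you are solving a harder bookkeeping problem than necessary; the magnitude argument is both shorter and more robust, and I would recommend adopting it.
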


\begin{proof}[Proof of \cref{lemma:vf111}]
\noindent {\sc Step 1 Calculating the Variance of $Y^{(e)}$ for $e\in\{1,2\}$. } Now we calculate $\mathbb{E}[|Y^{(1)}|^2]$ and $\mathbb{E}[|Y^{(2)}|^2]$. Without loss of generality we consider the cases where $Y^{(e)}$ is a linear combination of $X^{(e)}$ for $e\in\{1,2\}$, under which $\mathbb{E}[
|Y^{(e)}|^2]=(u^{(e)})^\top(\Sigma^{(e)})^{-1}u^{(e)}$ for $e\in\{1,2\}$. 

For $e=1$, we have
\begin{align*}
    \mathbb{E}[
|Y^{(1)}|^2]&=(u^{(1)})^\top(\Sigma^{(1)})^{-1}u^{(1)}\\
&\overset{(a)}{=}(u_{[7k+1]}^{(1)})^\top(\Sigma_{[7k+1]}^{(1)})^{-1}u_{[7k+1]}^{(1)}+(u_{[d]\setminus[7k+1]}^{(1)})^\top(\Sigma_{[d]\setminus[7k+1]}^{(1)})^{-1}u_{[d]\setminus[7k+1]}^{(1)}\\
&=(k^{-1})^{2}\left(\frac{1}{32k}(32k)^2(7k+1)+1_{d-7k-1}^\top H_{d-7k-1}^{-1}1_{d-7k-1}\right).
\end{align*}
Here $(a)$ follows from the fact that $\Sigma^{(1)}$ is a block diagonal matrix.
It follows from the identity $1_\ell^\top H_\ell^{-1}1_\ell=\ell/(1+\ell)$ that
\begin{align*}
    1&<(k^{-1})^{2}\frac{1}{32k}(32k)^2(7k+1)
    \\&\le \mathbb{E}[
|Y^{(1)}|^2]\\&\le (k^{-1})^{2}\left(32k \cdot (7k+1)+\frac{d-7k-1}{d-7k-1+1}\right)\\&< 256.
\end{align*}

Similarly, for $\mathbb{E}[
|Y^{(2)}|^2]$, following from the fact that $\Sigma^{(2)}$ is block diagonal, we obtain
\begin{align*}
    \mathbb{E}[
|Y^{(2)}|^2]&=(u^{(2)})^\top(\Sigma^{(2)})^{-1}u^{(2)}\\
&=(u_{[7k+1]}^{(2)})^\top(\Sigma_{[7k+1]}^{(2)})^{-1}u_{[7k+1]}^{(2)}+(u_{[d]\setminus[7k+1]}^{(2)})^\top(\Sigma_{[d]\setminus[7k+1]}^{(2)})^{-1}u_{[d]\setminus[7k+1]}^{(2)}\\
    &=(u^{(2)}_{[7k+1]})^\top(\Sigma^{(2)}_{[7k+1]})^{-1}u^{(2)}_{[7k+1]}+9(k^{-1})^{2}1_{d-7k-1}^\top H_{d-7k-1}^{-1}1_{d-7k-1}.
\end{align*}
Recall that $\lambda_{\max}(\Sigma_{[7k+1]}^{(2)})\le  40k$ and $\lambda_{\min}(\Sigma_{[7k+1]}^{(2)})\ge 24k$, we have
\begin{align*}
     1<(k^{-1})^{2}(40k)^{-1}(32k)^2(7k+1) &\le (u^{(2)}_{[7k+1]})^\top(\Sigma^{(2)}_{[7k+1]})^{-1}u^{(2)}_{[7k+1]}\\&\le (k^{-1})^{2}(24k)^{-1}(32k+k)^2(7k+1)\\&<999.
\end{align*}
Therefore, 
\begin{align*}
    1<\mathbb{E}[
|Y^{(2)}|^2]<999+9(k^{-1})^{2}<1000.
\end{align*}
Hence we can conclude that $1\le \mathbb{E}[|Y^{(1)}|^2],\mathbb{E}[|Y^{(2)}|^2] \le 1000$.

\noindent{\sc Step 2. Calculating the Prediction Variation.} Now we lower bound the heterogeneity gap $\frac{1}{|\mathcal{E}|}\sum_{e\in \mathcal{E}}\|\beta^{(e,S)}_S - \beta^{(S)}_S\|_{\Sigma_S^{(e)}}^2 \ge d^{-\epsilon}/1280$ when $S$ is not an invariant set as \cref{def:mis}. Denote $S_1=S\cap[7k+1]$ and $ S_2=S\setminus [7k+1] $. We divide it into two cases when $\beta^{(1,S)} \neq \beta^{(2,S)}$:
 
\noindent \emph{Case 1. $S_2\neq \emptyset$}: Observe $\Sigma^{(1)}$ and $\Sigma^{(2)}$ are block diagonal matrices, we have
\begin{align}
\label{eq:beta-S2}
\begin{split}
\beta^{(1,S)}_{S_2}&=\beta^{(1,S_2)}_{S_2}=H_{|S_2|}^{-1}u_{S_2}^{(1)},\\
\beta^{(2,S)}_{S_2}&=\beta^{(2,S_2)}_{S_2}=H_{|S_2|}^{-1}u_{S_2}^{(2)}=-3H_{|S_2|}^{-1}u_{S_2}^{(1)},\\
\beta^{(S)}_{S_2}&=\beta^{(S_2)}_{S_2}=(H_{|S_2|}+H_{|S_2|})^{-1}(u_{S_2}^{(1)}+u_{S_2}^{(2)})=-H_{|S_2|}^{-1}u_{S_2}^{(1)}.
\end{split}
\end{align}
Substituting the above terms, we can lower bound the heterogeneity gap as
\begin{align*}
    &\frac{1}{2}\left(\|\beta_{S}^{(1,S)} - \beta_{S}^{(S)}\|_{\Sigma^{(1)}_S}^2+\|\beta_{S}^{(2,S)} - \beta_{S}^{(S)}\|_{\Sigma^{(2)}_S}^2\right)\\
        &=\frac12\left(\|\beta_{S_1}^{(1,S_1)} - \beta_{S_1}^{(S_1)}\|_{\Sigma^{(1)}_{S_1}}^2+\|\beta_{S_1}^{(2,S_1)} - \beta_{S_1}^{(S_1)}\|_{\Sigma^{(2)}_{S_1}}^2\right)\\
    &\qquad\qquad\qquad+\frac12\left(\|\beta_{S_2}^{(1,S_2)} - \beta_{S_2}^{(S_2)}\|_{\Sigma^{(1)}_{S_2}}^2+\|\beta_{S_2}^{(2,S_2)} - \beta_{S_2}^{(S_2)}\|_{\Sigma^{(2)}_{S_2}}^2\right)\\
    &\ge \frac{1}{2}\left(\|\beta_{S_2}^{(1,S_2)} - \beta_{S_2}^{(S_2)}\|_{\Sigma^{(1)}_{S_2}}^2+\|\beta_{S_2}^{(2,S_2)} - \beta_{S_2}^{(S_2)}\|_{\Sigma^{(2)}_{S_2}}^2\right)\\
    &=
    \frac12\left(
    4({u_{S_2}^{(1)}})^\top H_{|S_2|}^{-1}u_{S_2}^{(1)}+
    4({u_{S_2}^{(2)}})^\top H_{|S_2|}^{-1}u_{S_2}^{(2)}
    \right)\\
    &=\frac{4|S_2|}{|S_2|+1}(k^{-1})^{2} \ge 2\cdot k^{-2}.
\end{align*}

\noindent \emph{Case 2. $S_2=\emptyset$}: In this case, we must have $\beta^{(1,S)}_{S_1} \neq \beta^{(2,S)}_{S_1}$ because $\beta_{(S_1)^c}^{(e,S)} = 0$ for any $e\in \{1,2\}$. At the same time, 
\begin{align*}
    &\frac12\left(\|\beta_{S}^{(1,S)} - \beta_{S}^{(S)}\|_{\Sigma^{(1)}_S}^2+\|\beta_{S}^{(2,S)} - \beta_{S}^{(S)}\|_{\Sigma^{(2)}_S}^2\right)\\
    &\overset{(a)}{=}\frac12\left(\|\beta_{S_1}^{(1,S_1)} - \beta_{S_1}^{(S_1)}\|_{\Sigma^{(1)}_{S_1}}^2+\|\beta_{S_1}^{(2,S_1)} - \beta_{S_1}^{(S_1)}\|_{\Sigma^{(2)}_{S_1}}^2\right)\\&\ge\frac{\lambda_{\min}\left(\Sigma^{(1)}_{[7k+1]}\right)\wedge\lambda_{\min}\left(\Sigma^{(2)}_{[7k+1]}\right)}{2}\left(\|\beta_{S_1}^{(1,S_1)} - \beta_{S_1}^{(S_1)}\|_2^2+\|\beta_{S_1}^{(2,S_1)} - \beta_{S_1}^{(S_1)}\|_2^2\right)\\
    &\overset{(b)}{\ge} \frac{24k}{2}\left(\|\beta_{S_1}^{(1,S_1)} - \beta_{S_1}^{(S_1)}\|_2^2+\|\beta_{S_1}^{(2,S_1)} - \beta_{S_1}^{(S_1)}\|_2^2\right)\\
    &\overset{(c)}{\ge}  \frac{24k}{2}\left(\frac12\|\beta_{S_1}^{(1,S_1)} - \beta_{S_1}^{(2,S_1)}\|_2^2\right) = 6k\|\beta_{S_1}^{(1,S_1)} - \beta_{S_1}^{(2,S_1)}\|_2^2.
\end{align*}

Here $(a)$ follows from the fact that $\Sigma^{(1)}$ and $\Sigma^{(2)}$ are block diagonal and $S_2=\emptyset$; $(b)$ follows from the fact that $\lambda_{\min}(\Sigma^{(1)}_{[7k+1]}),\lambda_{\min}(\Sigma^{(2)}_{[7k+1]})\ge 24k$; and $(c)$ follows from the fact that $\|a-c\|_2^2 + \|b-c\|_2^2 \ge \min_{x} \|a-x\|_2^2 + \|b-x\|_2^2 \ge \|a-(a+b)/2\|_2^2 + \|a-(a+b)/2\|_2^2 \ge 0.5\|a-b\|_2^2$ for any vector $a, b, c$. 

Recall that $\Sigma^{(2)}_{S_1}\beta_{S_1}^{(2,S)}=u_{S_1}^{(2)}$ and $\lambda_{\max}(\Sigma_{S_1}^{(2)})\le 40k$, then
\begin{align*}
    6k\|\beta_{S_1}^{(1,S_1)} - \beta_{S_1}^{(2,S_1)}\|^2
    &= 6k \left(\Sigma^{(2)}_{S_1} \beta^{(1,S)}_{S_1} - u^{(2)}_{S_1}\right)^\top (\Sigma^{(2)}_{S_1})^{-2} \left(\Sigma^{(2)}_{S_1} \beta^{(1,S)}_{S_1} - u^{(2)}_{S_1}\right)\\
    &\ge \frac{6k}{\lambda_{\max}(\Sigma^{(2)}_{S_1})^2} \left\| \Sigma^{(2)}_{S_1} \beta^{(1,S)}_{S_1} - u^{(2)}_{S_1} \right\|_2^2 \\
    &\ge \frac{6k}{(40k)^2}\left\| \Sigma^{(2)}_{S_1} \beta^{(1,S)}_{S_1} - u^{(2)}_{S_1} \right\|_2^2\\
    &=\frac{6k}{(80k^2)^2}\left\| (2\Sigma^{(2)}_{S_1}) (k\beta^{(1,S)}_{S_1}) - (2k)\cdot u^{(2)}_{S_1} \right\|_2^2.
\end{align*}
Combining $\beta^{(1,S)}_{S_1}=(k^{-1})1_{|S_1|}$ and the definition of $\Sigma^{(2)}$ and $u^{(2)}$, we obtain that each coordinate of the vector $(2\Sigma^{(2)}_{S_1}) (k\beta^{(1,S)}_{S_1}) - (2k)\cdot u^{(2)}_{S_1}$ is an integer. At the same time, we also have
\begin{align*}
(2\Sigma^{(2)}_{S_1}) (k\beta^{(1,S)}_{S_1}) - (2k)\cdot u^{(2)}_{S_1} = 2k \Sigma_{S_1}^{(2)} \left( \beta_{S_1}^{(1,S)} - \beta_{S_1}^{(2,S)}\right) \neq 0
\end{align*} because $\Sigma_{S_1}^{(2)}$ has full rank, which further yields $\|(2\Sigma^{(2)}_{S_1}) (k\beta^{(1,S)}_{S_1}) - (2k)\cdot u^{(2)}_{S_1}\|_2^2 \ge 1$. So we can conclude that
\begin{align*}
    &\frac{1}{2}\left(\|\beta_{S}^{(1,S)} - \beta_{S}^{(S)}\|_{\Sigma^{(1)}_S}^2+\|\beta_{S}^{(2,S)} - \beta_{S}^{(S)}\|_{\Sigma^{(2)}_S}^2\right) \ge \frac{6k}{(80k^2)^2}\ge \frac{1}{1280}k^{-3}
\end{align*} under Case 2. 
 Combing the above two cases together, we can conclude that
 \begin{align*}
     \frac{1}{2}\left(\|\beta^{(1,S)} - \beta^{(S)}\|_{\Sigma^{(1)}_S}^2+\|\beta^{(2,S)} - \beta^{(S)}\|_{\Sigma^{(2)}_S}^2\right)\ge k^{-3}/1280\ge d^{-\epsilon}/1280.
 \end{align*}

\noindent{\sc Step 3. Calculating the Gap between $\beta^{(S)}$ and $\beta^{(S^\dagger)}$.} Let $S^\dagger$ be arbitrary invariant set according to \cref{def:mis} and $S$ be any set that does not equal to $S^\dagger$. We keep adopting the notation $S_1=S\cap[7k+1], S_2=S\setminus [7k+1]$, and divide it into two cases.

\noindent \emph{Case 1. $S_2\neq \emptyset$}:
In this case, from the calculations above we have $\beta^{(S)}_{S_2}=-H_{|S_2|}^{-1}u_{S_2}^{(1)}$. On the other hand, $\beta^{(S^\dagger)}_{S_2}=0$ for any invariant set $S^\dagger$ according to \eqref{eq:empty-larger-than-7k}. Combing the two facts together yields
\begin{align*}
    \|\beta^{(S)}-\beta^{(S^\dagger)}\|^2_{\Sigma}\ge \|\beta^{(S_2)}_{S_2}\|^2_{H_{|S_2|}}&=({u_{S_2}^{(1)}})^\top H_{|S_2|}^{-1}u_{S_2}^{(1)} \\
    &\ge \frac{|S_2|}{|S_2|+1}(k^{-1})^{2}
    \ge \frac{1}{2}k^{-2}\ge d^{-\epsilon}/2.
\end{align*} 

\noindent \emph{Case 2. $S_2= \emptyset$}:
In this case, since $S_2=\emptyset$, one must have $S\subset [7k+1]$. On the other hand, in \eqref{eq:empty-larger-than-7k} we show that any invariant set $S^\dagger$ should also be a subset of $[7k+1]$. In this case, we claim that a stronger statement holds, that for any pair of distinct subsets $S,S'$ in $[7k+1]$, one has $\|\beta^{(S)}-\beta^{(S')}\|_{\Sigma}^2\ge d^{-\epsilon}/1280$.

Recall that in \eqref{eq:eigen-ul-bound-2} we obtain $24k\le\lambda_{\min}(\Sigma^{(2)}_{[7k+1]})\le \lambda_{\max}(\Sigma^{(2)}_{[7k+1]})\le 40k$. This implies
$28k\le\lambda_{\min}(\Sigma_{[7k+1]})\le \lambda_{\max}(\Sigma_{[7k+1]})\le 36k$. It follows from the assumption $S_2 = \emptyset$, our construction of $\Sigma$
\begin{align}
\label{eq:diff-lb-2}
\begin{split}
    \|\beta^{(S)} - \beta^{(S')}\|_{\Sigma}^2 &=\|\beta^{(S)}_{[7k+1]} - \beta^{(S')}_{[7k+1]}\|_{\Sigma_{[7k+1]}}^2\\
    &= \left(\beta_{[7k+1]}^{(S)} - \beta_{[7k+1]}^{(S')}\right)^\top \Sigma_{[7k+1]} \Sigma^{-1}_{[7k+1]} \Sigma_{[7k+1]} \left(\beta_{[7k+1]}^{(S)} - \beta_{[7k+1]}^{(S')}\right) \\
    &\ge \frac{1}{\lambda_{\max}(\Sigma_{[7k+1]})} \left\|\Sigma_{[7k+1]}\left(\beta_{[7k+1]}^{(S)} - \beta^{(S^\dagger)}_{[7k+1]}\right)\right\|_2^2 \\
    &\ge \frac{1}{36k} \left\|\Sigma_{[7k+1]}\left(\beta_{[7k+1]}^{(S)} - \beta_{[7k+1]}^{(S')}\right)\right\|_2^2.
\end{split}
\end{align}

First, we provide an upper bound for $\|\beta^{(S)}\|_2$, as $S\subseteq [7k+1]$ by assumption $S_2 =\emptyset$, 
\begin{align*}
    \|\beta^{(S)}\|_2 &= \|\beta^{(S)}_S\|_2 = \left\| (\Sigma_S)^{-1} u_S \right\|_2
    = \left\|\left(32k I_{|S|} + \frac{1}{2} \tilde{A}_S \right)^{-1} u_S \right\|_2 \\
    &= \left\|\left(I_{|S|} + \frac{1}{64k} \tilde{A}_S\right)^{-1} \frac{1}{32k} u_S \right\|_2 \\
    &\overset{(a)}{\le} \left(\left\|\left(I_{|S|} + \frac{1}{64k} \tilde{A}_S\right)^{-1} - I_{|S|} \right\|+1\right) \frac{1}{32k} \|u_S\|_2\\
    &\overset{(b)}{\le}\left(\frac{1}{32k}+1\right)\frac{1}{32k}\|u_{[7k+1]}\|_2\\
    &\le \left(\frac{1}{32k}+1\right) \frac{1}{32k} k^{-1} \sqrt{7k+1}(32+1/4)k \\
    &\le 3k^{-1/2}.
\end{align*}
Here $(a)$ follows from the triangle inequality, $(b)$ follows from the fact that $\|(I+M)^{-1} -I\|_2 \le 2\|M\|$ if $\|M\| \le 0.5$. Hence $\|\beta^{(S)}\|_2\le 3k^{-1/2}$ for any $ S\subset [7k+1]$. Similarly $\|\beta^{(S')}\|_2\le 3k^{-1/2}$.

Since $S\neq S'$, there exists some $j\in[7k+1]$ such that $j\in (S \setminus S') \lor (S' \setminus S)$. Without loss of generality, we assume $j\in S'\setminus S$.
Then it follows from the above upper bound, the fact $j\in S'\setminus S$ and Cauchy Schwarz inequality that
\begin{align*}
    \Delta_j &= \Sigma_{j, S}^\top \beta_S^{(S)} - u_j \\
    &\le \|\tilde{A}_{j, S}\|_2 \|\beta_S^{(S)}\|_2 - (k^{-1})\frac{1}{2}(32k) \\
    &\le (8k)^{1/2}(3k^{-1/2}) - 32/2 \le -1.
\end{align*} This further yields that $\|\Delta\|_2^2 \ge \Delta_j^2 \ge 1$. Combining \eqref{eq:diff-lb-2}, we have $\|\beta^{(S)} - \beta^{(S')}\|_{\Sigma}^2\ge\frac{1}{36k}\ge d^{-\epsilon}/36$. Combining Case~1 and Case~2, we complete the lower bound for the gap between $\beta^{(S)}$ and $\beta^{(S^\dagger)}$.

\end{proof}

\subsection{Proof of \cref{coro:est-d-eps}}

We use the same reduction as in \cref{thm:gap2}. For any $\epsilon>0$ and {\sc 3Sat} instance $x$, we let $y=T_\epsilon(x)$ be the constructed \probsep{} instance in \cref{thm:gap2}. Let $\hat{\beta}$ be the output required by \cref{prob:est-d-eps} in the instance $y$, and $\tilde{S} = \{j\in[7k+1]: \hat{\beta}_j \ge k^{-1}/2\}$. 
Following the notations therein, we claim that
\begin{align}
\label{eq:red2}
    \tilde{S} \in \mathcal{S}_y \qquad \overset{(a)}{\Longleftrightarrow} \qquad |\mathcal{S}_y| \ge 1 \qquad {\Longleftrightarrow} \qquad x\in \mathcal{X}_{\text{{\sc 3Sat}}, 1}
\end{align} Therefore, if an algorithm $\mathsf{A}$ can take \cref{prob:est-d-eps} instance $y$ as input and return the desired output $\hat{\beta}(y)$ within time $O(p(|y|))$ for some polynomial $p$, then the following algorithm can solve {\sc 3Sat} within polynomial time: for any instance $x$, it first transforms $x$ into $y=T_\epsilon(x)$, then use algorithm $\mathsf{A}$ to solve $y$ and gets the returned $\hat{\beta}$, and finally output $\indicator\{\tilde{S} \in \mathcal{S}_y\}$. 

It remains to verify $(a)$: the $\Rightarrow$ direction is obvious. For the $\Leftarrow$ direction, suppose $|\mathcal{S}_y| \ge 1$, the estimation error guarantee in \cref{prob:est-d-eps} indicates that
\begin{align*}
    \|\hat{\beta}_{[7k+1]} - \beta_{[7k+1]}^{(S^\dagger)}\|_\infty \le \|\hat{\beta} - \beta^{(S^\dagger)}\|_2 \le \sqrt{\frac{\|\hat{\beta} - \beta^{(S^\dagger)}\|_{\Sigma}^2}{\lambda_{\min}(\Sigma)}} \overset{(a)}{<} \sqrt{{0.25d^{-\epsilon}}} \le \frac{1}{2}k^{-1}
\end{align*} for some $S^\dagger \in \mathcal{S}_y$. Here $(a)$ follows from the the error guarantee in \cref{prob:est-d-eps}, and the fact $\lambda_{\min}(\Sigma) \ge 1$ derived in the proof of \cref{thm:gap2}. This further indicates $\tilde{S} = S^\dagger$ by the fact that $S^\dagger\subset[7k+1]$ and  $\beta^{(S^\dagger)}_j = (k^{-1})\indicator\{j\in S^\dagger\}$ for any $j\in [7k+1]$ derived in the proof of \cref{thm:gap2}. \qed

\subsection{Proof of \cref{thm:sparse-construction}}

\noindent {\sc Step 1. Sparse Reduction For 3SAT Problem.} We first show that there exists a parsimonious polynomial-time reduction $T$ from {\sc 3Sat} problem to the {\sc 3Sat} problem where in each instance all boolean variables appear no more than $15$ times.

To be specific, given a {\sc 3Sat} instance $x$ with $k$ clauses and $n$ boolean variables $\{v_m\}_{m=1}^n$ where obviously $n\le 3k$, we construct the new instance $x'=T(x)$ as follows, we first introduce $n\times k$ boolean variables $\{w_{m,i}\}_{m\in[n],i\in[k]}$. For each $i\in [k]$ and $m\in[n]$, if boolean variable $v_m$ appears in clause $i$ of the original instance $x$, we replace the variable $v_m$ with $w_{m,i}$. Then all the original variables $\{v_m\}_{m\in[n]}$ are completely replaced, and each variable in $\{w_{m,i}\}_{m\in[n],i\in[k]}$ appears no more than $3$ times.

Secondly, we need to add the following $n\times(k-1)$ additional constraints
\begin{align}
\label{eq:reduction-additional-constraints}
    w_{m,1}=w_{m,2},\quad
    w_{m,2}=w_{m,3},\quad 
    \cdots\quad
    w_{m,k-1}=w_{m,k},\quad \forall m\in [n].
\end{align}
Note that a constraint $ w=w'$ is equivalent to
\begin{align}
    \begin{split}
        (\neg w \lor \neg w' \lor  w^\circ) &\land (w \lor \neg w' \lor w^\circ) \land (\neg w \lor w' \lor w^\circ) \land (w \lor w' \lor  w^\circ)\\ 
    &\land (\neg w \lor w' \lor \neg w^\circ) \land (w \lor \neg w' \lor \neg w^\circ)
    \end{split}
    \label{eq:=-to-3sat}
\end{align}
with an additionally introduced boolean variable $w^\circ$ that is forced to be $\mathrm{True}$ by the first four clauses in \eqref{eq:=-to-3sat}. Hence the constraints \eqref{eq:reduction-additional-constraints} can be translated into $6n(k-1)$ clauses, with additionally introduced $n(k-1)$ variables $\{w_{\ell}^\circ\}_{\ell=1}^{n(k-1)}$. Finally, in instance $x'$ there are $k'=k+6n(k-1)<18k^2$ clauses in total. Each boolean variable in $\{w_{m,i}\}_{m\in[n],i\in[k]}$ appears no more than $3+2\times 6=15$ times, and each additionally introduced boolean variable in $\{w_{\ell}^\circ\}_{\ell=1}^{n(k-1)}$ appears no more than $6$ times.

Now we prove that the mapping $T$ we construct is a parsimonious polynomial-time reduction, namely, for any valid solution $v\in \mathcal{S}_x$, setting $w_{m,i}=v_m$ for $m\in[n],i\in[k]$ and $w_\ell^\circ=\mathrm{True}$ for $\ell\in[n(k-1)]$ leads to a valid solution $w\in\mathcal{S}_{x'}$, and such mapping from $\mathcal{S}_x$ to $\mathcal{S}_{x'}$ is a bijection.

The verification of injection is obvious. Now we prove it is a surjection. For any valid solution $w$ of instance $x'$, the constraints \eqref{eq:reduction-additional-constraints} require $w_{m,1}=\cdots=w_{m,k}$ for $m\in[n]$. Hence setting $v_m=w_{m,1}$ for $m\in[n]$ leads to a valid solution $v\in\mathcal{S}_x$ whose image is $w$. This completes to proof for the bijection.

\noindent {\sc Step 2. Construction of ExistLIS-Ident Problem.} Next, we construct the $7k'\times 7k'$ matrix $A$ that corresponds to the {\sc 3Sat} instance $x'$, as shown in \eqref{eq:reduction-a-mat}. Namely,
\begin{align*}
    A_{7(i-1)+t,7(i'-1)+t'} = \begin{cases}
        \indicator\{t ~\text{contradicts itself}\} & \qquad i=i' \text{and } t=t' \\
        1 & \qquad i=i' \text{and } t\neq t' \\
        1 & \qquad i\neq i' \text{and } t \text{contradicts } t' \\
        0 & \qquad \text{otherwise}
    \end{cases}
\end{align*} for any $i,i'\in [k']$ and $t,t' \in [7]$. We define a $k'\times k'$ symmetric matrix $B$ as follows:
\begin{align}
\label{eq:reduction-b-mat}
    B_{i,i'} = \begin{cases}
        1 & \qquad |i-i'|=1 \\
        0 & \qquad \text{otherwise}
    \end{cases}
\end{align}
for any $i,i'\in[k']$. Matrix $B$ can be seen as the adjacency matrix of a connected graph over $k'$ vertices. We define matrix $K\in\mathbb{R}^{k'\times 7k'}$ as follows
\begin{align}
\label{eq:reduction-c-mat}
    K_{i,j} = \begin{cases}
        1 & \qquad 7(i-1)< j\le 7i \\
        0 & \qquad \text{otherwise}
    \end{cases}
\end{align}
for any $i\in[k'], j\in[7k']$. We construct its corresponding {\sc ExistLIS} instance $y$ with $|y|=8k'$ as follows:
\begin{align*}
    \Sigma^{(1)} = I_{8k'} \qquad \text{and} \qquad u^{(1)} = 1_{8k'},
\end{align*}
and
\begin{align*}
    \Sigma^{(2)} = \begin{bmatrix}
        1000 I_{7k'} + A & \frac{1}{2} K^\top \\
        \frac{1}{2}K & 1000I_{k'}+\frac{1}{8}B\\
    \end{bmatrix} \qquad \text{and} \qquad u^{(2)} = \begin{bmatrix}
        (1000 + \frac{1}{2}) \cdot 1_{7k'}\\
        (1000+\frac{1}{2})1_{k'}+\frac{1}{8}B1_{k'}
    \end{bmatrix}.
\end{align*}
One can easily verify both $\Sigma^{(1)}$ and $\Sigma^{(2)}$ are positive definite from the fact that $\Sigma^{(2)}$ is diagonally dominant, and $H_{\ell}$ is positive definite for any $\ell\ge 1$. Note that $A_{7(i-1)+s,7(j-1)+t}\neq 0$ immediately implies the $i$-th clause and the $i'$-th clause have shared variable. Since each variable appears no more than $15$ times, one clause shares common variables with up to $3\times 15$ other clauses. Then we can conclude that each row of matrix $A$ has no more than $7\times(3\times 15+1)=322$ non-zero elements. Combining with the fact that there are no more than 2 non-zero elements in each row of $B$ and no more than 7 non-zero elements in each row/column of $K$, we can conclude that for any $e\in\mathcal{E}$, each row of matrix $\Sigma^{(e)}$ has no more than $322+7+2+1<400$ non-zero elements.

Similar to \eqref{eq:equiv-reduction} in the proof of \cref{lemma:reduction}, we claim the following and defer the proof to the end of this step.
\begin{align}
 \label{eq:append-sparse-equiv-reduction}
 \begin{split}
     S^\dagger \in \mathcal{S}_y ~~~&\overset{(a)}{\Longleftrightarrow} ~~~  \emptyset\neq S^\dagger\subset [8k']~\text{and }~\beta^{(2,S^\dagger)} = \beta^{(1,S^\dagger)}~\text{with}~ \beta^{(1,S^\dagger)}_j = \indicator\{j \in S^\dagger\} \\
     &\overset{(b)}{\Longleftrightarrow} ~~~  S^\dagger = \mathring{S} \cup \{7k'+1,\ldots,8k'\} ~\text{with}~|\mathring{S}\cap\{7i-6,\ldots, 7i\}|=1,~  \forall 1\le i\le k' \\
     &~~~~~~~~~~~~\text{and}~A_{j,j'} = 0~~\forall j,j'\in \mathring{S}\subseteq[7k']\\
     &\overset{(c)}{\Longleftrightarrow} ~~~ \mathring{S} = \{7(i-1)+a_i\}_{i=1}^{k'}~\text{with}~a_i \in [7]~\text{s.t. adopt action ID}~ a_i \\
     &~~~~~~~~~~~~ \text{in clause } i\in [k']~\text{will lead to a valid solution}~v\in \mathcal{S}_{x'}\text{.} 
\end{split}
\end{align}
Combining \eqref{eq:append-sparse-equiv-reduction} and {\sc Step 1}, we have $|\mathcal{S}_x|=|\mathcal{S}_{x'}|=|\mathcal{S}_y|$. Since $d=8k'=\mathrm{poly}(k)$ and such construction can be done in polynomial time, this mapping admits a deterministic polynomial-time reduction from {\sc 3Sat} to the problem we construct. Therefore, we can conclude that the problem we construct is NP-hard.

Proof of \eqref{eq:append-sparse-equiv-reduction}(a) is essentially identical to the proof of \eqref{eq:equiv-reduction}(a) in \cref{lemma:reduction}. Now we prove \eqref{eq:append-sparse-equiv-reduction}(b) and (c).

\noindent \underline{\sc Proof of \eqref{eq:append-sparse-equiv-reduction}(b)} The proof idea is similar to \eqref{eq:equiv-reduction}(b). The direction $\Leftarrow$ is obvious. For the $\Rightarrow$ direction, we first assert that 
\begin{align}
    \label{eq:sparse-construction-nonempty}
    \{7k'+1,\ldots,8k'\}\cap S^\dagger\neq \emptyset
\end{align}
We use the proof by contradiction argument. If $\{7k'+1,\ldots,8k'\}\cap S^\dagger= \emptyset$, there must exist an index  $j\in[7k']\cap S^\dagger$ since $S^\dagger$ is nonempty. Combined with the fact $\beta_j^{(2,S^\dagger)}=\beta_j^{(1,S^\dagger)}=\indicator\{j \in S^\dagger\}$, the equation $ \Sigma^{(2)}_{j,S^\dagger}\beta^{(2,S^\dagger)}_{S^\dagger}=u^{(2)}_j$ tells
\begin{align*}
    1000+\sum_{j'=1}^{7k'} A_{j,j'}\beta_{j'}^{(2,S^\dagger)}=1000+\frac{1}{2}
\end{align*}
The L.H.S. is an integer while the R.H.S. is not an integer. This leads to a contradiction. This proves \eqref{eq:sparse-construction-nonempty}.

Now we consider the element $i+7k'\in\{7k'+1,\ldots,8k'\}\cap S^\dagger$. Then the equation $ \Sigma^{(2)}_{S^\dagger}\beta^{(2,S^\dagger)}_{S^\dagger}=u^{(2)}_{S^\dagger}$ tells
\begin{align*}
    \frac{1}{2}\sum_{7i-6<j\le 7i} \beta_j^{(2,S^\dagger)} +\frac{1}{8}\sum_{i':B_{i,i'}=1} \beta_{i'+7k'}^{(2,S^\dagger)} +1000=1000+\frac{1}{2}+\frac{1}{8}\sum_{i':B_{i,i'}=1}1.
\end{align*}
Since $\beta_j^{(2,S^\dagger)}=\beta_j^{(1,S^\dagger)}=\indicator\{j \in S^\dagger\}$,
then $\sum_{i':B_{i,i'}=1} \beta_{i'+7k'}^{(2,S^\dagger)}$ can only take values $0,1$ or $2$. Through taking both sides of the equation modulo $1/2$ we can then obtain
\begin{align*}
    \sum_{i':B_{i,i'}=1} \beta_{i'+7k'}^{(2,S^\dagger)} =\sum_{i':B_{i,i'}=1}1.
\end{align*}
This indicates that all the neighbors of $i$ (with respect to the adjacency matrix $B$) should be simultaneously contained in $S^\dagger$. Since $B$ represents the adjacency matrix of a connected graph, we can then inductively prove that $\{7k'+1,\ldots,8k'\}\subset S^\dagger$. Given this, the equation $\Sigma_S^{(2)}\beta_S^{(2,S^\dagger)}=u^{(2)}_{S^\dagger}$ now becomes
 \begin{align*}
 \frac{1}{2}K 1_{\mathring{S}}=\frac{1}{2}1_{\mathring{S}}\qquad{\Longrightarrow}\qquad |\mathring{S}\cap\{7i-6,\ldots, 7i\}|=1,~\text{for}~\forall 1\le i\le k'
 \end{align*}
 and
 \begin{align*}
    A_{\mathring{S}} 1_{k'} = 0 \qquad\overset{(i)}{\Longrightarrow}\qquad A_{j',j}=0,~\text{for}~\forall j',j\in \mathring{S}
\end{align*} where $(i)$ follows from the fact that $A\in \{0,1\}^{7k'\times 7k'}$. 

\noindent \underline{\sc Proof of \eqref{eq:append-sparse-equiv-reduction}(c)} The direction $\Rightarrow$ follows from the proof of \eqref{eq:equiv-reduction}(c). For the direction $\Leftarrow$, it follows from the proof of \eqref{eq:equiv-reduction}(c) and the fact that $\mathring{S} = \{7(i-1)+a_i\}_{i=1}^{k'}$ with $a_i \in [7]$ naturally implies $|\mathring{S}\cap\{7i-6,\ldots, 7i\}|=1$ for $i\in[k']$.
\qed

\subsection{Proof of \cref{thm:cic}}

It suffices to construct a polynomial-time reduction from {\sc 3Sat} to \cref{prob:cic}. Let $x$ be any {\sc 3Sat} instance with input size $k$, following the notation in \cref{lemma:reduction}, we let $y=T(x)$ be an instance of \cref{prob:cic} that $d=7k$, $\Sigma^{(1)} = 5d I_d$, $u^{(1)} = 5d 1_d$, and $\Sigma^{(2)} = 5dI_d + A$, $u^{(2)} = 5d 1_d$. Now the constraint $u^{(1)}_S = u^{(2)}_S$ trivially holds for any $S\subseteq [d]$. We claim that
\begin{align*}
    S\in \mathcal{S}_y \qquad &\Longleftrightarrow \qquad |S|=k ~~\text{and} ~~ A_{j,j'}=0, ~~ \forall j,j'\in S \\
    \qquad &\Longleftrightarrow \qquad \mathring{S} = \{7(i-1)+a_i\}_{i=1}^k\text{ with }a_i \in [7]\text{ s.t. adopting action ID } a_i \\
     &~~~~~~~~~~~~ \text{ in clause } i\in [k] \text{ will lead to a valid solution }v\in \mathcal{S}_x\text{.} 
\end{align*} The proof of equivalence is identical to that in \cref{lemma:reduction}. This completes the proof.

\section{Proofs for the Population-level Results}
\label{sec:proof:population}

\subsection{Proof of \cref{prop:minimax-k1}}
\label{proof:minimax-k1}

Applying \cref{thm:minimax-k} with $k=1$ completes the proof of \eqref{eq:minimax-k1}. To establish the causal identification result, it suffices to verify \cref{cond:restricted-invariance} with $k=1$. 

To see this, under \eqref{model:lip} and \eqref{ident:lip}, if \cref{cond:ortho} further holds, we have, for each $j \in S^\star$ and $e\in \mathcal{E}$,
\begin{align*}  
    \beta^{(e, \{j\})} = \frac{\mathbb{E}[X_j^{(e)} Y^{(e)}]}{\mathbb{E}[X_j^{(e)} X^{(e)}_j]}
    = \frac{\mathbb{E}[X_j^{(e)} (\sum_{i\in S^\star} X_i^{(e)} \beta_i^\star) + \varepsilon^{(e)})]}{\mathbb{E}[X_j^{(e)} X^{(e)}_j]} \overset{(a)}{=} \beta^\star_j
\end{align*} where (a) follows from
\begin{align*}
    \forall i,j\in S^\star ~\text{with}~ i\neq j, ~~\mathbb{E}[X_i^{(e)} X_j^{(e)}] = 0 \qquad \text{and} \qquad \mathbb{E}[X_j^{(e)} \varepsilon^{(e)}] = 0,
\end{align*} provided \cref{cond:ortho} and \eqref{model:lip}, respectively. This completes the proof. \qed

\subsection{Proof of \eqref{eq:geo}}
\label{proof:geo}

Denote $q=(w_1(1),\ldots, w_1(d)) \in \mathbb{R}^d$, it follows from \cref{prop:minimax-k1} that
\begin{align*}
\beta^\gamma &= \argmin_{\beta} \max_{\mu \in \mathcal{P}_\gamma(\Sigma, u)} \left\{\mathbb{E}_{\mu}[|Y - \beta^\top X|^2] - \mathbb{E}_\mu[Y^2] \right\} \\
&= \argmin_{\beta} \max_{\mu \in \mathcal{P}_\gamma(\Sigma, u)} \left\{\beta^\top \mathbb{E}_{\mu} [XX^\top] \beta - 2\beta^\top \mathbb{E}[X Y]\right\} \\ 
&= \argmin_{\beta} \max_{\tilde{u}: |\tilde{u} - u| \le \gamma \cdot q} \left\{\beta^\top \Sigma \beta - 2\beta^\top \tilde{u} \right\} \\
&= \argmin_{\beta} \max_{\tilde{\beta} \in \Theta_\gamma} \left\{\beta^\top \Sigma \beta - 2\beta^\top \Sigma \tilde{\beta} \right\}.
\end{align*}
It is easy to check that the convex hull of $\Theta_\gamma$ is itself, applying Theorem 1 of \cite{meinshausen2015maximin} completes the proof.

\subsection{Proof of \cref{thm:minimax-k}}
\label{proof:minimax-k}
\noindent {\sc Proof of \eqref{eq:minimax-k}.} The existence and uniqueness of optimal solution follows from \cref{prop:sc}. We will show that
\begin{align*}
    \mathsf{Q}_{k,\gamma}(\beta) = \sup_{\mu \in \mathcal{P}_{k,\gamma}(\Sigma, u)} \mathbb{E}_{\mu} \left[ |Y - \beta^\top X|^2 - |Y|^2\right].
\end{align*}
For given fixed $\mu \in \mathcal{P}_{k,\gamma}$, one has
\begin{align*}
    \mathbb{E}_{\mu} \left[ |Y - \beta^\top X|^2 - |Y|^2\right] &= \beta^\top \mathbb{E}_{\mu} \left[XX^\top\right] \beta - 2\beta^\top \mathbb{E}_{\mu}[XY] \\
    &= \beta^\top \Sigma \beta - 2\beta^\top \mathbb{E}_{\mu}[XY].
\end{align*} On the other hand, it follows from the definition of $\Sigma$, ${u}$ and $\mathsf{Q}_{k,\gamma}(\beta)$ that
\begin{align*}
    \mathsf{Q}_{k,\gamma}(\beta) = \frac{1}{2} \beta^\top \Sigma \beta - \beta^\top {u} + \gamma v^\top |\beta| \qquad \text{with} \qquad v=(w_k(1),\ldots, w_k(d)).
\end{align*}
Now it suffices to show that for any $\beta \in \mathbb{R}^d$,
\begin{align}
\label{eq:quad-equiv}
    \frac{1}{2} \beta^\top \Sigma \beta - \beta^\top {u} + \gamma v^\top |\beta| = \sup_{\tilde{u}: |\tilde{u}-{u}| \le v} \frac{1}{2} \beta^\top \Sigma \beta - \gamma \beta^\top \tilde{u}.
\end{align} To see this, it is easy to verify that, for any given $x, a\in \mathbb{R}$ and $b \in \mathbb{R}^+$, one has
\begin{align*}
    \sup_{y\in [a - b, a + b]} -x y = -ax + b|x|,
\end{align*} then we can obtain
\begin{align*}
    \sup_{\tilde{u}: |\tilde{u}-{u}| \le v} \frac{1}{2} \beta^\top \Sigma \beta - \beta^\top \tilde{u} = \frac{1}{2} \beta^\top \Sigma \beta + \sum_{j=1}^d \sup_{\tilde{u}_j \in [{u}_j - \gamma v_j, {u}_j + \gamma v_j]} \left(-\beta_j \tilde{u}_j \right) = \frac{1}{2} \beta^\top \Sigma \beta - \sum_{j=1}^d {u}_j \beta_j + |\beta_j| v_j \gamma,
\end{align*} this verifies \eqref{eq:quad-equiv} and thus completes the proofs of the claim \eqref{eq:minimax-k}. \qed

\subsection{Proof of \cref{thm:causal-ident}}

\noindent {\sc Proof of the Causal Identification Result.} It follows from \cref{cond:restricted-invariance} and the definition of $w_k(j)$ that $w_k(j) = 0$ for any $j\in S^\star$. It also follows from \eqref{ident:lip} that
\begin{align*}
    w_k(j) \neq 0 \qquad \forall j\in [d]~ \text{with} ~\sum_{e\in \mathcal{E}}\mathbb{E}[X_j^{(e)} \varepsilon^{(e)}] \neq 0.  
\end{align*}
Therefore, for any $\beta \in \mathbb{R}^d$,
\begin{align*}
    \mathsf{Q}_{k,\gamma}(\beta) - \mathsf{Q}_{k,\gamma}(\beta^\star) &= \frac{1}{2|\mathcal{E}|} \sum_{e\in \mathcal{E}} \mathbb{E} \left[ |Y^{(e)} - \beta^\top X^{(e)}|^2 - |Y^{(e)} - (\beta^\star)^\top X^{(e)}|^2\right] + \sum_{j=1}^d \gamma |\beta_j| w_k(j)\\
    &= \frac{1}{2}(\beta - \beta^\star)^\top \Sigma (\beta - \beta^\star) - (\beta - \beta^\star) \Sigma (\bar{\beta} - \beta^\star) + \gamma \sum_{j=1}^d |\beta_j| w_k(j) \\
    &\overset{(a)}{\ge} \sum_{j\in G} |\beta_j| \left\{\gamma \cdot w_k(j) \right\} - \beta_j \left(\frac{1}{|\mathcal{E}|} \sum_{e\in \mathcal{E}} \mathbb{E}[X_j^{(e)} \varepsilon^{(e)}] \right).
\end{align*}
where $G$ is defined in \eqref{eq:set-g} and $\bar{\beta}=\Sigma^{-1} u$. Here (a) follows from the fact that the first quadratic term is non-negative, and the identity
\begin{align*}
    \Sigma (\bar{\beta} - \beta^\star) = \frac{1}{|\mathcal{E}|} \sum_{e\in \mathcal{E}} \left\{\mathbb{E}[X^{(e)} Y^{(e)}] - \mathbb{E}[X^{(e)} (X^{(e)})^\top \beta^\star] \right\} = \frac{1}{|\mathcal{E}|} \sum_{e\in \mathcal{E}} \mathbb{E}[X^{(e)} \varepsilon^{(e)}].
\end{align*} Therefore, we have
\begin{align*}
    \mathsf{Q}_{k,\gamma}(\beta) - \mathsf{Q}_{k,\gamma}(\beta^\star) \ge 0 \qquad \text{if} \qquad \gamma \ge \max_{j\in G} \frac{\left|\frac{1}{|\mathcal{E}|} \sum_{e\in \mathcal{E}} \mathbb{E}[\varepsilon^{(e)} X_j^{(e)}]\right|}{w_k(j)} := \gamma_k^\star,
\end{align*} this completes the proof. 

We finally establish the upper bound on $\gamma_k^\star$. It follows from the definition of $w_k$ that
\begin{align*}
    (\gamma_k^\star)^2 &= \max_{j\in G} \frac{\left|\frac{1}{|\mathcal{E}|} \sum_{e\in \mathcal{E}} \mathbb{E}[\varepsilon^{(e)} X_j^{(e)}]\right|^2}{\{w_k(j)\}^2} \\
    &= \max_{j\in G} \frac{\left|\frac{1}{|\mathcal{E}|} \sum_{e\in \mathcal{E}} \mathbb{E}[\varepsilon^{(e)} X_j^{(e)}]\right|^2}{\inf_{S: j\in S} \frac{1}{|\mathcal{E}|} \sum_{e\in \mathcal{E}} \|\beta^{(e,S)}_S - \beta^{(S)}_S\|_{\Sigma_S^{(e)}}^2} \\
    &\le \max_{j \in G} \max_{S: j\in S} \frac{\left\|\frac{1}{|\mathcal{E}|} \sum_{e\in \mathcal{E}} \mathbb{E}[\varepsilon^{(e)} X_S^{(e)}]\right\|_2^2}{\frac{1}{|\mathcal{E}|} \sum_{e\in \mathcal{E}} \|\beta^{(e,S)}_S - \beta^{(S)}_S\|_{\Sigma_S^{(e)}}^2} \\
    &= \max_{S: S\cap G\neq \emptyset} \frac{\left\|\frac{1}{|\mathcal{E}|} \sum_{e\in \mathcal{E}} \mathbb{E}[\varepsilon^{(e)} X_S^{(e)}]\right\|_2^2}{\frac{1}{|\mathcal{E}|} \sum_{e\in \mathcal{E}} \|\beta^{(e,S)}_S - \beta^{(S)}_S\|_{\Sigma_S^{(e)}}^2}
\end{align*}

Let $\kappa_{\min} = \min_{e\in \mathcal{E}} \lambda_{\min}(\Sigma^{(e)})$, we have
\begin{align*}
    \frac{1}{|\mathcal{E}|} \sum_{e\in \mathcal{E}} \|\beta^{(e,S)}_S - \beta^{(S)}_S\|_{\Sigma_S^{(e)}}^2 &\ge \kappa_{\min} \frac{1}{|\mathcal{E}|} \sum_{e\in \mathcal{E}} \|\beta^{(e,S)}_S - {\beta}^{(S)}_S\|_2^2 \\
    &\ge \kappa_{\min} \inf_{\beta: \beta_{S^c} = 0} \frac{1}{|\mathcal{E}|} \sum_{e\in \mathcal{E}} \|\beta^{(e,S)} - \beta\|_2^2 \ge \kappa_{\min}  \frac{1}{|\mathcal{E}|} \sum_{e\in \mathcal{E}} \|\beta^{(e,S)} - \bar{\beta}^{(S)}\|_2^2
\end{align*} with $\bar{\beta}^{(S)} = \frac{1}{|\mathcal{E}|} \sum_{e\in \mathcal{E}} \beta^{(e,S)}$. Plugging it back into the upper bounded on $(\gamma_k^\star)^2$, we conclude that
\begin{align*}
 (\gamma_k^\star)^2 \le (\kappa_{\min})^{-1} \max_{S: S\cap G\neq \emptyset} \frac{\left\|\frac{1}{|\mathcal{E}|} \sum_{e\in \mathcal{E}} \mathbb{E}[\varepsilon^{(e)} X_S^{(e)}]\right\|_2^2}{\frac{1}{|\mathcal{E}|} \sum_{e\in \mathcal{E}} \|\beta^{(e,S)} - \beta^{(S)}\|_{2}^2} = \gamma^* \kappa_{\min}^2,
\end{align*} where
\begin{align}
\label{eq:gamma-star}
    \gamma^* = (\kappa_{\min})^{-3} \max_{S: S\cap G\neq \emptyset} \frac{\left\|\frac{1}{|\mathcal{E}|} \sum_{e\in \mathcal{E}} \mathbb{E}[\varepsilon^{(e)} X_S^{(e)}]\right\|_2^2}{\frac{1}{|\mathcal{E}|} \sum_{e\in \mathcal{E}} \|\beta^{(e,S)} - \beta^{(S)}\|_{2}^2}
\end{align} is the quantity defined in (4.5) of \cite{fan2023environment}.

\qed
\section{Proofs for Non-asymptotic Results}
\label{sec:proof:non-asymptotic}

\subsection{Preliminaries}

We first introduce some notations. Recall the definition of $(\Sigma, u)$ in \eqref{eq:covariance}, we denote their empirical counterparts as
\begin{align}
    \hat{\Sigma} = \frac{1}{n \cdot |\mathcal{E}|} \sum_{i\in [n], e\in \mathcal{E}} X_i^{(e)} (X_i^{(e)})^\top \qquad \text{and} \qquad \hat{u} = \frac{1}{n\cdot |\mathcal{E}|} \sum_{i\in [n], e\in \mathcal{E}} X_i^{(e)} Y_i^{(e)}.
\end{align}
We define
\begin{align*}
    \hat{\mathsf{R}}(\beta) &= \frac{1}{2|\mathcal{E}|} \sum_{e\in \mathcal{E}} \hat{\mathbb{E}}\left[|Y_i^{(e)} - \beta^\top X_i^{(e)}|^2 \right] = \frac{1}{2} \beta^\top \hat{\Sigma} \beta - \beta^\top \hat{u} + \frac{1}{2} \hat{u}^\top \hat{u}, \\
    \mathsf{R}(\beta) &= \frac{1}{2|\mathcal{E}|} \sum_{e\in \mathcal{E}} {\mathbb{E}}\left[|Y_i^{(e)} - \beta^\top X_i^{(e)}|^2 \right] = \frac{1}{2} \beta^\top {\Sigma} \beta - \beta^\top u + \frac{1}{2} u^\top u.
\end{align*}
We let
\begin{align*}
    v(S) = \frac{1}{|\mathcal{E}|} \sum_{e\in \mathcal{E}} \left\| \beta^{(e,S)}_S - \beta^{(S)}_S \right\|_{\Sigma_S^{(e)}}^2 \qquad \text{and} \qquad \hat{v}(S) = \frac{1}{|\mathcal{E}|} \sum_{e\in \mathcal{E}} \left\| \hat{\beta}^{(e,S)}_S - \hat{\beta}^{(S)}_S \right\|_{\hat{\Sigma}_S^{(e)}}^2.
\end{align*} One can expect $|v(S) - \hat{v}(S)| \asymp (|S|/n)^{1/2}$ by CLT. However, applying such a crude bound will result in a slower rate. Instead, the next proposition targets to establish a shaper instance-dependent error bound for the difference. We define
\begin{align}
\label{eq:rho1}
    \rho(s, t) = \frac{(s\log (4d/s)) + \log (|\mathcal{E}|) + t}{n} \qquad \text{and} \qquad \zeta(s, t) = \frac{(s\log (4d/s)) + t}{n \cdot |\mathcal{E}|}
\end{align} with $s\in [d]$ and $t>0$. 

We also define some concepts that will be used throughout the proof.
\begin{definition}[Sub-Gaussian Random Variable]\label{def:sub-gaussian} A random variable $X$ is a sub-Gaussian random variable with parameter $\sigma\in\mathbb{R}^{+}$ if
\begin{align*}
    \forall\lambda\in\mathbb{R},\qquad \mathbb{E}[\exp\left(\lambda (X-\mathbb{E}[X])\right)]\le\exp\left(\frac{\lambda}{2}\sigma^2\right).
\end{align*}
\end{definition}

\begin{definition}[Sub-exponential Random Variable]\label{def:sub-exponential} A random variable $X$ is a sub-exponential random variable with parameter $(\nu,\alpha)\in\mathbb{R}^{+}\times\mathbb{R}^{+}$ if
\begin{align*}
    \forall|\lambda|<1/\alpha,\qquad \mathbb{E}[\exp\left(\lambda (X-\mathbb{E}[X])\right)]\le\exp\left(\frac{\lambda}{2}\nu^2\right).
\end{align*}
\end{definition}
It is easy to verify that the product of two sub-Gaussian random variables is a sub-exponential random variable, and the dependence of the parameters can be written as follows.
\begin{lemma}[Product of Two Sub-Gaussian Random Variables]\label{lemma:product_of_two_subGaussian}
Suppose $X_1$ and $X_2$ are two zero-mean
sub-Gaussian random variables with parameters $\sigma_1$ and $\sigma_2$, respectively. Then $X_1 X_2$ is a sub-exponential
random variable with parameter $(C\sigma_1\sigma_2, C\sigma_1\sigma_2)$, where $C>0$ is some universal constant.
\end{lemma}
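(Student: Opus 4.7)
The claim is a standard identity in the theory of Orlicz norms, and I would prove it by moving between the MGF, tail, and moment characterizations of sub-Gaussian and sub-exponential random variables.

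First, I would convert Definition~\ref{def:sub-gaussian} into a polynomial moment bound. Applying Markov's inequality to the sub-Gaussian MGF yields the usual tail estimate $\mathbb{P}(|X_i|>t)\le 2\exp(-t^2/(2\sigma_i^2))$, and layer-cake integration then gives
\begin{align*}
\mathbb{E}\bigl[|X_i|^{2k}\bigr] \;\le\; 4\,(2\sigma_i^2)^k\,k! \qquad \text{for every integer } k\ge 1.
\end{align*}
In particular $\mathbb{E}[X_i^2]\le 8\sigma_i^2$, so by Cauchy--Schwarz the centering term is controlled: $|\mathbb{E}[X_1 X_2]|\le 8\sigma_1\sigma_2$.

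Next I would combine the one-variable moments into a Bernstein-type bound for the product. For every integer $k\ge 2$, a Cauchy--Schwarz step followed by the previous display yields
\begin{align*}
\mathbb{E}\bigl[|X_1 X_2|^k\bigr] \;\le\; \sqrt{\mathbb{E}[X_1^{2k}]\,\mathbb{E}[X_2^{2k}]} \;\le\; 4\,(2\sigma_1\sigma_2)^k\,k!,
\end{align*}
and then the triangle inequality in $L^k$, together with the mean bound above, translates this into
\begin{align*}
\mathbb{E}\bigl[|X_1 X_2 - \mathbb{E}(X_1 X_2)|^k\bigr] \;\le\; \tfrac12\,k!\,(C_1\sigma_1\sigma_2)^2 (C_1\sigma_1\sigma_2)^{k-2}
\end{align*}
for a sufficiently large universal constant $C_1$. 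This is the classical moment condition equivalent to sub-exponentiality.

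Finally, I would insert these moment estimates into the Taylor expansion of the centered MGF: for $|\lambda|\le 1/(2C_1\sigma_1\sigma_2)$,
\begin{align*}
\mathbb{E}\!\left[e^{\lambda(X_1 X_2 - \mathbb{E}[X_1 X_2])}\right] \;\le\; 1 + \sum_{k\ge 2} \frac{|\lambda|^k}{k!}\,\mathbb{E}\bigl[|X_1 X_2-\mathbb{E}(X_1X_2)|^k\bigr] \;\le\; \exp\!\left(\tfrac{1}{2}\lambda^2 (C\sigma_1\sigma_2)^2\right)
\end{align*}
after summing the resulting geometric tail, which matches Definition~\ref{def:sub-exponential} with $\nu=\alpha=C\sigma_1\sigma_2$ for a suitably enlarged universal constant $C$. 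The only real obstacle is bookkeeping this single universal constant across the three conversions (MGF $\to$ tail $\to$ moment, Cauchy--Schwarz, moment $\to$ MGF); no step is technically deep, and the result is classical.
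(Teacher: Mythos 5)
Your proof is correct, and it uses the standard moment-method argument for deducing sub-exponentiality of a product from sub-Gaussianity of the factors (MGF $\to$ tail, layer-cake to even moments, Cauchy--Schwarz across the two factors, then re-expand the centered MGF and sum the geometric tail on the restricted range of $\lambda$). The paper itself does not prove this lemma at all: it remarks that "it is easy to verify" and cites it as a classical fact, so there is no internal argument to compare against. Your proof is exactly the argument one would find, e.g., in Vershynin or in the $\psi_1$/$\psi_2$ Orlicz-norm calculus, and it fills that gap cleanly. Two minor bookkeeping points, neither of which affects validity: the layer-cake computation actually yields $\mathbb{E}[|X_i|^{2k}] \le 2(2\sigma_i^2)^k k!$ (your constant $4$ is merely loose), and the paper's Definition~5.2 contains a typo ($\exp(\lambda\nu^2/2)$ should read $\exp(\lambda^2\nu^2/2)$) — your proof targets the standard, correct form, which is what the subsequent Bernstein-type concentration in Lemma~5.3 actually requires.
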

We also have the following lemma stating the concentration inequality for the sum of independent sub-exponential random variables.
\begin{lemma}[Sum of Independent Sub-exponential Random Variables]\label{lemma:Bernstein}
Suppose $X_1,\ldots X_N$ are independent sub-exponential random variables with parameters $\{(\nu_i,\alpha_i)\}_{i=1}^N$, respectively. There exists some universal constant $C>0$ such that the following holds,
\begin{align*}
    \mathbb{P}\left[\left|\sum_{i=1}^N(X_i-\mathbb{E}[X_i])\right|\ge C \left\{\sqrt{t\times\sum_{i=1}^{N}\nu_i^2}+t\times\max_{i\in[N]}\alpha_i\right\}\right]\le 2e^{-t}.
\end{align*}
\end{lemma}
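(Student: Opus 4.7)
The plan is to invoke the standard Chernoff/MGF machinery for sub-exponential random variables, optimize the free parameter $\lambda$ piecewise in its admissible range, and then symmetrize by a union bound to get the two-sided statement. Since the $X_i$ are independent sub-exponentials with parameters $(\nu_i,\alpha_i)$, setting $Y_i := X_i - \mathbb{E}[X_i]$ and $\alpha_* := \max_{i\in [N]} \alpha_i$, $V := \sum_{i=1}^N \nu_i^2$, the MGF of the sum factorizes and the defining inequality in \cref{def:sub-exponential} gives, for every $|\lambda|<1/\alpha_*$,
\begin{align*}
\mathbb{E}\Bigl[\exp\Bigl(\lambda \sum_{i=1}^N Y_i\Bigr)\Bigr] \;=\; \prod_{i=1}^N \mathbb{E}[e^{\lambda Y_i}] \;\le\; \exp\Bigl(\tfrac{\lambda^2}{2} V\Bigr).
\end{align*}

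The main step is Markov applied to the exponential: for any $s>0$ and $\lambda \in (0, 1/\alpha_*)$,
\begin{align*}
\mathbb{P}\Bigl[\sum_{i=1}^N Y_i \ge s\Bigr] \;\le\; \exp\Bigl(-\lambda s + \tfrac{\lambda^2}{2}V\Bigr).
\end{align*}
I then split into two regimes according to whether the unconstrained optimizer $\lambda^\star = s/V$ is admissible: (i) if $s \le V/\alpha_*$, choose $\lambda = s/V$ to obtain the Gaussian-type bound $\exp(-s^2/(2V))$; (ii) if $s > V/\alpha_*$, choose $\lambda$ just below $1/\alpha_*$, which yields the exponential-type bound $\exp(-s/(2\alpha_*))$. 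Combining the two cases uniformly gives
\begin{align*}
\mathbb{P}\Bigl[\sum_{i=1}^N Y_i \ge s\Bigr] \;\le\; \exp\Bigl(-\min\Bigl\{\tfrac{s^2}{2V},\ \tfrac{s}{2\alpha_*}\Bigr\}\Bigr).
\end{align*}
Inverting this tail in $t$, i.e., solving $\min(s^2/(2V), s/(2\alpha_*)) \ge t$ by taking $s = C(\sqrt{tV} + t\alpha_*)$ for a suitable universal $C$, produces a one-sided bound of $e^{-t}$. Finally, applying the identical argument to $-X_i$ (whose sub-exponential parameters are unchanged by the symmetric definition) and taking a union bound yields the factor of $2$ and the two-sided conclusion.

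The only delicate points are bookkeeping rather than conceptual. I would be careful that the piecewise optimum is stated with the correct constant in the combined form $\sqrt{tV}+t\alpha_*$ (rather than the less convenient $\min\{\sqrt{tV}, t\alpha_*\}$ form), since the target bound is additive in the two regimes; absorbing the $1/\sqrt{2}$ and $1/2$ constants into the universal $C$ handles this cleanly. I would also verify that centering preserves the sub-exponential parameters in the form of \cref{def:sub-exponential} (it does, tautologically, because the definition is already written for the centered variable $X-\mathbb{E}[X]$). No additional structural obstacle is expected — this is the textbook Bernstein-type inequality for sub-exponential summands.
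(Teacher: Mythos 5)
Your proof is correct: it is the canonical Chernoff/MGF derivation of Bernstein's inequality for sub-exponential summands — factor the MGF of the centered sum using independence, split the optimization over $\lambda \in (0,1/\alpha_*)$ at $s = V/\alpha_*$ to obtain $\exp(-\min\{s^2/(2V),\, s/(2\alpha_*)\})$, invert to $s = C(\sqrt{tV}+t\alpha_*)$ using $\max\{a,b\}\le a+b$, and symmetrize. Note, however, that the paper does not contain a proof of \cref{lemma:Bernstein} at all; it is stated without proof in the preliminaries of Appendix E as a standard concentration tool (alongside \cref{lemma:product_of_two_subGaussian}), so there is no paper-internal argument to compare against. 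Your argument supplies the standard justification one would cite (e.g., Wainwright or Vershynin) and the bookkeeping checks out, including the observation that in case (ii) one uses $V\le s\alpha_*$ to absorb $V/(2\alpha_*^2)$ into $s/(2\alpha_*)$, and that centering is vacuous since \cref{def:sub-exponential} is already stated for $X-\mathbb{E}[X]$.
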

The next proposition provides upper bounds for $|v(S)-\hat{v}(S)|$.
\begin{proposition}[Instance-dependent Error Bounds on $|v(S)-\hat{v}(S)|$]
\label{prop:instance-dependent-v2}
Suppose \cref{cond:regularity} hold. There exists some universal constant $C$ such that, for any $t>0$ and $\epsilon>0$, if $C \sigma_x^4 \rho(k, t) \le 1$, then the following event
\begin{align*}
    \forall S\subseteq [d], |S| \le k \qquad | v(S) - \hat{v}(S) | \le C\left\{\sqrt{v(S)\cdot \sigma_x^4 \sigma_y^2 b\rho(k, t)} + \sigma_x^4 \sigma_y^2 b \rho(k, t) \right\}
\end{align*} occurs with probability at least $1-e^{-t}$.
\end{proposition}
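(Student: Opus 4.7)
The plan is to decompose $\hat v(S)-v(S)$ into a centered stochastic ``linear'' term whose variance scales with $v(S)$---yielding by Bernstein the $\sqrt{v(S)\epsilon}$ contribution with $\epsilon \asymp \sigma_x^4\sigma_y^2 b\,\rho(k,t)$---plus a ``quadratic'' remainder of size $\epsilon$ controlled uniformly via standard OLS concentration. The starting identity is the closed form
\[
v(S)=\frac{1}{|\mathcal{E}|}\sum_{e\in\mathcal{E}}V^{(e)}(S)-V(S),\qquad V^{(e)}(S):=(u_S^{(e)})^\top(\Sigma_S^{(e)})^{-1}u_S^{(e)},\quad V(S):=u_S^\top \Sigma_S^{-1}u_S,
\]
with an analogous identity empirically. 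Combining the variational characterization $\hat V^{(e)}(S)=\sup_\beta\{2\beta^\top \hat u_S^{(e)}-\beta^\top \hat\Sigma_S^{(e)}\beta\}$ with the normal equations $\Sigma_S^{(e)}\beta^{(e,S)}_S=u_S^{(e)}$ yields the exact identity
\begin{align*}
\hat V^{(e)}(S)-V^{(e)}(S) = (\hat{\mathbb{E}}^{(e)}-\mathbb{E}^{(e)})\bigl[Y^2-(\eta^{(e)}_S)^2\bigr] + \|\hat\beta^{(e,S)}_S-\beta^{(e,S)}_S\|^2_{\hat\Sigma_S^{(e)}},
\end{align*}
where $\eta^{(e)}_S:=Y^{(e)}-(\beta^{(e,S)})^\top X_S^{(e)}$, and an analogous identity holds for the pooled $\hat V(S)-V(S)$. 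Subtracting and using $\hat{\mathbb{E}}-\mathbb{E}=\tfrac{1}{|\mathcal{E}|}\sum_e(\hat{\mathbb{E}}^{(e)}-\mathbb{E}^{(e)})$ so the $Y^2$ fluctuations cancel, and writing $\delta^{(e)}:=\beta^{(e,S)}_S-\beta^{(S)}_S$, one arrives at
\begin{align*}
\hat v(S)-v(S) = \underbrace{\frac{1}{|\mathcal{E}|}\sum_{e\in\mathcal{E}}(\hat{\mathbb{E}}^{(e)}-\mathbb{E}^{(e)})\bigl[2\delta^{(e)\top}X_S\eta^{(e)}_S+(\delta^{(e)\top}X_S)^2\bigr]}_{L(S)} + \underbrace{Q(S)}_{\text{OLS-gap term}}.
\end{align*}

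\textbf{Linear term.} Each piece of $L(S)$ is a centered sum of independent sub-exponentials. By first-order optimality $\mathbb{E}^{(e)}[X_S\eta^{(e)}_S]=0$, so the cross term $\delta^{(e)\top}X_S\eta^{(e)}_S$ has mean zero and sub-exponential parameters of order $\sigma_x\sigma_y\|\delta^{(e)}\|_{\Sigma_S^{(e)}}$ by \cref{lemma:product_of_two_subGaussian}; the square $(\delta^{(e)\top}X_S)^2$ has sub-exponential parameters of order $\sigma_x^2\|\delta^{(e)}\|^2_{\Sigma_S^{(e)}}$. A crucial deterministic input is the a priori bound $\|\delta^{(e)}\|^2_{\Sigma_S^{(e)}}\lesssim b\sigma_y^2$, which follows from $V^{(e)}(S),V(S)\le \mathbb{E}^{(e)}[Y^2]\le \sigma_y^2$ together with condition (d) of \cref{cond:regularity}. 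Since $\sum_{e}\|\delta^{(e)}\|^2_{\Sigma_S^{(e)}}=|\mathcal{E}|v(S)$, the total variance of the $L(S)$ sum is of order $\sigma_x^4\sigma_y^2 b\,v(S)/(n|\mathcal{E}|)$---this is the mechanism that produces a deviation scaling as $\sqrt{v(S)\epsilon}$. Applying \cref{lemma:Bernstein} and union-bounding over the $\binom{d}{k}\le e^{k\log(4d/k)}$ subsets of size $\le k$ yields $|L(S)|\lesssim \sqrt{v(S)\,\sigma_x^4\sigma_y^2 b\,\rho(k,t)}$ plus a Bernstein ``max'' term, which is absorbed into $\sqrt{v(S)\epsilon}+\epsilon$ via AM-GM under the hypothesis $C\sigma_x^4\rho(k,t)\le 1$.

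\textbf{OLS-gap term and assembly.} The term $Q(S)$ is the difference between averaged environment-specific and pooled squared OLS errors. For fixed $e$ with $|S|\le k$, writing $\|\hat\beta^{(e,S)}_S-\beta^{(e,S)}_S\|^2_{\hat\Sigma_S^{(e)}}=\|(\hat{\mathbb{E}}^{(e)}-\mathbb{E}^{(e)})[X_S\eta^{(e)}_S]\|^2_{(\hat\Sigma_S^{(e)})^{-1}}$ and applying sub-Gaussian OLS concentration together with spectral control of $\hat\Sigma_S^{(e)}-\Sigma_S^{(e)}$ gives a uniform bound of order $\sigma_x^2\sigma_y^2(k\log(4d/k)+t)/n$; a further union bound over the $|\mathcal{E}|$ environments then introduces the $\log|\mathcal{E}|$ contribution in $\rho(k,t)$. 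The pooled error is handled identically, now using residuals of variance $\lesssim b\sigma_y^2$ (since $\|\beta^{(S)}\|^2_{\Sigma^{(e)}_S}\lesssim b\sigma_y^2$) and effective sample size $n|\mathcal{E}|$, yielding $|Q(S)|\lesssim \sigma_x^4\sigma_y^2 b\,\rho(k,t)$. Combining with the linear bound gives the claim. The most delicate step---and the main technical obstacle---is the absorption of the Bernstein ``max'' term: since $\max_e\|\delta^{(e)}\|\le \sqrt{|\mathcal{E}|v(S)}$, a naive bound would pick up an undesirable $\sqrt{|\mathcal{E}|}$ factor, and reconciling this with the target form $\sqrt{v(S)\epsilon}+\epsilon$ is precisely where the smallness assumption $C\sigma_x^4\rho(k,t)\le 1$ is crucially exploited.
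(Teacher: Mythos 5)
Your proof is correct, but it follows a genuinely different decomposition from the paper's. The paper's argument goes through Lemma~\ref{lemma:variational-representation-v2}: it writes $v(S)=\frac{1}{|\mathcal{E}|}\sum_e\|(\Sigma_S^{(e)})^{-1/2}\mathbb{E}[U^{(e,S)}]\|_2^2$ with $U^{(e,S)}=X_S^{(e)}R^{(e,S)}$ and $R^{(e,S)}$ the \emph{pooled} residual, then expands $\hat v(S)=\hat{\mathsf{T}}_1+\hat{\mathsf{T}}_2$ as two quadratic forms in the perturbations $\Delta_1^{(e)}=(\Sigma^{(e)}_S)^{-1/2}(\hat{\mathbb{E}}[U^{(e,S)}]-\mathbb{E}[U^{(e,S)}])$ and $\Delta_2^{(e)}=(\Sigma^{(e)}_S)^{-1/2}\hat\Sigma^{(e)}_S(\Sigma^{(e)}_S)^{-1/2}-I$, with all the concentration packaged into the events $\mathcal{A}_1$--$\mathcal{A}_4$ of Lemmas~\ref{lemma:highprob1}--\ref{lemma:highprob2}; the $\sqrt{v(S)\epsilon}$ term arises from the cross product between $\Delta_1^{(e)}$ and the signal $(\Sigma_S^{(e)})^{-1/2}\mathbb{E}[U^{(e,S)}]$, whose squared norms sum to $v(S)$, while $\hat{\mathsf{T}}_2$ is a pure pooled OLS gap thanks to $\frac{1}{|\mathcal{E}|}\sum_e\mathbb{E}[U^{(e,S)}]=0$. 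You instead exploit the explained-variance identity $v(S)=\frac{1}{|\mathcal{E}|}\sum_e V^{(e)}(S)-V(S)$, the Pythagorean normal-equations identity to trade each $\hat V^{(e)}(S)-V^{(e)}(S)$ for an empirical-process fluctuation of squared residuals plus an OLS gap $\|\hat\beta^{(e,S)}-\beta^{(e,S)}\|_{\hat\Sigma_S^{(e)}}^2$, and the $Y^2$ cancellation. This cleanly isolates a centered sub-exponential sum $L(S)$ whose per-sample sub-exponential parameters scale with $\|\delta^{(e)}\|_{\Sigma^{(e)}_S}$ and whose total variance is proportional to $v(S)$ — which is a somewhat more transparent account of where $\sqrt{v(S)\epsilon}$ comes from than the paper's quadratic-form expansion. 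One small imprecision in your narrative: the Bernstein ``max'' term is absorbed not primarily by the smallness assumption $C\sigma_x^4\rho(k,t)\le 1$ but simply by preferring the deterministic bound $\max_e\|\delta^{(e)}\|_{\Sigma^{(e)}_S}\lesssim\sqrt{b}\,\sigma_y$ over $\sqrt{|\mathcal{E}|v(S)}$; the smallness hypothesis is in fact needed earlier, to guarantee that the spectral deviations of $\hat\Sigma_S^{(e)}$ and $\hat\Sigma_S$ are $\le 1/2$ so that the matrix inverses entering $Q(S)$ are under control. Both routes draw on the same underlying deterministic facts (normal equations, the pooled first-order condition, and the a priori $\|\delta^{(e)}\|^2_{\Sigma^{(e)}_S}\lesssim b\sigma_y^2$) and the same sub-exponential concentration with union bounds over $|S|\le k$ and $e\in\mathcal{E}$, and give the same rate.
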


The above inequality is instance-dependent in that both L.H.S. and R.H.S. of the inequality contain $v(S)$ dependent on $S$. The next proposition claims that one can establish strong convexity around $\beta^{k,\gamma}$.

\begin{proposition}
    \label{prop:sc}
    Under \cref{cond:normalize}, for any $k\in [d]$ and $\gamma \ge 0$, $\mathsf{Q}_{k,\gamma}(\beta)$ is uniquely minimized by some $\beta^{k,\gamma}$. Moreover, for any $\beta \in \mathbb{R}^d$. 
    \begin{align*}
        \mathsf{Q}_{k, \gamma}(\beta) - \mathsf{Q}_{k,\gamma}(\beta^{k,\gamma}) \ge \frac{1}{2} \|\Sigma^{1/2}(\beta - \beta^{k,\gamma})\|_2^2
    \end{align*}
\end{proposition}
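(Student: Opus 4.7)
The plan is to exploit the fact that $\mathsf{Q}_{k,\gamma}$ splits into a strictly convex quadratic in $\beta$ (whose Hessian is exactly $\Sigma$) plus a convex penalty, so a one-line quadratic-expansion-and-subgradient argument should deliver both uniqueness and the quadratic growth bound at the same time. Concretely, I would write
\begin{align*}
\mathsf{Q}_{k,\gamma}(\beta) = \mathsf{R}^{\mathcal{E}}(\beta) + \gamma\,\mathsf{J}_k(\beta), \qquad \mathsf{R}^{\mathcal{E}}(\beta) = \tfrac{1}{2}\beta^\top\Sigma\beta - \beta^\top u + \tfrac{1}{2|\mathcal{E}|}\sum_{e\in\mathcal{E}}\mathbb{E}[|Y^{(e)}|^2],
\end{align*}
and $\mathsf{J}_k(\beta)=\sum_{j=1}^d w_k(j)|\beta_j|$, which is convex and nonnegative since $w_k(j)\geq 0$.

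First I would observe that under \cref{cond:normalize} each $\Sigma^{(e)}\succ 0$, hence $\Sigma = |\mathcal{E}|^{-1}\sum_{e\in\mathcal{E}}\Sigma^{(e)}\succ 0$. In particular $\mathsf{R}^{\mathcal{E}}$ is a strictly convex coercive quadratic, and so is $\mathsf{Q}_{k,\gamma}$ (adding a convex, nonnegative function preserves strict convexity and coercivity). This yields existence and uniqueness of a minimizer $\beta^{k,\gamma}$. The same conclusion will also follow for free from the quadratic-growth bound, once it is proven.

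For the quadratic-growth inequality, the key observation is that because $\mathsf{R}^{\mathcal{E}}$ is exactly quadratic with Hessian $\Sigma$, the Taylor expansion around $\beta^{k,\gamma}$ is an identity rather than an inequality:
\begin{align*}
\mathsf{R}^{\mathcal{E}}(\beta) - \mathsf{R}^{\mathcal{E}}(\beta^{k,\gamma}) = \nabla\mathsf{R}^{\mathcal{E}}(\beta^{k,\gamma})^\top(\beta - \beta^{k,\gamma}) + \tfrac{1}{2}\|\Sigma^{1/2}(\beta - \beta^{k,\gamma})\|_2^2.
\end{align*}
The first-order optimality condition $0\in\partial\mathsf{Q}_{k,\gamma}(\beta^{k,\gamma})$ produces a subgradient $\xi\in\partial\mathsf{J}_k(\beta^{k,\gamma})$ satisfying $\nabla\mathsf{R}^{\mathcal{E}}(\beta^{k,\gamma}) + \gamma\xi = 0$. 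Convexity of $\mathsf{J}_k$ gives $\mathsf{J}_k(\beta)-\mathsf{J}_k(\beta^{k,\gamma})\geq \xi^\top(\beta-\beta^{k,\gamma})$. Adding these two relations, the linear terms in $(\beta-\beta^{k,\gamma})$ cancel exactly, leaving the claimed lower bound by $\tfrac{1}{2}\|\Sigma^{1/2}(\beta-\beta^{k,\gamma})\|_2^2$.

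There is no real obstacle here; the only point of care is to use the exact quadratic identity (rather than a generic convexity bound) to obtain the constant $\tfrac{1}{2}$ and the exact metric $\|\Sigma^{1/2}\cdot\|_2$ on the right-hand side, and to verify $\Sigma\succ 0$ so that the resulting quadratic form is a genuine norm, which in turn re-derives uniqueness of $\beta^{k,\gamma}$ by setting $\beta$ to any other candidate minimizer.
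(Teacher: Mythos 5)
Your proof is correct and takes essentially the same route as the paper: the quadratic-growth bound comes from the exact second-order Taylor identity for the quadratic loss $\mathsf{R}^{\mathcal{E}}$ (Hessian $\Sigma$), combined with the first-order subgradient optimality condition at $\beta^{k,\gamma}$ and the convexity of the weighted-$L_1$ penalty, which makes the linear terms cancel. The only cosmetic difference is in existence/uniqueness: you package it as strict convexity plus coercivity, whereas the paper argues existence by restricting to a compact sublevel set and uniqueness via a midpoint contradiction (both arguments rest on the same underlying facts, $\Sigma\succ 0$ and convexity of the penalty).
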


The next lemma shows the explained variance of $\beta^{k,\gamma}$ is smaller than the explained variance of population-level least squares, the latter is smaller than $\sigma_y^2$. 
\begin{lemma}
    \label{lemma:shrinkage}
    Let $\beta^{k,\gamma}$ be the unique minimizer of $\mathsf{Q}_{k,\gamma}(\beta)$, and $\bar{\beta}$ be the unique minimizer of $\mathsf{R}(\beta)$. Then we have
    \begin{align}
    \label{eq:norm}
        \|\Sigma^{1/2} \beta^{k,\gamma}\|_2 \le \|\Sigma^{1/2} \bar{\beta}\|_2\le\sigma_y.
    \end{align}
\end{lemma}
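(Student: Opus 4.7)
The argument splits into the two inequalities in \eqref{eq:norm}, and each has a short geometric/convexity flavor.

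For the first inequality, the plan is to reinterpret $\beta^{k,\gamma}$ as a projection. Starting from the minimax characterization \eqref{eq:minimax-k} in \cref{thm:minimax-k} and repeating the computation already performed in \cref{proof:geo} (which in turn invokes Theorem~1 of \citet{meinshausen2015maximin}), I would rewrite
\[
\beta^{k,\gamma} \;=\; \argmin_{\beta \in \Theta_{k,\gamma}} \beta^{\top}\Sigma\beta,
\qquad
\Theta_{k,\gamma} \;=\; \bigl\{\beta : |\Sigma\beta - u| \le \gamma\cdot(w_k(1),\ldots,w_k(d))^{\top}\bigr\}.
\]
So $\beta^{k,\gamma}$ is the projection of the origin onto the closed convex set $\Theta_{k,\gamma}$ in the (semi)norm $\|\Sigma^{1/2}\cdot\|_2$. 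The pooled least-squares solution $\bar{\beta}=\Sigma^{-1}u$ trivially belongs to $\Theta_{k,\gamma}$ because $\Sigma\bar{\beta}-u=0$, and hence by the defining optimality of a projection,
\[
\|\Sigma^{1/2}\beta^{k,\gamma}\|_2 \;\le\; \|\Sigma^{1/2}\bar{\beta}\|_2,
\]
which is the first inequality.

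For the second inequality, I would use a Schur-complement (positive semi-definiteness) argument on the pooled second-moment matrix. Let $v := \frac{1}{|\mathcal{E}|}\sum_{e\in\mathcal{E}}\mathbb{E}[|Y^{(e)}|^2]$. Then
\[
\frac{1}{|\mathcal{E}|}\sum_{e\in\mathcal{E}} \mathbb{E}\!\begin{bmatrix} X^{(e)}\\ Y^{(e)}\end{bmatrix}\!\begin{bmatrix} X^{(e)}\\ Y^{(e)}\end{bmatrix}^{\!\top}
\;=\;
\begin{bmatrix} \Sigma & u \\ u^{\top} & v \end{bmatrix} \;\succeq\; 0,
\]
so by the Schur-complement criterion $v - u^{\top}\Sigma^{-1}u \ge 0$, i.e. $\|\Sigma^{1/2}\bar{\beta}\|_2^2 = u^{\top}\Sigma^{-1}u \le v$. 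The sub-Gaussian hypothesis on $Y^{(e)}$ in \cref{cond:regularity}(c) implies $\mathbb{E}[Y^{(e)}]=0$ and $\mathbb{E}[|Y^{(e)}|^2]\le \sigma_y^2$ (expand the MGF to second order); averaging over $e$ gives $v\le\sigma_y^2$, and combining yields $\|\Sigma^{1/2}\bar{\beta}\|_2\le\sigma_y$.

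\textbf{Main obstacle.} There is no serious obstacle here; the only subtle point is the reduction to the projection form $\beta^{k,\gamma}=\argmin_{\beta\in\Theta_{k,\gamma}}\beta^{\top}\Sigma\beta$. This must be cited carefully from the argument in \cref{proof:geo}, which itself relies on the fact that $\Theta_{k,\gamma}$ is already convex (so its convex hull equals itself), allowing a direct application of Theorem~1 of \citet{meinshausen2015maximin}. Everything else is the two-line observation that $\bar{\beta}\in\Theta_{k,\gamma}$ plus a one-line Schur-complement calculation.
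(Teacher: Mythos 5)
Your proof is correct, and for the second inequality it is essentially the paper's argument (positive semi-definiteness of the pooled second-moment matrix, plus $\frac{1}{|\mathcal{E}|}\sum_{e}\mathbb{E}[|Y^{(e)}|^2]\le\sigma_y^2$ from the sub-Gaussian moment bound). For the first inequality you take a genuinely different route. You derive the projection characterization $\beta^{k,\gamma}=\argmin_{\beta\in\Theta_{k,\gamma}}\beta^{\top}\Sigma\beta$ from the maximin form \eqref{eq:minimax-k} of \cref{thm:minimax-k} (extending the $k=1$ computation of \cref{proof:geo} verbatim), observe that $\Sigma\bar\beta-u=0$ so $\bar\beta\in\Theta_{k,\gamma}$, and read off $\|\Sigma^{1/2}\beta^{k,\gamma}\|_2\le\|\Sigma^{1/2}\bar\beta\|_2$ directly from the minimum-norm property. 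The paper instead argues by contradiction: assuming $\|\Sigma^{1/2}\beta^{k,\gamma}\|_2>\|\Sigma^{1/2}\bar\beta\|_2$, it projects $\bar\beta$ in $\Sigma$-norm onto the line $\{t\beta^{k,\gamma}:t\in\mathbb{R}\}$ to obtain $\tilde\beta=\tilde t\beta^{k,\gamma}$ with $|\tilde t|<1$, and then shows $\mathsf{Q}_{k,\gamma}(\tilde\beta)<\mathsf{Q}_{k,\gamma}(\beta^{k,\gamma})$ because the quadratic term drops by projection optimality while the weighted-$\ell_1$ term drops since $|\tilde t|<1$, contradicting uniqueness. Your route is shorter and more conceptual once the geometric characterization is in hand, but it imports Theorem~1 of \citet{meinshausen2015maximin} via \cref{proof:geo}; the paper's argument is self-contained and relies only on the defining optimality of $\beta^{k,\gamma}$. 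There is no circularity in citing \eqref{eq:geo}, since that result is established from \cref{thm:minimax-k} independently of this lemma.
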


\subsection{Proof of \cref{thm:main-lowdim}}

We need the following technical lemma.
\begin{lemma}
\label{lemma:sqrt}
    For any $x, y, \delta \ge 0$ and $C>0$, if $|x - y| \le C (\delta^2 + \delta \sqrt{y})$, then
    \begin{align*}
        |\sqrt{x} - \sqrt{y}| \le 2(C+1) \delta.
    \end{align*}
\end{lemma}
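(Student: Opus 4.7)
The natural starting point is the factorization $|x-y| = |\sqrt{x}-\sqrt{y}|\,(\sqrt{x}+\sqrt{y})$, which rearranges to
\[
|\sqrt{x}-\sqrt{y}| = \frac{|x-y|}{\sqrt{x}+\sqrt{y}} \le \frac{C(\delta^2+\delta\sqrt{y})}{\sqrt{x}+\sqrt{y}}.
\]
This bound degrades when the denominator is small, so I would split into two cases according to the size of $\sqrt{y}$ relative to $\delta$.

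In the \emph{first case} $\sqrt{y}\ge \delta$, the denominator is at least $\sqrt{y}\ge \delta$, giving
\[
|\sqrt{x}-\sqrt{y}| \le \frac{C\delta^2}{\sqrt{y}} + \frac{C\delta\sqrt{y}}{\sqrt{y}} \le C\delta + C\delta = 2C\delta,
\]
which is already better than the claimed $2(C+1)\delta$. In the \emph{second case} $\sqrt{y}<\delta$, the above factorization is wasteful; instead I would use the hypothesis directly to control $x$: since $|x-y|\le C(\delta^2+\delta\sqrt{y})\le 2C\delta^2$, I get
\[
x \le y + 2C\delta^2 < (1+2C)\delta^2,
\]
so $\sqrt{x}<\sqrt{1+2C}\,\delta$ and $\sqrt{y}<\delta$. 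Then
\[
|\sqrt{x}-\sqrt{y}| \le \max(\sqrt{x},\sqrt{y}) \le \sqrt{1+2C}\,\delta \le (1+C)\delta \le 2(C+1)\delta,
\]
where the step $\sqrt{1+2C}\le 1+C$ follows from $(1+C)^2 = 1+2C+C^2 \ge 1+2C$.

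There is no real obstacle here: the argument is a one-paragraph case split. The only subtlety worth flagging is that one must not try to use the factorization in the second case, since $\sqrt{x}+\sqrt{y}$ could be arbitrarily small (even zero when $x=y=0$); switching to the ``$\max$'' bound in that regime is what makes the argument clean. The constant $2(C+1)$ is not tight—in fact $\max(2C,\,1+C)$ suffices—but the stated version is what is invoked in \cref{thm:main-lowdim}.
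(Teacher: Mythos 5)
Your proof is correct and follows essentially the same strategy as the paper: a case split on whether $\sqrt{y}$ is smaller or larger than $\delta$ (equivalently $y \lessgtr \delta^2$), using the factorization $|x-y| = |\sqrt{x}-\sqrt{y}|(\sqrt{x}+\sqrt{y})$ in the large-$y$ regime and a direct bound on $\sqrt{x}$ in the small-$y$ regime. The only differences are cosmetic: in the small-$y$ case you use $|\sqrt{x}-\sqrt{y}|\le\max(\sqrt{x},\sqrt{y})$ where the paper uses the looser $\sqrt{x}+\sqrt{y}$, and your arithmetic is slightly cleaner, but the decomposition, the key inequality, and the conclusion are the same.
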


We are ready to prove \cref{thm:main-lowdim}.
\begin{proof}[Proof of \cref{thm:main-lowdim}]
    We consider the following decomposition, for any $\beta \in \mathbb{R}^d$, 
    \begin{align*}
        \mathsf{Q}_{k,\gamma}(\beta) - \mathsf{Q}_{k,\gamma}(\beta^{k,\gamma}) &= \mathsf{Q}_{k,\gamma}(\beta) - \hat{\mathsf{Q}}_{k,\gamma}(\beta) + \hat{\mathsf{Q}}_{k,\gamma}(\beta) - \hat{\mathsf{Q}}_{k,\gamma}(\beta^{k,\gamma}) \\
        &~~~~~~~~~~ + \hat{\mathsf{Q}}_{k,\gamma}(\beta^{k,\gamma}) - \mathsf{Q}_{k,\gamma}(\beta^{k,\gamma}) \\
        &= \hat{\mathsf{Q}}_{k,\gamma}(\beta) - \hat{\mathsf{Q}}_{k,\gamma}(\beta^{k,\gamma}) \\
        &~~~~~~~~~~ + \mathsf{R}(\beta) - \mathsf{R}(\beta^{k,\gamma}) - \left(\hat{\mathsf{R}}(\beta) - \hat{\mathsf{R}}(\beta^{k,\gamma}) \right)\\
        &~~~~~~~~~~ + \gamma \sum_{j=1}^d \left(w_k(j) - \hat{w}_k(j)\right) \left(|\beta_j| - |\beta_{j}^{k,\gamma}|\right) \\
        &= \mathsf{T}_1(\beta) + \mathsf{T}_2(\beta) + \gamma\mathsf{T}_3(\beta).
    \end{align*}
    For $\mathsf{T}_2(\beta)$, under the event $\mathcal{A}_3(d, t)$ and $\mathcal{A}_4(d,t)$ defined in \eqref{eq:event2}, the following holds with a universal constant $C_1$
    \begin{align*}
        \forall \beta, ~~ \mathsf{T}_2(\beta)&=\mathsf{R}(\beta) - \mathsf{R}(\beta^{k,\gamma}) - \left(\hat{\mathsf{R}}(\beta) - \hat{\mathsf{R}}(\beta^{k,\gamma})\right) \\
        &= \frac{1}{2} (\beta - \beta^{k,\gamma})^\top \Sigma (\beta - \beta^{k,\gamma}){+} (\beta - \beta^{k,\gamma})^\top (\Sigma \beta^{k,\gamma} - u) \\
        &~~~~~~~~ - \frac{1}{2} (\beta - \beta^{k,\gamma})^\top \hat{\Sigma} (\beta - \beta^{k,\gamma}) - (\beta - \beta^{k,\gamma})^\top (\hat{\Sigma} \beta^{k,\gamma} - \hat{u}) \\
        &= \frac{1}{2} \{\Sigma^{1/2}(\beta - \beta^{k,\gamma})\}^\top \left\{I - \Sigma^{-1/2} \hat{\Sigma} \Sigma^{-1/2}\right\} \{\Sigma^{1/2}(\beta - \beta^{k,\gamma})\} \\
        &~~~~~~~~ - \{\Sigma^{1/2}(\beta - \beta^{k,\gamma})\}^\top \left\{I - \Sigma^{-1/2} \hat{\Sigma} \Sigma^{-1/2}\right\}(\Sigma^{1/2} \beta^{k,\gamma}) \\
        &~~~~~~~~ +  \{\Sigma^{1/2}(\beta - \beta^{k,\gamma})\}^\top \Sigma^{-1/2} (\hat{u} - u) \\
        &\le C_1 \Bigg\{ \frac{1}{2} \|\Sigma^{1/2}(\beta - \beta^{k,\gamma})\|_2^2 \cdot \sigma_x^2 \sqrt{b\cdot \zeta(d, t)} \\
        &~~~~~~~~~~~~ +  \|\Sigma^{1/2}(\beta - \beta^{k,\gamma})\|_2 \cdot \left(\|\Sigma^{1/2} \beta^{k,\gamma}\|_2 \sigma_x^2 + \sigma_x^2 \sigma_y\right) \sqrt{b\cdot\zeta(d, t)} \Bigg\}\\
        &\le C_1 \Bigg\{ \frac{1}{2} \|\Sigma^{1/2}(\beta - \beta^{k,\gamma})\|_2^2 \cdot \sigma_x^2 \sqrt{b\cdot \zeta(d, t)} \\
        &~~~~~~~~~~~~ +  \frac{1}{8C_1}\|\Sigma^{1/2}(\beta - \beta^{k,\gamma})\|_2^2\\
        &~~~~~~~~~~~~ +  2C_1\left(\|\Sigma^{1/2} \beta^{k,\gamma}\|_2 \sigma_x^2 + \sigma_x^2 \sigma_y\right)^2 {b\cdot\zeta(d, t)} \Bigg\}.
    \end{align*}
    Here we substitute the upper bounds in \eqref{eq:event2} and use condition that $n \ge 3 (d + t)$ such that $b \cdot{\zeta(d, t)} \le (t+d\cdot\log4)/n\le1$. We also use $\|\Sigma^{1/2} \beta^{k,\gamma}\|_2 \le \sigma_y$ derived in \cref{lemma:shrinkage}.    Substituting the low-dimension structure, if $n \cdot |\mathcal{E}| \ge 64C_1^2 b \sigma_x^4 (d+t)$, we can obtain
    \begin{align*}
        \forall \beta, \qquad \mathsf{T}_2(\beta) \le \frac{1}{4} \|\Sigma^{1/2} (\beta - \beta^{k,\gamma})\|_2^2 + C_2 b \sigma_x^4 \sigma_y^2 \cdot \frac{d + t}{n \cdot |\mathcal{E}|}
    \end{align*} using the fact $xy \le x^2/\epsilon + \epsilon y^2$ for any $x, y, \epsilon>0$. 

    For $\mathsf{T}_3(\beta)$, we have
    \begin{align*}
        \forall \beta, ~~
        \mathsf{T}_3(\beta) &\le  \sum_{j=1}^d |w_k(j) - \hat{w}_k(j)| |\beta_j - \beta_{j}^{k,\gamma}| \\
        &\le  \sup_{j\in [d]} \left|\sqrt{v(\hat{S}_j)} - \sqrt{\hat{v}(\hat{S}_j)}\right|\cdot \|\beta - \beta^{k,\gamma}\|_1 \\
        &\overset{(a)}{\le} C_3  \sqrt{\sigma_x^4 \sigma_y^2 b \rho(k, t)} \sqrt{d} \|\beta - \beta^{k,\gamma}\|_2.
    \end{align*} Here in $(a)$ we use the facts $\|x-y\|_1\le \sqrt{d} \|x-y\|_2$ and 
    \begin{align}
        \label{eq:v-delta}
        \forall S ~\text{with} ~|S| \le k, \qquad |\sqrt{\hat{v}(S)} - \sqrt{v(S)}| \lesssim \delta \qquad \text{with} \qquad \delta = \sqrt{\sigma_x^4 \sigma_y^2 b \rho(k, t)}
    \end{align} by first applying \cref{prop:instance-dependent-v2} and then applying \cref{lemma:sqrt} provided $C \sigma_x^4 \rho(k, t) \le 1$ following from $n\ge \tilde{C}(k\log(d)+t)$ and $|\mathcal{E}|<n^{c_1}$.

    Now we plug in $\beta = \hat{\beta}^{k,\gamma}$, under which $\mathsf{T}_1(\hat{\beta}^{k,\gamma}) \le 0$, denote $\clubsuit = b \sigma_x^4 \sigma_y^2$, we have
    \begin{align}
    \label{eq:lowdim-ub}
    \begin{split}
        &\mathsf{Q}_{k,\gamma}(\hat{\beta}^{k,\gamma}) - \mathsf{Q}_{k,\gamma}(\beta^{k,\gamma}) \\ 
        &~~~~\le C_4 \left\{\clubsuit \cdot \frac{d + t}{n \cdot |\mathcal{E}|} + {\gamma} \sqrt{\clubsuit \cdot \frac{(k\log(d) + \log(|\mathcal{E}|) + t)\cdot d}{n}} \|\hat{\beta}^{k,\gamma} - \beta^{k,\gamma}\|_2 \right\}  \\
        &~~~~~~~~~~ + \frac{1}{4} {\|\Sigma^{1/2} (\hat{\beta}^{k,\gamma} - \beta^{k,\gamma})\|_2^2}.
    \end{split}
    \end{align}
    On the other hand, it follows from \cref{prop:sc} that
    \begin{align}
    \label{eq:lowdim-lb}
    \mathsf{Q}_{k,\gamma}(\hat{\beta}^{k,\gamma}) - \mathsf{Q}_{k,\gamma}(\beta^{k,\gamma}) \ge \frac{\|\Sigma^{1/2}(\hat{\beta}^{k,\gamma} - \beta^{k,\gamma})\|_2^2}{2}.
    \end{align}
    Combining \eqref{eq:lowdim-lb} and \eqref{eq:lowdim-ub} and recalling that we assume $|\mathcal{E}|\le n^{c_1}$, we obtain
    \begin{align*}
    \|\hat{\beta}^{k,\gamma} - \beta^{k,\gamma}\|_2^2 \le C_5 \cdot \clubsuit \left(\frac{\gamma^2}{\kappa^2} \frac{(k\log(d) + c_1\log(n)+ t) d}{n}+\frac{d  + t}{\kappa n \cdot |\mathcal{E}|}\right).
    \end{align*}
We complete the proof with $\tilde{C}=C_6\max\{\sigma_x^4c_1^2,b^{1/2}\sigma_x^2\sigma_yc_1^{1/2}\}$. 
\end{proof}

\begin{proof}[Proof of \cref{lemma:sqrt}]
We divide it into two cases.

\noindent \emph{Case 1. $y\le \delta^2$. } In this case, it follows from triangle inequality that
\begin{align*}
    |x| \le |x-y| + |y| \le C \left(\delta^2 + \delta^2\right) \le 2C \delta^2,
\end{align*} then we can obtain
\begin{align*}
    |\sqrt{x} - \sqrt{y}| \le |\sqrt{x}| + |\sqrt{y}| \le (\sqrt{2C} + 1) \delta.
\end{align*}
\noindent \emph{Case 2. $y \ge \delta^2$. } In this case, it follows from the upper bound on $|x-y|$ and the assumption $y\ge \delta^2$ that
\begin{align*}
    |\sqrt{x} - \sqrt{y}| = \frac{|x - y|}{\sqrt{x} + \sqrt{y}} \le \frac{|x - y|}{\sqrt{y}} \le \frac{C\delta^2 + \delta \sqrt{y}}{\sqrt{y}} = \delta + C \frac{\delta^2}{y} \le (1+C) \delta.
\end{align*}
Combining the above two cases completes the proof.
\end{proof}

\subsection{Proof of \cref{thm:high-dim}}

The next several lemmas are standard in high-dimensional linear regression analysis, and we simply adapt it to the multi-environment setting.

\begin{lemma}
    \label{lemma:highdim-T1-bound}
    Suppose \cref{cond:regularity} holds. Let $\beta^{k,\gamma}$ be the unique minimizer of $\mathsf{Q}_{k,\gamma}(\beta)$. Then there exist some universal constants $C$ such that, the following event
    \begin{align*}
        \forall j\in [d], \qquad &\left| \frac{1}{|\mathcal{E}|} \sum_{e\in \mathcal{E}} \left(\hat{\mathbb{E}}\left[\left(Y^{(e)}-(X^{(e)})^\top\beta^{k,\gamma}\right)X_j^{(e)}\right] - \mathbb{E}\left[\left(Y^{(e)}-(X^{(e)})^\top\beta^{k,\gamma}\right)X_j^{(e)}\right] \right) \right| \\
        &\qquad\qquad\qquad\qquad\qquad
        \le Cb\sigma_x^2\sigma_y
        \left(\sqrt{\zeta(1, t)} + \zeta(1, t) \right)
    \end{align*}
    happens with probability at least $1-e^{-t}$. 
\end{lemma}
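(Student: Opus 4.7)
The plan is to recognize the quantity inside the absolute value as the average of $n\cdot|\mathcal{E}|$ independent mean-zero random variables, each of which is sub-exponential with an environment-uniform parameter, apply a Bernstein-type tail bound, and finish with a union bound over the $d$ coordinates.

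First, fix $j\in[d]$ and introduce
\begin{align*}
Z_{e,i,j} \;=\; \bigl(Y_i^{(e)} - (X_i^{(e)})^\top\beta^{k,\gamma}\bigr)\,X_{i,j}^{(e)},\qquad e\in\mathcal{E},\ i\in[n].
\end{align*}
Under \cref{cond:regularity}(a) the $Z_{e,i,j}$ are jointly independent across $(e,i)$, and the quantity to be controlled is exactly $\tfrac{1}{n|\mathcal{E}|}\sum_{e,i}(Z_{e,i,j}-\mathbb{E}[Z_{e,i,j}])$. The core task is therefore to identify the sub-exponential parameters of $Z_{e,i,j}$.

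Second, I would bound the sub-Gaussian parameters of the two factors. By \cref{cond:regularity}(c)--(d), $X_{i,j}^{(e)}$ is sub-Gaussian with parameter at most $\sigma_x\sqrt{\Sigma_{j,j}^{(e)}}\le\sigma_x\sqrt{b\,\Sigma_{j,j}}=\sigma_x\sqrt{b}$, because $\Sigma_{j,j}=1$ by normalization. For the residual $R_{e,i}=Y_i^{(e)}-(X_i^{(e)})^\top\beta^{k,\gamma}$, I would use that $Y_i^{(e)}$ is sub-Gaussian with parameter $\sigma_y$ and that $(X_i^{(e)})^\top\beta^{k,\gamma}$ is sub-Gaussian with parameter at most $\sigma_x\|(\Sigma^{(e)})^{1/2}\beta^{k,\gamma}\|_2\le\sigma_x\sqrt{b}\,\|\Sigma^{1/2}\beta^{k,\gamma}\|_2\le\sigma_x\sqrt{b}\,\sigma_y$, where the last inequality is \cref{lemma:shrinkage}. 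Using $\sigma_x,b\ge 1$, the triangle inequality gives $R_{e,i}$ sub-Gaussian with parameter $O(\sigma_x\sqrt{b}\,\sigma_y)$. Applying \cref{lemma:product_of_two_subGaussian} then yields that $Z_{e,i,j}$ is sub-exponential with parameter $(\nu,\alpha)$ for $\nu=\alpha=Cb\sigma_x^2\sigma_y$.

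Third, I apply the Bernstein inequality \cref{lemma:Bernstein} to the $N=n|\mathcal{E}|$ independent sub-exponential variables $\{Z_{e,i,j}-\mathbb{E}[Z_{e,i,j}]\}_{e,i}$. This yields, for any $s>0$,
\begin{align*}
\mathbb{P}\left[\left|\frac{1}{n|\mathcal{E}|}\sum_{e,i}\bigl(Z_{e,i,j}-\mathbb{E}[Z_{e,i,j}]\bigr)\right|\ge Cb\sigma_x^2\sigma_y\left(\sqrt{\tfrac{s}{n|\mathcal{E}|}}+\tfrac{s}{n|\mathcal{E}|}\right)\right]\le 2e^{-s}.
\end{align*}
Taking $s=t+\log(4d)$ and applying a union bound over $j\in[d]$, the probability that the bound fails for some $j$ is at most $2d\cdot e^{-t-\log(4d)}\le e^{-t}$, and the right-hand side becomes $Cb\sigma_x^2\sigma_y(\sqrt{\zeta(1,t)}+\zeta(1,t))$ by the definition of $\zeta(1,t)$ in \eqref{eq:rho1}. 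Absorbing universal constants into $C$ finishes the proof.

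The main obstacle is really bookkeeping: controlling $R_{e,i}$ uniformly over $e\in\mathcal{E}$ requires \cref{cond:regularity}(d) to reduce $\Sigma^{(e)}$ to the pooled $\Sigma$, and then \cref{lemma:shrinkage} to replace $\|\Sigma^{1/2}\beta^{k,\gamma}\|_2$ by $\sigma_y$; once this is done, the remaining work is a direct application of Bernstein plus a union bound.
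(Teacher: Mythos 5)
Your proof takes essentially the same approach as the paper: both recognize the quantity as the average of $n|\mathcal{E}|$ independent products of two sub-Gaussian variables, bound the sub-Gaussian parameter of the covariate factor by $\sigma_x\sqrt{b}$ via \cref{cond:regularity}(d) and the normalization $\Sigma_{jj}=1$, bound the residual's parameter via \cref{lemma:shrinkage} and the change-of-norm $\|(\Sigma^{(e)})^{1/2}\beta^{k,\gamma}\|_2\le\sqrt{b}\,\|\Sigma^{1/2}\beta^{k,\gamma}\|_2$, and then apply \cref{lemma:product_of_two_subGaussian} and \cref{lemma:Bernstein}. The only cosmetic difference is that you spell out the union bound over $j\in[d]$ and the choice $s=t+\log(4d)$ explicitly, whereas the paper leaves these steps implicit in the $\log(4d)$ term already built into the definition of $\zeta(1,t)$.
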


\begin{lemma}
\label{lemma:rsc} Suppose \cref{cond:regularity} holds. Then there exist some universal constants $C,c>0$ such that, for any constant $\alpha>0$, the following event
\begin{align*}
    \forall \theta \in \bigcup_{S\subseteq [d], |S| \le s}\{\Delta\in \mathbb{R}^d: \|\Delta_{S^c}\|_{1} \le \alpha \| \Delta_{S}\|_1 \} \qquad \frac{1}{|\mathcal{E}|} \sum_{e\in \mathcal{E}} \hat{\mathbb{E}}[|\theta^\top X^{(e)} |^2] \ge \frac{1}{2} \kappa\|\theta\|_2^2 
\end{align*}
occurs with probability at least $1-3\exp\left(-\tilde{n}/(C\sigma_x)^4\right)$, where $s=c(1+\alpha)^{-2} \sigma_x^{-4}{\kappa}\cdot n |\mathcal{E}|/(b\cdot\log d)$.
\end{lemma}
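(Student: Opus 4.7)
The plan is to adapt the standard restricted strong convexity (RSC) argument of Raskutti--Wainwright--Yu and Rudelson--Zhou to the present multi-environment setting, where the $n|\mathcal{E}|$ samples $\{X_i^{(e)}\}$ are independent but not identically distributed. The key preliminary observation is that, since $\Sigma = \frac{1}{|\mathcal{E}|}\sum_{e\in\mathcal{E}}\Sigma^{(e)}$ is a convex combination of matrices each with smallest eigenvalue at least $\kappa$, Weyl's inequality gives $\lambda_{\min}(\Sigma)\ge \kappa$; combined with Conditions (b) and (d), every coordinate of $X_i^{(e)}$ is sub-Gaussian with parameter $\sqrt{b}\sigma_x$, so by \cref{lemma:product_of_two_subGaussian} each entry of $X_i^{(e)}(X_i^{(e)})^\top-\Sigma^{(e)}$ is sub-exponential with parameters $(Cb\sigma_x^2, Cb\sigma_x^2)$.

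The first substantive step is to establish the \emph{unrestricted} one-sided deviation inequality
\begin{align*}
\theta^\top \hat\Sigma\,\theta \;\ge\; \tfrac{3}{4}\,\theta^\top\Sigma\,\theta \;-\; c_1\, b\sigma_x^4\,\frac{\log d}{n|\mathcal{E}|}\,\|\theta\|_1^2,\qquad\forall\,\theta\in\mathbb{R}^d,
\end{align*}
with probability at least $1-3\exp(-c_2\,n|\mathcal{E}|/\sigma_x^4)$. Since this inequality is two-homogeneous, it suffices to prove it on the set $K=\{\theta:\|\theta\|_2\le 1,\|\theta\|_1\le R\}$ for each dyadic $R$. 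For fixed $\theta\in K$ I will apply the sub-exponential Bernstein bound in \cref{lemma:Bernstein} to $\sum_{e,i}\theta^\top(X_i^{(e)}(X_i^{(e)})^\top-\Sigma^{(e)})\theta$, yielding $|\theta^\top(\hat\Sigma-\Sigma)\theta|\le c_3 b\sigma_x^2\{\sqrt{t/(n|\mathcal{E}|)}+t/(n|\mathcal{E}|)\}$ with probability $\ge 1-2e^{-t}$.

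Next, I extend this pointwise control to a uniform bound over $K$ by a discretization / peeling argument in the spirit of Loh--Wainwright (2012, Lemma 12). Concretely, I will construct, at each sparsity scale $k_0=1,2,4,\dots,d$, an $\varepsilon$-net $\mathcal{N}_{k_0}$ of the set $\{\theta\in K:|\mathrm{supp}(\theta)|\le k_0\}$ whose cardinality obeys $\log|\mathcal{N}_{k_0}|\lesssim k_0\log(ed/k_0)$, decompose any $\theta\in K$ into a sum of $O(R^2/k_0)$ such $k_0$-sparse pieces (the greedy Maurey-type decomposition), apply the concentration bound above to every element of $\bigcup_{k_0}\mathcal{N}_{k_0}$ with $t\asymp k_0\log(ed/k_0)$, and then sum the pieces. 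The outcome is precisely the quadratic-plus-$\ell_1^2$ error advertised above; the sub-exponential parameter $b\sigma_x^2$ squared in the Bernstein bound is what produces the $b\sigma_x^4$ coefficient, while the tail exponent $n|\mathcal{E}|/\sigma_x^4$ arises from choosing $t\asymp n|\mathcal{E}|/\sigma_x^4$ in the small-deviation regime.

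Having the unrestricted bound, specialization to the cone is routine: for $\theta$ with $\|\theta_{S^c}\|_1\le\alpha\|\theta_S\|_1$ and $|S|\le s$ one has $\|\theta\|_1\le (1+\alpha)\sqrt{s}\,\|\theta\|_2$, so
\begin{align*}
\theta^\top\hat\Sigma\,\theta \;\ge\; \tfrac{3}{4}\kappa\|\theta\|_2^2 \;-\; c_1\,b\sigma_x^4(1+\alpha)^2 s\,\frac{\log d}{n|\mathcal{E}|}\,\|\theta\|_2^2;
\end{align*}
choosing the constant $c$ in the definition of $s$ so that $c_1 c\le \tfrac{1}{4}$ makes the subtracted term at most $\tfrac{1}{4}\kappa\|\theta\|_2^2$, leaving $\tfrac{1}{2}\kappa\|\theta\|_2^2$. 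The main obstacle is Step~2, the uniform chaining bound: one must simultaneously peel across sparsity levels and $\ell_2$-norm scales because a naive union bound over all supports of size $s$ would inflate the error by a factor of $\log(ed/s)$ rather than $\log d$, and the non-identically distributed nature of the samples forces the centering to be the pooled $\Sigma$ throughout the argument; verifying that the sub-exponential parameters transfer correctly through the Maurey decomposition is where most of the care is needed.
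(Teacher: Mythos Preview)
Your high-level strategy --- first establish the unrestricted inequality $\theta^\top\hat\Sigma\theta \ge \tfrac34\theta^\top\Sigma\theta - c_1\,b\sigma_x^4\tfrac{\log d}{n|\mathcal{E}|}\|\theta\|_1^2$ and then restrict to the cone via $\|\theta\|_1\le(1+\alpha)\sqrt{s}\,\|\theta\|_2$ --- is a legitimate route and is \emph{not} the one the paper takes. The paper works directly on the normalized cone $\mathcal{B}=\Theta\cap\{\|\theta\|_\Sigma=1\}$, bounds Talagrand's functional $\gamma_2(\mathcal{B},\sigma_x\|\cdot\|_\Sigma)\le C\sigma_x(1+\alpha)\kappa^{-1/2}\sqrt{s\log d}$ via the majorizing-measure theorem and an $\ell_1/\ell_\infty$ Gaussian-width computation, and then runs Mendelson's generic-chaining argument (splitting an admissible sequence at the scale $2^{k_0}\sim\tilde n:=n|\mathcal{E}|/b$ and controlling both the square-root process $W_v$ and the quadratic process $Z_v$) to conclude $\sup_{v\in\mathcal{B}}|Z_v|\le\tfrac12$ with the stated probability.

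There is, however, a real gap in your Step~2 as described. First, applying Bernstein with $t\asymp k_0\log(ed/k_0)$ and union-bounding over $\bigcup_{k_0}\mathcal{N}_{k_0}$ gives failure probability dominated by the smallest scale, hence only $d^{-c}$, not the $\exp(-c\,\tilde n/\sigma_x^4)$ the lemma demands; your later sentence ``choosing $t\asymp n|\mathcal{E}|/\sigma_x^4$'' is inconsistent with the preceding one. Second, and more seriously, the shelling/Maurey decomposition into $k_0$-sparse blocks controls $|\theta^\top(\hat\Sigma-\Sigma)\theta|$ by $\delta_{2k_0}\bigl(\|\theta\|_2+k_0^{-1/2}\|\theta\|_1\bigr)^2$; with $\delta_{2k_0}\asymp\sigma_x^2\sqrt{bk_0\log d/(n|\mathcal{E}|)}$ the $\|\theta\|_1^2$-coefficient is $\sqrt{b\log d/(k_0 n|\mathcal{E}|)}$, so matching the sharp $b\sigma_x^4\log d/(n|\mathcal{E}|)$ forces $k_0$ so large that the $\|\theta\|_2^2$-term already eats a constant fraction of $\theta^\top\Sigma\theta$, and one loses an extra factor of $\kappa$ in the final bound on $s$. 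Because the population covariance is $\Sigma\neq I$, the coordinate-sorting underlying the shelling does not interact cleanly with $\|\cdot\|_\Sigma$, which is why the paper normalizes in the $\Sigma$-geometry from the outset. To get the exact $s$ in the lemma (linear in $\kappa$, quadratic in $(1+\alpha)^{-1}$) with the exponential-in-$\tilde n$ probability, your unrestricted bound would itself have to be proved by a chaining argument of the Mendelson type --- i.e., essentially the tool the paper deploys directly on the cone.
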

Now we are ready to prove \cref{thm:high-dim}.

\begin{proof}[Proof of \cref{thm:high-dim}]
Denote $\hat{\beta}=\hat{\beta}^{k,\gamma,\lambda}$, $\beta_\star = (\beta_{\star, 1},\ldots, \beta_{\star, d})^\top = \beta^{k,\gamma}$, $S_\star=\supp(\beta_\star)$ and $\hat{\Delta} = \hat{\beta} - \beta_\star$. 

First, one can observe that
\begin{align*}
    \lambda(\|\hat{\Delta}_{S_\star}\|_1-\|\hat{\Delta}_{S_\star^c}\|_1)
    \overset{(a)}{\ge}\lambda(\|\beta_\star\|_1-\|\hat{\beta}\|_1)
    \overset{(b)}{\ge}\hat{\mathsf{Q}}_{k,\gamma}(\hat{\beta}) - \hat{\mathsf{Q}}_{k,\gamma}(\beta_\star).
\end{align*}
Here $(a)$ follows from the triangle inequality; and $(b)$ follows from the fact that $\hat{\beta}$ minimizes $\hat{\mathsf{Q}}_{k,\gamma}(\beta) + \lambda\|\beta\|_1$. At the same time, it follows from the definition of $\hat{\mathsf{Q}}_{k,\gamma}(\beta)$ that
\begin{align}
\label{eq:highdim-introduce_T1T2}
\begin{split}
    &\hat{\mathsf{Q}}_{k,\gamma}(\hat{\beta}) - \hat{\mathsf{Q}}_{k,\gamma}(\beta_\star)\\
    &=\hat{\mathsf{R}}(\beta)+\sum_{j=1}^d \hat{w}_k (j)\cdot |\hat{\beta}_j|-\hat{\mathsf{R}}(\beta_\star)-\sum_{j=1}^d \hat{w}_k (j)\cdot |\beta_{\star,j}|\\
    &=\frac{1}{2}\hat{\beta}^\top\hat{\Sigma}\hat{\beta}-\hat{\beta}^\top \hat{u}-\frac{1}{2}\beta_\star^\top\hat{\Sigma}\beta_\star+\beta_\star^\top \hat{u}+\gamma \sum_{j=1}^d \hat{w}_k (j) \left(|\hat{\beta}_j|-|\beta_{\star,j}|\right)\\
    &=\frac{1}{2}(\hat{\beta}-\beta_\star+\beta_\star)^\top \hat{\Sigma}(\hat{\beta}-\beta_\star+\beta_\star)-\frac{1}{2}\beta_\star^\top \hat{\Sigma}\beta_\star-(\hat{\beta}-\beta_\star)^\top \hat{u}+\gamma\sum_{j=1}^d \hat{w}_k(j)\left(|\hat{\beta}_j|-|\beta_{\star,j}|\right)\\
    &=\frac{1}{2}\hat{\Delta}^\top\hat{\Sigma}\hat{\Delta}+\hat{\Delta}^\top\hat{\Sigma}\beta_\star-\hat{\Delta}^\top\hat{u}+ \gamma\sum_{j=1}^d \hat{w}_k(j)\left(|\hat{\beta}_j|-|\beta_{\star,j}|\right)\\
    &= \frac{1}{2}\hat{\Delta}^\top\hat{\Sigma}\hat{\Delta} - \hat{\Delta}^\top(\hat{u}-\hat{\Sigma}\beta_\star) + \hat{\Delta}^\top(u-\Sigma\beta_\star) - \hat{\Delta}^\top(u-\Sigma\beta_\star) + \gamma\sum_{j=1}^d \hat{w}_k(j)\left(|\hat{\beta}_j|-|\beta_{\star,j}|\right)\\
    &\overset{(a)}{=} \frac{1}{2}\hat{\Delta}^\top\hat{\Sigma}\hat{\Delta} - \hat{\Delta}^\top \left\{ (\hat{u}-\hat{\Sigma}\beta_\star)-(u-\Sigma\beta_\star)
 \right\} \\
 &\qquad\qquad\qquad+ \gamma \left( -\sum_{j=1}^d \xi_j w_k(j)\left(\hat{\beta}_j-\beta_{\star,j}\right) + \sum_{j=1}^d \hat{w}_k(j)\left(|\hat{\beta}_j|-|\beta_{\star,j}|\right) \right)\\
    &= \frac{1}{2}\hat{\Delta}^\top\hat{\Sigma}\hat{\Delta}+\mathsf{T}_1+\gamma\mathsf{T}_2(\hat{\beta}).
\end{split}
\end{align}
Here $(a)$ follows from the KKT condition that
\begin{align}
\label{eq:kkt}
    \Sigma\beta_\star-u=\Sigma(\beta_\star - \bar{\beta}) = - \gamma \cdot v \odot \xi \qquad \text{with} \qquad v=(w_k(1),\ldots, w_k(d))^\top
\end{align} and
\begin{align*}
    \xi_j \in \begin{cases}
        \{\sign(\beta_\star)\} \qquad &j \in S_\star \\
        [-1, 1] \qquad & j \notin S_\star 
    \end{cases}.
\end{align*}

For $\mathsf{T}_1$, note that the $j$-th coordinate of $\left\{ (\hat{u}-\hat{\Sigma}\beta_\star)-(u-\Sigma\beta_\star)
 \right\}$ is
 \begin{align*}
     \frac{1}{|\mathcal{E}|} \sum_{e\in \mathcal{E}} \left(\hat{\mathbb{E}}\left[(Y^{(e)}-(X^{(e)})^\top\beta^{k,\gamma})X_j^{(e)}\right] - \mathbb{E}\left[(Y^{(e)}-(X^{(e)})^\top\beta^{k,\gamma})X_j^{(e)}\right] \right).
 \end{align*}
Denote $\spadesuit=b\sigma_x^2\sigma_y$. Applying \cref{lemma:highdim-T1-bound} with $t=C\log(n\cdot d)$ yields
\begin{align}
    \label{eq:highdim-T1-lowerbound}
    \mathsf{T}_1\ge -\|(\hat{u}-\hat{\Sigma}\beta_\star)-(u-\Sigma\beta_\star)\|_\infty\|\hat{\Delta}\|_1 \ge - C\spadesuit\sqrt{\frac{\log d+\log n}{n\cdot|\mathcal{E}|}}\|\hat{\Delta}\|_1
\end{align} with probability at least $1-(nd)^{-20}$. 

For $\mathsf{T}_2$, one has
\begin{align*}
    \mathsf{T}_2&= \sum_{j\in S_\star}\left(|\hat{\beta}_j|-|\beta_{\star,j}|\right)\hat{w}_k(j)-\sum_{j\in S_\star}\left(\hat{\beta}_j-\beta_{\star,j}\right)w_k(j)\sign(\beta_{\star,j})\\
    &\qquad\qquad +\sum_{j\notin S_\star}|\hat{\beta}_j|\cdot\hat{w}_k(j)-\sum_{j\notin S_\star}\hat{\beta}_j\cdot w_k(j)\xi_j\\
    &\ge \sum_{j\in S_\star}\left(|\hat{\beta}_j|-|\beta_{\star,j}|\right)\hat{w}_k(j)-\sum_{j\in S_\star}\left(\hat{\beta}_j-\beta_{\star,j}\right)w_k(j)\sign(\beta_{\star,j})\\
    &\qquad\qquad +\sum_{j\notin S_\star}|\hat{\beta}_j|\cdot\hat{w}_k(j)-\sum_{j\notin S_\star}|\hat{\beta}_j|\cdot w_k(j)|\xi_j|\\
    &= \sum_{j\in S_\star^c}\left[|\hat{\beta}_j|\cdot \left( \hat{w}_k(j)-w_k(j) \right)+|\hat{\beta}_j|\cdot w_k(j)(1-|\xi_j|)\right]\\
    &\qquad\qquad +\sum_{j\in S_\star}\left[\left(|\hat{\beta}_j|-|\beta_{\star,j}|\right)\cdot\hat{w}_k(j)+\left(|\beta_{\star,j}|-(\hat{\beta}_j)\sign(\beta_{\star,j})\right)w_k(j)\right]\\
    &\overset{(a)}{\ge} \sum_{j\in S_\star^c}\left[|\hat{\beta}_j|\cdot \left( \hat{w}_k(j)-w_k(j) \right)\right]+\sum_{j\in S_\star}\left[\left(|\hat{\beta}_j|-|\beta_{\star,j}|\right)\cdot\hat{w}_k(j)+\left(|\beta_{\star,j}|-|\hat{\beta}_j|\right)w_k(j)\right]\\
    &=\sum_{j\in S_\star^c}\left[(|\hat{\beta}_j|-|\beta_{\star,j}|)\cdot \left( \hat{w}_k(j)-w_k(j) \right)\right]+\sum_{j\in S_\star}\left[(|\hat{\beta}_j|-|\beta_{\star,j}|)\cdot \left( \hat{w}_k(j)-w_k(j) \right)\right]\\
    &\ge -\sum_{j=1}^d |\hat{\beta}_j-\beta_{\star,j}|\cdot|\hat{w}_k(j)-w_k(j)|.
\end{align*}
Here $(a)$ follows from the fact that $1-|\xi_j|\ge 0$ and $(\hat{\beta}_j)\sign(\beta_{\star,j})\ge -|\hat{\beta}_j|$. It follows from the upper bound of $\|\hat{w}_k-w_k\|_\infty$ derived in \eqref{eq:v-delta} that, provided $C\sigma_x^4\rho(k,t)\le 1$, the following holds with probability at least $1-e^{-t}$
\begin{align*}
    \mathsf{T}_2\ge
    -\|\hat{\beta}-\beta_\star\|_1\|\hat{w}_k-w_k\|_\infty
    \ge 
    -\|\hat{\Delta}\|_1\sqrt{\clubsuit\cdot \rho(k, t)}.
\end{align*}
Set $t=C\log(n\cdot d)$ and recall that we assume $|\mathcal{E}|\le n^{c_1}$. Then the following holds with probability at least $1-(n\cdot d)^{-20}$
\begin{align}
    \label{eq:highdim-T2-lowerbound}
    \mathsf{T}_2\ge -C\sqrt{\clubsuit}\|\hat{\Delta}\|_1\cdot \sqrt{\frac{k\log d+(c_1+1)\log n}{n}}.
\end{align}
Combining \eqref{eq:highdim-introduce_T1T2}, \eqref{eq:highdim-T1-lowerbound} and \eqref{eq:highdim-T2-lowerbound}, we obtain
\begin{align*}
    \lambda(\|\hat{\Delta}_{S_\star}\|_1-\|\hat{\Delta}_{S_\star^c}\|_1)\ge -\|\hat{\Delta}\|_1\underbrace{\left( C \sqrt{\clubsuit}\cdot\gamma\sqrt{\frac{k\log d+(c_1+1)\log n}{n}} + C\spadesuit\sqrt{\frac{\log d+\log n}{n\cdot|\mathcal{E}|}}\right)}_{\lambda^\star}.
\end{align*}
This immediately implies $\|\hat{\Delta}_{{S_\star^c}}\|_1\left( 
\lambda - \lambda^\star\right)\le (\lambda+\lambda^\star)\cdot \|\hat{\Delta}_{S_\star}\|_1$, then the following holds
\begin{align}
\label{eq:3alpha}
    \|\hat{\Delta}_{{S_\star^c}}\|_1 \le 3\|\hat{\Delta}_{S_\star}\|_1
\end{align} provided $\lambda \ge 2\lambda^\star$. Given \eqref{eq:3alpha}, we can apply the restricted strong convexity derived from \cref{lemma:rsc} with $\alpha=3$ and combine \eqref{eq:highdim-introduce_T1T2}, which yields
\begin{align*}
    \lambda(\|\hat{\Delta}_{{S_\star}}\|_1-\|\hat{\Delta}_{S_\star^c}\|_1)\ge \frac{1}{2}\hat{\Delta}^\top\hat{\Sigma}\hat{\Delta} -\lambda^\star\|\hat{\Delta}\|_1 \ge \frac{\kappa}{4}\|\hat{\Delta}\|_2^2-\lambda^\star\|\hat{\Delta}\|_1
\end{align*}
with probability over $1-3\exp(-c{n}/\sigma_x^4)\ge 1-(n\cdot d)^{-20}$. This further implies
\begin{align*}
\kappa \|\hat{\Delta}\|_2^2&\le 4\lambda^\star\|\hat{\Delta}\|_1+4\lambda\|\hat{\Delta}_{S_\star}\|_1
    \overset{(a)}{\le} 12\lambda \|\hat{\Delta}_{S_\star}\|_1
    \overset{(b)}{\le} 12\lambda\sqrt{|S_\star|}\|\hat{\Delta}\|_2.
\end{align*}
Here $(a)$ follows from $\lambda\ge 2\lambda^\star$ and $\|\hat{\Delta}_{{S_\star^c}}\|_1 \le 3\|\hat{\Delta}_{S_\star}\|_1$; and $(b)$ follows from Cauchy-Schwarz inequality. By letting $\tilde{C}=C_3\max\{\sigma_x^4c_1^2,(c_1b)^{1/2} \sigma_x^2\sigma_y ,b\sigma_x^2\sigma_y\}$, we can conclude that
\begin{align*}
    \|\hat{\Delta}\|_2\le \frac{12\lambda}{\kappa}\sqrt{|S_\star|}.
\end{align*}
\end{proof}

\begin{proof}[Proof of \cref{lemma:highdim-T1-bound}]
Note that
\begin{align}
    \frac{1}{|\mathcal{E}|} \sum_{e\in \mathcal{E}} \left(\hat{\mathbb{E}}\left[\left(Y^{(e)}-(X^{(e)})^\top\beta^{k,\gamma}\right)X_j^{(e)}\right] - \mathbb{E}\left[\left(Y^{(e)}-(X^{(e)})^\top\beta^{k,\gamma}\right)X_j^{(e)}\right] \right) 
\end{align}
 is the recentered average of mean-zero independent random variables, each of which is the product of two sub-Gaussian variables. By \cref{cond:regularity}, the product of sub-Gaussian parameters of $(Y^{(e)}-(X^{(e)})^\top\beta^{k,\gamma})$ and $X_j^{(e)}$ is no more than
 \begin{align*}
     C\left(\sigma_y+\sigma_x\max_{e\in\mathcal E}\|(\Sigma^{(e)})^{1/2}\beta^{k,\gamma}\|_2\right)\sigma_x\sqrt{\max_{e\in\mathcal{E},j\in[d]}|\Sigma_{jj}^{(e)}|}
     &\overset{(a)}{\le} C(\sigma_y+\sqrt{b}\sigma_x\|\Sigma^{1/2}\beta^{k,\gamma}\|_2)\sigma_x\sqrt{b}\\
 &\overset{(b)}{\le} C(\sigma_y+\sqrt{b}\sigma_x\sigma_y)\sqrt{b}\sigma_x\le 2C{b}\sigma_x^2\sigma_y.
 \end{align*}
 Here the $(a)$ follows from \cref{cond:regularity}; and $(b)$ follows from \cref{lemma:shrinkage}.
  Consequently, it follows from the concentration inequality \cref{lemma:Bernstein} that 
\begin{align*}
        \forall j\in [d], \qquad &\left| \frac{1}{|\mathcal{E}|} \sum_{e\in \mathcal{E}} \left(\hat{\mathbb{E}}\left[(Y^{(e)}-(X^{(e)})^\top\beta^{k,\gamma})X_j^{(e)}\right] - \mathbb{E}\left[(Y^{(e)}-(X^{(e)})^\top\beta^{k,\gamma})X_j^{(e)}\right] \right) \right| \\
        &\qquad\qquad\qquad\qquad\qquad
        \le 2C{b}\sigma_x^2\sigma_y
        \left(\sqrt{\zeta(1, t)} + \zeta(1, t) \right)
    \end{align*}
    happens with probability at least $1-e^{-t}$.
\end{proof}

\subsection{Proof of \cref{prop:instance-dependent-v2}}

\cref{prop:instance-dependent-v2} is based on instance-dependent decomposition of the response $Y^{(e)}$. We denote the residual defined by the least squares solution constrained on $X_S$ using all the data as 
\begin{align*}
    R^{(e,S)} := Y^{(e)} - (\beta^{(S)})^\top X^{(e)}.
\end{align*}
Define the random vector
\begin{align}
    U^{(e,S)} = X_S^{(e)} R^{(e,S)}.
\end{align} 
The following deterministic lemma unveils the relationship between the calculated weight $v(S)$ and population-level FAIR loss proposed by \cite{gu2024causality}, respectively.

\begin{lemma}
\label{lemma:variational-representation-v2}
    Suppose $\Sigma^{(e)} \succ 0$ for any $e\in \mathcal{E}$. We have
    \begin{align}
    \label{eq:variational-v2s}
        {v}(S) &= \min_{u\in \mathbb{R}^S} \frac{1}{|\mathcal{E}|} \sum_{e\in \mathcal{E}} \left\|({\Sigma}^{(e)})^{-1/2} {\mathbb{E}}\left[(Y^{(e)} - u^\top X_S^{(e)}) X_S^{(e)}\right] \right\|_2^2 \\
    \label{eq:v2-in-r}
        &= \frac{1}{|\mathcal{E}|} \sum_{e\in \mathcal{E}} \left\|({\Sigma}^{(e)})^{-1/2} {\mathbb{E}}\left[U^{(e,S)}\right] \right\|_2^2
    \end{align} Moreover,
    \begin{align}
    \frac{1}{|\mathcal{E}|} \sum_{e\in \mathcal{E}} {\mathbb{E}}\left[X_S^{(e)} R^{(e,S)}\right] = 0.
    \label{eq:v2:first-order-cond}
    \end{align}
\end{lemma}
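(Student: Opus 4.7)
The plan is to prove the three identities in the stated order \eqref{eq:v2:first-order-cond}, then \eqref{eq:v2-in-r}, and finally \eqref{eq:variational-v2s}, since each rests on the previous one. The whole argument is a short chain of algebraic identities driven by the normal equations for the pooled least-squares coefficient, and the main thing to be careful about is a notational issue: on dimensional grounds the expression $(\Sigma^{(e)})^{-1/2}\mathbb{E}[U^{(e,S)}]$ in \eqref{eq:v2-in-r} must be read with the submatrix $\Sigma_S^{(e)}$, since $U^{(e,S)} = X_S^{(e)} R^{(e,S)}$ lives in $\mathbb{R}^{|S|}$.

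For \eqref{eq:v2:first-order-cond}, I would just unfold the definition $\beta^{(S)}_S = (\sum_e \Sigma_S^{(e)})^{-1}\sum_e u_S^{(e)}$, which is equivalent to $\sum_e(u_S^{(e)} - \Sigma_S^{(e)}\beta^{(S)}_S) = 0$; combined with $\beta^{(S)}_{S^c}=0$ and $u_S^{(e)}=\mathbb{E}[X_S^{(e)}Y^{(e)}]$, this is exactly the claim.

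For \eqref{eq:v2-in-r}, a direct computation gives
$\mathbb{E}[U^{(e,S)}] = \mathbb{E}[X_S^{(e)}(Y^{(e)} - (\beta^{(S)})^\top X^{(e)})] = u_S^{(e)} - \Sigma_S^{(e)}\beta^{(S)}_S = \Sigma_S^{(e)}(\beta^{(e,S)}_S - \beta^{(S)}_S)$,
where I used $u_S^{(e)}=\Sigma_S^{(e)}\beta^{(e,S)}_S$ from the single-environment normal equation. Substituting into $\|(\Sigma_S^{(e)})^{-1/2}\mathbb{E}[U^{(e,S)}]\|_2^2$ and cancelling one copy of $\Sigma_S^{(e)}$ against $(\Sigma_S^{(e)})^{-1}$ leaves $(\beta^{(e,S)}_S - \beta^{(S)}_S)^\top \Sigma_S^{(e)}(\beta^{(e,S)}_S - \beta^{(S)}_S)$, which is $\|\beta^{(e,S)}_S - \beta^{(S)}_S\|_{\Sigma_S^{(e)}}^2$; averaging over $e\in\mathcal{E}$ matches the definition of $v(S)$.

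For \eqref{eq:variational-v2s}, I would treat the right-hand side as a function $g(u) = \frac{1}{|\mathcal{E}|}\sum_e \|(\Sigma_S^{(e)})^{-1/2}(u_S^{(e)} - \Sigma_S^{(e)}u)\|_2^2$ of $u\in\mathbb{R}^{|S|}$, which is a nonnegative quadratic and hence convex. Differentiating, the first-order condition is $\sum_e(u_S^{(e)} - \Sigma_S^{(e)}u)=0$, whose unique solution is $u^\star=\beta^{(S)}_S$ by positive definiteness of $\sum_e \Sigma_S^{(e)}$. Plugging $u^\star$ back into $g$ and invoking \eqref{eq:v2-in-r} gives $g(u^\star) = v(S)$, completing the proof. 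The main ``obstacle'' is purely bookkeeping around the subset $S$ and the submatrices $\Sigma_S^{(e)}$; once the dimensional convention is fixed, every step is a one-line identity.
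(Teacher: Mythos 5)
Your proof is correct and follows essentially the same route as the paper: both arguments rest on the pooled normal equations to get \eqref{eq:v2:first-order-cond}, and both identify $\beta_S^{(S)}$ as the minimizer of the quadratic in \eqref{eq:variational-v2s} and evaluate it there. The only cosmetic difference is that you establish \eqref{eq:v2-in-r} directly via the identity $\mathbb{E}[U^{(e,S)}]=\Sigma_S^{(e)}(\beta_S^{(e,S)}-\beta_S^{(S)})$, whereas the paper obtains it as the residual term in the same quadratic decomposition; you also correctly flag the $\Sigma_S^{(e)}$ versus $\Sigma^{(e)}$ dimensional convention that the paper leaves implicit.
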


\cref{prop:instance-dependent-v2} is a deterministic result after defining the following high-probability events. 
\begin{lemma}
\label{lemma:highprob1}
Suppose \cref{cond:regularity} hold. Then there exists some universal constants $C_1, C_2>0$ such that the following two events
\begin{align}
\label{eq:event1}
\begin{split}
\mathcal{A}_1(s, t) &= \Bigg\{ \forall e\in \mathcal{E}, S\subseteq[d] \text{~with~} |S| \le s,  \\
&~~~~~~~~~~~~~~ \left\| (\Sigma^{(e)}_S)^{-1/2} (\hat{\mathbb{E}}[U^{(e,S)}] - \mathbb{E}[U^{(e,S)}]) \right\|_2 \le C_1 \sqrt{b} \sigma_x^2 \sigma_y \left(\sqrt{\rho(s, t)} + \rho(s,t)\right) \Bigg\}  \\
\mathcal{A}_2(s, t) &= \Bigg\{ \forall e\in \mathcal{E}, S\subseteq[d] \text{~with~} |S| \le s, \\
&~~~~~~~~~~~~~~ \left\| (\Sigma^{(e)}_S)^{-1/2} (\hat{\Sigma}_S^{(e)}) (\Sigma^{(e)}_S)^{-1/2} - I \right\|_2 \le C_2 \sigma_x^2 \left(\sqrt{\rho(s, t)} + \rho(s,t)\right)  \Bigg\} \\
\end{split}
\end{align} occurs with probability at least $1-e^{-t}$.
\end{lemma}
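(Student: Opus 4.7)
The plan is to prove the two events in Lemma~\ref{lemma:highprob1} separately, using standard concentration arguments for sub-Gaussian/sub-exponential random vectors and matrices, together with a peeling/union-bound argument across environments and all subsets $S$ with $|S|\le s$. The quantity $\rho(s,t)$ is designed so that the sizes of these union bounds enter through $\log |\mathcal{E}|$ and $\log\binom{d}{|S|}\lesssim |S|\log(4d/|S|)$, matching the two dominant terms in its numerator.

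For $\mathcal{A}_1(s,t)$, fix $e\in\mathcal{E}$ and $S\subseteq[d]$ with $|S|\le s$. Write $\tilde U_i = (\Sigma_S^{(e)})^{-1/2} U_i^{(e,S)}$, so the quantity inside the norm is $\frac{1}{n}\sum_{i=1}^n(\tilde U_i - \mathbb{E}\tilde U_i)$. For any fixed unit vector $v\in\mathbb{R}^{|S|}$, the summands $v^\top \tilde U_i$ are products of two centered-or-controlled sub-Gaussians: $v^\top (\Sigma_S^{(e)})^{-1/2} X_{S,i}^{(e)}$ is sub-Gaussian with parameter~$\sigma_x$ by Condition~\ref{cond:regularity}(c), and $R_i^{(e,S)} = Y^{(e)}_i - (\beta^{(S)})^\top X_i^{(e)}$ is sub-Gaussian with parameter bounded by $C(\sigma_y + \sqrt{b}\sigma_x\|\Sigma^{1/2}\beta^{(S)}\|_2) \lesssim \sqrt{b}\sigma_x\sigma_y$ using Cauchy--Schwarz, Condition~\ref{cond:regularity}(d) and Lemma~\ref{lemma:shrinkage}. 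Lemma~\ref{lemma:product_of_two_subGaussian} then yields a sub-exponential parameter $(\nu,\alpha)$ with $\nu\vee\alpha \lesssim \sqrt{b}\sigma_x^2\sigma_y$, and Bernstein's inequality (Lemma~\ref{lemma:Bernstein}) gives a $\big(\sqrt{t'/n}+t'/n\big)$-type deviation for a single $v$ with probability $\ge 1-2e^{-t'}$. Finally, upgrade to the operator norm by taking the supremum over a standard $1/4$-net of the unit sphere in $\mathbb{R}^{|S|}$, which has cardinality at most $9^{|S|}$, and then union-bound over $e\in\mathcal{E}$ and over all $S$ with $|S|\le s$, using $\binom{d}{s'}\le (ed/s')^{s'}$ for $s'\le s$. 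Absorbing the resulting $|S|\log 9 + s\log(ed/s) + \log|\mathcal{E}|$ penalty into $t'$ matches exactly the form $\rho(s,t)$, proving $\mathcal{A}_1(s,t)$.

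For $\mathcal{A}_2(s,t)$, fix $e\in\mathcal{E}$ and $S$ with $|S|\le s$, and let $Z_i = (\Sigma_S^{(e)})^{-1/2} X_{S,i}^{(e)}$, which are i.i.d.\ isotropic sub-Gaussian vectors in $\mathbb{R}^{|S|}$ with parameter $\sigma_x$. The object $(\Sigma_S^{(e)})^{-1/2}\hat\Sigma_S^{(e)}(\Sigma_S^{(e)})^{-1/2} - I = \frac{1}{n|\mathcal{E}|}\sum_{i,e}Z_iZ_i^\top - I$ is handled by the standard sub-Gaussian sample-covariance bound (e.g.\ Vershynin's theorem), which delivers operator norm deviation $\lesssim \sigma_x^2(\sqrt{(|S|+t')/n}+(|S|+t')/n)$ with probability $\ge 1-2e^{-t'}$ after a $1/4$-net argument over the unit sphere in $\mathbb{R}^{|S|}$. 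Union-bounding over $e\in\mathcal{E}$ and over all $S\subseteq[d]$ with $|S|\le s$ and absorbing $\log|\mathcal{E}|$ and $s\log(4d/s)$ into $t'$ yields precisely the $\rho(s,t)$ bound claimed for $\mathcal{A}_2(s,t)$.

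I expect the main obstacle to be bookkeeping rather than ideas: one must verify that the sub-Gaussian parameter of the residual $R^{(e,S)}$ is controlled uniformly in $S$ by a factor proportional to $\sqrt{b}\sigma_x\sigma_y$ (this is where Condition~\ref{cond:regularity}(d) enters, via $\|\Sigma_S^{(e)}\|\le b\|\Sigma_S\|$ and the universal bound $\|\Sigma^{1/2}\beta^{(S)}\|_2\le\sigma_y$), and that the combinatorial $\log\binom{d}{s}+|S|\log 9$ term together with the environment union bound $\log|\mathcal{E}|$ collapse exactly into the definition of $\rho(s,t)$ up to absolute constants absorbed into $C_1,C_2$. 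Once those two ingredients are in place, the rest is a direct application of Bernstein and the standard covering-plus-union-bound template.
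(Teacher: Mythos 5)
Your proposal is correct and follows essentially the same path as the paper's proof: reduce the operator/vector norm to linear functionals via a $1/4$-net of cardinality $\le 9^{|S|}$, observe that each linear functional is a sample average of products (respectively squares) of sub-Gaussians with parameters $\sigma_x$ and $\lesssim\sqrt{b}\,\sigma_x\sigma_y$, apply Bernstein, and union-bound over the net, the subsets $S$ with $|S|\le s$, and the environments, which is exactly what the numerator of $\rho(s,t)$ is built to absorb. Two cosmetic points: in $\mathcal{A}_2$ the environment $e$ is fixed, so the empirical covariance is $\frac{1}{n}\sum_i Z_iZ_i^\top$ rather than $\frac{1}{n|\mathcal{E}|}\sum_{i,e}Z_iZ_i^\top$ (your stated rate already reflects this, so it is only a slip of notation); and the bound $\|\Sigma^{1/2}\beta^{(S)}\|_2\le\sigma_y$ is not literally Lemma~\ref{lemma:shrinkage} (which concerns $\beta^{k,\gamma}$), though it follows by the same explained-variance argument, which the paper carries out inline in the derivation of the sub-Gaussian parameter of $R^{(e,S)}$.
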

\begin{proof}[Proof of \cref{lemma:highprob1}] See \cref{sec:proof:lemma:highprob1}.
\end{proof}

\begin{lemma}
\label{lemma:highprob2}
Suppose \cref{cond:regularity} hold. Then there exists some universal constants $C_1, C_2>0$ such that the following two events
\begin{align}
\label{eq:event2}
\begin{split}
\mathcal{A}_3(s, t) &= \Bigg\{ \forall S\subseteq[d] \text{~with~} |S| \le s,  \\
&~~~~~~~~~~ \left\| (\Sigma_S)^{-1/2} \frac{1}{|\mathcal{E}|} \sum_{e\in \mathcal{E}} (\hat{\mathbb{E}}[U^{(e,S)}] - \mathbb{E}[U^{(e,S)}]) \right\|_2 \le C_1  \sigma_x^2 \sigma_y \left(\sqrt{b\zeta(s, t)} + b\zeta(s,t)\right) \Bigg\}  \\
\mathcal{A}_4(s, t) &= \Bigg\{ \forall S\subseteq[d] \text{~with~} |S| \le s, ~~ \left\| \Sigma_S^{-1/2} \hat{\Sigma}_S (\Sigma_S)^{-1/2} - I \right\|_2 \le C_2 \sigma_x^2 \left(\sqrt{b\zeta(s, t)} + b\zeta(s,t) \right) \Bigg\} \\
\end{split}
\end{align} occurs with probability at least $1-e^{-t}$.
\end{lemma}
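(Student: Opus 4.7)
The proof will parallel that of \cref{lemma:highprob1}, with two modifications that together account for the differences in the bound. First, instead of concentrating per-environment using $n$ samples, I pool all $n|\mathcal{E}|$ observations, which replaces the per-environment rate $\rho(s,t)$ by the pooled rate $\zeta(s,t) = \rho(s,t)/|\mathcal{E}|$ (up to the $\log|\mathcal{E}|$ term, which is unneeded since we no longer take a union bound over environments). Second, the covariate rescaled by the pooled covariance, $(\Sigma_S)^{-1/2} X_S^{(e)}$, is sub-Gaussian with parameter $\sqrt{b}\,\sigma_x$ rather than $\sigma_x$, because under \cref{cond:regularity}(d) the operator norm $\|(\Sigma_S)^{-1/2}(\Sigma_S^{(e)})^{1/2}\|_{\mathrm{op}} \le \sqrt{b}$. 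This is the source of the extra $\sqrt{b}$ (respectively $b$) factors in the bounds for $\mathcal{A}_3$ and $\mathcal{A}_4$.

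For $\mathcal{A}_3(s,t)$, I would fix a subset $S$ with $|S|\le s$ and an $\epsilon$-net $\mathcal{N}_\epsilon$ on the unit sphere of $\mathbb{R}^{|S|}$ with $|\mathcal{N}_\epsilon|\le (3/\epsilon)^s$. For each unit vector $v\in\mathcal{N}_\epsilon$, the scalar $v^\top (\Sigma_S)^{-1/2}\bigl(\tfrac{1}{|\mathcal{E}|}\sum_e \hat{\mathbb{E}}[U^{(e,S)}]-\mathbb{E}[U^{(e,S)}]\bigr)$ is an average of $n|\mathcal{E}|$ independent mean-zero random variables, each of the form $\tfrac{1}{n|\mathcal{E}|}R_i^{(e,S)} \cdot v^\top(\Sigma_S)^{-1/2} X_{i,S}^{(e)}$. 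The factor $R^{(e,S)} = Y^{(e)} - (\beta^{(S)})^\top X^{(e)}$ is sub-Gaussian with parameter $O(\sigma_y)$ by \cref{cond:regularity}(c) and the bound on $\|\Sigma^{1/2}\beta^{(S)}\|_2$ analogous to \cref{lemma:shrinkage}; the second factor is sub-Gaussian with parameter $\sqrt{b}\,\sigma_x\,\|v\|_2 = \sqrt{b}\,\sigma_x$. By \cref{lemma:product_of_two_subGaussian} their product is sub-exponential with parameter $(C\sqrt{b}\sigma_x\sigma_y, C\sqrt{b}\sigma_x\sigma_y)$, so \cref{lemma:Bernstein} yields a deviation bound of order $\sqrt{b}\sigma_x\sigma_y\bigl(\sqrt{t'/(n|\mathcal{E}|)} + t'/(n|\mathcal{E}|)\bigr)$. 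Taking a union bound over $\mathcal{N}_\epsilon$ and over the $\binom{d}{s}\le(ed/s)^s$ choices of $S$ with $t' = t + s\log(3/\epsilon) + s\log(ed/s)$ and a standard net-to-sphere conversion with $\epsilon=1/4$ gives the stated bound.

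For $\mathcal{A}_4(s,t)$ the argument is identical but with the scalar residual $R^{(e,S)}$ replaced by a second rescaled coordinate $v'^\top(\Sigma_S)^{-1/2}X_{i,S}^{(e)}$, so both factors are sub-Gaussian with parameter $\sqrt{b}\,\sigma_x$ and the variance per sample of $v^\top[(\Sigma_S)^{-1/2}\hat\Sigma_S(\Sigma_S)^{-1/2} - I]v'$ scales as $b\sigma_x^2$; this yields the bound $C_2\sigma_x^2(\sqrt{b\zeta(s,t)} + b\zeta(s,t))$ after the same net-discretization (here symmetrized so that operator norm is bounded by twice the supremum of $|v^\top(\cdot)v|$ over the net).

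The main technical care is in the variance bookkeeping: by sub-Gaussianity, the per-sample variance of $v^\top(\Sigma_S)^{-1/2}U^{(e,S)}$ is at most $\mathbb{E}[(R^{(e,S)})^2]\cdot\mathbb{E}[(v^\top(\Sigma_S)^{-1/2}X_S^{(e)})^2] \lesssim \sigma_y^2 \cdot b\sigma_x^2$, so the Bernstein ``sub-Gaussian'' contribution scales as $\sigma_x\sigma_y\sqrt{b\zeta(s,t)}$ rather than $b\sigma_x\sigma_y\sqrt{\zeta(s,t)}$, exactly matching the target; the sub-exponential ``linear'' tail contributes the additive $b\zeta(s,t)$ term. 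No genuinely new obstacles arise beyond those already handled in \cref{lemma:highprob1}; the only delicate point is making sure that the transition from per-environment to pooled normalization is paid for by exactly one factor of $\sqrt{b}$ in each of the two factors of the product, not more.
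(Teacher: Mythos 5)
The high-level scaffolding of your argument (pool all $n|\mathcal{E}|$ observations, discretize over a net, apply Bernstein) is the same as the paper's, and you are right that the $\log|\mathcal{E}|$ term disappears because no union over environments is taken. But the sub-Gaussian bookkeeping has a genuine gap that, once fixed, shows the argument as written cannot reach the stated bound.

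First, your claim that the residual $R^{(e,S)}=Y^{(e)}-(\beta^{(S)})^\top X^{(e)}$ is sub-Gaussian with parameter $O(\sigma_y)$ is false. By the triangle inequality its parameter is $\sigma_y + \sigma_x\|(\Sigma_S^{(e)})^{1/2}\beta^{(S)}\|_2$, and since $\|(\Sigma_S^{(e)})^{1/2}\beta^{(S)}\|_2\le\|(\Sigma_S^{(e)})^{1/2}\Sigma_S^{-1/2}\|_{\mathrm{op}}\|\Sigma_S^{1/2}\beta^{(S)}\|_2\le\sqrt{b}\,\sigma_y$ (using \cref{lemma:shrinkage}), the correct parameter is $\sigma_y(1+\sqrt{b}\sigma_x)$, which is $\Theta(\sqrt{b}\sigma_x\sigma_y)$, not $\Theta(\sigma_y)$. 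Second — and this is where your two errors fail to cancel — your per-environment bound on the covariate factor, $\|(\Sigma_S)^{-1/2}X_S^{(e)}\|$ sub-Gaussian with parameter $\sqrt{b}\sigma_x$, is a worst-case-over-$e$ bound. Plugging both corrected parameters into Bernstein gives a product sub-exponential parameter $\nu_{e}\propto(\sqrt{b}\sigma_x)\cdot(\sqrt{b}\sigma_x\sigma_y)=b\sigma_x^2\sigma_y$, hence a Gaussian-tail contribution $b\sigma_x^2\sigma_y\sqrt{\zeta(s,t)}$, which is a factor of $\sqrt{b}$ \emph{larger} than the claimed $\sigma_x^2\sigma_y\sqrt{b\zeta(s,t)}$. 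The two looseness factors you introduce (undercounting the residual by $\sqrt{b}\sigma_x$, overcounting the covariate-sum by $\sqrt{b}$) are not in opposition; after correction the argument overshoots.

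The missing idea is that the per-environment covariate parameters $\sigma_{e,S,\ell}:=\big(w^\top(\Sigma_S)^{-1/2}\Sigma_S^{(e)}(\Sigma_S)^{-1/2}w\big)^{1/2}\sigma_x$ satisfy an \emph{exact} averaging identity, $\sum_{e\in\mathcal{E}}\sigma_{e,S,\ell}^2=|\mathcal{E}|\sigma_x^2$, because $\sum_e\Sigma_S^{(e)}=|\mathcal{E}|\Sigma_S$. So while each $\sigma_{e,S,\ell}$ can be as large as $\sqrt{b}\sigma_x$, the \emph{sum of squares} over environments carries no $b$. In Bernstein's inequality the Gaussian tail is governed by $\sqrt{\sum_{e,i}\nu_{e,i}^2}$, which therefore contributes $\sigma_x\cdot\sqrt{n|\mathcal{E}|}\cdot(\sqrt{b}\sigma_x\sigma_y)$ rather than $\sqrt{b}\sigma_x\cdot\sqrt{n|\mathcal{E}|}\cdot(\sqrt{b}\sigma_x\sigma_y)$: the single $\sqrt{b}$ comes only from the residual factor. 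The pointwise worst-case bound $\sqrt{b}\sigma_x$ is still used, but only in the $\max_{e,i}\alpha_{e,i}$ term, giving the additive $b\zeta(s,t)$ piece. The same mechanism is needed for $\mathcal{A}_4$: there $\sum_{e,i}\nu_{e,i}^2\lesssim\sum_{e,i}\sigma_{e,S,\ell}^4\le(\max_e\sigma_{e,S,\ell}^2)\cdot n\sum_e\sigma_{e,S,\ell}^2 = b\sigma_x^2\cdot n|\mathcal{E}|\sigma_x^2$, so the $b$ enters once (and your stated ``variance per sample $b\sigma_x^2$'' is also not what the crude argument would give — the sub-exponential parameter of the product of two $\sqrt{b}\sigma_x$-sub-Gaussians is $b\sigma_x^2$, so the per-sample variance scales like $b^2\sigma_x^4$, again $\sqrt{b}$ too large). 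Without the averaging identity your argument proves a weaker version of the lemma with $\sqrt{b\zeta}$ replaced by $b\sqrt{\zeta}$.
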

\begin{proof}[Proof of \cref{lemma:highprob2}] See \cref{sec:proof:lemma:highprob2}.
\end{proof}

Now we are ready to prove \cref{prop:instance-dependent-v2}.
\begin{proof}[Proof of \cref{prop:instance-dependent-v2}]
The proof proceeds when $\mathcal{A}_1(k, t)$ -- $\mathcal{A}_4(k, t)$ happens and $C \sigma_x^4 \rho(k, t) \le 1$ for some large enough universal constant $C$. In this case we have $\Sigma^{(e)}\succ 0$ and $\rho(k, t) \le \sqrt{\rho(k, t)}$. It follows similar to the proof of \cref{lemma:variational-representation-v2} that $\hat{v}(S) = \min_{u \in \mathbb{R}^{|S|}} \hat{q}_S(a)$ with
\begin{align*}
    \hat{q}_S(a) &= (a - \beta^{(S)}_S)^\top \hat{\Sigma} (a - \beta^{(S)}_S) - 2(a - \beta^{(S)}_S)^\top \left\{\frac{1}{|\mathcal{E}|} \sum_{e\in \mathcal{E}} \hat{\mathbb{E}}[R^{(e,S)} X_S^{(e)}]\right\} \\
     & ~~~~~~~~ + \frac{1}{|\mathcal{E}|} \sum_{e\in \mathcal{E}} \left\|({\hat{\Sigma}}^{(e)})^{-1/2} \hat{\mathbb{E}}\left[X_S^{(e)} R^{(e,S)}\right] \right\|_2^2
\end{align*} provided $\hat{\Sigma}^{(e)}_S \succ 0$ for any $e\in \mathcal{E}$. We can claim that $\hat{q}_S(a)$ can be minimized by 
\begin{align*}
    \hat{a} = \hat{\beta}^{(S)} = \beta_S^{(S)} + (\hat{\Sigma})^{-1} \left\{\frac{1}{|\mathcal{E}|} \sum_{e\in \mathcal{E}} \hat{\mathbb{E}}[R^{(e,S)} X_S^{(e)}]\right\} = \beta_S^{(S)} + (\hat{\Sigma})^{-1} \left\{\frac{1}{|\mathcal{E}|} \sum_{e\in \mathcal{E}} \hat{\mathbb{E}}[U^{(e,S)}] \right\},
\end{align*} substituting it into $\hat{q}_S$, we obtain
\begin{align*}
    \hat{v}(S) = \hat{q}_S(\hat{a}) &= \frac{1}{|\mathcal{E}|} \sum_{e\in \mathcal{E}} \hat{\mathbb{E}}\left[U^{(e,S)}\right]^\top ({\hat{\Sigma}}^{(e)})^{-1} \hat{\mathbb{E}}\left[U^{(e,S)}\right] \\
    & ~~~~~~~~ - \left\{\frac{1}{|\mathcal{E}|} \sum_{e\in \mathcal{E}} \hat{\mathbb{E}}[U^{(e,S)}]\right\} (\hat{\Sigma}_S)^{-1}\left\{\frac{1}{|\mathcal{E}|} \sum_{e\in \mathcal{E}} \hat{\mathbb{E}}[U^{(e,S)}]\right\}   = \hat{\mathsf{T}}_1 + \hat{\mathsf{T}}_2.
\end{align*}

For $\hat{\mathsf{T}}_1$, we do the following decomposition,
\begin{align*}
    \hat{\mathsf{T}}_1 &= \frac{1}{|\mathcal{E}|} \sum_{e\in \mathcal{E}} \left\{(\Sigma^{(e)}_S)^{-1/2}\left(\hat{\mathbb{E}}[U^{(e,S)}] - \mathbb{E}[U^{(e,S)}] + \mathbb{E}[U^{(e,S)}] \right)\right\}^\top ((\Sigma^{(e)}_S)^{-1/2}\hat{\Sigma}^{(e)}_S(\Sigma^{(e)}_S)^{-1/2})^{-1} \\
    &~~~~~~~~~~~~~~~~~~~~ \times \left\{(\Sigma^{(e)}_S)^{-1/2}\left(\hat{\mathbb{E}}[U^{(e,S)}] - \mathbb{E}[U^{(e,S)}] + \mathbb{E}[U^{(e,S)}] \right)\right\} 
\end{align*}
We let $\Delta_1^{(e)} = (\Sigma^{(e)}_S)^{-1/2}(\hat{\mathbb{E}}[U^{(e,S)}] - \mathbb{E}[U^{(e,S)}]) \in \mathbb{R}^{|S|}$, and $\Delta_2^{(e)} = (\Sigma^{(e)}_S)^{-1/2}\hat{\Sigma}^{(e)}_S(\Sigma^{(e)}_S)^{-1/2} - I$ satisfying $\|\Delta_2^{(e)}\| \le 0.5$ by our assumption on $n$. Then it follows from Weyl's theorem that
\begin{align*}
    \lambda_{\min} \left((\Delta_2^{(e)} + I)^{-1}\right) &\ge \frac{1}{\lambda_{\max}\left(\Delta_2^{(e)} + I\right)} \ge \frac{1}{1 + \|\Delta_2^{(e)}\|_2} \ge 1 - 2\|\Delta_2^{(e)}\|_2 \\
    \lambda_{\max} \left((\Delta_2^{(e)} + I)^{-1}\right) &\le \frac{1}{\lambda_{\min}(\Delta_2^{(e)} + I)} \le \frac{1}{1 - \|\Delta_2^{(e)}\|_2} \le 1+2\|\Delta_2^{(e)}\|_2,
\end{align*} where the last inequalities follows from the fact that $1/(1-x) \le 1+2x$ and $1/(1+x) \ge 1-2x$ when $x\in [0,0.5]$. We thus have
\begin{align}
\label{eq:bound-delta-2}
    \left\|(\Delta_2^{(e)} + I)^{-1} - I\right\|_2 \le 2\|\Delta_2\|_2 \qquad \text{and} \qquad \left\|(\Delta_2^{(e)} + I)^{-1}\right\|_2 \le 1.5
\end{align}
Therefore, it follows from the triangle inequality and Cauchy-Schwarz inequality that
\begin{align*}
    \left| \hat{\mathsf{T}}_1 - v(S) \right| &= \Bigg|\frac{1}{|\mathcal{E}|} \sum_{e\in \mathcal{E}} \left\{ \Delta_1^{(e)} +  (\Sigma^{(e)}_S)^{-1/2} \mathbb{E}[U^{(e,S)}] \right\}^\top (\Delta_2 + I)^{-1} \\
    &~~~~~~~~~~~~~~~~~~\times \left\{\Delta_1^{(e)} + (\Sigma^{(e)}_S)^{-1/2} \mathbb{E}[U^{(e,S)}] \right\} - v(S)\Bigg|\\
    &\le \frac{1}{|\mathcal{E}|} \sum_{e\in \mathcal{E}} 2 \|\Delta_1^{(e)}\|_2 \left\|(\Delta_2 + I)^{-1}\right\|_2 \left\|(\Sigma^{(e)}_S)^{-1/2} \mathbb{E}[U^{(e,S)}]\right\|_2 \\
    &~~~~~~~~~~ + \frac{1}{|\mathcal{E}|} \sum_{e\in \mathcal{E}} \|\Delta_1^{(e)}\|_2^2 \left\|(\Delta_2 + I)^{-1}\right\|_2 \\
    &~~~~~~~~~~ + \frac{1}{|\mathcal{E}|} \sum_{e\in \mathcal{E}}  \left\|(\Delta_2+I)^{-1} - I \right\|_2 \left\| (\Sigma^{(e)}_S)^{-1/2} \mathbb{E}[U^{(e,S)}]\right\|_2^2 \\
    &\overset{(a)}{\le} 3 \sqrt{\frac{1}{|\mathcal{E}|} \sum_{e\in \mathcal{E}} \left\|(\Sigma^{(e)}_S)^{-1/2} \mathbb{E}[U^{(e,S)}]\right\|_2^2} \sqrt{\frac{1}{|\mathcal{E}|} \sum_{e\in \mathcal{E}} \|\Delta_1^{(e)}\|_2^2} + 1.5 \frac{1}{|\mathcal{E}|} \sum_{e\in \mathcal{E}} \|\Delta_1^{(e)}\|_2^2  \\
    &~~~~~~~~~~ + \left(\sup_{e\in \mathcal{E}}  2 \|\Delta_2^{(e)}\|_2^2 \right) \frac{1}{|\mathcal{E}|} \sum_{e\in \mathcal{E}} \left\| (\Sigma^{(e)}_S)^{-1/2} \mathbb{E}[U^{(e,S)}]\right\|_2^2 \\
    &\overset{(b)}{\le} 3 \sqrt{v(S) \cdot C_1^2 \sigma_x^4\sigma_y^2 b \rho(k, t)} + 1.5 C_1^2 \sigma_x^4 \sigma_y^2 b \rho(k, t) + 2C_2 \sigma_x^2 \sqrt{\rho(k,t)} \cdot v(S) \\
    &\overset{(c)}{\le} 4 \sqrt{v(S)} \cdot \sqrt{(C_1^2 + C_2^2) \sigma_x^4 \sigma_y^2 b \cdot \rho(k, t)} + 1.5 C_1^2 \sigma_x^4 \sigma_y^2 b\rho(k, t)
\end{align*} where $(a)$ follows from the inequalities  \eqref{eq:bound-delta-2} and Cauchy-Schwarz inequality, $(b)$ follows from \eqref{eq:event1}, and $(c)$ follows from the fact
\begin{align*}
\|(\Sigma_S^{(e)})^{-1/2} \mathbb{E}[U^{(e,S)}]\|_2 &\le \|(\Sigma_S^{(e)})^{-1/2} \mathbb{E}[X^{(e)} Y^{(e)}] \|_2 + \|(\Sigma_S^{(e)})^{1/2} \beta^{(S)}\|_2 \\
&\overset{(d)}{\le} \sqrt{(\mathbb{E}[X_S^{(e)} Y^{(e)}])^\top (\Sigma_S^{(e)})^{-1} (\mathbb{E}[X_S^{(e)} Y^{(e)}])} \\
&~~~~~~~~ + \|(\Sigma_S^{(e)})^{1/2} \Sigma_S^{-1/2}\|_2 \|\Sigma_S^{1/2} \beta^{(S)}\|_2 \\
&\le \sigma_y + \sqrt{b} \sigma_y \le 2\sqrt{b} \sigma_y,
\end{align*} which further implies that
\begin{align*}
    v(S) = \sqrt{\frac{1}{|\mathcal{E}|} \|(\Sigma_S^{(e)})^{-1/2} \mathbb{E}[U^{(e,S)}]\|_2^2} \cdot \sqrt{v(S)} \le \sqrt{4b \sigma_y^2} \cdot \sqrt{v(S)}.
\end{align*}
Here $(d)$ follows from the fact that the covariance matrix of $[X^{(e)}_S, Y^{(e)}]$ are positive semi-definite thus the Schur complement satisfies
\begin{align*}
    \sigma_y^2 - (\mathbb{E}[X_S^{(e)} Y^{(e)}])^\top (\Sigma_S^{(e)})^{-1} (\mathbb{E}[X_S^{(e)} Y^{(e)}]) \ge 0, 
\end{align*} and a similar argument to the covariance matrix of the mixture distribution $[X_S, Y] \sim \frac{1}{|\mathcal{E}|} \sum_{e\in \mathcal{E}} \mu_{(x_S, y)}^{(e)}$.

For $\hat{\mathsf{T}}_2$, observe that $\frac{1}{|\mathcal{E}|} \sum_{e\in \mathcal{E}} \mathbb{E}[X_S^{(e)} R^{(e,S)}] = 0$, then following \eqref{eq:event2}, \eqref{eq:bound-delta-2} and the fact that $b\sigma_x^4\zeta(k, t)\le \sigma_x^4\rho(k, t) \le 1$ since $b\le|\mathcal{E}|$ by \cref{cond:regularity},
\begin{align*}
    |\hat{\mathsf{T}}_2| &\le \left\| (\Sigma_S)^{-1/2} \frac{1}{|\mathcal{E}|} \sum_{e\in \mathcal{E}} (\hat{\mathbb{E}}[U^{(e,S)}] - \mathbb{E}[U^{(e,S)}]) \right\|_2^2 \cdot \left\|(\Sigma^{-1/2}_S \hat{\Sigma}_S \Sigma^{-1/2}_S)^{-1}\right\|_2 \\
    &\le 1.5  (C_2^2\sigma_x^2\sigma_y)^2\left( \sqrt{b\zeta(s, t)} + b\zeta(s,t) \right)^2 \le C_3 \sigma_x^4\sigma_y^2 \rho(k, t).
\end{align*}

Putting all the pieces together, we can conclude that
\begin{align*}
     \left| \hat{\mathsf{T}}_1 + \hat{\mathsf{T}}_2 - v(S) \right| &\le \left| \hat{\mathsf{T}}_1 - v(S) \right| + \left|\hat{\mathsf{T}}_2 \right| \\
     &\le C_4 \left(b\sigma_x^4 \sigma_y^2\rho(k, t) + \sqrt{v(S)} \sqrt{b\sigma_x^4 \sigma_y^2\rho(k, t)}\right).
\end{align*} This completes the proof.
\end{proof}

\subsection{Proof of \cref{prop:sc}}

We first establish the existence and uniqueness of $\beta^{k,\gamma}$. The existence of an optimal solution follows from the fact that $\mathsf{Q}_{k,\gamma}(\beta)$ is continuous in $\mathbb{R}^d$, and its optimal solution can be attained on the closed set $F=\{\beta: \|\beta - \bar{\beta}\|_2 \le (1/\lambda_{\min} (\Sigma))^{1/2} \bar{\beta}^\top \Sigma \bar{\beta}\}$ given 
\begin{align*}
    2\mathsf{Q}_{k,\gamma}(\beta) \ge (\beta - \bar{\beta})^\top \Sigma (\beta - \bar{\beta}) \ge \bar{\beta}^\top \Sigma \bar{\beta} = 2\mathsf{Q}_{k,\gamma}(0) \qquad \forall ~ \beta \in F^c.
\end{align*}
The uniqueness will be established using the proof-by-contradiction argument. Let $\beta'$ and $\beta^\dagger$ be two optimal solutions with $\beta' \neq \beta^\dagger$, then
\begin{align*}
    \mathsf{Q}_{k,\gamma}\left(\frac{\beta'+\beta^\dagger}{2} \right) &= \mathsf{R}\left(\frac{\beta'+\beta^\dagger}{2} \right) + \sum_{j=1}^d \gamma w_k(j) \left|\frac{\beta'_j+\beta^\dagger_j}{2} \right| \\
    &\overset{(a)}{<} \frac{1}{2} \left\{\mathsf{R}(\beta') + \mathsf{R}(\beta^\dagger)\right\} + \frac{1}{2} \left\{\sum_{j=1}^d \gamma w_k(j) (|\beta'_j|+|\beta^\dagger_j|)\right\} \\
    &{\le} \frac{1}{2} \mathsf{Q}_{k,\gamma}\left(\beta' \right) + \mathsf{Q}_{k,\gamma}\left(\beta^\dagger \right).
\end{align*} Here (a) follows from the fact that $\mathsf{R}(\beta)$ is quadratic function with positive eigenvalues and hence is further strongly convex. This is contrary to the fact that $\beta'$ and $\beta^\dagger$ are optimal solutions. 

Finally, we show the loss is strong convex with respect to $\beta^{k,\gamma}$. Let $S^{k,\gamma} = \supp(\beta^{k,\gamma})$.
Observe that
\begin{align*}
    \mathsf{R}(\beta) - \mathsf{R}(\beta^{k,\gamma}) &= (\beta - \bar{\beta})^\top \Sigma (\beta - \bar{\beta}) - (\beta^{k,\gamma} - \bar{\beta})^\top \Sigma (\beta^\gamma - \bar{\beta}) \\
    &= \frac{1}{2} (\beta - \beta^{k,\gamma})^\top \Sigma (\beta - \beta^{k,\gamma}) - (\beta - \beta^{k,\gamma})^\top \Sigma (\bar{\beta} - \beta^{k,\gamma}).
\end{align*} Putting these pieces together, we obtain
\begin{align*}
    \mathsf{Q}_{k,\gamma}(\beta) - \mathsf{Q}_{k,\gamma}(\beta^{k,\gamma}) &\overset{(a)}{=} \frac{1}{2} (\beta - \beta^{k,\gamma})^\top \Sigma (\beta - \beta^{k,\gamma}) - (\beta - \beta^{k,\gamma})^\top \Sigma (\bar{\beta} - \beta^{k,\gamma}) \\
    & ~~~~~~~~ + \gamma \sum_{j=1}^d v_j \left(|\beta_j| - |\beta^{k,\gamma}_j| \right) \\
    &\overset{(b)}{=} \frac{1}{2} (\beta - \beta^{k,\gamma})^\top \Sigma (\beta - \beta^{k,\gamma}) \\
    &~~~~~~~~ + \gamma \sum_{j=1}^d v_j \left\{-(\beta_j - \beta^{k,\gamma}_j) \xi_j + |\beta_j| - |\beta_j^{k,\gamma}|\right\} \\
    &= \frac{1}{2} (\beta - \beta^{k,\gamma})^\top \Sigma (\beta - \beta^{k,\gamma}) + \gamma \sum_{j=1}^d v_j \left\{|\beta_j| - \xi_j \beta_j\right\} \\
    &\overset{(c)}{\ge} \frac{1}{2} \|\Sigma^{1/2}(\beta - \beta^{k,\gamma})\|_2^2
\end{align*} Here $(a)$ follows from the calculation of $\mathsf{R}(\beta) - \mathsf{R}(\beta^{k,\gamma})$, $(b)$ follows from the KKT condition \eqref{eq:kkt}, $(c)$ follows from the fact that $\xi_j\in [-1,1]$. This completes the proof.

\subsection{Proof of \cref{lemma:variational-representation-v2}}

    It follows from the identity $\beta^{(e,S)}_S = (\Sigma_S^{(e)})^{-1} \mathbb{E}[Y^{(e)} X_S^{(e)}]$ that
    \begin{align*}
    v(S) &= \frac{1}{|\mathcal{E}|} \sum_{e\in \mathcal{E}} (\beta_S^{(e,S)} - \beta_S^{(S)})^\top \Sigma_S^{(e)} (\beta_S^{(e,S)} - \beta_S^{(S)}) \\
    &= \frac{1}{|\mathcal{E}|} \sum_{e\in \mathcal{E}} \mathbb{E}[Y^{(e)} X_S^{(e)}]^\top (\Sigma_S^{(e)})^{-1} \mathbb{E}[Y^{(e)} X_S^{(e)}] - \mathbb{E}[Y^{(e)}X_S^{(e)}]^\top \beta_S^{(S)} + \beta_S^{(S)} \Sigma_S^{(e)} \beta_S^{(S)} \\
    &= \frac{1}{|\mathcal{E}|} \sum_{e\in \mathcal{E}} \left\|(\Sigma_S^{(e)})^{-1/2} \left(\mathbb{E}[Y^{(e)} X_S^{(e)}] - \Sigma^{(e)}_S \beta_S^{(S)} \right) \right\|_2^2.
    \end{align*}
    At the same time, for any $a\in \mathbb{R}^{|S|}$, plugging $Y^{(e)} = (\beta_S^{(S)})^\top X_S^{(e)} + R^{(e,S)}$ gives
    \begin{align*}
    q_S(a) = & \frac{1}{|\mathcal{E}|} \sum_{e\in \mathcal{E}} \left\|(\Sigma_S^{(e)})^{-1/2} \left(\mathbb{E}[Y^{(e)} X_S^{(e)}] - \Sigma^{(e)}_S a \right) \right\|_2^2 \\
    = &  \frac{1}{|\mathcal{E}|} \sum_{e\in \mathcal{E}} \left\|(\Sigma_S^{(e)})^{-1/2} \left(\mathbb{E}[ R^{(e,S)} X_S^{(e)}] - \Sigma^{(e)}_S (a - \beta^{(S)}_S) \right) \right\|_2^2 \\
    = & (a - \beta^{(S)}_S)^\top \Sigma (a - \beta^{(S)}_S) - 2(a - \beta^{(S)}_S)^\top \left\{\frac{1}{|\mathcal{E}|} \sum_{e\in \mathcal{E}} \mathbb{E}[R^{(e,S)} X_S^{(e)}]\right\} \\
     & ~~~~~~~~ + \frac{1}{|\mathcal{E}|} \sum_{e\in \mathcal{E}} \left\|({\Sigma}^{(e)})^{-1/2} {\mathbb{E}}\left[X_S^{(e)} R^{(e,S)}\right] \right\|_2^2.
    \end{align*} It follows from the definition of $R^{(e,S)}$ and the definition of $\beta^{(S)}$ that
    \begin{align*}
        \frac{1}{|\mathcal{E}|} \sum_{e\in \mathcal{E}} \mathbb{E}[X_S^{(e)} R^{(e,S)}] &= \frac{1}{|\mathcal{E}|} \sum_{e\in \mathcal{E}} \mathbb{E}[X_S^{(e)} (Y^{(e)} - (\beta^{(S)}_S)^\top X_S^{(e)})] \\
        &= \frac{1}{|\mathcal{E}|} \sum_{e\in \mathcal{E}} \mathbb{E}[X_S^{(e)} Y^{(e)}] - \Sigma_S \beta^{(S)}_S = 0.
    \end{align*} This verifies \eqref{eq:v2:first-order-cond}. Therefore $a^\star = \beta^{(S)}_S$ attains the global minima of $q_S(a)$,  this verifies \eqref{eq:variational-v2s} and \eqref{eq:v2-in-r}.

\subsection{Proof of \cref{lemma:highprob1}}
\label{sec:proof:lemma:highprob1}

\noindent {\it High probability error bound in $\mathcal{A}_1(k, t)$. } For any $S\subseteq [d]$ with $|S|\le s$, let $w^{(S,1)}, \ldots, w^{(S,N_S)}$ be an $1/4-$covering of unit ball $\mathcal{B}_S = \{x \in \mathbb{R}^d: x_{S^c} = 0, \|x\|_2 \le 1\}$, that is, for any $w \in \mathcal{B}_S$, there exists some $\pi(w) \in [N_S]$ such that
\begin{align}
\label{eq:proof-cover1}
	\|w - w^{(S,\pi(w))}\|_2 \le 1/4.
\end{align}
	It follows from standard empirical process result that $N_S \le 9^{|S|}$, then 
\begin{align}
\label{eq:calculate-cover-number}
\begin{split}
	N = \sum_{|S| \le s} N_S \le \sum_{|S|\le s} 9^{|S|} &\le \sum_{i=0}^s 9^{i} \binom{d}{i} \\
            &\le \left(\frac{9d}{s}\right)^{s} \sum_{i=0}^s \left(\frac{s}{d}\right)^i \binom{d}{i} \le \left(\frac{9d}{s}\right)^{s} \sum_{i=0}^d \left(\frac{s}{d}\right)^i \binom{d}{i} \\
            & = \left(\frac{9d}{s}\right)^{s} \left(1 + \frac{s}{d}\right)^d \le \left(\frac{9 \times 4 d}{s}\right)^{s}.
\end{split}
\end{align} 

At the same time, for fixed $e$ and $S$, denote $\xi = (\Sigma^{(e)}_S)^{-1/2} (\hat{\mathbb{E}}[U^{(e,S)}] - \mathbb{E}[U^{(e,S)}])$. It follows from the variational representation of the $\ell_2$ norm that
\begin{align*}
\|\xi\|_2 &= \sup_{w \in \mathcal{B}_S} w_S^\top \xi \le \sup_{\ell \in [N_S]} (w_S^{(S,\ell)})^\top \xi + \sup_{w \in \mathcal{B}_S} (w_S - w_S^{(S,\pi(w))})^\top \xi \le \sup_{\ell \in [N_S]} (w_S^{(S,\ell)})^\top \xi + \frac{1}{4}\|\xi\|_2,
\end{align*} where the last inequality follows from the Cauchy-Schwarz inequality and our construction of covering in \eqref{eq:proof-cover1}. This implies $\|\xi\|_2 \le 2 \sup_{\ell \in [N_S]} (w_S^{(S,\ell)})^\top \xi$, thus
\begin{align}
\label{eq:proof-sup1}
\begin{split}
	&\sup_{e\in \mathcal{E}} \sup_{|S| \le s} \left\|(\Sigma^{(e)}_S)^{-1/2} (\hat{\mathbb{E}}[U^{(e,S)}] - \mathbb{E}[U^{(e,S)}])\right\|_2  \\
	&~~~~~~ \le 2 \sup_{e\in \mathcal{E}, |S|\le s, \ell\in [N_S]} \underbrace{(w_S^{(S,\ell)})^\top (\Sigma^{(e)}_S)^{-1/2} \frac{1}{n} \sum_{i=1}^n \left(X_{i,S}^{(e)} R_i^{(e,S)} - \mathbb{E}[X_{S}^{(e)} R^{(e,S)}] \right)}_{Z_1(e, S,\ell)}. 
\end{split}
\end{align} 
Note for fixed $e,S$ and $\ell$, $Z_1(e, S,\ell)$ is the recentered average of independent random variables, each of which is the product of two sub-Gaussian variables. By \cref{cond:regularity}, $(w_S^{(S,\ell)})^\top (\Sigma^{(e)}_S)^{-1/2} X_{S}^{(e)}$ has sub-Gaussian parameter at most $\sigma_x$, and the sub-Gaussian parameter of
$R^{(e,S)} := Y^{(e)} - (\beta^{(S)})^\top X^{(e)}$ is no more than 
\begin{align}
\label{eq:subgaussian-of-R}
    \begin{split}
        \sigma_y+\sigma_x\left\|(\Sigma_S^{(e)})^{1/2}\beta^{(S)}\right\|_2
        &\overset{(a)}{\le}\sigma_y+\sigma_x\left\|(\Sigma^{(e)}_S)^{1/2} \Sigma_S^{-1/2} \right\|_2 \left\|\Sigma_S^{-1/2} \left(\frac{1}{|\mathcal{E}|} \sum_{e\in \mathcal{E}} \mathbb{E}[X_S^{(e)} Y^{(e)}] \right) \right\|_2 \\
        &\overset{(b)}{\le} \sigma_y+\sigma_x \left\|(\Sigma^{(e)}_S)^{1/2} \Sigma_S^{-1/2} \right\|_2 \sqrt{\frac{1}{|\mathcal{E}|} \sum_{e\in \mathcal{E}}\mathbb{E}[(Y^{(e)})^2]}\\
        &\overset{(c)}{\le} \sigma_y+\sigma_x \sqrt{b} \sigma_y.
    \end{split}
\end{align}
Here $(a)$ follows from the property of the operator norm and the definition of $\beta^{(S)}$; $(b)$ follows from the Cauchy-Schwarz inequality; and $(c)$ follows from \cref{cond:regularity}. Therefore, $(w_S^{(S,\ell)})^\top (\Sigma^{(e)}_S)^{-1/2}X_{S}^{(e)} R^{(e,S)}$ is the product of two sub-Gaussian variables with parameter no more than $\sigma_x$ and $\sigma_y+\sigma_x \sqrt{b} \sigma_y$.
Then it follows from the tail bound for sub-exponential random variable that
\begin{align*}
    \forall e\in \mathcal{E}, |S|\le s, \ell\in [N_S], \quad
    \mathbb{P}\left[|Z_1(e, S,\ell)| \ge C' b^{1/2} \sigma_x^2 \sigma_y \left(\frac{u}{n} + \sqrt{\frac{u}{n}}\right)\right]\le 2e^{-u},\quad \forall u>0.
\end{align*}
Letting $u=t+\log(2N|\mathcal{E}|) \le 6 \left(t + s\log(4d/s) + \log(|\mathcal{E}|)\right)$, we obtain
\begin{align*}
    &\mathbb{P}\left[\sup_{e\in \mathcal{E}, |S|\le s, \ell\in [N_S]} |Z_1(e, S, \ell)| \ge 6C'b^{1/2} \sigma_x^2 \sigma_y \left(\sqrt{\rho(s, t)}+\rho(s, t) \right) \right]\\ 
    &~~~~ \le  N|\mathcal{E}|\times 2e^{-\log(2N|\mathcal{E}|)-t} \le e^{-t}.
\end{align*} Combining with the argument \eqref{eq:proof-sup1} concludes the proof of the claim with $C_1=12C'$.

\noindent {\it High probability error bound in $\mathcal{A}_2(k, t)$. } For any symmetric matrix $Q\in\mathbb{R}^{d\times d}$, it follows from the variational representation of the operator norm that,  
\begin{align*}
    \|Q_S\|_2=\sup_{w \in \mathcal{B}_S}  w_S^\top Q_S w_S &\le \sup_{l\in [N_S]} (w_S^{(S,\ell)})^\top Q_S(w_S^{(S,\ell)})\\
    &\qquad\qquad + \sup_{w \in \mathcal{B}_S} 2  (w_S-w_S^{(S,\pi(w))})^\top Q_Sw_S^{(S,\pi(w))}\\
    &\qquad\qquad+ \sup_{w \in \mathcal{B}_S}(w_S-w_S^{(S,\pi(w))})^\top Q_S(w_S-w_S^{(S,\pi(w))}).\\
    &\le \sup_{l\in [N_S]} (w_S^{(S,\ell)})^\top Q_S(w_S^{(S,\ell)})+\frac{1}{2}\|Q_S\|_2+\frac{1}{16}\|Q_S\|_2,
\end{align*}
which implies $\|Q_S\|_2\le 3\sup_{\ell\in[N_S]}(w_S^{(S,\ell)})^\top Q_S(w_S^{(S,\ell)})$, thus
\begin{align}
\label{eq:proof-sup2}
\begin{split}
    &\sup_{e\in \mathcal{E}} \sup_{|S| \le s} \left\| (\Sigma^{(e)}_S)^{-1/2} (\hat{\Sigma}_S^{(e)}) (\Sigma^{(e)}_S)^{-1/2} - I \right\|_2\\
    &~~~~~~\le 3 \sup_{e\in \mathcal{E}, |S|\le s, \ell\in [N_S]}\underbrace{(w_S^{(S,\ell)})^\top\left[ (\Sigma^{(e)}_S)^{-1/2} (\hat{\Sigma}_S^{(e)}) (\Sigma^{(e)}_S)^{-1/2} - I \right](w_S^{(S,\ell)})}_{Z_2(e,S,\ell)}.
\end{split}
\end{align}
Note for fixed $e,S$ and $\ell$, $Z_2(e,S,\ell)$ is the recentered average of independent random variables, each of wich is the square of a sub-Gaussian variable $(w_S^{(S,\ell)})^\top(\Sigma_S^{(2)})^{-1/2}X_S^{(e)}$ with parameter at most $\sigma_x$, by \cref{cond:regularity}. Then it follows from the tail bound for exponential random variable that
\begin{align*}
    \forall e\in \mathcal{E}, |S|\le s, \ell\in [N_S], \quad
    \mathbb{P}\left[|Z_2(e, S,\ell)| \ge C'  \sigma_x^2 \left(\frac{u}{n} + \sqrt{\frac{u}{n}}\right)\right]\le 2e^{-u},\quad \forall u>0.
\end{align*}
Letting $u=t+\log(2N|\mathcal{E}|) \le 6 \left(t + s\log(4d/s) + \log(|\mathcal{E}|)\right)$, we obtain
\begin{align*}
    \mathbb{P}\left[\sup_{e\in \mathcal{E}, |S|\le s, \ell\in [N_S]} |Z_2(e, S, \ell)| \ge 6C'  \sigma_x^2 \left(\sqrt{\rho(s, t)}+\rho(s, t) \right) \right] \le  N|\mathcal{E}|\times 2e^{-\log(2N|\mathcal{E}|)-t} \le e^{-t}.
\end{align*} 
Combining with the argument \eqref{eq:proof-sup2} concludes the proof of the claim with $C_2=18C'$.

\subsection{Proof of \cref{lemma:highprob2}}
\label{sec:proof:lemma:highprob2}

\noindent {\it High probability error bound in $\mathcal{A}_3(k, t)$. } The proof idea is almost identical to \cref{lemma:highprob1}. For any $S\subseteq [d]$ with $|S|\le s$, let $w^{(S)}_1, \ldots, w^{(S)}_{N_S}$ be an $1/4-$covering of unit ball $\mathcal{B}_S = \{x \in \mathbb{R}^d: x_{S^c} = 0, \|x\|_2 \le 1\}$. Recall that in \cref{lemma:highprob1} we obtain $\|\xi\|_2 \le 2 \sup_{\ell \in [N_S]} (w_S^{(S,\ell)})^\top \xi$ by the variational representation of $\ell_2$ norm. This immediately yields,
\begin{align}
\label{eq:proof-sup3}
\begin{split}
	&\sup_{|S| \le s} \left\| (\Sigma_S)^{-1/2} \frac{1}{|\mathcal{E}|} \sum_{e\in \mathcal{E}} (\hat{\mathbb{E}}[U^{(e,S)}] - \mathbb{E}[U^{(e,S)}]) \right\|_2  \\
	&~~~~~~ \le 2 \sup_{|S|\le s, \ell\in [N_S]} \underbrace{(w_S^{(S,\ell)})^\top (\Sigma_S)^{-1/2}\frac{1}{n\cdot\mathcal{E}} \sum_{e\in\mathcal{E}}\sum_{i=1}^n \left(X_{i,S}^{(e)} R_i^{(e,S)} - \mathbb{E}[X_{S}^{(e)} R^{(e,S)}] \right)}_{Z_3(S,\ell)}. 
\end{split}
\end{align} 
Note by \cref{cond:regularity}, for fixed $e,S$ and $\ell$, $(w_S^{(S,\ell)})^\top(\Sigma_S)^{-1/2}X_{S}^{(e)}$ is a sub-Gaussian variable with parameter 
$\sigma_{e,S,\ell}=\left((w_S^{(S,\ell)})^\top(\Sigma_S)^{-1/2}(\Sigma_S^{(e)})(\Sigma_S)^{-1/2}w_S^{(S,\ell)}\right)^{1/2}\sigma_x$,
which satisfies 
\begin{align*}
    \left(\sum_{e\in\mathcal{E}}\sum_{i=1}^n (\sigma_{e,S,\ell})^2\right)^{1/2}
    &=
    \left(n\sum_{e\in\mathcal{E}}(\sigma_{e,S,\ell})^2\right)^{1/2}\\
    &=\left(n\sum_{e\in\mathcal{E}}(w_S^{(S,\ell)})^\top(\Sigma_S)^{-1/2}(\Sigma_S^{(e)})(\Sigma_S)^{-1/2}w_S^{(S,\ell)}\right)^{1/2}\sigma_x\\
    &=\left(n\cdot(w_S^{(S,\ell)})^\top(\Sigma_S)^{-1/2}\left(\sum_{e\in\mathcal{E}}\Sigma_S^{(e)}\right)(\Sigma_S)^{-1/2}w_S^{(S,\ell)}\right)^{1/2}\sigma_x
    \\
    &=(n\cdot|\mathcal{E}|)^{1/2}\sigma_x.
\end{align*}
Also, from \cref{cond:regularity}, we have
\begin{align}
    \label{eq:upper-bound-sigma_esl}
    \begin{split}
        \forall e\in \mathcal{E}, |S|\le s, \ell\in [N_S],\quad
    \sigma_{e,S,\ell}&=\left((w_S^{(S,\ell)})^\top(\Sigma_S)^{-1/2}(\Sigma_S^{(e)})(\Sigma_S)^{-1/2}w_S^{(S,\ell)}\right)^{1/2}\sigma_x\\
    &\le \sqrt{b\cdot(w_S^{(S,\ell)})^\top w_S^{(S,\ell)}}\cdot\sigma_x\\
    &=\sqrt{b}\cdot\sigma_x.
    \end{split}
\end{align}
While for fixed $e$ and $S$, $R^{(e,S)}$ is a sub-Gaussian variable with parameter $\sigma_y(1+\sigma_x\sqrt{b})$, as obtained in \eqref{eq:subgaussian-of-R}. Thus $Z_3(e, S, \ell)$ is the recentered average of independent random variables, each of which is the product of two sub-Gaussian variables with parameters $\sigma_{e,S,\ell}$ and $\sigma_y (1+ \sigma_x \sqrt{b})$.
Then it follows from the tail bound for exponential random variable that
\begin{align*}
    |S|\le s, \ell\in [N_S],\quad\mathbb{P}\left[|Z_3(S,\ell)| \ge C' \sqrt{b} \sigma_x^2 \sigma_y \left(\sqrt{b}\frac{u}{n\cdot |\mathcal{E}|} + \sqrt{\frac{u}{n\cdot |\mathcal{E}|}}\right)\right]\le 2e^{-u},\quad \forall u>0.
\end{align*}
Letting $u=t+\log(2N) \le 6 \left(t + s\log(ed/s)\right)$, we obtain
\begin{align*}
    \mathbb{P}\left[\sup_{|S|\le s, \ell\in [N_S]} |Z_3(S, k)| \ge 6C'  \sigma_x^2 \sigma_y \left(\sqrt{b\zeta(s, t)}+b\zeta(s, t) \right) \right] \le  N\times 2e^{-\log(2N)-t} \le e^{-t}.
\end{align*} Combining with the argument \eqref{eq:proof-sup3} concludes the proof of the claim with $C_1=12C'$.

\noindent {\it High probability error bound in $\mathcal{A}_4(k, t)$. }
Recall that in \cref{lemma:highprob1} we obtain that for any symmetric matrix $Q\in\mathbb{R}^{d\times d}$, $\|Q_S\|_2 \le 3 \sup_{\ell \in [N_S]} (w_S^{(S,\ell)})^\top Q_S w_S^{(S,\ell)}$, by the variational representation of the operator norm. This immediately yields,
\begin{align}
\label{eq:proof-sup4}
\begin{split}
    & \sup_{|S| \le s} \left\| (\Sigma_S)^{-1/2} (\hat{\Sigma}_S) (\Sigma_S)^{-1/2} - I \right\|_2\\
    &\qquad\qquad\qquad\le 3 \sup_{|S|\le s, \ell\in [N_S]}\underbrace{(w_S^{(S,\ell)})^\top\left[ (\Sigma_S)^{-1/2} (\hat{\Sigma}_S) (\Sigma_S)^{-1/2} - I \right](w_S^{(S,\ell)})}_{Z_4(S,\ell)}. 
\end{split}
\end{align}
Note for fixed $S$ and $\ell$, 
\begin{align*}
    Z_4(S,\ell)&=(w_S^{(S,\ell)})^\top\left[ (\Sigma_S)^{-1/2} (\hat{\Sigma}_S) (\Sigma_S)^{-1/2} - I \right](w_S^{(S,\ell)})\\
    &=\frac{1}{n\cdot |\mathcal{E}|}\sum_{e\in \mathcal{E}}\sum_{i=1}^n\left( (w_S^{(S,\ell)})^\top(\Sigma_S)^{-1/2}X_i^{(e)}\right)^2-1,
\end{align*} is the recentered average of independent random variables, each of which is the square of sub-Gaussian variable with parameter $\sigma_{e,S,\ell}=\left((w_S^{(S,\ell)})^\top(\Sigma_S)^{-1/2}(\Sigma_S^{(e)})(\Sigma_S)^{-1/2}w_S^{(S,\ell)}\right)^{1/2}\sigma_x$. We have $\sigma_{e,S,\ell}\le\sqrt{b}\cdot\sigma_x$ as obtained in \eqref{eq:upper-bound-sigma_esl}, and
\begin{align*}
    &\left(\sum_{e\in\mathcal{E}}\sum_{i=1}^n (\sigma_{e,S,\ell})^4\right)^{1/2}\\
    &=
    \left(n\sum_{e\in\mathcal{E}}(\sigma_{e,S,\ell})^4\right)^{1/2}\\
    &\le
    \sqrt{n}\cdot(\max_{e,S,\ell}\sigma_{e,S,\ell})\left(\sum_{e\in\mathcal{E}} \sigma_{e,S,\ell}^2\right)^{1/2}\\
    &\le\sqrt{n}\cdot\sqrt{b}\cdot\sigma_x\left(\sum_{e\in\mathcal{E}} \sigma_{e,S,\ell}^2\right)^{1/2}\\
    &=\sqrt{n}\cdot\sqrt{b}\cdot\sigma_x\left(\sum_{e\in\mathcal{E}}(w_S^{(S,\ell)})^\top(\Sigma_S)^{-1/2}(\Sigma_S^{(e)})(\Sigma_S)^{-1/2}w_S^{(S,\ell)}\right)^{1/2}\sigma_x\\
    &=\sqrt{n}\cdot\sqrt{b}\cdot\sigma_x\left((w_S^{(S,\ell)})^\top(\Sigma_S)^{-1/2}\left(\sum_{e\in\mathcal{E}}\Sigma_S^{(e)}\right)(\Sigma_S)^{-1/2}w_S^{(S,\ell)}\right)^{1/2}\sigma_x
    \\
    &=\sqrt{n}\cdot\sqrt{b}\cdot\sqrt{|\mathcal{E}|}\cdot\sigma_x^2.
\end{align*}
Then it follows from the tail bound for the sub-exponential random variable that
\begin{align*}
    \forall |S|\le s, \ell\in [N_S],\quad
    \mathbb{P}\left[|Z_4( S,\ell)| \ge C'  \sigma_x^2 \left(b\frac{u}{n\cdot |\mathcal{E}|} + \sqrt{b\frac{u}{n\cdot |\mathcal{E}|}}\right)\right]\le 2e^{-u}, \quad \forall u>0.
\end{align*}
Letting $u=t+\log(2N) \le 6 \left(t + s\log(ed/s)\right)$, we obtain
\begin{align*}
    \mathbb{P}\left[\sup_{ |S|\le s, \ell\in[N_S]} |Z_4(S, \ell)| \ge 6C'  \sigma_x^2 \left(\sqrt{b\zeta(s, t)}+b\zeta(s, t) \right) \right] \le  N\times 2e^{-\log(2N)-t} \le e^{-t}.
\end{align*}
Combining with the argument \eqref{eq:proof-sup4} concludes the proof of the claim with $C_1=18C'$.

\subsection{Proof of \cref{lemma:rsc}}








We first introduce some notation and outline the sketch of the proof. For any given fixed $v \in \mathbb{R}^d$, we define the random variables $Z_v$ and $W_v$ be
\begin{align*}
    Z_v = \frac{1}{n \cdot |\mathcal{E}|} \sum_{i\in [n], e\in \mathcal{E}} (v^\top X_i^{(e)})^2 - v^\top \Sigma v \qquad \text{and}\qquad W_v = \sqrt{Z_v + v^\top\Sigma v},
\end{align*} respectively. 
Given any fixed $\alpha>0$, let $s=c(1+\alpha)^{-2} \sigma_x^{-4}{\kappa}\cdot n |\mathcal{E}|/(b\cdot\log d)$ where $c$ is a universal constant. We also define the set
\begin{align*}
\Theta = \Theta_{s,\alpha} := \bigcup_{S\subseteq [d], |S| \le s}\{\theta\in \mathbb{R}^d: \|\theta_{S^c}\|_{1} \le \alpha \| \theta_{S}\|_1\} 
\end{align*} and abbreviate it as $\Theta$ given our analysis focused on any fixed $(\alpha, s(\alpha))$. Note that $\Theta$ is a cone, in the sense that for any $\theta\in\Theta$ and $t>0$ we also have $t\cdot \theta\in \Theta$, and note that the result we want to prove is quadratic in $\theta$ on both sides. Therefore it suffices to consider $\{v\in\Theta: \|v\|_\Sigma=1\}$, and we define the following set 
\newcommand{\surface}{{\mathcal{B}}}
\newcommand{\sigmasurface}{{\mathcal{B}^\Sigma}}
\begin{align*}
    \surface :=\surface_{s,\alpha}:&= \Theta\cap\{\theta\in\mathbb{R}^d:\|\theta\|_\Sigma=1\}\\
    &=\bigcup_{S\subseteq [d], |S| \le s}\{\theta\in \mathbb{R}^d: \|\theta_{S^c}\|_{1} \le \alpha \| \theta_{S}\|_1, \|\theta\|_\Sigma=1 \}
\end{align*}
and abbreviate it as $\surface$. We also define the following metric on $\mathbb{R}^d$ 
\begin{align*}
    \mathsf{d}(v, v') = \sigma_x \|v - v'\|_\Sigma
\end{align*} and simply let $\mathsf{d}(v, \mathcal{T}) = \inf_{a\in \mathcal{T}} \mathsf{d}(v, a)$ for some set $\mathcal{T}$. It suffices to show that there exists some universal constant $C$ such that
\begin{align*}
    \mathbb{P}\left(\inf_{v\in \surface} Z_v+1 \ge \frac{1}{2}\right) \ge 1-3\exp(-\tilde{n}/(C\sigma_x)^4),
\end{align*}
where 
\begin{align*}
    \tilde{n}:=\frac{n\cdot|\mathcal{E}|}{b}.
\end{align*}
It is obvious that $\tilde{n}\ge n$ follows from $b\le|\mathcal{E}|$ derived in \eqref{eq:b<E}. Our proof is divided into three steps.

In the first step, we establish concentration inequalities for any fixed $v$ and $v'$. To be specific, we show that for some universal constant $C>0$, the following holds: for any $t>0$,
\begin{align}
&\mathbb{P}\left[|Z_v-Z_{v'}|>C\mathsf{d}(v,-v')\mathsf{d}(v,v')\left(\sqrt{\frac{t}{\tilde{n}}}+\frac{t}{\tilde{n}}\right)\right]\le 2e^{-t} ,\quad &{\forall }v,v'\in \mathbb{R}^d;\label{eq:ZDconcentration}\\
&\mathbb{P}\left[|Z_v|>C\sigma_x^2\left(\sqrt{\frac{t}{\tilde{n}}}+\frac{t}{\tilde{n}}\right)\right]\le 2e^{-t},\quad &\forall v\in \surface;\label{eq:Zconcentration}\\
    &\mathbb{P}\left[W_{v}> C\mathsf{d}(v,0)\left(\sqrt{\frac{t}{\tilde{n}}}+1\right)\right]\le  2e^{-t},\quad &\forall v\in \mathbb{R}^d.\label{eq:WDconcentration}
\end{align}

In the second step, we establish an upper bound on the Talagrand's $\gamma_2$ functional~\citep{vershynin2018high} of $\Theta$, which is defined as
\begin{align}\label{eq:def-talagrand-functional}
    \gamma_2(\Theta, \mathsf{d}) := \inf_{\{\surface_k\}_{k=0}^\infty: |\surface_0|=1, |\surface_k| \le 2^{2^k}} \sup_{v\in \Theta} \sum_{k=0}^\infty 2^{k/2} \mathsf{d}(v, \surface_k).
\end{align}
To be specific, we show that
\begin{align}
        \label{eq:gamma_2_bound}\gamma_2(\surface,\mathsf{d})\le C\sigma_x(1+\alpha)\kappa^{-1/2}\sqrt{s\log d}
    \end{align}
where $C$ is a universal constant.


Finally, we combine the concentration inequalities and the complexity measure $\gamma_2(\Theta, \mathsf{d})$ to bound the supremum $\sup_{v\in \Theta} |Z_v|$. Specifically, we show that if  $\tilde{n}^{1/2}\ge C\sigma_x \gamma_2(\surface,\mathsf{d})$
, then
\begin{align}
    \label{eq:zv-upper-bound}\mathbb{P}\left[\sup_{v\in \surface}|Z_v|> 1/2\right]\le 3\exp\left(-\tilde{n}/(C\sigma_x)^4\right).
\end{align}
\noindent {\sc Step 1. Establish Concentration Inequalities for Fixed $v$.} In this step we prove the concentration inequalities \eqref{eq:ZDconcentration},\eqref{eq:Zconcentration} and \eqref{eq:WDconcentration}.
For \eqref{eq:ZDconcentration}, it follows from the definition of $Z$ that
\begin{align*}
    Z_v-Z_{v'}&=\frac{1}{n \cdot |\mathcal{E}|} \sum_{i\in [n], e\in \mathcal{E}} \left((v^\top X_i^{(e)})^2 -((v')^\top X_i^{(e)})^2 \right) - \left(v^\top\Sigma v-v'^\top\Sigma v'\right)\\
    &=\frac{1}{n\cdot|\mathcal E|}\sum_{i\in [n], e\in \mathcal{E}}
\left((v+v')^\top X_i^{(e)}\right)\cdot\left((v-v')^\top X_i^{(e)}\right)-\left(v^\top\Sigma v-v'^\top\Sigma v'\right).
\end{align*}
It is the recentered average of independent random variables, each of which is the product of two sub-Gaussian variables with parameter $\sigma_{e,v+v'}$ and $\sigma_{e,v-v'}$ satisfying
\begin{align*}
    \sigma_{e,v+v'}\sigma_{e,v-v'}&\overset{(a)}{\le} (\sigma_x\|v+v'\|_{\Sigma^{(e)}})\cdot(\sigma_x\|v-v'\|_{\Sigma^{(e)}})\\
    &\overset{(b)}{\le} \sqrt{b}\cdot\mathsf{d}(v,-v')\cdot\sigma_x\|v-v'\|_{\Sigma^{(e)}}\\
    &\overset{(c)} {\le} b\cdot\mathsf{d}(v,-v')\mathsf{d}(v,v');\\
    \sum_{i\in [n], e\in \mathcal{E}} (\sigma_{e,v+v'}\sigma_{e,v-v'})^2&\overset{(d)}{\le} 
    \sum_{i\in [n], e\in \mathcal{E}}\left(b\cdot\mathsf{d}(v,-v')^2\cdot\sigma_x^2\cdot\|v-v'\|_{\Sigma^{(e)}}^2\right)\\
    &\overset{(e)}{=} n|\mathcal{E}|b\mathsf{d}(v,-v')^2\sigma_x^2\|v-v'\|_{\Sigma}^2\\
    &= n|\mathcal{E}|b\mathsf{d}(v,-v')^2\mathsf{d}(v,v')^2.
\end{align*}
Here $(a)$ follows from the data generating process \cref{cond:regularity}(c); $(b)$ and $c$ follow from the fact that $\|v\|_{\Sigma^{(e)}} \le \sqrt{b} \|v\|_{\Sigma}$ by $\lambda_{\max}(\Sigma^{-1/2}\Sigma^{(e)}\Sigma^{-1/2})\le b$; $(d)$ follows directly from $(b)$. and $(e)$ follows from $\frac{1}{|\mathcal{E}|}\sum_{e\in\mathcal{E}}\|\cdot\|_{\Sigma^{(e)}}^2=\|\cdot\|_{\Sigma}^2$ since $|\mathcal{E}|\cdot\Sigma=\sum_{e\in\mathcal{E}}\Sigma^{(e)}$.
Using \cref{lemma:product_of_two_subGaussian} and \cref{lemma:Bernstein}, we can obtain that for all $v,v'\in\mathbb{R}^d$ and $t>0$,
\begin{align*}
\mathbb{P}\left[|Z_v-Z_{v'}|>C\mathsf{d}(v,-v')\mathsf{d}(v,v')\left(\sqrt{\frac{t}{\tilde{n}}}+\frac{t}{\tilde{n}}\right)\right]\le 2e^{-t}
\end{align*}
for some universal constant $C>0$. This completes the proof of \eqref{eq:ZDconcentration}.

\eqref{eq:Zconcentration} is a corollary of \eqref{eq:ZDconcentration}, following from assigning $v'=0$ and noticing that $\mathsf{d}(v,0)=\sigma_x$ for all $v\in\surface$.

For \eqref{eq:WDconcentration}, observe that, $W_v^2=Z_v+v^\top\Sigma v$. Combining \eqref{eq:ZDconcentration} we can conclude that for all $v\in\mathbb{R}^d$ and $t>0$,
\begin{align*}
    &\mathbb{P}\left[W_v>\sqrt{C}\mathsf{d}(v,0)\left(\sqrt{\frac{t}{\tilde{n}}}+1\right)\right]\\
    &=\mathbb{P}\left[W_v^2>C\mathsf{d}(v,0)^2\left(2\sqrt{\frac{t}{\tilde{n}}}+\frac{t}{\tilde{n}}+1\right)\right]\\
    &\overset{(a)}{\le} \mathbb{P}\left[W_v^2>C\mathsf{d}(v,0)^2\left(\sqrt{\frac{t}{\tilde{n}}}+\frac{t}{\tilde{n}}\right)+v^\top\Sigma v\right]\\
    &=\mathbb{P}\left[Z_v>C\mathsf{d}(v,0)^2\left(\sqrt{\frac{t}{\tilde{n}}}+\frac{t}{\tilde{n}}\right)\right]\\
    &\le 2e^{-t}.
\end{align*} Here in $(a)$ we use the fact that $C>1$ and that $\mathsf{d}(v,0)^2=\sigma_x^2 v^\top\Sigma v\ge v^\top\Sigma v$ since $\sigma_x\ge 1$. 

\noindent {\sc Step 2. Bounding the $\gamma_2$-functional.} In this step we prove \eqref{eq:gamma_2_bound}.
We define another set
\begin{align*}
    \sigmasurface&:=\Sigma^{1/2}\surface=\{x\in \mathbb{R}^d: \|x\|_2=1, \Sigma^{-1/2}x\in \Theta\}.
\end{align*}
Since $(\surface,\mathsf{d})$ is isometric to $(\sigmasurface,\sigma_x\|\cdot\|_2)$ and $(\sigmasurface,\sigma_x\|\cdot\|_2)$ is isometric to $(\sigma_x\sigmasurface,\|\cdot\|_2)$. From the fact that $\gamma_2$ functional is invariant under isometries, we have
\begin{align}
\label{eq:Tala-under-iso}
    \gamma_2(\surface,\mathsf{d})= \gamma_2(\sigmasurface,\sigma_x\|\cdot\|_2)=\gamma_2(\sigma_x \sigmasurface,\|\cdot\|_2).
\end{align}
Also, the $\gamma_2$ functional respects scaling in the sense that
\begin{align}
    \label{eq:gamma2-scaling}
    \gamma_2(\sigma_x \sigmasurface,\|\cdot\|_2)=\sigma_x\gamma_2(\sigmasurface,\|\cdot\|_2).
\end{align}
Additionally, it follows from Talagrand’s majorizing measure theorem~\citep{talagrand2005generic} that there exists some universal constant $C>0$ such that
\begin{align}
\label{eq:Tala-comparison}
    \gamma_2(\sigmasurface,\|\cdot\|_2)\le C\cdot \mathbb{E}_{g\sim N(0,I_d)} \left[\sup_{x\in \sigmasurface}g^\top x\right].
\end{align}
So, it remains to obtain an upper bound the right-hand side as follows: 
    \begin{align*}
    \mathbb{E}_{g\sim N(0,I_d)} \left[\sup_{x\in \sigmasurface}g^\top x\right] 
        &= \mathbb{E}_{g\sim N(0,I_d)} \left[\sup_{x\in \sigmasurface}(\Sigma^{1/2}g)^\top(\Sigma^{-1/2}x)\right]\\
        &\le \sup_{x\in \sigmasurface}\|\Sigma^{-1/2}x\|_1\cdot\mathbb{E}_{g\sim N(0,I_d)}[\|\Sigma^{1/2}g\|_\infty]\\
        &\overset{(a)}{\le} (1+\alpha)\sqrt{s}\sup_{x\in \sigmasurface}\|\Sigma^{-1/2}x\|_2\cdot\mathbb{E}_{g\sim N(0,I_d)}[\|\Sigma^{1/2}g\|_\infty]\\
        &\overset{(b)}{\le} (1+\alpha)\sqrt{s}\kappa^{-1/2}\mathbb{E}_{g\sim N(0,I_d)}[\|\Sigma^{1/2}g\|_\infty]\\
        &\overset{(c)}{\le}
        (1+\alpha)\sqrt{s}\kappa^{-1/2}\cdot50\sqrt{\log d}.
    \end{align*}
    Here $(a)$ follows from the fact that $\Sigma^{-1/2}x\in\surface$ and for any $v\in\surface$, we have 
    \begin{align*}
        \|v\|_1= \|v_{S}\|_1+\|v_{S^c}\|_1\le(1+\alpha)\|v_S\|_1\le (1+\alpha)\sqrt{s}\|v_{S}\|_2\le(1+\alpha)\sqrt{s}\|v\|_2 
    \end{align*} for some subset $|S|\le s$ by the definition of $\Theta$; $(b)$ follows from $\|\Sigma^{-1/2}x\|_2\le \kappa^{-1/2}\|x\|_2= \kappa^{-1/2}$; and $(c)$ follows from $\mathbb{E}_{g\sim N(0,I_d)}[\|\Sigma^{1/2}g\|_\infty]\le \mathbb{E}_{g\sim N(0,I_d)}[\|g\|_\infty]\le 50\sqrt{\log d}$ by Sudakov-Fernique’s inequality~\citep{fernique1975regularite} and \cref{cond:regularity}(b). Combining with \eqref{eq:Tala-under-iso}, \eqref{eq:Tala-comparison} and \eqref{eq:gamma2-scaling}, we complete the proof of \eqref{eq:gamma_2_bound}.

\noindent {\sc Step 3. Bounding the maximum of $|Z_{v}|$:}
In this step, we prove \eqref{eq:zv-upper-bound} following \citet{mendelson2007reconstruction}.
It follows from the definition of $\gamma_2$-functional that there exists a sequence of subsets $\{\surface_k: k \ge 0\}$ of $\surface$ with $|\surface_0|=1$ and $|\surface_k| \le 2^{2^k}$ such that for every $v \in \surface$,
\begin{align*}
    \sum_{k=0}^\infty
2^{k/2} \mathsf{d}(v,\pi_{k}(v)) \le 1.01 \gamma_2(\surface,  \mathsf{d} ),
\end{align*}
where $\pi_{k}(v)$ denotes the nearest element of $v$ in $\surface_k$. This immediately implies
\begin{align}
\label{eq:sum-of-distance-by-tala}
\begin{split}
    \sum_{k=0}^\infty
2^{k/2} \mathsf{d}(\pi_{k+1}(v),\pi_{k}(v))
&\le \sum_{k=0}^\infty
2^{k/2}\left(\mathsf{d}(v,\pi_{k}(v))+\mathsf{d}(v,\pi_{k+1}(v))\right)\\
&\le (1+2^{-1/2})\sum_{k=0}^\infty
2^{k/2} \mathsf{d}(v,\pi_{k}(v))\\
&\le 2 \gamma_2(\surface,  \mathsf{d} ).
\end{split}
\end{align}
Let the integer $k_0$ satisfy $2\tilde{n} \ge 2^{k_0} > \tilde{n}$. It follows from triangle inequality and the definition of $W_v$ and $Z_v$ that
\begin{align}
    \label{eq:separate-Zv}
    |Z_v|\le |Z_v-Z_{\pi_{k_0}(v)}|+|Z_{\pi_{k_0}(v)}|= |W_{v}^2-W_{\pi_{k_0}(v)}^2|+|Z_{\pi_{k_0}(v)}|.
\end{align}
From Minkowski's inequality, we can observe that $W_v$ is sub-additive with respect to $v$, that is, for any $v_1,v_2\in\mathbb{R}^d$, 
\begin{equation}\label{eq:sub-additive-of-W0}
\begin{split}
    W_{v_1+v_2}&=\left[{\frac{1}{n \cdot |\mathcal{E}|} \sum_{i\in [n], e\in \mathcal{E}} \left(v_1^\top X_i^{(e)}+v_2^\top X_i^{(e)}\right)^2}\right]^{1/2}\\
    &\le
    \left[{\frac{1}{n \cdot |\mathcal{E}|} \sum_{i\in [n], e\in \mathcal{E}} \left(v_1^\top X_i^{(e)}\right)^2}\right]^{1/2}+\left[{\frac{1}{n \cdot |\mathcal{E}|} \sum_{i\in [n], e\in \mathcal{E}} \left(v_2^\top X_i^{(e)}\right)^2}\right]^{1/2}\\
    &=W_{v_1}+W_{v_2}.
\end{split}
\end{equation}
This helps us to obtain
\begin{align*}
    (W_{\pi_{k_0}(v)}-W_{v-\pi_{k_0}(v)})^2-W_{\pi_{k_0}(v)}^2\le W_{v}^2-W_{\pi_{k_0}(v)}^2&\le(W_{\pi_{k_0}(v)}+W_{v-\pi_{k_0}(v)})^2-W_{\pi_{k_0}(v)}^2.
\end{align*}
Then we can derive
\begin{align}
    \label{eq:separate-Zv-2}
   |W_{v}^2-W_{\pi_{k_0}(v)}^2|\le W_{v-\pi_{k_0}(v)}^2+2W_{v-\pi_{k_0}(v)}W_{\pi_{k_0}(v)}.
\end{align}
Therefore, combining \eqref{eq:separate-Zv} and \eqref{eq:separate-Zv-2}, and letting the positive integer $k_1< k_0$ be determined later, we can upper bound $\sup_{v \in \surface}|Z_v|$ as follows
\begin{equation}
\begin{split}
    \sup_{v \in \surface} |Z_v| &\le \sup_{v \in \surface} W_{v-\pi_{k_0}(v)}^2
    + 2\sup_{v \in \surface} W_{v-\pi_{k_0}(v)}  \sup_{{v_0} \in \surface_{k_0}} W_{{v_0}}
     + \sup_{{v_0} \in \surface_{k_0}}|Z_{v_0}|\\
     &= \sup_{v \in \surface} W_{v-\pi_{k_0}(v)}^2
    + 2\sup_{v \in \surface} W_{v-\pi_{k_0}(v)}  \sup_{{v_0} \in \surface_{k_0}} \sqrt{Z_{{v_0}}+1}
     + \sup_{{v_0} \in \surface_{k_0}}|Z_{v_0}|\\
     &\le \sup_{v \in \surface} W_{v-\pi_{k_0}(v)}^2
    + 2\sup_{v \in \surface} W_{v-\pi_{k_0}(v)}  \sup_{{v_0} \in \surface_{k_0}}\left( |Z_{{v_0}}|+1\right)
     + \sup_{{v_0} \in \surface_{k_0}}|Z_{v_0}|\\
     &= \sup_{v \in \surface} W_{v-\pi_{k_0}(v)}^2+2\sup_{v \in \surface} W_{v-\pi_{k_0}(v)}+\left(2\sup_{v \in \surface} W_{v-\pi_{k_0}(v)}+1\right)\sup_{{v_0} \in \surface_{k_0}}|Z_{v_0}|\\
     &\le \sup_{v \in \surface} W_{v-\pi_{k_0}(v)}^2+2\sup_{v \in \surface} W_{v-\pi_{k_0}(v)}\\
     &\qquad\qquad+\left(2\sup_{v \in \surface} W_{v-\pi_{k_0}(v)}+1\right)\left(\sup_{v_0\in\surface_{k_0}}|Z_{v_0}-Z_{\pi_{k_1}(v_0)}|+\sup_{v_1\in\surface_{k_1}}|Z_{v_1}|\right).\\ 
\end{split}
  \label{eq:Z_v-three-terms}
  \end{equation}
Then it remains to upper bound $\sup_{v \in \surface} W_{v-\pi_{k_0}(v)}$, $\sup_{v_0\in\surface_{k_0}}|Z_{v_0}-Z_{\pi_{k_1}(v_0)}|$ and $\sup_{v_1\in\surface_{k_1}}|Z_{v_1}|$.

First, we upper bound $\sup_{v \in \surface} W_{v-\pi_{k_0}(v)}$. It follows from the sub-additivity of $W_v$ that
\begin{align}\label{eq:sub-add_of_W}
    W_{v-\pi_{k_0}(v)} \le \sum_{k=k_0}^\infty
W_{\pi_{k+1}(v)-\pi_k(v)},\qquad \forall v\in\surface.
\end{align}
For each $k\ge k_0$, we define the following event
\begin{align*}
    \mathcal{U}_1(k)=\left\{\sup_{v\in \surface} W_{\pi_{k+1}(v)-\pi_k(v)} \le 8C
\sqrt{{2^{k}}/{\tilde{n}}}\cdot\mathsf{d}\left(\pi_{k+1}(v),\pi_k(v)\right) \right\}
\end{align*}
where the constant $C$ is the same as that in \eqref{eq:WDconcentration}. Since $|\surface_k| \le 2^{2^k}$, there are at most
$2^{2^k}\times2^{2^{k+1}}\le 2^{2^{k+2}}$ distinct pairs of $(\pi_{k+1}(v),\pi_k(v))$. Thus, we can take a union bound over all such pairs, combine with \eqref{eq:WDconcentration} and use the fact that $2^k>\tilde{n}$ provided $k\ge k_0$ to obtain
\begin{equation}
    \begin{split}
        &\mathbb{P}\left(\overline{\mathcal{U}_1(k)}\right)\\
        &\le\sum_{(\pi_{k+1}(v), \pi_k(v))}\mathbb{P}\left[W_{\pi_{k+1}(v)-\pi_k(v)} > 8C
\sqrt{{2^{k}}/{\tilde{n}}}\cdot\mathsf{d}\left(\pi_{k+1}(v),\pi_k(v)\right)\right]\\
&\le\sum_{(\pi_{k+1}(v), \pi_k(v))}\mathbb{P} \left[W_{\pi_{k+1}(v)-\pi_k(v)} > C
\left(\sqrt{{16\cdot2^{k}}/{\tilde{n}}}+1\right)\cdot\mathsf{d}\left(\pi_{k+1}(v)-\pi_k(v),0\right)\right]\\
 &\le 2^{2^{k+2}}\cdot 2 \exp(-16\cdot 2^{k})\le \exp (-8\cdot 2^{k}).
    \end{split}\label{eq:u1-prob}
\end{equation}
Under the event $\bigcap_{k\ge k_0} \mathcal{U}_1(k)$, it follows from \eqref{eq:sum-of-distance-by-tala} and \eqref{eq:sub-add_of_W} that
\begin{equation}
\label{eq:Zv-part1-upperbound}
    \begin{split}
        \sup_{v \in \surface} W_{v-\pi_{k_0}(v)}\le \sum_{k=k_0}^\infty
\sup_{v\in\surface} W_{\pi_{k+1}(v)-\pi_k(v)}&\le 8C{\tilde{n}}^{-1/2}\sum_{k=k_0}^\infty 2^{k/2}\mathsf{d}(\pi_{k+1}(v),\pi_k(v))\\
&\le 16C{\tilde{n}}^{-1/2}\gamma_2(\surface,\mathsf{d}).
    \end{split}
\end{equation}

For $\sup_{v_0\in\surface_{k_0}}|Z_{v_0}-Z_{\pi_{k_1}(v_0)}|$, we first define the following event for each $0\le k \le k_0-1$,
\begin{align*}
    \mathcal{U}_2(k)=\left\{\sup_{v\in \surface} \left|Z_{\pi_{k+1}(v)}-Z_{\pi_k(v)}\right| \le C\cdot40\sigma_x
\mathsf{d}(\pi_{k+1}({v}),\pi_k({v}))\sqrt{{2^k }/\tilde{n}} \right\}
\end{align*}
where the constant $C$ is the same as that in \eqref{eq:ZDconcentration}. Since $|\surface_k| \le 2^{2^k}$, there are at most
$2^{2^k}\times2^{2^{k+1}}\le 2^{2^{k+2}}$ distinct pairs of $(\pi_{k+1}(v),\pi_k(v))$. Thus, we can take a union bound over all such pairs, combine with \eqref{eq:ZDconcentration} and use the fact that $2^k\le\tilde{n}$ provided $k\le k_0-1$ to obtain
\begin{equation}
    \begin{split}
        &\mathbb{P}\left(\overline{\mathcal{U}_2(k)}\right)\\
        &\le\sum_{(\pi_{k+1}(v), \pi_k(v))}\mathbb{P}\left[\left|Z_{\pi_{k+1}({v})}-Z_{\pi_{k}({v})}\right| >C\cdot2\sigma_x
\mathsf{d}(\pi_{k+1}({v}),\pi_k({v}))\cdot20\sqrt{{2^k }/\tilde{n}}\right]\\
&\le\sum_{(\pi_{k+1}(v), \pi_k(v))}\mathbb{P} \bigg[\left|Z_{\pi_{k+1}({v})}-Z_{\pi_{k}({v})}\right| > C\mathsf{d}\left(\pi_{k+1}({v}),-\pi_k({v})\right)
\mathsf{d}(\pi_{k+1}({v}),\pi_k({v}))\\
&\qquad\qquad\qquad\qquad\qquad\qquad\qquad\qquad\qquad\times \left(\sqrt{{(16\cdot 2^k )}/\tilde{n}}+(16\cdot2^k)/\tilde{n}\right)\bigg]\\
 &\le 2^{2^{k+2}}\cdot 2 \exp(-16\cdot 2^{k})\le \exp (-8\cdot 2^{k}).
    \end{split}\label{eq:u2-prob}
\end{equation}
From triangle inequality, under the event $\bigcap_{k=k_1}^{k_0-1}\mathcal{U}_2(k)$, we have
\begin{equation}\label{eq:Zv-part2-upperbound}
    \begin{split}
        \sup_{v_0\in\surface_{k_0}}|Z_{v_0}-Z_{\pi_{k_1}(v_0)}|&\le \sum_{k=k_1}^{k_0-1}\sup_{v\in\surface}|Z_{\pi_{k+1}(v)}-Z_{\pi_{k}(v)}|\\
    &\le 40C\sigma_x\tilde{n}^{-1/2}\sum_{k=k_1}^{k_0-1}2^{k/2}\mathsf{d}(\pi_{k+1}(v),\pi_k(v))\\
    &\le 40C\sigma_x\tilde{n}^{-1/2}2\gamma_2(\surface,\mathsf{d}).
    \end{split}
\end{equation}

For $\sup_{v_1\in\surface_{k_1}}|Z_{v_1}|$, we define the following event for each $0\le k\le k_0-1$, 
\begin{equation}\label{eq:u3-def}
    \begin{split}
        \mathcal{U}_3(k)=\left\{\sup_{v\in \surface} \left|Z_{\pi_k(v)}\right| \le C\sigma_x^2
\cdot32\sqrt{{2^k}/\tilde{n}} \right\}
    \end{split}
\end{equation}
where the constant $C$ is the same as that in \eqref{eq:Zconcentration}.
We take a union bound over all elements in $\surface_{k}$, combine with \eqref{eq:Zconcentration} and use the fact that $2^k\le\tilde{n}$ to obtain
\begin{equation}
    \begin{split}
            \mathbb{P}(\overline{\mathcal{U}_3(k)})
    &\le\sum_{\pi_k(v)}\mathbb{P} \left[\left|Z_{\pi_{k}({v})}\right| >C\sigma_x^2
\cdot32\sqrt{{2^k}/\tilde{n}}\right] \\
&\le\sum_{\pi_k(v)}\mathbb{P} \left[\left|Z_{\pi_{k}({v})}\right| >C\sigma_x^2\left(\sqrt{{(16\cdot 2^k )}/\tilde{n}}+(16\cdot2^k)/\tilde{n}\right)\right]
\\
    &\le 2^{2^{k}}\cdot 2\exp(-16\cdot 2^k)\le \exp\left(-8\cdot2^k\right).
    \end{split}\label{eq:u3-prob}
\end{equation}

Now we choose $k_1$ such that $\tilde{n}/(2^{23}C^2\sigma_x^4)\le 2^{k_1}<\tilde{n}/(2^{22}C^2\sigma_x^4)$. Then combining with \eqref{eq:Zv-part1-upperbound}, \eqref{eq:Zv-part2-upperbound} and \eqref{eq:u3-def}, there exists a universal constant $C'$ such that, provided $\tilde{n}^{1/2}\ge C'\sigma_x\gamma_2(\surface,\mathsf{d})$, the following holds under the event $\left(\bigcap_{k=k_0}^{\infty}\mathcal{U}_1(k)\right)\cap\left(\bigcap_{k=k_1}^{k_0-1}\mathcal{U}_2(k)\right)\cap\mathcal{U}_3(k_1)$
\begin{align*}
    \sup_{v\in\surface} |Z_{v}|&\le \sup_{v \in \surface} W_{v-\pi_{k_0}(v)}^2+2\sup_{v \in \surface} W_{v-\pi_{k_0}(v)}\\
     &\qquad\qquad+\left(2\sup_{v \in \surface} W_{v-\pi_{k_0}(v)}+1\right)\left(\sup_{v_0\in\surface_{k_0}}|Z_{v_0}-Z_{\pi_{k_1}(v_0)}|+\sup_{v_1\in\surface_{k_1}}|Z_{v_1}|\right)\\
     &\le \left(\frac{1}{64}\right)^2+2\cdot \frac{1}{64}+\left(2\cdot\frac{1}{64}+1\right)\left(\frac{1}{64}+\frac{1}{64}\right)<\frac12.
\end{align*}
Therefore, combine this with \eqref{eq:u1-prob}, \eqref{eq:u2-prob} and \eqref{eq:u3-prob}, we can conclude that the event
\begin{align*}
    \left\{\sup_{v\in\surface} |Z_{v}|<\frac12\right\}
\end{align*}
occurs with probability at least
\begin{align*}
    &1-\sum_{k=k_0}^\infty\mathbb{P}\left[\overline{\mathcal{U}_1(k)}\right]-\sum_{k=k_1}^{k_0-1}\mathbb{P}\left[\overline{\mathcal{U}_2(k)}\right]-\mathbb{P}\left[\overline{\mathcal{U}_3(k_1)}\right]\\
    &\ge1-\sum_{k=k_0}^\infty \exp (-8\cdot 2^{k})-\sum_{k=k_0}^\infty \exp(-8\cdot 2^k)-\exp\left(-8\cdot2^{k_1}\right)\\
    &\ge 1-2\exp(-4\cdot 2^{k_0})-\exp\left(-8\cdot2^{k_1}\right)\\
    &\ge 1-3\exp\left(-4\cdot2^{k_1}\right)\\
    &\ge 1-3\exp\left(-\tilde{n}/(C'\sigma_x^4)\right).
\end{align*}

With these results, we are ready to prove \cref{lemma:rsc}.
\begin{proof}[Proof of \cref{lemma:rsc}]
    Combining the results in Step~2 and Step~3, we can conclude that if $\tilde{n}\ge C \sigma_x^4(1+\alpha)^2\kappa^{-1} s\log d$, i.e. $ s\le C^{-2} (1+\alpha)^{-2}\sigma_x^{-4} {\kappa}\cdot{n|\mathcal{E}|}/{(b\log d)}$ where $C$ is a universal constant, then
\begin{align*}
\mathbb{P}\left(\inf_{v\in \surface} Z_v+1 \ge \frac{1}{2}\right)\ge\mathbb{P}\left(\sup_{v\in\surface}\left| Z_v\right|<1/2\right)\ge 1-3\exp(-\tilde{n}/(C\sigma_x)^4).
\end{align*}
    Therefore with probability over $1-3\exp(-\tilde{n}/(C\sigma_x)^4)$, the following holds: for all $\theta \in \mathbb{R}^d\setminus\{0\}$:
    \begin{align*}
    \frac{1}{|\mathcal{E}|} \sum_{e\in \mathcal{E}} \hat{\mathbb{E}}[|\theta^\top X^{(e)} |^2]=Z_{\theta}+\theta^\top\Sigma\theta=\|\theta\|_\Sigma^2 (Z_{\theta/\|\theta\|_{\Sigma}}+1) \ge 0.5\|\theta\|_\Sigma^2\ge 0.5  \kappa\|\theta\|_2^2.
\end{align*}
\end{proof}

\subsection{Proof of \cref{lemma:shrinkage}}

    The R.H.S. of the inequality follows from the fact that the augmented covariance matrix $\begin{bmatrix} \Sigma & u \\
    u^\top &\sigma_y^2
    \end{bmatrix}$ is the positive semi-definite matrix and thus 
    For the L.H.S., we apply the proof-by-contradiction argument. To be specific, we will show that if $\|\Sigma^{1/2} \beta^{k,\gamma}\|_2 > \|\Sigma^{1/2} \bar{\beta} \|_2$, then $\beta^{k,\gamma}$ will not be the unique minimizer of $\mathsf{Q}_{k,\gamma}(\beta)$, which is contrary to the claim in \cref{thm:minimax-k}. To see this, let 
    \begin{align}
    \label{eq:proj}
        \tilde{\beta} = \argmin_{\beta = t \cdot \beta^{k,\gamma}, t\in \mathbb{R}} \|\Sigma^{1/2}( \beta - \bar{\beta})\|_2 \qquad \text{with} \qquad \bar{\beta} = \Sigma^{-1} u.
    \end{align} Observe that $\tilde{\beta}$ is the projection on the subspace $\{t\cdot \beta^{k,\gamma}: t\in \mathbb{R}\}$ with respect to $\|\Sigma^{1/2} \cdot \|_2$ norm, this implies that
    \begin{align}
    \label{eq:pf-identity}
        (\tilde{\beta} - \bar{\beta})^\top \Sigma v = 0 \qquad \forall v\in \{t\cdot \beta^{k,\gamma}: t\in \mathbb{R}\}.
    \end{align} Then we can obtain that
    \begin{align*}
        \|\Sigma^{1/2} \tilde{\beta} \|_2 = \frac{\tilde{\beta}^\top \Sigma \tilde{\beta}}{\|\Sigma^{1/2} \tilde{\beta} \|_2} \overset{(a)}{=} \frac{\bar{\beta}^\top \Sigma \tilde{\beta}}{\|\Sigma^{1/2} \tilde{\beta} \|_2} &\overset{(b)}{\le} \frac{\|\Sigma^{1/2} \bar{\beta} \|_2 \|\Sigma^{1/2} \tilde{\beta} \|_2}{\|\Sigma^{1/2} \tilde{\beta}\|_2} \\
        &= \|\Sigma^{1/2} \bar{\beta} \|_2 \overset{(c)}{<}  \|\Sigma^{1/2} {\beta}^{k,\gamma} \|_2, 
    \end{align*} which means $\tilde{\beta} = \tilde{t} \cdot \beta^{k,\gamma}$ with $|\tilde{t}| < 1$ because $\lambda_{\min}(\Sigma) >0$. Here $(a)$ we set $v=\tilde{\beta}$ in \eqref{eq:pf-identity}; $(b)$ follows from Cauchy-Schwarz inequality;  and $(c)$ follows from our assumption $\|\Sigma^{1/2} \beta^{k,\gamma}\|_2 > \|\Sigma^{1/2} \bar{\beta} \|_2$. Therefore, we have
    \begin{align*}
        \mathsf{Q}_{k,\gamma}(\tilde{\beta}) - \mathsf{Q}_{k,\gamma}(\beta^{k,\gamma}) &= \|\Sigma^{1/2}(\tilde{\beta} - \bar{\beta})\|_2^2 - \|\Sigma^{1/2}(\beta^{k,\gamma} - \bar{\beta})\|_2^2 + \gamma \sum_{j=1}^n (|\tilde{\beta}_j| - |\beta^{k,\gamma}|) w_k(j) \\
        &\overset{(a)}{\le} 0 + \gamma \sum_{j=1}^n (|\tilde{\beta}_j| - |\beta^{k,\gamma}_j|) w_k(j) \overset{(b)}{<} 0,
    \end{align*} where $(a)$ follows from the minimization program in \eqref{eq:proj}, $(b)$ follows from $\tilde{\beta} = \tilde{t} \cdot \beta^{k,\gamma}$ with $|\tilde{t}| < 1$. This is contrary to the fact that $\beta^{k,\gamma}$ uniquely minimize $\mathsf{Q}_{k,\gamma}(\beta)$. Then we can conclude that $\|\Sigma^{1/2} \beta^{k,\gamma}\|_2 \le \|\Sigma^{1/2} \bar{\beta} \|_2\le\sigma_y$. \qquad

\section{Implementation Details and Omitted Results in Experiments}\label{sec:implement-detail}
In this section we elaborate more on the implementation details.

\subsection{Pre-Processing in Climate Dynamic Prediction}\label{sec:pre-processing}
For Climate Dynamic Prediction, we follow the approach of \citet{runge2015identifying} and conduct preprocessing as follows. For each of the four tasks, we perform the cosine transform on the grid dat. Specifically, for a measurement $x$ at a grid with latitude $\phi \in [-\pi, \pi]$, we apply the following transformation:
\begin{align*}
    x_{\cos} = x * \sqrt{\cos(\phi)}.
\end{align*}
The cosine transform compensates for the varying areas that grids at different latitudes represent, helping to avoid over-compression or over-amplification of grids at higher latitudes.
Next, we estimate the covariance matrix on the training data and compute the eigenvectors, which are then rotated using the Varimax~\citep{kaiser1958varimax,vejmelka2015non} criterion. We select $ N = 60 $ top significant components based on a comparison of the eigenvalues of the original data with those of the surrogate data that only represent the autocorrelation structure.
Finally, for each task, the component weight matrix computed from the training dataset is multiplied with cosine-transformed daily-gridded time series. The resulting product is then normalized to have zero mean and unit variance based on the training data.

\subsection{Construction of the Target Variables in Climate Dynamic Prediction}\label{sec:construction-of-target}
For each task $ a\in\{\texttt{air}, \texttt{csulf},\texttt{slp},\texttt{pres}\}$, we use $X_{t}$ to regress $Z_{t,j}$ tentatively for each $j\in[60]$ on all training data and evaluate $R^2$, which is defined as
\begin{align*}
    1-\frac{\sum_{(X_t,Z_{t,j})\in\mathcal{D}_1\cup \mathcal{D}_2}(Z_{t,j}-\hat{Y}(X_t))^2}{\sum_{(X_t,Z_{t,j})\in\mathcal{D}_1\cup \mathcal{D}_2}(Z_{t,j})^2}.
\end{align*}
We add $j$ to the set of target variables $\mathcal{Y}$ if $R^2$ exceeds a predefined threshold. We set the threshold to $0.75$ for \texttt{a
ir} and \texttt{csulf}, and $0.9$ for \texttt{pres} and \texttt{slp}. 
The selected target variables are shown in \cref{table: climate-target}. We do so since we only care about those target variables that have strong correlations with explain variables.
We use the same hyper-parameters when predicting multiple targets, while different tasks do not share the same hyper-parameters.

\subsection{The Procedure of Applying PCMCI$^+$ or Granger Causality}\label{sec:procedure-pcmci}
When applying PCMCI$^+$ or Granger causality in \cref{sec:stock} and \cref{sec:climate}, we perform the analysis on the entire training data and use a significance level of $\alpha = 0.01$. We fix the set of selected covariates and then conduct $100$ random trials, with the $L_2$ regularization parameter set to $0.1$.

\begin{table}[htb!]
    \centering
    \footnotesize
    \begin{tabular}{cccccc}
    \hline
        Data & \texttt{air} &\texttt{csulf} & \texttt{pres} & \texttt{slp}\\
        \hline
        Target Variables & \makecell {[1, 2, 6, 9, 13, 15,\\19, 20, 21, 23, 24, 27,\\ 31, 33, 37, 38, 40, \\47, 48, 49, 54, 55, 58]}&  \makecell{
[1, 2, 6, 7,\\ 10, 14, 16, 17,\\ 20, 22, 28, 30,\\33, 36, 42, 45,\\49, 54,55,58]}
& 
\makecell{[1, 2, 8, 17, \\25, 27, 45, 53]} &\makecell{[1, 3, 7, 8,\\ 13, 28, 32, 33,\\ 45, 46, 50, 53,\\ 54, 55, 56]
}
\\
        \hline
    \end{tabular}
    \caption{Selected target variables for the four tasks air temperature (\texttt{air}),  clear sky upward solar flux (\texttt{csulf}), surface pressure (\texttt{pres}) and sea level pressure (\texttt{slp}) over $100$ replications.}
    \label{table: climate-target}
\end{table}

\subsection{Comparison of Different $k$}\label{sec:different-k}
In this sectioon we show the performance of varying $k\in\{1,2,3\}$ for our method \emph{invariance-guided regularization (IGR)} in the same settings as \cref{sec:stock} and \cref{sec:climate}. The results presented in \cref{table:stock-different-k} and \cref{table: climate-different-k} indicate that the performance across different $k$ is similar.

\begin{table}[htb!]
    \footnotesize
    \centering
    \begin{tabular}{crr}
      \hline
      \multicolumn{1}{c}{Data} &
      \multicolumn{1}{c}{\texttt{AMT}} & 
      \multicolumn{1}{c}{\texttt{SPG}}\\
      \hline
      $k=1$ & {$0.135\pm0.068$}& {$0.036\pm 0.034$}\\
      $k=2$&{$0.131\pm0.074$}& {$0.048\pm 0.039$}\\
      $k=3$ &{$0.129\pm0.094$}& {$0.051\pm 0.040$}\\ 
      \hline
    \end{tabular}
	\caption{The average $\pm$ standard deviation of the worst-case out-of-sample $R^2$ \eqref{eq:worst-exp} for predicting the stocks $\mathtt{AMT}$ and $\mathtt{SPG}$ using IGR with different $k$.}
    \label{table:stock-different-k}
\end{table}
\begin{table}[htb!]
    \centering
    \footnotesize
    \begin{tabular}{cccccc}
    \hline
        Data & \texttt{air} &\texttt{csulf} & \texttt{pres} & \texttt{slp}\\
        \hline
        $k=1$&  $3.7882\pm0.3416$&$2.0431\pm0.0673$  &$ 1.5892\pm0.1952$&$3.0392\pm0.2569$\\
        $k=2$&$ 3.7838 \pm 0.3281$&$2.0523\pm 0.0883$ &$1.6077\pm 0.1122$&$3.0466\pm0.1955$\\
        $k=3$&  $3.7652\pm0.4016 $ &$2.0637\pm 0.0415$& $1.6004\pm 0.1042$&$3.0379\pm 0.2157$\\
        \hline
    \end{tabular}
    \caption{The average $\pm$ standard deviation of the mean squared error~\eqref{eq:MSE} of the four tasks air temperature (\texttt{air}),  clear sky upward solar flux (\texttt{csulf}), surface pressure (\texttt{pres}) and sea level pressure (\texttt{slp}) using IGR with different $k$.}
    \label{table: climate-different-k}
\end{table}

\subsection{Causal Relation Identified by Our Method in Climate Dynamic Data}
\label{appendix:climate-causal}

To qualitatively evaluate our method for causal discovery, we present the paths identified by our approach among six regions (No. 20, 23, 38, 40, 48, and 49) in the air temperature task (\textit{air}) in \cref{fig:map}. In particular, the causal path from the Arabian Sea (No. 38) to the eastern limb of ENSO (No. 40) via the Indian Ocean (No. 49) is verified by \citet{doi:10.1126/science.284.5423.2156} and~\citet{timmermann2018nino}. Additionally, the paths between East Asia (No. 48 and No. 23) and the high surface pressure sector of the Indian Monsoon region (No. 38) align with the known relationship between the sea surface temperatures of the Indian Ocean and the Asian Summer Monsoon~\citep{li2001relationship}. These results demonstrate that our method is capable of effectively identifying causal relationships.

\begin{figure}[htb!]
    \centering
    \includegraphics[width=0.4\linewidth]{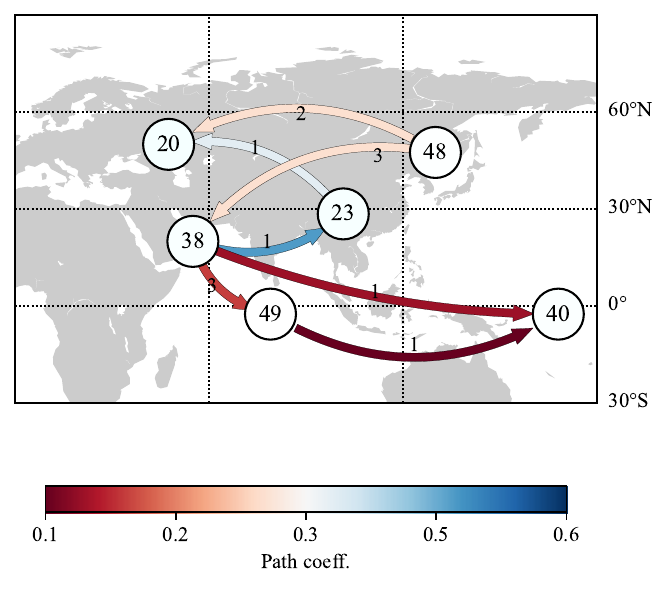}
    \caption{The paths identified by our approach among the six regions (No. 20, 23, 38, 40, 48, and 49) in the air temperature task (\textit{air}). The edge colors represent the path coefficients, while the labels indicate the time lags in days.}
    \label{fig:map}
\end{figure}


\end{document}